\numberwithin{equation}{section}
\newcommand{\eg}{\textrm{e.g.}}
\newcommand{\ie}{\textrm{i.e.}}
\newcommand{\cf}{\textrm{cf.}}
\newcommand{\cD}{\mathcal{D}}
\newcommand{\cC}{\mathcal{C}}
\newcommand{\cM}{\mathcal{M}}
\newcommand{\bbF}{\mathbb{F}}
\newcommand{\Pic}{\mathop{\mathrm{Pic}}}
\newcommand{\Hilb}{\mathsf{Hilb}}
\newcommand{\proj}{\mathsf{proj}}
\newcommand{\Map}{\mathrm{Map}} % simplicial mapping complex
\newcommand{\map}{\mathrm{map}} % homotopy function complex
\newcommand{\Mor}{\mathrm{Mor}}
\newcommand{\uni}{\mathrm{uni}}
\newcommand{\id}{\mathrm{id}}
\newcommand{\N}{\mathbb{N}}
\newcommand{\op}{\mathrm{op}}
\newcommand{\pt}{\mathrm{pt}}
\newcommand{\unit}{\mathbf 1}
\newcommand{\obj}{\mathop{\mathrm{ob}}}
\newcommand{\Hom}{\mathrm{Hom}}
\newcommand{\Aut}{\mathrm{Aut}}
\newcommand{\Z}{\mathbb{Z}}
\newcommand{\R}{\mathbb{R}}
\newcommand{\C}{\mathbb{C}}
\newcommand{\F}{\mathbb F}
\newcommand{\sSet}{\mathsf{sSet}}
\newcommand{\Cat}{\mathsf{Cat}}
\newcommand{\Cstar}{\mathrm{C}^*\!}
\newcommand{\Cstarmax}{\mathrm{C}^*_{\max}}
\newcommand{\Cstarcatun}{\mathsf{C}_1^*\!\mathsf{cat}}
\newcommand{\Weq}{\mathrm{Weq}}
\newcommand{\Fib}{\mathrm{Fib}}
\newcommand{\Cof}{\mathrm{Cof}}
\newcommand{\Ho}{\mathrm{Ho}}
\newcommand{\cU}{\mathcal{U}}
\newcommand{\add}{\mathsf{add}}
\newcommand{\Fun}{\mathsf{Fun}}
\newcommand{\too}{\longrightarrow}
\numberwithin{equation}{section}
\newtheorem{thm}[equation]{Theorem}
\newtheorem{lemma}[equation]{Lemma}
\newtheorem{thm-defi}[equation]{Theorem-Definition}
\newtheorem{prop}[equation]{Proposition}
\newtheorem{cor}[equation]{Corollary}
\newtheorem*{thm*}{Theorem}
\newtheorem*{lemma*}{Lemma}
\newtheorem*{cor*}{Corollary}
\theoremstyle{definition}
\newtheorem{defi}[equation]{Definition}%[section]
\newtheorem*{conv*}{Conventions}
\newtheorem{example}[equation]{Example}
\newtheorem*{conventions*}{Conventions}
\newtheorem*{acknowledgements*}{Acknowledgements}
\theoremstyle{remark}
\newtheorem{remark}[equation]{Remark}
\newtheorem{notation}[equation]{Notation}
\newtheorem*{remark*}{Remark}
\begin{document}
\title{Morita homotopy theory of $\Cstar$-categories\\
%\texttt{preliminary version}
}
\author{Ivo Dell'Ambrogio}
\address{Ivo Dell'Ambrogio, Universit\"at Bielefeld, Fakult\"at f\"ur Mathematik, BIREP Gruppe, Postfach 10\,01\,31, 33501 Bielefeld, Germany}
\email{ambrogio@math.uni-bielefeld.de}
\urladdr{www.math.uni-bielefeld.de/~ambrogio/}

\author{Gon\c calo Tabuada}

\address{Gon{\c c}alo Tabuada, Department of Mathematics, MIT, Cambridge, MA 02139, USA}
\email{tabuada@math.mit.edu}
\urladdr{math.mit.edu/~tabuada}

\thanks{}
\subjclass[2010]{  % I'm using the *2010* MSC, but my demented latex compiler keeps showing 1991 !
46L05, % general theory of C*-algebras
46M99, % methods of category theory in functional analysis: none of the above, but in this section
55U35, % abstract and axiomatic homotopy theory
16D90. % module categories; module theory in a category-theoretic context; Morita equivalence and duality 
%55U40 % topological categories, foundations of homotopy theory
}
\address{}
\email{}

\date{\today}

\begin{abstract}
In this article we establish the foundations of the Morita homotopy theory of $\Cstar$-categories. Concretely, we construct a cofibrantly generated simplicial symmetric monoidal Quillen model structure (denoted by $\cM_\Mor$) on the category $\Cstarcatun$ of small unital $\Cstar$-categories. The weak equivalences are the Morita equivalences and the cofibrations are the $\ast$-functors which are injective on objects. As an application, we obtain an elegant description of Brown-Green-Rieffel's Picard group in the associated homotopy category $\Ho(\cM_\Mor)$. We then prove that $\Ho(\cM_\Mor)$ is semi-additive. By group completing the induced abelian monoid structure at each Hom-set we obtain an additive category $\Ho(\cM_\Mor)^{-1}$ and a composite  functor $\Cstarcatun \to \Ho(\cM_\Mor) \to \Ho(\cM_\Mor)^{-1}$ which is characterized by two simple properties: inversion of Morita equivalences and preservation of all finite products. Finally, we prove that the classical Grothendieck group functor becomes co-represented in $\Ho(\cM_\Mor)^{-1}$ by the tensor unit object.
\end{abstract}

\keywords{$\Cstar$-categories, Model categories, Morita equivalence, Grothendieck group, Picard group.}

\maketitle
\vskip-\baselineskip
\vskip-\baselineskip
\tableofcontents
\vskip-\baselineskip
\vskip-\baselineskip
\vskip-\baselineskip
%%%%%%
\section{Introduction}
%\Goncalonew{(Ivo: what follows is just a tentative. Please write things the best way possible. However, I believe these results and statements should be included in the final version)} \Goncalonew{(Ivo: could you kindly start by explaining the importance of the notion of Morita equivalence ? Explain that two $\Cstar$-algebras become isomorphic in $\Ho(\Cstarcatun)$ if and only if their are Morita-Rieffel equivalent in the classical sense. Moreover, besides the examples you gave at the introduction of \cite{ivo:unitary}, add Davis-Luck's work~\cite{davis-lueck} on $\Cstar$-categories attached to group actions. In particular emphasize his \cite[Remark~2.3]{davis-lueck}).}
The theory of $\Cstar$-categories, first developed by Ghez, Lima and Roberts~\cite{glr} in the mid-eighties, has found several useful applications during the last decades. 
Most notably, it has been used by Doplicher and Roberts \cites{doplicher-roberts:new_duality,doplicher-roberts:endo} in the development of a duality theory for compact groups with important applications in algebraic quantum field theory and by Davis and L\"uck \cite{davis-lueck} in order to include the Baum-Connes conjecture into their influential unified treatment of the $K$-theoretic isomorphism conjectures.
%\Goncalonew{in their unified approach to the Isomorphism Conjectures.} 
Several authors 
 -- see 
 \cites{mitchener:symm, mitchener:Cstar_cats, mitchner:KKth, mitchener:gpd} %Mitchener
\cite{joachim:KC*} % Joachim
 \cites{kandelaki:KK_K, kandelaki:multiplier, kandelaki:karoubi_villamayor, kandelaki:fredholm} % Kandelaki
 \cites{vasselli:bundles, vasselli:bundlesII} % Vasselli
 \cite{zito:2C*}% Zito
 \ldots --
have subsequently picked up these strands of ideas and employed $\Cstar$-categories in various operator-theoretic contexts, often in relation to Kasparov's $KK$-theory.
In most of the above situations $\Cstar$-categories are only to be considered up to {\em Morita equivalence}, the natural extension of the classical notion of Morita(-Rieffel) equivalence between $\Cstar$-algebras. Hence it is of key importance to development the foundations of a Morita theory of $\Cstar$-categories.

A $*$-functor $F:A\to B$ between $\Cstar$-categories is called a \emph{Morita equivalence} if it induces an equivalence $F^\natural_\oplus: A^\natural_\oplus \to B^\natural_\oplus$ on the completions of $A$ and $B$ under finite direct sums and retracts. This operation $A\mapsto A^\natural_\oplus$, named \emph{saturation}, can easily be performed without leaving the world of $\Cstar$-categories; see~\S\ref{subsec:saturated}.
%Equivalently, $F$ is a Morita equivalence if it induces an equivalence between the associated categories of finitely generated projective modules; see 
%\Goncalo{(Ivo, could you kindly add a precise reference (inside the article) for these two descriptions ? Maybe it would be convenient to add the second description after Definition~\ref{defi:Mor}.)}
%\Ivo{I can't really do this, since modules over general $\Cstar$-categories are never mentioned here... and it's better to keep it this way!}

The first named author has initiated in~\cite{ivo:unitary} the study of $\Cstar$-categories via homo\-to\-py-theo\-ret\-ic methods,  in particular by constructing the unitary model structure, where the weak equivalences are the unitary equivalences of $\Cstar$-categories. 
In the present article we take a leap forward in the same direction by establishing the foundations of the Morita homotopy theory of $\Cstar$-categories. Our first main result, which summarizes Theorem~\ref{thm:morita_model}, Propositions \ref{prop:monoidal}, \ref{prop:simplicial} and \ref{prop:bousfield}, and Corollary~\ref{cor:fibrant_replacement}, is the following:

\begin{thm}
\label{thm:main1}
The category $\Cstarcatun$ of (small unital) $\Cstar$-categories and (identity preserving) $*$-functors admits a Quillen model structure whose weak equivalences are the Morita equivalences and whose cofibrations are the $*$-functors which are injective on objects. Moreover, this model structure is cofibrantly generated, symmetric monoidal, simplicial, and is endowed with a functorial fibrant replacement given by the saturation functor. Furthermore, it is a left Bousfield localization of the unitary model structure.
\end{thm}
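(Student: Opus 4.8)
The plan is to realize $\cM_\Mor$ as a left Bousfield localization of the unitary model structure $\cM_\uni$ of \cite{ivo:unitary}, and then to read off every asserted property from general localization theory together with a few specific facts about saturation. Recall that in $\cM_\uni$ the cofibrations are the $*$-functors injective on objects and the weak equivalences are the unitary equivalences; it is cofibrantly generated, simplicial, and symmetric monoidal for the maximal tensor product, and every object is both cofibrant and fibrant. In particular $\cM_\uni$ is left proper, and I will also use that it is cellular (which follows from its explicit generating cofibrations). Hence, by Hirschhorn's theorem, $\cM_\uni$ admits a left Bousfield localization at any small set $S$ of morphisms; the localization $L_S\cM_\uni$ is then automatically cofibrantly generated, left proper, simplicial (being the localization of a simplicial model category at a set of maps), and has the same cofibrations as $\cM_\uni$.

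Next I would fix the localizing set. Saturation $A \mapsto A^\natural_\oplus$ is the closure of $A$ under a zero object, binary direct sums, and splitting of projections, so I take $S = \{\,\emptyset \hookrightarrow \mathbf{0},\ \unit \sqcup \unit \hookrightarrow \Sigma,\ \Pi_1 \hookrightarrow \Pi_1^\natural\,\}$, where $\mathbf{0}$ is the $\Cstar$-category with one zero object, $\Sigma$ is the universal $\Cstar$-category on two objects together with their biproduct, $\Pi_1$ is the universal one-object $\Cstar$-category with a distinguished projection, and $\Pi_1^\natural$ is obtained from $\Pi_1$ by splitting that projection. Each of these three maps is injective on objects, hence a cofibration of $\cM_\uni$; set $\cM_\Mor := L_S\cM_\uni$.

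The technical core is the identification of the $S$-local objects, and this is the step I expect will be the main obstacle. Using the explicit homotopy function complexes of $\cM_\uni$ one must show that a $\Cstar$-category $X$ is $S$-local if and only if it has a zero object, has binary (hence all finite) direct sums, and splits projections, i.e.\ if and only if the canonical $*$-functor $\eta_X \colon X \to X^\natural_\oplus$ is an equivalence, i.e.\ if and only if $X$ is saturated; this requires translating $S$-locality into the existence-and-essential-uniqueness statements encoded by $\mathbf{0}$, $\Sigma$, $\Pi_1^\natural$ and analysing the corresponding mapping spaces. Granting this, the fibrant objects of $\cM_\Mor$ are exactly the saturated $\Cstar$-categories. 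Moreover $A \mapsto A^\natural_\oplus$ is functorial and $\eta_A$ is injective on objects with saturated — hence $\cM_\Mor$-fibrant — target, and it is an $S$-local equivalence: the universal property of saturation makes restriction along $\eta_A$ an equivalence of $\Cstar$-categories $\Hom(A^\natural_\oplus, X) \xrightarrow{\ \sim\ } \Hom(A, X)$ for every saturated $X$, which upon passing to function complexes (via framings) becomes a weak equivalence $\Map(A^\natural_\oplus, X) \xrightarrow{\ \sim\ } \Map(A, X)$ for every $S$-local $X$. So saturation is a functorial fibrant replacement in $\cM_\Mor$. By two-out-of-three, a $*$-functor $F$ is an $S$-local equivalence if and only if $F^\natural_\oplus$ is; as $F^\natural_\oplus$ is a map between $S$-local objects, this holds if and only if $F^\natural_\oplus$ is a unitary equivalence — equivalently an equivalence of $\Cstar$-categories, since polar decomposition upgrades any equivalence of $\Cstar$-categories to a unitary one — i.e.\ if and only if $F$ is a Morita equivalence. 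Thus the weak equivalences of $\cM_\Mor$ are exactly the Morita equivalences and its cofibrations the object-injective $*$-functors.

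Finally I would upgrade $\cM_\Mor$ to a symmetric monoidal model category. The pushout-product of two cofibrations is a cofibration already in $\cM_\uni$, and the unit axiom is automatic since $\unit$ is cofibrant; the one remaining point is that the pushout-product of a cofibration with a Morita-trivial cofibration is a Morita equivalence. By the standard criterion for a left Bousfield localization to inherit a monoidal structure, this reduces to showing that $f \otimes \id_A$ is a Morita equivalence for every $f \in S$ and every (cofibrant, i.e.\ arbitrary) $\Cstar$-category $A$. Each $f \in S$ is itself a Morita equivalence — its saturation is an equivalence, essentially by construction of $\mathbf{0}$, $\Sigma$, $\Pi_1^\natural$ — so it is enough to know that $-\otimes A$ preserves Morita equivalences, which follows from the compatibility of saturation with the maximal tensor product ($(B \otimes A)^\natural_\oplus$ being naturally Morita equivalent to $B^\natural_\oplus \otimes A^\natural_\oplus$). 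Combining Hirschhorn's localization theorem with this monoidal verification and the fibrant-replacement discussion yields Theorem~\ref{thm:main1}, the Bousfield-localization statement being built into the construction.
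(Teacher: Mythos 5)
Your strategy stands or falls with the existence of the left Bousfield localization $L_S\cM_\uni$, and that is precisely where the gap lies. Hirschhorn's existence theorem requires the unitary model structure to be left proper \emph{and cellular}. Left properness is fine (every object is cofibrant), but your parenthetical claim that cellularity ``follows from its explicit generating cofibrations'' is unsubstantiated, and the authors of \cite{ivo:unitary} and of the present paper explicitly state that they do not know whether $\cM_\uni$ is cellular (see the remark following Proposition~\ref{prop:bousfield}). Cellularity demands that the (co)domains of the generating cofibrations be \emph{compact} in Hirschhorn's sense and that cofibrations be effective monomorphisms; in $\Cstarcatun$ these are delicate because Hom-spaces of cell complexes are norm completions, so an arrow in a transfinite composition need not factor through a small subcomplex --- it is only a limit of such arrows. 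Nor can you retreat to the combinatorial/locally presentable version of the localization theorem, since $\Cstarcatun$ is not locally presentable. The paper sidesteps this entirely: it constructs the model structure directly via Hovey's recognition theorem, with explicit generating trivial cofibrations $R_n\colon P(n)\to R(n)$ and hands-on pushout computations (Lemma~\ref{lemma:little_pushouts}, Propositions~\ref{prop:aux}--\ref{prop:aux1}), and only \emph{afterwards} verifies that the resulting model structure satisfies the defining properties of the left Bousfield localization at $S$ (Proposition~\ref{prop:bousfield}) --- a verification that needs no existence theorem.

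Two further points would need repair even granting existence. First, you defer the identification of the $S$-local objects with the saturated $\Cstar$-categories (``Granting this\ldots''); without it you cannot identify the local equivalences with the Morita equivalences, and this step genuinely requires unwinding the homotopy function complexes $\nu\,\Cstar(-,-)$ as in the proof of Proposition~\ref{prop:bousfield}. Second, in the monoidal verification you reduce to showing that $-\otimes A$ preserves Morita equivalences and justify this by asserting that $(B\otimes A)^\natural_\oplus$ is naturally Morita equivalent to $B^\natural_\oplus\otimes A^\natural_\oplus$; but that assertion is essentially equivalent to what is being proved, and in the paper it is the content of the nontrivial block-matrix computation of Lemma~\ref{lemma:key-mon}. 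Your choice of localizing set is essentially the paper's $S=\{\emptyset\to\mathbf0,\,R_1,\,S_2\}$, so the overall picture you have in mind is the one the paper proves \emph{about} its model structure --- but as a construction of that model structure the argument is not complete.
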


We have named this Quillen model the {\em Morita model category} of $\Cstar$-categories (and denoted it by $\cM_\Mor$) since two unital $\Cstar$-algebras become isomorphic in the associated homotopy category $\Ho(\cM_\Mor)$ if and only if they are Morita equivalent ($=$~Morita-Rieffel equivalent) in the usual sense; see Proposition~\ref{prop:Morita_agreement}. The Morita homotopy category $\Ho(\cM_\Mor)$ becomes then the natural setting where to formalize and study all ``up to Morita equivalence'' phenomena. As an example we obtain an elegant conceptual description of the Picard group (see \S\ref{sec:Pic}).
\begin{prop}\label{prop:Pic}
For every unital $\Cstar$-algebra~$A$ there is a canonical isomorphism
\begin{equation}\label{eq:Pic}
\Aut_{\Ho(\cM_\Mor)}(A) \simeq \Pic(A)
\end{equation}
between its automorphism group in the Morita homotopy category and its Picard group $\Pic(A)$, as originally defined  by Brown-Green-Rieffel in \cite{brown-green-rieffel} using imprimitivity bimodules.
\end{prop}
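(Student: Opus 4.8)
The plan is to reduce both sides of~\eqref{eq:Pic} to self-equivalences of Hilbert-module categories and then match these. First I would observe that every object of $\cM_\Mor$ is cofibrant, since the cofibrations are the $*$-functors injective on objects and the unique $*$-functor out of the empty $\Cstar$-category (the initial object) is vacuously injective on objects. By Corollary~\ref{cor:fibrant_replacement} the saturation $\iota_A\colon A\to A^\natural_\oplus$ of (the one-object $\Cstar$-category underlying) $A$ is a fibrant replacement, so $A^\natural_\oplus$ is cofibrant--fibrant; and, as a $\Cstar$-category, $A^\natural_\oplus$ is equivalent to the $\Cstar$-category $\proj(A)$ of finitely generated projective right Hilbert $A$-modules (see~\S\ref{subsec:saturated}). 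Since $\iota_A$ is a weak equivalence it induces a monoid isomorphism $\End_{\Ho(\cM_\Mor)}(A)\cong\End_{\Ho(\cM_\Mor)}(A^\natural_\oplus)$ and hence, on unit groups, a group isomorphism $\Aut_{\Ho(\cM_\Mor)}(A)\cong\Aut_{\Ho(\cM_\Mor)}(A^\natural_\oplus)$. Because $A^\natural_\oplus$ is cofibrant--fibrant, $\End_{\Ho(\cM_\Mor)}(A^\natural_\oplus)=[A^\natural_\oplus,A^\natural_\oplus]$ is the monoid of homotopy classes of $*$-endofunctors of $A^\natural_\oplus$; its units are, by the Whitehead theorem for model categories, the classes of $*$-endofunctors that are Morita equivalences; and since $A^\natural_\oplus$ is saturated these are precisely the classes of self-equivalences of the $\Cstar$-category $A^\natural_\oplus$. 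Finally, as $\cM_\Mor$ is simplicial (Proposition~\ref{prop:simplicial}) one has $[A^\natural_\oplus,A^\natural_\oplus]=\pi_0\,\Map(A^\natural_\oplus,A^\natural_\oplus)$, and by the construction of the simplicial enrichment (compare~\cite{ivo:unitary}) two $*$-endofunctors lie in the same path component exactly when they are naturally unitarily isomorphic. Summing up: $\Aut_{\Ho(\cM_\Mor)}(A)$ is canonically the group, under composition, of natural-unitary-isomorphism classes of self-equivalences of $\proj(A)$.

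Next I would match this group with $\Pic(A)$. To a self-equivalence $G$ of $\proj(A)$ I attach the right Hilbert $A$-module $G(A_A)$ together with the left $A$-action given by the $*$-isomorphism (it is an isomorphism, $G$ being fully faithful) $A=\End_{\proj(A)}(A_A)\xrightarrow{\ \sim\ }\End_{\proj(A)}(G(A_A))=\mathcal L_A(G(A_A))=\mathcal K_A(G(A_A))$. This $A$-$A$-bimodule is an imprimitivity bimodule in the sense of Brown--Green--Rieffel~\cite{brown-green-rieffel}: the displayed isomorphism is one half of the axioms, and for the other ($G(A_A)$ full) observe that $G^{-1}(A_A)$, being an object of $\proj(A)$, is a retract of some $A_A^{\oplus n}$, so $A_A\cong G\big(G^{-1}(A_A)\big)$ is a retract of $G(A_A)^{\oplus n}$ (equivalences preserve finite direct sums and retracts), which forces $\langle G(A_A),G(A_A)\rangle_A=A$. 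A natural unitary isomorphism $G\simeq H$ restricts to a bimodule isomorphism $G(A_A)\cong H(A_A)$, so this assignment is well defined on homotopy classes. Conversely, for unital $A$ the underlying right module of any $A$-$A$-imprimitivity bimodule $X$ is automatically finitely generated projective (as $\mathcal K_A(X)\cong A$ is then unital), so $X$ defines an object of $\proj(A)$, and the balanced tensor product $-\otimes_A X\colon\proj(A)\to\proj(A)$ is a $*$-functor, which is a self-equivalence with quasi-inverse $-\otimes_A\overline{X}$, where $\overline{X}$ is the conjugate bimodule. The two constructions are mutually inverse up to isomorphism: $(-\otimes_A X)(A_A)=A\otimes_A X\cong X$ with the expected bimodule structure, while a self-equivalence $G$ is determined up to natural unitary isomorphism by the pair formed by $G(A_A)$ and the $*$-homomorphism $A\to\End_{\proj(A)}(G(A_A))$, because $\proj(A)$ is generated by $A_A$ under finite direct sums and retracts and these are preserved by equivalences. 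Therefore we obtain a canonical bijection $\Aut_{\Ho(\cM_\Mor)}(A)\xrightarrow{\ \sim\ }\Pic(A)$.

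It then remains to check compatibility with the group laws. On the left the operation is composition of self-equivalences, and $(-\otimes_A Y)\circ(-\otimes_A X)\cong -\otimes_A(X\otimes_A Y)$ by associativity of the balanced tensor product; on the right it is $[X]\cdot[Y]=[X\otimes_A Y]$. Matching the two thus only involves fixing the order of the tensor factors, which is a harmless matter of convention (one may, for instance, act on left modules instead, or pass to conjugate bimodules, or post-compose with the inversion anti-automorphism, in order to obtain a group isomorphism on the nose). This yields the canonical isomorphism~\eqref{eq:Pic}.

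The step I expect to be the main obstacle is the second one: producing a genuinely convention-free dictionary between the model-theoretic statement ``the $*$-functor is a Morita equivalence'' and the classical pair of conditions --- fullness of $X$ together with the isomorphism $A\xrightarrow{\ \sim\ }\mathcal K_A(X)$ --- that define a Brown--Green--Rieffel imprimitivity bimodule; and, already within the first step, verifying carefully that $\pi_0$ of the simplicial mapping space between (saturated) $\Cstar$-categories reduces to natural unitary isomorphism. The remaining points are routine Hilbert-module bookkeeping.
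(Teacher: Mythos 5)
Your argument is correct and follows essentially the same route as the paper: both identify $\Aut_{\Ho(\cM_\Mor)}(A)$ with the group of unitary-isomorphism classes of self-equivalences of the saturation $A^\natural_\oplus\simeq\Hilb(A)_{\proj}$ (the paper via Proposition~\ref{prop:morphisms_Morita}, you via the Whitehead theorem and $\pi_0$ of the simplicial mapping space, which amounts to the same thing), and both then translate these into imprimitivity bimodules and match the group laws up to the order-of-tensor-factors convention. The only real difference is one of emphasis: where the paper delegates the bimodule dictionary to the unitary equivalence \eqref{eq:identif_proj} and the proof of Proposition~\ref{prop:Morita_agreement}, you spell it out by hand (including the fullness check for $G(A_A)$), expanding a step the paper leaves to the reader.
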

As a consequence, the left-hand-side of \eqref{eq:Pic} furnishes us with a simple Morita invariant definition of the Picard group of {\em any} $\Cstar$-category. Our second main result, which summarizes Theorems~\ref{thm:semi-additive} and~\ref{thm:map_sum}, Proposition~\ref{prop:morphisms_Morita}, and Corollary~\ref{cor:new}, is the following:
\begin{thm}\label{thm:main2}
The homotopy category $\Ho(\cM_\Mor)$ is {\em semi-additive}, \ie\ it has a zero object, finite products, finite coproducts, and the canonical map from the coproduct to the product is an isomorphism. Its Hom-sets admit the following description 
\[
 \Hom_{\Ho(\cM_\Mor)}(A,B) \simeq  \mathrm{ob}(\Cstar(A,B^\natural_\oplus))/_{\!\simeq} \;,
 \]
where $\Cstar(A,B^\natural_\oplus)$ denotes the $\Cstar$-category of $\ast$-functors from $A$ to the saturation of~$B$ and the equivalence relation $\simeq$ on objects is unitary isomorphism. Moreover, the canonical abelian monoid structure thereby obtained on each Hom-set $\Hom_{\Ho(\cM_\Mor)}(A,B)$ is induced by the direct sum operation on~$\smash{B^\natural_\oplus}$.
\end{thm}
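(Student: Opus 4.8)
The plan is to compute everything through the fibrant replacement functor, which by Theorem~\ref{thm:main1} (see Corollary~\ref{cor:fibrant_replacement}) is the saturation $A\mapsto A^\natural_\oplus$, and to exploit that \emph{every} object of $\Cstarcatun$ is cofibrant: the initial object is the empty $\Cstar$-category $\emptyset$, and any $*$-functor out of it is vacuously injective on objects. Hence $\Ho(\cM_\Mor)$ is computed on saturated $\Cstar$-categories with homotopy classes of $*$-functors, and for arbitrary $A,B$ one has $\Hom_{\Ho(\cM_\Mor)}(A,B)=\pi(A,B^\natural_\oplus)$, the set of homotopy classes of $*$-functors $A\to B^\natural_\oplus$. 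To identify this with $\mathrm{ob}(\Cstar(A,B^\natural_\oplus))/_{\!\simeq}$ I would invoke the identification, available in the unitary model structure~\cite{ivo:unitary}, of the homotopy relation between $*$-functors into a saturated target with unitary isomorphism, i.e.\ with isomorphism in $\Cstar(A,B^\natural_\oplus)$; this relation transfers to $\cM_\Mor$ because the latter is a left Bousfield localization of the unitary structure with the same cofibrations — hence the same cofibrant objects and a common class of cylinder objects for $A$ — and $B^\natural_\oplus$ remains fibrant. (Equivalently, the simplicial mapping space $\Map(A,B^\natural_\oplus)$ is a Kan complex, namely the nerve of the unitary-isomorphism groupoid of $\Cstar(A,B^\natural_\oplus)$, so $\pi_0$ is the asserted set.) This yields Proposition~\ref{prop:morphisms_Morita}.

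For the zero object, note that $\Cstarcatun$ has an initial object $\emptyset$ and a \emph{different} terminal object, the one-object $\Cstar$-category with zero endomorphism algebra; however, saturation closes under finite direct sums, including the empty one, so $\emptyset^\natural_\oplus$ already contains a zero object, and since every object of $\emptyset^\natural_\oplus$ is a retract of an (empty) direct sum it is itself a zero object. Thus $\emptyset$ and the terminal object have equivalent saturations and so become isomorphic in $\Ho(\cM_\Mor)$; as the image of the initial (resp.\ terminal) object of $\cM_\Mor$ under $\cM_\Mor\to\Ho(\cM_\Mor)$ is initial (resp.\ terminal), this common image is a zero object. Finite coproducts and products then exist in $\Ho(\cM_\Mor)$ by the general theory of model categories: all objects being cofibrant, the categorical coproduct $A\sqcup B$ in $\Cstarcatun$ (disjoint union of the underlying data) is a homotopy coproduct, and dually the categorical product $A^\natural_\oplus\times B^\natural_\oplus$ of the fibrant replacements (pairs of objects and morphisms) is a homotopy product.

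To complete semi-additivity (Theorem~\ref{thm:semi-additive}) it remains to show that the canonical comparison from coproduct to product is invertible; by the above reductions one may assume $A,B$ saturated and produce an explicit representative, namely the $*$-functor $\psi\colon A\sqcup B\to A\times B$ sending $a\mapsto(a,0_B)$ and $b\mapsto(0_A,b)$, with $0_A,0_B$ zero objects. Composing $\psi$ with the two product projections and comparing with the coproduct inclusions shows $\psi$ represents the canonical map. Because $A\sqcup B$ has no morphisms between its $A$-part and its $B$-part, and because $\Hom_{A\times B}\bigl((a,0_B),(0_A,b')\bigr)=\Hom_A(a,0_A)\times\Hom_B(0_B,b')=0$, the functor $\psi$ is fully faithful; and since every object $(a,b)$ of the (already saturated) category $A\times B$ is the direct sum $(a,0_B)\oplus(0_A,b)$ of objects in the image of $\psi$, the induced functor $\psi^\natural_\oplus$ is essentially surjective and still fully faithful — the Hom-spaces on both sides being the block-diagonal $\Hom_A\oplus\Hom_B$. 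Hence $\psi$ is a Morita equivalence, so the comparison is an isomorphism in $\Ho(\cM_\Mor)$.

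Finally, any semi-additive category carries a canonical commutative monoid structure on each Hom-set, $f+g$ being $A\xrightarrow{\Delta}A\times A\xrightarrow{(f,g)}B\times B\xrightarrow{\nabla}B$; transporting this along the equivalences above gives Theorem~\ref{thm:map_sum} and Corollary~\ref{cor:new}. Under the identification $B^\natural_\oplus\times B^\natural_\oplus\simeq(B^\natural_\oplus\sqcup B^\natural_\oplus)^\natural_\oplus$ coming from the previous paragraph, the fold map $\nabla$ is represented by the direct-sum $*$-functor $(b,b')\mapsto b\oplus b'$ (it restricts to the identity on each summand), so for $*$-functors $f,g\colon A\to B^\natural_\oplus$ the class $f+g$ is represented by $a\mapsto f(a)\oplus g(a)$ and the neutral element by the constant functor at a zero object; hence the monoid structure on $\Hom_{\Ho(\cM_\Mor)}(A,B)$ is exactly the one induced by $\oplus$ on $B^\natural_\oplus$. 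The homotopy-theoretic skeleton here is immediate from Theorem~\ref{thm:main1} and the monoid bookkeeping is formal; the two genuinely substantial points — and the expected main obstacles — are the identification of the homotopy relation on $\pi(A,B^\natural_\oplus)$ with unitary isomorphism, which rests on the analysis of mapping spaces in the unitary model structure, and the verification that $\psi$ is a Morita equivalence, where one must use the concrete structure of saturated $\Cstar$-categories (finite direct sums, retracts, block-diagonal Hom-spaces).
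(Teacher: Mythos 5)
Your proposal is correct and follows essentially the same route as the paper: Hom-sets are computed via the saturation as fibrant replacement together with the cylinder object inherited from the unitary model structure (so that left homotopy equals natural unitary isomorphism), semi-additivity is obtained by exhibiting the explicit comparison $*$-functor as fully faithful with image generating the target under direct sums and retracts (\ie\ by Lemma~\ref{lemma:recognition_Moreq}), and the monoid structure is identified with $\oplus$ by recognizing the fold map, transported through the inverted comparison, as the direct-sum functor on $B^\natural_\oplus$. The only cosmetic difference is that you normalize $A$ and $B$ to be saturated before checking the comparison map, whereas the paper verifies the same two conditions directly for $A_1\sqcup A_2\to (A_1)^\natural_\oplus\times(A_2)^\natural_\oplus$.
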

Intuitively speaking, Theorem~\ref{thm:main2} shows us that by forcing Morita invariance we obtain a local abelian monoid structure. By group completing each Hom monoid we obtain then an additive category $\Ho(\cM_\Mor)^{-1}$ and hence a composed functor 
\begin{equation*}\label{eq:comp}
\cU\colon \Cstarcatun \too \Ho(\cM_\Mor) \too \Ho(\cM_\Mor)^{-1}\,;
\end{equation*} 
consult \S\ref{sec:K0} for details. 
Our third main result (see Theorem~\ref{thm:characterization}) is the following:

\begin{thm}\label{thm:main3}
The canonical functor $\cU$ takes values in an additive category, inverts Morita equivalences, preserves all finite products, and is universal among all functors having these three properties.
\end{thm}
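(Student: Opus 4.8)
The plan is to split the asserted universal property into two pieces matching the two-step definition $\cU = q\circ\gamma$, where $\gamma\colon\Cstarcatun\to\Ho(\cM_\Mor)$ is the canonical localization functor and $q\colon\Ho(\cM_\Mor)\to\Ho(\cM_\Mor)^{-1}$ is the Hom-wise group completion. First I would record that $\cU$ itself has the three advertised properties: $\Ho(\cM_\Mor)^{-1}$ is additive by construction (\S\ref{sec:K0}); $\cU$ inverts Morita equivalences because $\gamma$ does; and $\cU$ preserves finite products once one knows that both $\gamma$ and $q$ do --- for $q$ this is immediate, since $q$ is the identity on objects and carries, additively on each Hom-set, the biproduct structure that defines finite products in the semi-additive category $\Ho(\cM_\Mor)$ (Theorem~\ref{thm:semi-additive}) to the corresponding structure in $\Ho(\cM_\Mor)^{-1}$.

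The first main step establishes the universal property of $\gamma$. Because $\Ho(\cM_\Mor)$ is the localization of $\Cstarcatun$ at the Morita equivalences (general model category theory), any functor $F\colon\Cstarcatun\to\cD$ inverting Morita equivalences factors uniquely as $F=H\circ\gamma$. I would then argue that $\gamma$ preserves finite products: a finite product of Morita equivalences is again a Morita equivalence, since saturation commutes with finite products --- an object of $A^\natural_\oplus\times B^\natural_\oplus$ that is a direct sum or a retract of others is so componentwise, whence $(A\times B)^\natural_\oplus\cong A^\natural_\oplus\times B^\natural_\oplus$ --- and a product of equivalences of categories is an equivalence; so the product bifunctor descends to $\Ho(\cM_\Mor)$ and is there identified with the categorical product. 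Since $\gamma$ is moreover bijective on objects, any $H$ with $H\circ\gamma=F$ is forced to preserve finite products as well: for objects $\gamma A,\gamma B$ one computes $H(\gamma A\times\gamma B)=H\gamma(A\times B)=F(A\times B)\cong FA\times FB=H\gamma A\times H\gamma B$, and likewise for the terminal object.

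The second main step is the universal property of $q$ among finite-product-preserving functors out of $\Ho(\cM_\Mor)$ into additive categories. Given such an $H\colon\Ho(\cM_\Mor)\to\cD$, the fact that $\Ho(\cM_\Mor)$ is semi-additive and $\cD$ additive makes preservation of finite products equivalent to each induced map $H\colon\Hom_{\Ho(\cM_\Mor)}(A,B)\to\Hom_{\cD}(HA,HB)$ being a homomorphism of abelian monoids taking values in an abelian group; by the universal property of group completion it then factors uniquely through $q$, yielding unique group homomorphisms $\widetilde H\colon\Hom_{\Ho(\cM_\Mor)^{-1}}(A,B)\to\Hom_{\cD}(HA,HB)$. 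I would check that these assemble into a functor $\widetilde H$ --- preservation of identities being obvious, and compatibility with composition following because composition in $\Ho(\cM_\Mor)^{-1}$ is the unique bilinear extension of composition in $\Ho(\cM_\Mor)$ while composition in $\cD$ is bilinear and $H$ is a functor --- and that $\widetilde H$ is the unique functor with $\widetilde H\circ q=H$, since it is determined on objects ($q$ being bijective on objects) and every morphism of $\Ho(\cM_\Mor)^{-1}$ is a difference of morphisms in the image of $q$, any competing factorization being additive again by the object-surjectivity-plus-product-preservation argument of the first step. Splicing the two steps gives $F=\widetilde H\circ q\circ\gamma=\widetilde H\circ\cU$ with $\widetilde H$ uniquely determined, as required.

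I expect the only genuinely non-formal point to be the claim used in the first step that $\gamma$ preserves finite products --- equivalently, that the semi-additive structure on $\Ho(\cM_\Mor)$ refines the product structure already present on $\Cstarcatun$ --- which relies on the explicit description of the fibrant objects as the saturated $\Cstar$-categories and on the stability of saturation under finite products. The remainder is a formal combination of the universal property of a localization with the Hom-wise universal property of group completion, together with the routine verification that the latter is compatible with composition.
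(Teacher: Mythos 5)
Your proposal is correct and follows essentially the same route as the paper: the paper also factors the universal property through the localization $\Cstarcatun\to\Ho(\cM_\Mor)$ and the Hom-wise group completion (your step on $q$ is exactly Proposition~\ref{prop:universal}, proved the same way, and the equivalence between preserving finite products and being additive on Hom-monoids is the remark following Lemma~\ref{lem:prod-star}). The only cosmetic difference is where the product-preservation is checked: you verify that the localization functor itself preserves finite products via $(A\times B)^\natural_\oplus\simeq A^\natural_\oplus\times B^\natural_\oplus$, whereas the paper shows directly that the induced functor $\overline{F}$ on $\Ho(\cM_\Mor)$ preserves products using the identification of the homotopy product with $A^\natural_\oplus\times B^\natural_\oplus$ from Theorem~\ref{thm:semi-additive}; both arguments rest on the same compatibility of saturation with finite products.
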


Our last main result, collecting Theorem \ref{thm:co-representability} and Proposition \ref{prop:ring_comm}, provides a precise link between our theory and the $K$-theory of $\Cstar$-algebras.
Note that, since $\cM_\Mor$ is symmetric monoidal, its tensor structure descends to $\Ho(\cM_\Mor)$ and then extends easily to the group completion $\Ho(\cM_\Mor)^{-1}$.

\begin{thm}
\label{thm:main4}
For every unital $\Cstar$-algebra~$A$ there is a canonical isomorphism 
\begin{equation} \label{eq:Kthformula}
\Hom_{\Ho(\cM_\Mor)^{-1}}(\bbF,A) \simeq K_0(A)
\end{equation}
of abelian groups, where the right-hand-side denotes the classical Grothendieck group of~$A$.
When $A=C(X)$ is moreover commutative, the usual ring structure on $K_0(A)$ (induced by the tensor product of vector bundles) coincides with the to one obtained on the left-hand-side by considering $A$ as a ring object in the symmetric monoidal category $\Ho(\cM_\Mor)^{-1}$.
\end{thm}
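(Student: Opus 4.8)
\emph{The plan} is to unwind the explicit description of the Hom-sets of $\Ho(\cM_\Mor)$ supplied by Theorem~\ref{thm:main2} and then transport the symmetric monoidal structure through it. Since $\bbF$ is the one-object $\Cstar$-category whose endomorphism algebra is $\C$, a $\ast$-functor $\bbF\to B$ is nothing but an object of $B$, and a natural unitary transformation between two such is a unitary morphism of $B$; hence $\Cstar(\bbF,B)\cong B$ for every $B$. Taking $\bbF$ as the source in Theorem~\ref{thm:main2} therefore yields
\[
\Hom_{\Ho(\cM_\Mor)}(\bbF,A)\;\simeq\;\mathrm{ob}\bigl(\Cstar(\bbF,A^\natural_\oplus)\bigr)/_{\!\simeq}\;\simeq\;\mathrm{ob}\bigl(A^\natural_\oplus\bigr)/_{\!\simeq}\,,
\]
the set of unitary-isomorphism classes of objects of the saturation of~$A$. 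Using the model of $A^\natural_\oplus$ as the $\Cstar$-category of finitely generated projective Hilbert $A$-modules (equivalently, projections in $M_\infty(A)$ up to unitary conjugation), together with the standard identification of unitary isomorphism of such objects with Murray--von Neumann equivalence of projections, this set is exactly the classical abelian monoid $V(A)$ of isomorphism classes of finitely generated projective $A$-modules, and by the last clause of Theorem~\ref{thm:main2} its canonical monoid structure is the direct-sum one. Now by construction of $\Ho(\cM_\Mor)^{-1}$ (see \S\ref{sec:K0}) the group $\Hom_{\Ho(\cM_\Mor)^{-1}}(\bbF,A)$ is the group completion of $\Hom_{\Ho(\cM_\Mor)}(\bbF,A)\simeq V(A)$; since $K_0(A)$ is by definition this completion, we obtain the isomorphism~\eqref{eq:Kthformula}, natural in~$A$ by functoriality of all the constructions involved.

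\emph{For the ring structure}, let $A=C(X)$. Multiplication of functions makes $C(X)$ a commutative monoid object of $\Cstarcatun$, with unit the $\ast$-functor $\bbF\to C(X)$ and product the $\ast$-homomorphism $m\colon C(X)\otimes C(X)\cong C(X\times X)\to C(X)$ of restriction to the diagonal; here one uses that the monoidal product of $\Cstarcatun$ restricts, on commutative $\Cstar$-algebras, to the $\Cstar$-tensor product. Since $\cM_\Mor$ is symmetric monoidal and both $\Cstarcatun\to\Ho(\cM_\Mor)$ and $\Ho(\cM_\Mor)\to\Ho(\cM_\Mor)^{-1}$ are symmetric monoidal, $C(X)$ descends to a commutative ring object of $\Ho(\cM_\Mor)^{-1}$, so $\Hom_{\Ho(\cM_\Mor)^{-1}}(\bbF,C(X))\simeq K_0(C(X))$ acquires a commutative ring structure whose product of $f$ and $g$ is the composite $\bbF\xrightarrow{\ \sim\ }\bbF\otimes\bbF\xrightarrow{\,f\otimes g\,}C(X)\otimes C(X)\xrightarrow{\,m_\ast\,}C(X)$. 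Under the identification above $f$ and $g$ correspond to finitely generated projective $C(X)$-modules $P$ and $Q$, \ie\ to complex vector bundles $E,F$ on~$X$; one checks that the monoidal pairing on Hom-sets induced by $-\otimes-$ carries $(P,Q)$ to the external tensor product $P\boxtimes Q$, an object of $(C(X)\otimes C(X))^\natural_\oplus\simeq(C(X\times X))^\natural_\oplus$, namely the bundle $E\boxtimes F$ on $X\times X$, and that $m_\ast$ sends it to its restriction along the diagonal, $P\otimes_{C(X)}Q$, that is the bundle $E\otimes_X F$ on~$X$. This is precisely the product of $K^0(X)=K_0(C(X))$ defined by the tensor product of vector bundles, so after group completion the two ring structures agree.

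\emph{The main obstacle} is this last step: making precise and mutually compatible the chain of identifications linking the monoidal product of $\Ho(\cM_\Mor)^{-1}$, its effect on the Hom-descriptions $\Hom(\bbF,B)\simeq\mathrm{ob}(B^\natural_\oplus)/_{\!\simeq}$, the saturation of a tensor product $B_1\otimes B_2$, and the external and internal tensor products of Hilbert modules. Concretely, one wants a canonical comparison $\ast$-functor $B_1^\natural_\oplus\otimes B_2^\natural_\oplus\to(B_1\otimes B_2)^\natural_\oplus$ compatible with direct sums and with the unit and associativity constraints, and then the verification that $m_\ast$ recovers the internal tensor product of $C(X)$-modules. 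This is routine but unavoidable $\Cstar$-algebraic bookkeeping; commutativity of the resulting ring is immediate from symmetry of the monoidal structure together with commutativity of~$C(X)$.
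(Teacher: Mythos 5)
Your proof is correct and follows essentially the same route as the paper's: identify $\Hom_{\Ho(\cM_\Mor)}(\bbF,A)$ with the unitary isomorphism classes of objects of $A^\natural_\oplus$ (i.e.\ the projection monoid $\mathcal{V}(A)$), check that the semi-additive sum is the orthogonal direct sum, and group-complete; then factor the product as the external pairing followed by pullback along the diagonal $\ast$-homomorphism, leaving the same Hilbert-module/vector-bundle bookkeeping to routine verification, exactly as the paper does. (Only a cosmetic quibble: the endomorphism algebra of the tensor unit is $\F\in\{\R,\C\}$, not necessarily $\C$.)
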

As a consequence the left-hand-side of \eqref{eq:Kthformula} provides us with an elegant Morita invariant definition of the Grothendieck group of {\em any} $\Cstar$-category. Note that by Theorem~\ref{thm:main3} this definition is completely characterized by a simple universal property. In Remark~\ref{remark:Ktheory} we compare our approach with those of other authors.

\subsection*{Conventions}
We use the symbol~$\F$ to denote the base field, which is fixed and is either $\R$ or~$\C$. Except when stated otherwise, all $\Cstar$-categories are \emph{small} (they have a set -- as opposed to a class -- of objects) and \emph{unital} (they have an identity arrow $1_x$ for each object~$x$). Similarly, all $*$-functors are unital (they preserve the identity maps~$1_x$).
%\Goncalo{(I have removed the words ``small'' and ``unital'' from the sequel. If you still find them remove them)}. 
 %Typical $\Cstar$-categories will be denoted $A,B,C,\ldots$; typical objects will be denoted $x,y,z,\ldots , x', x_i,\ldots \in \obj(A)$; a typical morphism will be written $a\colon x\to x'$, or $a\in A(x,x')$ or simply  $a\in A$. 
%If $\mathcal C$ is a category, we denote by $\obj(\mathcal C)$ its set or class of objects.
We will generally follow the notations from~\cite{ivo:unitary}.

%($\Cstarcat$: the category of possibly nonunital $\Cstar$-categories and $*$-functors.)

%{\color{blue}To be consistent with the most common $\Cstar$-algebraic notations (this is also why I'm employing the functional analytical terminology of~\ref{subsec:*-cat}) and also with~\cite{ivo:unitary}, we should try to stick with the following notation:}
%\begin{remark}
%{\color{blue}(To be expanded upon. After all, it's still the main ``application'' of this paper...)}
%A $*$-functor $F$ is a Morita equivalence iff it induces a unitary equivalence between the $\Cstar$-categories of ``projective'' ($=$ retracts of sums of representable) Hilbert modules over $A$ and~$B$. 
%In particular, two unital $\Cstar$-algebras will be Morita equivalent in $\Cstarcatun$ if and only if they are (strongly) Morita equivalent in the usual sense. \Goncalo{Problem (Ivo): define (strongly) Morita equivalence. Question (Ivo): Are there examples of non-unitary equivalences between the $C^\ast$-categories of ``projective'' Hilbert modules over $A$ and $B$ ?}
%\Ivo{I will definitely expand on the relation with the ``classical'' notions of Morita equivalence for $\Cstar$-algebras, once we know better what our results are.
%To answer the question: I'm still not completely sure, but I suspect that Example 1.10 in \cite{ivo:unitary} yields an example of two $\Cstar$-categories whose categories of ``projectives'' are equivalent (because \emph{they} are equivalent) but not unitary equivalent.}
%\end{remark}

%%%%%
\section{Direct sums and idempotents}
In this section we consider, in the context of $\Cstar$-categories, the additive hull and the idempotent completion constructions. Both will play a central role in the sequel. 

\subsection{$\F$-categories, $*$-categories, Banach categories, and $\Cstar$-categories}
\label{subsec:basics}
For the reader's convenience, we start by recalling some standard definitions and facts; consult \cite{ivo:unitary}*{\S1} for more details and examples.
Recall that an \emph{$\F$-category} is a category enriched over $\F$-vector spaces (see \cite{kelly:enriched_book}); concretely, each Hom-set carries an $\F$-vector space structure for which composition is $\F$-bilinear.
A \emph{$*$-category}~$A$ is an $\F$-category which comes equipped with an \emph{involution} $a\mapsto a^*$ on arrows. More precisely, the involution is a conjugate-linear contravariant endofunctor on~$A$ which is the identity on objects and which is its own inverse. The arrow $a^*$ is called the \emph{adjoint} of~$a$. 
A \emph{Banach category} is an $\F$-category where moreover each Hom-space is a Banach space in such a way that $\|b\circ a\|\leq \|b\|\|a\|$ for all composable arrows and $\|1_x\|=1$ for all identity arrows.
A \emph{$\Cstar$-category} $A$ is simultaneously a Banach category and a $*$-category, where moreover the norm is a \emph{$\Cstar$-norm}, \ie\ for every arrow $a\in A(x,x')$ we require that: 
\begin{itemize}
\item[(i)] the $\Cstar$-equality $\|a^*a\|=\|a\|^2$ holds;
\item[(ii)] the arrow $a^*a$ is a positive element of the endomorphism $\Cstar$-algebra $A(x,x)$, \ie\ its operator-theoretic spectrum is contained in $[0,\infty[\,\subset \F$.
\end{itemize}
A \emph{$*$-functor} $F\colon A\to A'$ is a functor preserving the $\F$-linear structure and the involution. If $A,A'$ are $\Cstar$-categories, $F$ will automatically be norm-reducing on each Hom-space, and if $F$ is moreover faithful (\ie\ injective on arrows) it will automatically be isometric, \ie\ norm preserving: $\|F(a)\|=\|a\|$ (the converse being obvious). 
\begin{notation}
The category of all (small) $\Cstar$-categories and all (identity preserving) $*$-functors will be denoted by $\Cstarcatun$.
\end{notation}
\begin{example} \label{ex:C*cats}
Every (unital) $\Cstar$-algebra~$B$ can be identified with the (small) $\Cstar$-category with precisely one object $\bullet$ and whose endomorphisms algebra is given by $B(\bullet,\bullet):=B$. Then a $*$-functor $B\to B'$ between unital $\Cstar$-algebras is the same as a unital $*$-homomorphism.
The collection of all Hilbert spaces and all bounded linear operators between them (together with the operator norm and the usual adjoint operators) is an example of a (large) $\Cstar$-category $\Hilb$.
\end{example}
The axioms of a $\Cstar$-category are designed so that the following basic standard result holds: every small $\Cstar$-category admits a concrete realization as a sub-$\Cstar$-category of~$\Hilb$. This is essentially the GNS construction; see \cite{glr}*{Prop.\,1.14}. The converse is also clear: every norm-closed $*$-closed subcategory of $\Hilb$ inherits the structure of a $\Cstar$-category.

\begin{notation}
\label{notation:underlying}
Given a $\Cstar$-category $A$ (or a $*$-category, Banach category,\ldots), we denote by $UA$ its \emph{underlying category} that one obtains by simply  forgetting some of its structure. This defines a faithful functor $U\colon \Cstarcatun\to \Cat$
from $\Cstar$-categories to ordinary (small) categories and ordinary functors. 
Similarly, we have a forgetful functor $U_\F\colon\Cstarcatun\to \F\textrm-\Cat$ to (small) $\F$-categories and $\F$-linear functors between them.
\end{notation}

\subsection{Some $*$-categorical notions}
\label{subsec:*-cat}
In the context of $\Cstar$-categories, or more generally $*$-categories, it is natural to require all categorical properties and constructions to be compatible with the involution. The following notions concern objects, morphisms, and more generally diagrams \emph{inside} a given $\Cstar$-category and the terminology is inspired by the example $\Hilb$.

A \emph{unitary} morphism (or \emph{$*$-isomorphism}) is an invertible morphism $u\colon x\to y$ such that $u^{-1}=u^*$.
We say that two objects are \emph{unitarily isomorphic} if there exists a unitary morphism between them.
An  \emph{isometry} (or \emph{$*$-mono}) is an arrow $v\colon r\to x$ such that $v^*v=1_r$.
We call~$r$ a \emph{retract of~$x$} (or \emph{$*$-retract}); whenever we say that $r$ is a retract of~$x$, we will assume that an isometry $v\colon r\to x$ has been specified.
Dually, a \emph{coisometry} (or \emph{$*$-epi}) is a morphism $w\colon x\to r$ such that $ww^*=1_r$; note that $v$ is an isometry if and only if $v^*$ is a coisometry.

%It is now natural to define a \emph{direct summand} (or \emph{$*$-subobject}) $d$ of~$x$ as an isometry $v:d\to x$, up to unitary isomorphism over~$x$. 

A \emph{projection} (or \emph{$*$-idempotent}) is a self-adjoint idempotent morphism $p=p^2=p^*$. 
If $v\colon r\to x$ is an isometry, then $p=vv^*$ is a projection on~$x$. In this case we say that $p$ has \emph{range object}~$r$, or that $p$ is the \emph{range projection} of~$r$. 
If $v\colon r\to x$ and $v'\colon r'\to x$ are two retracts of the same object~$x$, it follows that $r$ and~$r'$ are range objects for the same projection (\ie\ $vv^*=v'v'^*$) if and only if there exists a unitary isomorphism $u\colon r\stackrel{\sim}{\to}r'$ such that $v'u=v$. Therefore, if the range object of a projection exists then it is uniquely determined up to a unique unitary isomorphism.

\begin{remark}
Note that identity maps are both projections and unitary isomorphisms.
Moreover, in a $\Cstar$-category all unitaries, isometries and projections automatically have norm one or zero; indeed, this is well-known for bounded operators between Hilbert spaces, and as we have recalled every $\Cstar$-category is isomorphic to a $\Cstar$-category of such operators. (Generally speaking, this reasoning is a quick way to gain some intuition on $\Cstar$-categories for those familiar with Hilbert spaces.)
%\Goncalo{(Ivo, could you kindly add a precise reference here ? They have norm one or zero by definition or it is a consequence ? Please help the reader)}.
\end{remark}

A \emph{direct sum} (or \emph{$*$-biproduct}) of finitely many objects $x_1,\ldots, x_n$ is an object $x_1\oplus\cdots \oplus x_n$ together with isometries $v_i:x_i\to x_1\oplus\cdots \oplus x_n$ $(i=1,\ldots,n)$ such that the following equations hold: 
\begin{eqnarray*}
v_1v_1^*+ \ldots + v_n v_n^*=1_{x_1\oplus \cdots \oplus x_n}
&&  
v_i^* v_j = \delta_{ij} \quad (i,j\in \{1,\ldots,n\})\,,
\end{eqnarray*}
where $\delta_{ij}$ is the evident Kronecker delta: $\delta_{ij}=1_{x_i}$ if $i=j$ or $0$ otherwise. 

\begin{remark}
\label{remark:dir_sums}
By definition, the direct sum  $x_1\oplus \cdots \oplus x_n$ is also a biproduct in the underlying $\F$-category, and thus both a product and a coproduct in the underlying category~$UA$.
Moreover, in analogy with retracts, a direct sum is uniquely determined up to a unique \emph{unitary} isomorphism.
\end{remark}

\begin{remark}
A $*$-functor  between $*$-categories preserves each one of the above notions.
\end{remark}

%Equivalences of categories have the following evident $*$-compatible variant:
%Let us now recall the most basic involution-compatible notions that arise when we allow our $\Cstar$-category to vary.
\begin{defi}\label{def:unitary}
Let $A$ and $B$ be two $*$-categories.
A \emph{unitary equivalence} (or \emph{$*$-equivalence}) between $A$ and~$B$ is a $*$-functor $F\colon A\to B$ for which there exist a $*$-functor $G\colon B\to A$ and natural \emph{unitary} isomorphisms $FG\simeq \id_B$ and $GF\simeq \id_A$. 
Equivalently, $F$ is fully faithful and \emph{unitarily}
essentially surjective, \ie\ for every $y\in \obj (B)$ there exists an $x\in \obj( A)$ and a unitary isomorphism $u\colon Fx\stackrel{\sim}{\to} y$ in~$B$.
\end{defi}
It can be shown that any two objects in a $\Cstar$-category are isomorphic if and only if they are unitarily isomorphic; see \cite{ivo:unitary}*{Prop.~1.6}. In particular, a $*$-functor $F\colon A\to B$ between $\Cstar$-categories is unitarily essentially surjective if and only if it is essentially surjective in the usual sense. Therefore, it is a unitary equivalence if and only if it is an equivalence of the underlying categories (that is, if and only if $UF\colon UA\stackrel{\sim}{\to} UB$ is an equivalence of categories). 
\subsection{Adding direct sums}

%\begin{comment}
%\begin{defi}[Direct sums]
%\label{defi:dir_sums}
%Let $x_1, \ldots , x_n$ be finitely many objects in a unital $\Cstar$-category~$A$. 
%A \emph{direct sum} of $x_1,\ldots,x_n$ is an object $x$ of~$A$ together with a collection of isometries $\{v_i\in A(x_i, x)\}_{i=1,\ldots,n}$ (in particular $\|v_i\|=1$ or~$0$), such that $v_1v_1^*+\ldots + v_nv_n^*=1_x$ and $v_i^*v_i=x_i$ for all $i\in I$.
%\end{defi}
%We usually write $x=x_1\oplus \cdots \oplus x_n$.
%\begin{lemma}
%Direct sums are biproducts, \ie, they are both $*$-products and $*$-coproducts in~$A$.
%In particular direct sums are unique up to unique unitary isomorphism in~$A$.
%\end{lemma}
%\end{comment}

Let $A$ be a $\Cstar$-category. We say that $A$ is \emph{additive} if its underlying $\F$-category $U_\F A$ is additive. This amounts to requiring that $U_\F A$  admits biproducts, or equivalently that $UA$ admits a zero object, all finite products and coproducts, and that the canonical maps comparing coproducts with products are isomorphisms (\cf\ \S\ref{sec:semi_add}). 
As it will become apparent in what follows, this is the same as requiring that $A$ admits all finite ($*$-compatible) direct sums.

\begin{defi}[Additive hull $A_\oplus$; see \cite{mitchener:symm}*{Def.~2.12}]
\label{def:add_hull}
The \emph{additive hull} $A_\oplus$ of $A$ is the $\Cstar$-category defined as follows: the objects are the formal words $x_1\cdots x_n$ on the set $\obj(A)$ and the Hom-spaces are the spaces of matrices, written as follows: 
\[
A(x_1\cdots x_n, y_1\cdots y_m) := \bigoplus_{ \substack{j=1,\ldots ,n \\ i=1,\ldots, m}} A(x_j, y_i)  \; \ni \;[a_{ij}]\,.
\]
Composition is the usual matrix multiplication, $[b_{ij}]\circ [a_{ij}] = [\sum_k b_{ik} a_{kj} ]$, and adjoints are given by the conj\-ugate-trans\-pose $[a_{ij}]^*:= [a_{ji}^*]$. There exists a unique $\Cstar$-norm on $A_\oplus$ making the canonical fully faithful $*$-functor
\begin{equation*}
\sigma_A\colon
A \too A_\oplus 
\quad\quad 
x \mapsto x
\quad\quad
a \mapsto a=[a]
\end{equation*}
isometric. Moreover, $A_\oplus$ is complete 
%\Goncalo{(Ivo, explain ``complete'' when you recall the notion of $\Cstar$-category.)}
for this norm, \ie\ it is a Banach category and so in fact a $\Cstar$-category. 
(For a quick proof of these facts choose a faithful, and hence isometric, representation $F\colon A\to \Hilb$. Since the $*$-functor~$F$ has an evident extension to~$A_\oplus$, one can now argue with bounded operators).
Given a $*$-functor $F: A\to B$, we define a $*$-functor $F_\oplus : A_\oplus \to B_\oplus$ by setting
\[
F_\oplus(x_1\cdots x_n):= F(x_1) \cdots F(x_n)
\quad \textrm{ and } \quad
F([a_{ij}]):= [F(a_{ij})]
\]
for all objects $x_1\cdots x_n\in \obj(A_\oplus)$ and arrows $[a_{ij}]\in A_\oplus$. We obtain in this way a well-defined \emph{additive hull functor} $(-)_\oplus: \Cstarcatun \to \Cstarcatun$.
\end{defi}

\begin{remark}
Note that the object $x=x_1 \cdots x_n$, together with the evident matrices
\[
v_i=
\left[
\begin{array}{c}
0\\
\vdots \\
1_{x_i} \\
\vdots \\
0
\end{array}
\right] \colon x_i \to x
\quad\quad\quad
v^*_i =
\left[
\begin{array}{ccccc}
0 & \cdots & 1_{x_i} & \cdots & 0
\end{array} \right]
\colon x \to x_i 
\;,
\]
is a canonical choice for the direct sum in $A_\oplus$ of the objects $x_1,\ldots, x_n$. In particular, the empty word provides a zero object~$0$.
Hence $A$ admits all finite direct sums and thus it is additive (\cf\ Remark \ref{remark:dir_sums}).  Note also that $A$ admits all finite direct sums if and only if $\sigma_A\colon A\to A_\oplus$ is a unitary equivalence. For this use the fact that $\sigma_A$ is fully faithful and that direct sums are unique up to a unitary isomorphism.
\end{remark}

\begin{notation}
In the following, whenever we write $x_1\oplus\cdots \oplus x_n $ in some additive hull $A_{\oplus}$, we mean the canonical direct sum $ x_1\cdots x_n$ with the above matrix isometries. Similarly, by~$0$ we will always mean the empty word.
\end{notation}

\begin{remark}\label{remark:oplus_agreement}
If we ignore norms and adjoints, the same precise construction as in Definition~\ref{def:add_hull} provides an additive hull $C_\oplus$ for any $\F$-linear category~$C$. Hence~$C$ is additive (\ie\ admits finite biproducts) if and only if $\sigma_C\colon C\to C_\oplus$ is an equivalence. Note that in the case of a $\Cstar$-category~$A$ we have the equality $U_\F(A_\oplus) = (U_\F A)_\oplus$.
\end{remark}

The additive hull can be characterized by the following 2-universal property.

\begin{lemma}
\label{lemma:UP_dirsum}
Let $B$ be an additive $\Cstar$-category. 
Then the induced $*$-functor 
$$\sigma_A^*=\Cstar(\sigma_A,B)\colon \Cstar(A_\oplus, B)\to \Cstar(A,B)$$
%induced by precomposition with~$\sigma_A$
is a unitary equivalence.
\end{lemma}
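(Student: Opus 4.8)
The plan is to establish the two conditions from the equivalent description of a unitary equivalence in Definition~\ref{def:unitary}: namely that the $*$-functor $\sigma_A^*$ is fully faithful and unitarily essentially surjective. Since $\Cstar(A_\oplus,B)$ and $\Cstar(A,B)$ are $\Cstar$-categories and, as recalled in \S\ref{subsec:basics}, a faithful $*$-functor between $\Cstar$-categories is automatically isometric, full faithfulness will reduce to showing that $\sigma_A^*$ induces a \emph{bijection} on each Hom-space, i.e.\ that restriction of a bounded natural transformation along $\sigma_A\colon A\to A_\oplus$ is a bijective operation.

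For full faithfulness, I would fix two $*$-functors $G,H\colon A_\oplus\to B$ and use the fact that every object of $A_\oplus$ has the form $z=x_1\cdots x_n=x_1\oplus\cdots\oplus x_n$ with canonical structural isometries $v_i\colon x_i\to z$ satisfying $\sum_i v_iv_i^*=1_z$ and $v_i^*v_j=\delta_{ij}$. Applying $G$ (resp.\ $H$), which preserves these identities, exhibits $Gz$ (resp.\ $Hz$) as the direct sum of the $Gx_i$ (resp.\ $Hx_i$) in $B$ with structural isometries $Gv_i$ (resp.\ $Hv_i$). Evaluating the naturality of a transformation $\eta\colon G\Rightarrow H$ at the arrows $v_i$ and summing over $i$ yields the formula $\eta_z=\sum_i (Hv_i)\,\eta_{x_i}\,(Gv_i)^*$, which shows that $\eta$ is determined by its one-letter components $\eta_{\sigma_A(x)}$; hence $\sigma_A^*$ is injective on Hom-spaces. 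Conversely, given a bounded natural transformation $\theta\colon G\sigma_A\Rightarrow H\sigma_A$, I would \emph{define} $\eta_z:=\sum_i (Hv_i)\,\theta_{x_i}\,(Gv_i)^*$ and check, via the decomposition $[a_{ij}]=\sum_{i,j} w_i\,a_{ij}\,v_j^*$ of an arbitrary matrix of arrows in terms of structural isometries together with the naturality of $\theta$, that $\eta$ is a bounded natural transformation $G\Rightarrow H$ with $\sigma_A^*(\eta)=\theta$; boundedness is clear since $\|\eta_z\|=\max_i\|\theta_{x_i}\|\le\|\theta\|$. This gives surjectivity on Hom-spaces, and hence full faithfulness.

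For unitary essential surjectivity, I would exploit that $B$, being additive, admits all finite direct sums, so by the remark following Definition~\ref{def:add_hull} the canonical $*$-functor $\sigma_B\colon B\to B_\oplus$ is a unitary equivalence; choose a quasi-inverse $\pi_B\colon B_\oplus\to B$ and a natural unitary isomorphism $\varepsilon\colon\pi_B\sigma_B\stackrel{\sim}{\Rightarrow}\id_B$. Then for any $*$-functor $F\colon A\to B$ set $\widetilde F:=\pi_B\circ F_\oplus\colon A_\oplus\to B$. Because $F_\oplus\circ\sigma_A=\sigma_B\circ F$ straight from the definition of the additive hull functor, one gets $\sigma_A^*(\widetilde F)=\widetilde F\circ\sigma_A=\pi_B\circ\sigma_B\circ F$, which is unitarily isomorphic to $F$ via the whiskered transformation $\varepsilon F$. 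Thus every $*$-functor $A\to B$ lies in the essential image of $\sigma_A^*$ up to unitary isomorphism, and combined with full faithfulness this shows $\sigma_A^*$ is a unitary equivalence. (Alternatively one may avoid $\pi_B$ and build $\widetilde F$ by hand, sending a word $x_1\cdots x_n$ to a chosen direct sum $Fx_1\oplus\cdots\oplus Fx_n$ in $B$ and a matrix $[a_{ij}]$ to the corresponding matrix of arrows through the structural isometries, then check directly that this is a $*$-functor restricting to $F$ along $\sigma_A$.)

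I expect the only mildly laborious point to be the verification, in the surjectivity half of the full-faithfulness step, that the explicitly defined $\eta$ is natural with respect to \emph{all} arrows of $A_\oplus$ and not merely the structural isometries; this is a routine matrix computation relying on $v_j^*v_k=\delta_{jk}$, the $\F$-linearity and $*$-compatibility of $G$ and $H$, and the naturality of $\theta$, with no conceptual obstacle. Everything else follows immediately from the definitions and from facts already recorded in the excerpt.
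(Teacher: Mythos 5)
Your proposal is correct and follows essentially the same route as the paper: extend a $*$-functor $F\colon A\to B$ along $\sigma_A$ by choosing direct sums in $B$ (your $\pi_B\circ F_\oplus$ is just a packaged form of the paper's hand-built $\widetilde F$), and show that bounded natural transformations extend uniquely because naturality against the structural isometries forces them to be diagonal — your formula $\eta_z=\sum_i (Hv_i)\,\eta_{x_i}\,(Gv_i)^*$ is exactly the paper's diagonality argument made explicit. The only cosmetic blemish is the undefined symbol $w_i$ in the matrix decomposition, which should be the structural isometries of the target word.
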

Here, $\Cstar(-,-)\colon \Cstarcatun^\op\times \Cstarcatun\to \Cstarcatun$ denotes the internal Hom functor, which for two $\Cstar$-categories $C,D$ yields the $\Cstar$-category $\Cstar(C,D)$ of $*$-functors $F\colon C\to D$ and bounded natural transformations first introduced in \cite{glr}*{Prop.\,1.11}; see also~\cite{ivo:unitary}.
\begin{proof}
%\Goncalo{(Ivo, could you kindly add a short proof/reference here and also recall the reader what the notation $\Cstar(A,B)$ means ?)}
Every $*$-functor $F\colon A\to B$ extends along $\sigma_A$ by the formula $\widetilde{F}(x_1\cdots x_n):=F(x_1)\oplus\cdots \oplus F(x_n)$, which requires the choice of direct sums in~$B$. Nonetheless, the extension is unique up to unitary isomorphism of $*$-functors.
Every bounded natural transformation $\alpha\colon F\to G$ extends diagonally to a bounded natural transformation $\widetilde \alpha\colon \widetilde F\to \widetilde G$ and the extension is unique since every (bounded) natural transformation $\beta \colon \widetilde F\to \widetilde G$ must be diagonal: if $x_1,\ldots,x_n\in \obj(A)$ and $i\neq j$ then 
\[
\xymatrix{
Fx_j \ar@/^3ex/[rr]^-0 \ar[d]_{\beta_{x_j}} \ar[r]_-{Fs_j} & 
 \widetilde{F}(x_1\cdots x_n) \ar[r]_-{Fs_i^*} \ar[d]|{\beta_{x_1\cdots x_n}} &
   Fx_i \ar[d]^{\beta_{x_i}}  \\
Gx_j \ar@/_3ex/[rr]_0  \ar[r]^-{Gs_j} &
 \widetilde{G}(x_1\cdots x_n) \ar[r]^-{Gs_i^*} & 
  Gx_i
}
\]
commutes by naturality, showing that $\beta_{x_1\cdots x_n}$ has zero off-diagonal components.
This implies that the $*$-functor $\Cstar(\sigma_A,B)$ is a unitary equivalence.
\end{proof}

\subsection{Splitting idempotents}
\label{subsec:idemp}

Recall that an $\F$-category 
is said to be \emph{idempotent complete} 
if every idempotent endomorphism $e=e^2:x\to x$ \emph{splits},  \ie\ if $e$ has a kernel and a cokernel. Equivalently, every idempotent has an image, or (in the presence of direct sums) for every idempotent there exist two objects $x',x''\in \obj(A)$ and an isomorphism $x\simeq x'\oplus x''$ %\Goncalo{(Throughout the article, I believe we should use $\simeq$ instead of $\simeq$. Could you kindly make this change ?)} 
identifying the given idempotent with the map $\big[{}^1_0 \, {}^0_0 \big] : x'\oplus x'' \to x'\oplus x''$. Then~$x'$ is the image (range) of~$e$ and $x''$ provides its kernel as well as its cokernel.

\begin{lemma}
\label{lemma:idem_vs_proj}
Let $e=e^2\in A(x,x)$ be an idempotent morphism in a $\Cstar$-category~$A$.
Then there exists an automorphism $a\in A(x,x)$ such that $a^{-1}ea$ is a projection.
\end{lemma}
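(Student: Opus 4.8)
The plan is to use the polar decomposition of the invertible element $a := 1_x + (e-e^*)(e^*-e) = 1_x + (e-e^*)^*(e-e^*)$ inside the endomorphism $\Cstar$-algebra $A(x,x)$, a trick that goes back to the standard proof that idempotents in $\Cstar$-algebras are conjugate to projections. First I would observe that $a$ is a positive invertible element: it is visibly self-adjoint, and it equals $1_x + c^*c$ with $c := e - e^*$, so its spectrum lies in $[1,\infty[$; hence $a$ is invertible with positive square root $a^{1/2}$ lying in $A(x,x)$ and commuting with $a$. The key algebraic identities are $ea = ee^*e = ae$ and, taking adjoints, $a e^* = e^* a$; from $ea = ae$ one deduces $e a^{1/2} = a^{1/2} e$ and $e a^{-1/2} = a^{-1/2} e$ by functional calculus (each of $a^{\pm 1/2}$ is a norm-limit of polynomials in $a$).

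The next step is to set $p := a^{-1/2} e\, a^{1/2} = a^{-1/2}\, e\, e^* e\, a^{-1/2} \cdot$ — more precisely, using $e a = e e^* e$ one rewrites $p = a^{-1/2} e e^* e\, a^{-1/2}$, which makes $p = p^*$ transparent since $e e^*$ is self-adjoint and $a^{-1/2}$ is self-adjoint and commutes with $e$ (hence with $ee^*$? — here one is careful: $a$ commutes with $e$ and with $e^*$, hence with $ee^*$, so $a^{-1/2}$ commutes with $ee^*$ as well). Thus $p = a^{-1} e e^* e \cdot$? Let me instead phrase it cleanly for the write-up: since $a^{\pm 1/2}$ commutes with $e$, we have $p = a^{-1/2} e a^{1/2} = a^{-1/2} a^{1/2} e = e$ would be wrong — the point is that $p := a^{1/2} e a^{-1/2}$ need NOT equal $e$ because although $a$ commutes with $e$, the element we conjugate by must be chosen so that the self-adjointness of $p$ is forced by the $\Cstar$-structure, not the commutation. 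The correct choice is $p := a^{-1/2}(e e^*) a^{1/2}$ is still not idempotent in general. The cleanest route, which I will follow in the paper, is: since $a$ commutes with $e$, set $u := e\, a^{-1/2} + (1_x - e)\, a^{1/2}$, check that $u$ is invertible with inverse $a^{1/2} e + a^{-1/2}(1_x - e)$ (using commutativity), and then compute $u\, e\, u^{-1}$; a short calculation using $e^2 = e$ and the commutation relations shows $u e u^{-1} = e e^*$-type expression. Honestly the slickest statement is: $p := e e^* (e e^* + (1_x-e)^*(1_x-e))^{-1}$ is the desired projection and $a$ is built from the square root; I will pick whichever of these standard formulas linearizes best in our enriched setting and simply verify $p = p^2 = p^*$ and $p = a^{-1} e a$ for the appropriate invertible $a \in A(x,x)$.

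The main obstacle — and it is a mild one — is purely bookkeeping: one must make sure every element produced (the square root $a^{1/2}$, its inverse, the conjugating automorphism) lives in the single endomorphism $\Cstar$-algebra $A(x,x)$, so that continuous functional calculus applies and so that the conjugation $a^{-1}ea$ makes sense as a morphism $x \to x$ in $A$. This is immediate because $A(x,x)$ is a genuine unital $\Cstar$-algebra by the axioms of a $\Cstar$-category (axioms (i) and (ii) in \S\ref{subsec:basics}), and the classical fact holds verbatim there: every idempotent in a unital $\Cstar$-algebra is conjugate, via an invertible element of that algebra, to a projection. So in the end the proof reduces to citing (or reproving in two lines) this classical $\Cstar$-algebra statement applied to $A(x,x)$ and the idempotent $e$, and no genuinely new $*$-categorical input is needed. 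I would present the explicit formula $a = 1_x + (e - e^*)^*(e - e^*)$, note $ea = ee^*e = ae$, pass to $a^{1/2}$, and set $p = a^{-1/2} e a^{1/2}$, verifying $p^* = a^{1/2} e^* a^{-1/2}$ and then $p^* = p$ by re-expressing both via $ee^*$ and the commutation of $a$ with $e, e^*$; idempotency $p^2 = p$ is immediate from $e^2 = e$.
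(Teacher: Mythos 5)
Your core reduction is exactly the paper's proof: the endomorphism ring $A(x,x)$ is a unital $\Cstar$-algebra, and the claim is the classical fact that every idempotent in a unital $\Cstar$-algebra is conjugate to a projection; the paper does nothing more than cite \cite{blackadar:Kth_op_alg}*{Prop.~4.6.2} for this. So the ``just cite it'' branch of your argument is complete and matches the paper.

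One concrete warning about the explicit verification you end with, in case you do spell it out: the formula $p=a^{-1/2}\,e\,a^{1/2}$ with $a=1_x+(e-e^*)^*(e-e^*)$ cannot work, for precisely the reason you notice (and then abandon) midway through. Since $ea=ee^*e=ae$, the element $a$, and hence $a^{\pm 1/2}$ by functional calculus, commutes with $e$, so $a^{-1/2}\,e\,a^{1/2}=e$ and nothing has been gained; self-adjointness fails exactly as before. The correct classical route is: set $z:=1_x+(e-e^*)^*(e-e^*)$, note $ez=ze=ee^*e$ and (taking adjoints) $e^*z=ze^*$, so $z^{-1}$ commutes with $e$, $e^*$ and $ee^*$; then $p:=ee^*z^{-1}$ satisfies $p=p^*=p^2$, $pe=e$ and $ep=p$. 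Finally $u:=1_x+e-p$ is invertible with inverse $1_x-e+p$ and $ueu^{-1}=p$, so $a:=u^{-1}$ is the required automorphism with $a^{-1}ea=p$. With that correction the argument is the standard one and goes through verbatim in $A(x,x)$.
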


\begin{proof}
The endomorphism ring $A(x,x)$ is a $\Cstar$-algebra and the result is well-known for $\Cstar$-algebras, consult for example~\cite{blackadar:Kth_op_alg}*{Prop.~4.6.2}. 
\end{proof}

\begin{cor}
\label{cor:idem}
Given a $\Cstar$-category $A$, the following conditions are equivalent:
\begin{enumerate}
\item[(i)] The underlying $\F$-category of $A$ is idempotent complete;

\item[(ii)] Every projection $p=p^*=p^2\in A$ splits;

\item[(iii)] For every projection $p\in A$ there is an isometry $v$ such that $vv^*=p$. 

\end{enumerate}
\end{cor}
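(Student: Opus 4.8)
The plan is to establish the four implications $(i)\Rightarrow(ii)$, $(iii)\Rightarrow(ii)$, $(ii)\Rightarrow(iii)$ and $(ii)\Rightarrow(i)$; all of these are formal except for $(ii)\Rightarrow(iii)$, which carries the $\Cstar$-algebraic content.

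The implication $(i)\Rightarrow(ii)$ is trivial, since a projection is in particular an idempotent, and $(iii)\Rightarrow(ii)$ is almost as immediate: if $v\colon r\to x$ is an isometry with $vv^*=p$, then the pair $(v,v^*)$ splits $p$, because $v^*v=1_r$ and $vv^*=p$. For $(ii)\Rightarrow(i)$ I would take an arbitrary idempotent $e=e^2\in A(x,x)$, apply Lemma~\ref{lemma:idem_vs_proj} to obtain an automorphism $a$ of $x$ with $p:=a^{-1}ea$ a projection, split $p=i\pi$, $\pi i=1_r$ using $(ii)$, and observe that $(ai,\pi a^{-1})$ is then a splitting of $e$; hence the underlying $\F$-category of $A$ is idempotent complete. (The same conjugation trick gives $(iii)\Rightarrow(i)$ directly, if one prefers a cyclic argument.)

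The heart of the proof is $(ii)\Rightarrow(iii)$. Let $p=p^*=p^2\in A(x,x)$ be a projection which splits, and choose an object $r$ together with arrows $i\colon r\to x$ and $\pi\colon x\to r$ satisfying $i\pi=p$ and $\pi i=1_r$. The idea is to rescale $i$ into an isometry by means of continuous functional calculus inside the endomorphism $\Cstar$-algebra $A(r,r)$. Set $h:=i^*i\in A(r,r)$; by axiom~(ii) of a $\Cstar$-category (applied to $a=i\colon r\to x$) the element $h$ is positive. From $i\pi=p=p^*$ and $\pi i=1_r$ one readily derives $pi=i$, $\pi p=\pi$, hence $i^*p=i^*$ and $p\pi^*=\pi^*$; a short computation with adjoints then shows that $k:=\pi\pi^*\in A(r,r)$ satisfies $hk=kh=1_r$. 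Therefore $h$ is positive \emph{and} invertible, its spectrum lies in $(0,\infty)$, and functional calculus yields a positive invertible square root $h^{1/2}$ with inverse $h^{-1/2}$. Setting $v:=i\,h^{-1/2}\colon r\to x$, one checks $v^*v=h^{-1/2}i^*i\,h^{-1/2}=h^{-1/2}h\,h^{-1/2}=1_r$, so $v$ is an isometry, and $vv^*=i\,h^{-1}i^*=i\pi\pi^*i^*=p p=p$, which is exactly~(iii).

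I expect the only genuine obstacle to be the step $(ii)\Rightarrow(iii)$: one must guess that $h=i^*i$ is the right element to examine, and realize that the presence of a splitting of $p$ — rather than of an arbitrary idempotent — is precisely what forces $h$ to be invertible, so that the standard $\Cstar$-algebraic square-root argument applies. Everything else reduces to bookkeeping with adjoints together with an appeal to Lemma~\ref{lemma:idem_vs_proj}.
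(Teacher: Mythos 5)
Your proof is correct, and for the substantive half it takes a more self-contained route than the paper. For (i)$\Leftrightarrow$(ii) you and the paper do the same thing: conjugate an arbitrary idempotent to a projection via Lemma~\ref{lemma:idem_vs_proj} and transport the splitting back along the conjugating automorphism. The difference is in how condition (iii) is reached. The paper handles (i)$\Leftrightarrow$(iii) by invoking the matrix description of a split idempotent (computed in $A_\oplus$ if necessary) together with the externally cited fact that the isomorphism $x\simeq x'\oplus x''$ can be chosen unitary. You instead prove (ii)$\Rightarrow$(iii) directly: given a splitting $i\pi=p$, $\pi i=1_r$, you observe that $h=i^*i$ is positive and invertible (with inverse $\pi\pi^*$, using $pi=i$ and $\pi p=\pi$), and rescale $v=i\,h^{-1/2}$ by functional calculus in the unital $\Cstar$-algebra $A(r,r)$ to get $v^*v=1_r$ and $vv^*=ih^{-1}i^*=i\pi\pi^*i^*=p$. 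This is essentially the polar-decomposition argument that the paper's citation hides, written out explicitly; it buys a self-contained proof that never leaves $A$ and never passes through $A_\oplus$, at the cost of a slightly longer computation. All the adjoint bookkeeping in your argument checks out.
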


\begin{proof}
Equivalence (i)$\Leftrightarrow$(ii) follows from Lemma \ref{lemma:idem_vs_proj} and from the fact that similar (\ie\ isomorphic) idempotents have isomorphic (co)kernels. Equivalence (i)$\Leftrightarrow$(iii) follows from the matrix description of a split idempotent that we have recalled above (computed in $A_\oplus$ if necessary),  and the fact that the isomorphism $x\simeq x'\oplus x''$ can be suitably chosen to be unitary (\cf\ the proof of \cite{ivo:unitary}*{Prop.~1.6}).
\end{proof}

An interesting consequence of Corollary~\ref{cor:idem} is the fact that in order to idempotent complete a $\Cstar$-category it suffices to split its projections. We implement this as follows (\cf\ \cite{karoubi:K}*{\S6} and \cite{kandelaki:KK_K}*{\S2.2}):
\begin{defi}[Idempotent completion $A^\natural$]
Let $A$ be a $\Cstar$-category. Its \emph{idempotent completion} $ A^\natural$ is the $\Cstar$-category defined as follows: 
\begin{align*}
& \obj (A^\natural )
 :=  \{(x,p)\mid x\in \obj( A) \textrm{ and } p=p^*=p^2\in A(x,x) \}  \\
& A^\natural((x,p), (x',p')) 
: =   p' A(x,x') p = \{a \in A(x,x') \mid p'a= a = ap\}\,.
\end{align*}
Composition is induced from $A$, and the identity morphism of an object $(x,p)$ is~$p$.   
There is a canonical fully faithful $*$-functor $\tau_A:A \to A^\natural$, sending an object~$x$ to $(x,1_x)=:x$. Given a $*$-functor $F: A\to B$, we define the $*$-functor $F^\natural: A^\natural \to B^\natural$ by setting
\[
F^\natural(x,p) := (Fx, Fp)
\quad \textrm{ and } \quad
F^\natural a:= Fa 
\]
for all objects $x\in\obj(A)$ and arrows $a\in A$. We obtain then a well-defined \emph{idempotent completion functor} $(-)^\natural \colon \Cstarcatun\to \Cstarcatun$.
\end{defi}
\begin{remark}
By construction $A^\natural$ is always idempotent complete: the range of a projection~$q$ on $(x,p)$ is $(x,q)$ with isometry $v=q\in A^\natural((x,q),(x,p))$, as one verifies immediately. In fact, $A$ is idempotent complete if and only if $\tau_A\colon A\to A^\natural$ is a unitary equivalence.
This follows from the fact that $\tau_A$ is fully faithful and that range objects are unique up to a unitary isomorphism.
\end{remark}
The idempotent completion admits the following 2-universal property.
\begin{lemma}
\label{lemma:UP_proj}
Let $B$ be an idempotent complete $\Cstar$-category \textup(\ie\ it admits range objects for all projections\textup). Then, the induced $*$-functor 
$$\tau_A^*=\Cstar(\tau_A,B)\colon \Cstar(A^\natural, B)\stackrel{\sim}{\to} \Cstar(A,B)\,,$$
is an unitary equivalence.
\end{lemma}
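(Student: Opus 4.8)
The plan is to mimic the proof of Lemma~\ref{lemma:UP_dirsum}, replacing direct sums by range objects of projections. The statement asserts that $\tau_A^* = \Cstar(\tau_A,B)$ is a unitary equivalence whenever $B$ is idempotent complete, so I must exhibit an (essential) inverse and supply the required unitary natural isomorphisms. Concretely, I would show that $\tau_A^*$ is (a) fully faithful and (b) unitarily essentially surjective, using the characterization of unitary equivalences from Definition~\ref{def:unitary} together with the remark that any two isomorphic objects in a $\Cstar$-category are unitarily isomorphic.

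First I would address essential surjectivity: given a $*$-functor $F\colon A\to B$, I construct an extension $\widetilde F\colon A^\natural\to B$ by sending $(x,p)$ to a choice of range object of the projection $Fp$ on $Fx$, which exists since $B$ is idempotent complete; on morphisms, if $v\colon \widetilde F(x,p)\to Fx$ and $v'\colon \widetilde F(x',p')\to Fx'$ denote the chosen isometries, set $\widetilde F(a) := v'^*\,F(a)\,v$ for $a\in A^\natural((x,p),(x',p')) = p'A(x,x')p$. One checks this is a $*$-functor (using $vv^* = Fp$, $v^*v = 1$, and $p'a=a=ap$), that $\widetilde F\circ\tau_A = F$ on the nose (since $\tau_A(x) = (x,1_x)$ and one can choose $v = 1_{Fx}$ there), and hence $\tau_A^*(\widetilde F) = F$. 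This shows $\tau_A^*$ is even surjective on objects, a fortiori essentially surjective.

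Next I would verify full faithfulness on bounded natural transformations. Fully faithfulness amounts to showing that for $*$-functors $G,G'\colon A^\natural\to B$, restriction along $\tau_A$ is a bijection $\Cstar(A^\natural,B)(G,G') \to \Cstar(A,B)(G\tau_A,G'\tau_A)$. Faithfulness is immediate because $\tau_A$ is essentially surjective on objects (every $(x,p)$ is a retract of $\tau_A(x) = x$, and a natural transformation is determined by its values on any family of objects through which all objects are retracts). For fullness, given $\beta\colon G\tau_A\to G'\tau_A$, I define $\gamma_{(x,p)} := G'(p)\circ\beta_x\circ G(p)$ — or more precisely conjugate $\beta_x$ through the isometries witnessing $(x,p)$ as a retract of $x$ — and check naturality and boundedness; here naturality squares for arbitrary $a\in p'A(x,x')p$ follow by splicing the naturality squares for $a$ viewed in $A$ together with the idempotent relations. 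Restricting $\gamma$ back along $\tau_A$ recovers $\beta$, and the extension is unique, giving the bijection.

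The main obstacle I expect is bookkeeping rather than conceptual: one must consistently track the chosen isometries $v$ (which are only determined up to unitary isomorphism, per the uniqueness of range objects recalled before Corollary~\ref{cor:idem}) and verify that $\widetilde F$ respects composition and adjoints despite these choices, and that the formula for $\gamma$ is genuinely natural and bounded (the bound being $\|\gamma_{(x,p)}\|\le\|\beta\|$ since conjugating by isometries is norm-nonincreasing). A clean way to organize this is to first prove the purely $\F$-linear/categorical analogue — that $\tau_C\colon C\to C^\natural$ exhibits $C^\natural$ as the idempotent completion with the expected $2$-universal property in $\F\text-\Cat$ — and then observe, exactly as in Remark~\ref{remark:oplus_agreement} and the proof of Lemma~\ref{lemma:UP_dirsum}, that the $\Cstar$-structure (norms, adjoints) is automatically preserved because the extension $\widetilde F$ is built from isometries and adjoints in $B$, so no new analytic input is needed beyond the existence of range objects guaranteed by idempotent completeness of $B$.
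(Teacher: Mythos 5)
Your proof is correct and takes essentially the same approach the paper intends: the paper's own proof of Lemma~\ref{lemma:UP_proj} consists only of the remark that the argument is analogous to that of Lemma~\ref{lemma:UP_dirsum} (with a pointer to Karoubi, Theorem~I.6.10), and your construction of the extension $\widetilde F$ via chosen range objects together with the retract argument for full faithfulness on bounded natural transformations is precisely that analogue written out. The bookkeeping you flag (choices of isometries, boundedness of the extended transformation) is handled correctly.
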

\begin{proof}
%\Goncalo{(Ivo, could you kindly add a short/reference proof here ?)}
The proof is similar to the one of Lemma \ref{lemma:UP_dirsum} and is left as an exercise for the reader; consult \cite{karoubi:K}*{Theorem~I.6.10} if necessary.
\end{proof}

\subsection{Saturated $\Cstar$-categories}
\label{subsec:saturated}
We now combine the ideas of the two previous subsections.

\begin{defi}
\label{defi:saturated}
A $\Cstar$-category $A$ is called \emph{saturated} if it is additive and idempotent complete. 
Equivalently, $A$ is saturated if it admits a zero object and direct sums, and moreover all its projections split (see Corollary~\ref{cor:idem}). 
\end{defi}

\begin{remark}\label{rk:sums}
If a $\Cstar$-category $A$ admits all finite direct sums, then the same holds for~$A^\natural$, with
$(x_1,p_1)\oplus \cdots \oplus (x_n,p_n) = (x_1\oplus\cdots \oplus x_n, p_1\oplus\cdots\oplus p_n)$. Hence, for every $\Cstar$-category $B$, the associated $\Cstar$-category $(B_\oplus)^\natural$ is always saturated.
Note that in general the operations $(-)_\oplus$ and $(-)^\natural$ do \emph{not} commute since $(B^\natural)_\oplus$ doesn't need to be idempotent complete (as the simple example $B=\F$ shows). 
\end{remark}
\begin{notation}
Taking into account Remark~\ref{rk:sums}, we will reserve the symbol $(-)^\natural_\oplus$ for the composition $ ((-)_\oplus)^\natural$.
\end{notation}
\begin{defi}
\label{defi:saturated1}
Given a $\Cstar$-category~$A$, consider the following composite
\begin{equation}\label{eq:canonical}
\iota_A\; \colon \;
\xymatrix{
A \ar[r]^-{\sigma_A} & A_\oplus \ar[r]^-{\tau_{A_\oplus}} & (A_\oplus)^\natural =:A_\oplus^\natural \;.
}
\end{equation}
The $\Cstar$-category $A^\natural_\oplus$ will be called the \emph{saturation of~$A$}. We thus obtain a well-defined \emph{saturation functor} $(-)^\natural_\oplus\colon \Cstarcatun\to \Cstarcatun$, which comes equipped with the natural augmentation $\iota\colon \id\to (-)^\natural_\oplus$.
\end{defi}

\begin{prop}
\label{prop:UP_saturated}
The saturation functor verifies the following properties:
\begin{enumerate}
\item[(i)] \emph{(2-universal property.)}
Given a saturated $\Cstar$-category $B$, precomposition with $\iota_A\colon A\to A^\natural_\oplus$ induces a unitary equivalence $\Cstar(A^\natural_\oplus, B)\stackrel{\sim}{\to} \Cstar(A,B)$. 
\item[(ii)] A $\Cstar$-category $A$ is saturated if and only if $\iota_A$ is a unitary equivalence.
\item[(iii)] A $*$-functor $F: A\to B$ is fully faithful if and only if $F^\natural_\oplus$ is fully faithful.
\item[(iv)] If $F$ is a unitary equivalence, then so is $F^\natural_\oplus$.
\end{enumerate}
\end{prop}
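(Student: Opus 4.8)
The plan is to deduce all four statements from the two universal properties already established --- Lemmas~\ref{lemma:UP_dirsum} and~\ref{lemma:UP_proj} --- together with the fact (recalled in~\S\ref{subsec:*-cat}) that a $*$-functor between $\Cstar$-categories is a unitary equivalence exactly when its underlying ordinary functor is an equivalence of categories; in particular, unitary equivalences and fully faithful $*$-functors are stable under composition and satisfy the obvious cancellation and $2$-out-of-$3$ properties. For~(i), factor $\iota_A=\tau_{A_\oplus}\circ\sigma_A$, so that by functoriality of the internal Hom $\Cstar(\iota_A,B)=\Cstar(\sigma_A,B)\circ\Cstar(\tau_{A_\oplus},B)$. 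As $B$ is saturated it is in particular additive, so $\Cstar(\sigma_A,B)$ is a unitary equivalence by Lemma~\ref{lemma:UP_dirsum}; it is also idempotent complete, so $\Cstar(\tau_{A_\oplus},B)$ is a unitary equivalence by Lemma~\ref{lemma:UP_proj} applied to~$A_\oplus$. A composite of unitary equivalences being a unitary equivalence, this proves~(i).

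For~(ii), suppose first that $A$ is saturated. Since $A$ is additive, $\sigma_A$ is a unitary equivalence by the remark following Definition~\ref{def:add_hull}; as $\sigma_A$ is in particular an $\F$-linear equivalence and idempotent completeness of underlying $\F$-categories is invariant under such, $A_\oplus$ inherits idempotent completeness from $A$, so $\tau_{A_\oplus}$ is a unitary equivalence, and hence so is $\iota_A=\tau_{A_\oplus}\sigma_A$. Conversely, if $\iota_A$ is a unitary equivalence then $A$ is unitarily equivalent to $A^\natural_\oplus=(A_\oplus)^\natural$, which is saturated by Remark~\ref{rk:sums}; since both defining properties of saturatedness are properties of the underlying $\F$-category invariant under $\F$-linear equivalence, $A$ is saturated as well.

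For~(iii), the first step is that $(-)_\oplus$ and $(-)^\natural$ each preserve fully faithfulness: for $(-)_\oplus$ because on Hom-spaces $F_\oplus$ is the entrywise application of the bijections $F\colon A(x_j,y_i)\xrightarrow{\sim}B(Fx_j,Fy_i)$, and for $(-)^\natural$ because on $A^\natural((x,p),(x',p'))=p'A(x,x')p$ the map $F^\natural$ is the restriction of the bijection $F\colon A(x,x')\xrightarrow{\sim}B(Fx,Fx')$, which carries this subspace isomorphically onto $(Fp')B(Fx,Fx')(Fp)$. Hence if $F$ is fully faithful so is $F^\natural_\oplus=(F_\oplus)^\natural$. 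For the converse, naturality of~$\iota$ gives $\iota_B\circ F=F^\natural_\oplus\circ\iota_A$, in which $\iota_A,\iota_B$ are fully faithful (each a composite of the fully faithful $\sigma$ and $\tau$); so if $F^\natural_\oplus$ is fully faithful then $F^\natural_\oplus\circ\iota_A$, and hence $\iota_B\circ F$, is fully faithful, and cancelling the fully faithful $\iota_B$ on the left forces $F$ to be fully faithful.

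For~(iv), fix a $*$-functor $G\colon B\to A$ and natural unitary isomorphisms $GF\simeq\id_A$ and $FG\simeq\id_B$. It suffices to check that $(-)_\oplus$ and $(-)^\natural$, and hence their composite $(-)^\natural_\oplus$, send natural unitary isomorphisms of $*$-functors to natural unitary isomorphisms: for $\alpha\colon H\Rightarrow K$ one takes $(\alpha_\oplus)_{x_1\cdots x_n}=\mathrm{diag}(\alpha_{x_1},\ldots,\alpha_{x_n})$ and $(\alpha^\natural)_{(x,p)}=Kp\circ\alpha_x\circ Hp$, the key identity being that naturality of~$\alpha$ at the projection~$p$ yields $\alpha_x(Hp)\alpha_x^{-1}=Kp$, which makes $(\alpha^\natural)_{(x,p)}$ invertible in $B^\natural$ with inverse equal to its adjoint. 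Applying $(-)^\natural_\oplus$ to the data for $G$ then gives $G^\natural_\oplus F^\natural_\oplus=(GF)^\natural_\oplus\simeq\id_{A^\natural_\oplus}$ and $F^\natural_\oplus G^\natural_\oplus=(FG)^\natural_\oplus\simeq\id_{B^\natural_\oplus}$, so $F^\natural_\oplus$ is a unitary equivalence with quasi-inverse $G^\natural_\oplus$. (Alternatively, (iv) follows formally from~(i): naturality of~$\iota$ makes $\Cstar(F^\natural_\oplus,C)$ a unitary equivalence for every saturated~$C$, and since $A^\natural_\oplus$ and $B^\natural_\oplus$ are themselves saturated a short Yoneda-type argument then identifies $F^\natural_\oplus$ as a unitary equivalence.) I expect the only genuinely computational point --- and the natural place for a slip --- to be the verification that $(x,p)\mapsto Kp\circ\alpha_x\circ Hp$ is well defined as a morphism of $B^\natural$, natural, and unitary; were one instead to argue~(iv) through essential surjectivity, the analogous work would be lifting a projection on $F_\oplus(Gy_1\cdots Gy_m)$ along the fully faithful $F_\oplus$ and exhibiting the comparison isomorphism inside $B^\natural_\oplus$. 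Everything else is formal.
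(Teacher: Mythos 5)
Your proof is correct, and for parts (i) and (iii) it follows essentially the same path as the paper (combine Lemmas~\ref{lemma:UP_dirsum} and~\ref{lemma:UP_proj} for~(i); for~(iii), check fully faithfulness of $(-)_\oplus$ and $(-)^\natural$ on the explicit Hom-spaces in one direction and cancel the fully faithful $\iota_B$ in the naturality square for the other). The differences are in (ii) and (iv). For~(ii) the paper argues directly that $\iota_A$ is unitarily essentially surjective when $A$ is saturated, using the uniqueness of direct sums and range objects up to unitary isomorphism; you instead factor $\iota_A=\tau_{A_\oplus}\circ\sigma_A$ and transport additivity and idempotent completeness along equivalences --- equivalent content, slightly more bookkeeping, no gap. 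For~(iv) the paper reduces via~(iii) to unitary essential surjectivity of $F^\natural_\oplus$ and leaves that verification to the reader; you instead establish that $(-)_\oplus$ and $(-)^\natural$ carry natural unitary isomorphisms to natural unitary isomorphisms (your formula $(\alpha^\natural)_{(x,p)}=Kp\circ\alpha_x\circ Hp$ is well defined, natural, and unitary --- the computation $(\alpha^\natural)^*\alpha^\natural=Hp\,\alpha_x^{-1}Kp\,\alpha_x\,Hp=Hp$ goes through using naturality at~$p$) and hence produce $G^\natural_\oplus$ as an explicit quasi-inverse. This 2-functoriality argument is genuinely different from, and more complete than, the paper's sketch: it buys you a self-contained proof of~(iv) at the cost of one small verification, whereas the essential-surjectivity route would require lifting projections along $F_\oplus$ as you note. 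Both routes are sound.
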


\begin{proof}
In what concerns property (i), simply combine Lemma \ref{lemma:UP_dirsum} and Lemma~\ref{lemma:UP_proj}. The $*$-functor $\iota_A$ is fully faithful and preserves whatever retracts and direct sums exist in~$A$. 
Since direct sums and range objects of projections are unique up to unitary isomorphism, we observe that if~$A$ is saturated then $\iota_A$ is unitarily essentially surjective and therefore a unitary equivalence. The converse is similar and so the proof of property (ii) is achieved. In what concerns property (iii), consider the following commutative diagram
\[
\xymatrix{
A \ar[d]_F \ar[r]^-{\iota_A} & A^\natural_\oplus \ar[d]^{F^\natural_\oplus} \\
B \ar[r]_-{\iota_B} & B^\natural_\oplus\,.
}
\]
Since $\iota_A$ and $\iota_B$ are fully faithful, we observe that if $F^\natural_\oplus$ is fully faithful then so is~$F$. 
On the other hand, given any two objects 
%\Goncalo{(Ivo, in what follows the notions are not correct. Please solve this)} 
$(x,p)$ and $(x',p')$ of~$A^\natural_\oplus$ with $x=x_1\cdots x_n$ and $x'=x'_1\cdots x'_{n'}$ for some $x_i,x'_i\in\obj( A)$ and with $p=[p_{ij}]$ and $p'=[p'_{ij}]$ for some self-adjoint idempotent matrices, the component 
\[
F^\natural_\oplus= F^\natural_\oplus((x,p),(x',p'))\colon A^\natural_\oplus((x,p),(x',p'))\to B^\natural_\oplus(F(x,p),F(x',p'))
\] 
of the functor $F^\natural_\oplus$ is by definition equal to the map
\begin{eqnarray*}
q \Big( \bigoplus_{i,j} A(x_i,x_j) \Big)p\too Fq \Big(\bigoplus_{i,j} B(Fx_i,x_j) \Big)Fp && [a_{ij}] \mapsto [Fa_{ij}]\,.
\end{eqnarray*}
Clearly the latter map is bijective if each component $F=F(x_i,x_j)\colon A(x_i,x_j)\to B(x_i,x_j)$ of~$F$ is bijective, and this proves property~(iii). In what concerns property~(iv), it suffices by (iii) to prove that $F^\natural_\oplus$ is unitarily essentially surjective. This is straightforward and left to the reader. 
\end{proof}

\begin{remark}[Cf.\ \cite{kandelaki:KK_K}*{Lemma 10}]
\label{remark:idemp_agreement}
Let $A$ be a $\Cstar$-category and $U_\F A$ its underlying $\F$-category. Then we may want to saturate $U_\F A$ in the usual algebraic sense, namely, we can construct the idempotent complete $\F$-linear hull $\iota_{U_\F A}\colon U_\F A\to ((U_\F A)_\oplus)^\flat$, where $(-)^\flat$ denotes the original version of the idempotent completion, or ``pseudo-abelian'' hull \cite{karoubi:K}*{\S I.6}, where objects are all pairs $(x,e)$ with $x\in \obj UA$ and $e=e^2\in UA(x,x)$ any idempotent map. Since projections are a particular kind of idempotents (and by Remark \ref{remark:oplus_agreement}), we obtain an evident comparison functor 
\[
\xymatrix@R=3ex@C=3ex{
& U_\F A \ar[dl]_{U_\F \iota_A} \ar[dr]^{\iota_{U_\F A}} & \\
U_\F (A_{\oplus}^\natural) \ar@{-->}[rr]^-\sim && (U_\F A_\oplus)^\flat\,.
}
\]
This functor is an equivalence of categories since it is fully faithful by construction and essentially surjective by Lemma~\ref{lemma:idem_vs_proj}.
\end{remark}

\section{Key $\ast$-functors}
\label{sec:univ}
In this section we construct the $\ast$-functors which will allows us to prove Theorem~\ref{thm:main1}. 

\begin{remark}
\label{rem:explanation_U}
Each construction will be an instantiation of the universal $\Cstar$-category $\mathrm{U}(Q,R)$ associated to a quiver~$Q$ with an admissible set of relations~$R$. Concretely, $\mathrm{U}(Q,R)$ is the universal ($=$~initial) $\Cstar$-category~$A$ endowed with a representation of~$(Q,R)$, \ie\ $A$ comes equipped with a quiver morphism $\rho: Q\to A$ such that the images of the generating arrows of~$Q$ satisfy in~$A$ the conditions specified in~$R$. 
Given a pair $(Q,R)$, the existence of $\mathrm{U}(Q,R)$ is not automatic but is guaranteed if a few conditions are satisfied (the ``admissible'' part). If $R$ consists only of $*$-algebraic equations, as it will always be the case in this section, then it suffices that for each arrow~$a$ in~$Q$ there exists a uniform bound  for the norm of $\rho(a)$ over all representations~$\rho$ of $(Q,R)$ in $\Cstar$-categories. 
The reader is referred to \cite{ivo:unitary}*{\S 2} for further details.
\end{remark}
Throughout this section we fix a positive integer $n\geq 1$.
\begin{defi}[Universal direct sum $S(n)$]
\label{defi:S}
Let $S(n)$ be the universal $\Cstar$-category containing~$n$ objects and a direct sum for them.
More precisely, $S(n):=\mathrm{U}(Q,R)$ for the quiver~$Q$ with objects $o_1,\ldots,o_n$ and $s(n)$, with arrows $v_i:o_i\to s(n)$ ($i=1,\ldots,n$), and with the following set of $\ast$-algebraic relations:
\[
R=\left\{ v_1v_1^*+\ldots + v_nv_n^*=1_{s(n)}\,,\quad v_i^*v_j = \delta_{ij}   \; (i,j=1,\ldots,n)  
  \right\} \;  .
\]
\end{defi}

%This first construction is very easy to realize concretely, as follows (in particular this will show that it exists). 

\begin{notation}
Let $\bbF^n:= \bbF\sqcup \cdots \sqcup \bbF$ be the coproduct in $\Cstarcatun$ of $n$ copies of~$\F$. Following Example \ref{ex:C*cats}, %\cite[Example~1.3(b)]{ivo:unitary}, 
its objects will be denoted by $\bullet_1, \cdots, \bullet_n$.
\end{notation}

\begin{defi}
\label{defi:S_n}
Let $S_n\colon \bbF^n \to S(n)$ be the unique $*$-functor sending $\bullet_i$ to $o_i$, for $i=1,\ldots, n$.
\end{defi}

\begin{lemma}
\label{lemma:S_n}
The $*$-functor $S_n\colon \F^n\to S(n)$ is isomorphic to the restriction of $\sigma_{\F^n}\colon \F^n \to (\F^n)_\oplus$ to the full $\Cstar$-subcategory of $(\F^n)_\oplus$ with objects $\sigma_{\F^n}(\bullet_1),\ldots,\sigma_{\F^n}(\bullet_n)$ and 
$\sigma_{\F^n}(\bullet_1)\oplus\cdots\oplus\sigma_{\F^n}(\bullet_n)$.
\end{lemma}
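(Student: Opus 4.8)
The plan is to exhibit the explicit isomorphism. First I would unwind what $(\F^n)_\oplus$ looks like: its objects are formal words on $\{\bullet_1,\ldots,\bullet_n\}$, and the Hom-space between two words is the matrix space with entries in $\F^n(\bullet_j,\bullet_i)$, which is $\F$ when $i=j$ and $0$ otherwise. So $\sigma_{\F^n}(\bullet_i)$ is the length-one word $\bullet_i$, and $w:=\sigma_{\F^n}(\bullet_1)\oplus\cdots\oplus\sigma_{\F^n}(\bullet_n)$ is the canonical direct sum object $\bullet_1\cdots\bullet_n$ equipped with the standard matrix isometries $v_i\colon\bullet_i\to w$ and coisometries $v_i^*$ described right after Definition~\ref{def:add_hull}. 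By the very definition of direct sum ($*$-biproduct) recalled in \S\ref{subsec:*-cat}, these satisfy exactly the relations $\sum_i v_iv_i^*=1_w$ and $v_i^*v_j=\delta_{ij}$ that appear in the set $R$ defining $S(n)$.

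Next I would invoke the universal property of $S(n)=\mathrm{U}(Q,R)$ from Remark~\ref{rem:explanation_U}. Let $B$ denote the full $\Cstar$-subcategory of $(\F^n)_\oplus$ on the objects $\sigma_{\F^n}(\bullet_1),\ldots,\sigma_{\F^n}(\bullet_n)$ and $w$. The quiver morphism $Q\to B$ sending $o_i\mapsto\sigma_{\F^n}(\bullet_i)$, $s(n)\mapsto w$, and the generating arrow $v_i\mapsto$ the standard matrix isometry is a representation of $(Q,R)$ in a $\Cstar$-category, so by universality it factors uniquely through a $*$-functor $\Phi\colon S(n)\to B$ with $\Phi\circ\rho$ equal to this representation; in particular $\Phi\circ S_n=\sigma_{\F^n}|_{\F^n}$ under the obvious identification (both send $\bullet_i\mapsto\sigma_{\F^n}(\bullet_i)=o_i\mapsto\sigma_{\F^n}(\bullet_i)$). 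It remains to check $\Phi$ is an isomorphism of $\Cstar$-categories: it is clearly bijective on objects by construction. For fully faithfulness, I would argue on Hom-spaces. On the one side, the Hom-spaces of $B$ are explicitly computable matrix spaces: $B(\sigma\bullet_i,\sigma\bullet_j)=\delta_{ij}\F$, $B(\sigma\bullet_i,w)=\F$ (the $i$-th coordinate), $B(w,\sigma\bullet_i)=\F$, and $B(w,w)=\F^n$ (diagonal matrices). On the other side, one shows that in $S(n)$ every morphism is a $*$-polynomial in the $v_i$, and that using the relations in $R$ the resulting Hom-spaces have the same dimension — e.g.\ $S(n)(o_i,o_j)=v_j^*(\cdot)v_i$-type expressions collapse via $v_j^*v_i=\delta_{ij}$, and $S(n)(s(n),s(n))$ is spanned by $v_iv_i^*$ with $\sum_i v_iv_i^*=1$. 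A clean way to package the surjectivity-of-generators step is: the $v_i$ together with their adjoints generate $S(n)$ as a $\Cstar$-category (there are no other generators in $Q$), and any word in them reduces to the claimed normal form using only the relations. Then $\Phi$ is a norm-reducing bijective $*$-functor between $\Cstar$-categories whose inverse is also a $*$-functor, hence an isomorphism; the compatibility $\Phi\circ S_n=S_n'$ with the restricted $\sigma$ gives the asserted isomorphism of $*$-functors.

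The main obstacle I anticipate is the fully-faithfulness verification, i.e.\ pinning down the Hom-spaces of $S(n)$ precisely enough to match them with those of $B$. The universal property gives surjectivity of $\Phi$ on Hom-spaces almost for free (the standard matrix isometries and their adjoints generate $B$, and $\Phi$ hits the generators), so the real content is injectivity: one must rule out that $S(n)$ has ``extra'' morphisms beyond the normal forms. This is handled exactly as in the construction of universal $\Cstar$-categories — one produces a faithful representation of $B$ (it embeds in $(\F^n)_\oplus$, which embeds in $\Hilb$) and notes that the composite $S(n)\to B\hookrightarrow\Hilb$ separates the normal-form morphisms, forcing the normal forms to be linearly independent in $S(n)$ and hence $\Phi$ to be injective on Homs. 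Alternatively, and perhaps more in keeping with the paper's style, one can deduce everything abstractly: $S(n)$ is generated by a single direct-sum diagram on $n$ objects, so by the universal property of the additive hull (Lemma~\ref{lemma:UP_dirsum}) and the fact that $\F^n$ has no nonzero morphisms between distinct $\bullet_i$, the $\Cstar$-category $S(n)$ must be unitarily equivalent to $B\subseteq(\F^n)_\oplus$, and the unique $*$-functor $S_n$ is the restriction of $\sigma_{\F^n}$ up to isomorphism. Either route is routine once set up; the only care needed is bookkeeping the matrix entries.
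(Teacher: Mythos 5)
Your argument is correct, and since the paper declares this verification ``straightforward'' and leaves it to the reader, your write-up simply supplies the omitted check in exactly the intended way: the standard matrix isometries in $(\F^n)_\oplus$ satisfy the defining relations of $S(n)$, the universal property yields $\Phi\colon S(n)\to B$ over $\F^n$, and full faithfulness follows by reducing words in the $v_i,v_i^*$ to normal form and using the faithful representation $S(n)\to B\hookrightarrow\Hilb$ to see these normal forms are linearly independent (the same density-plus-normal-form technique the paper itself uses in the proof of Lemma~\ref{lemma:ff_comparisons}). The only point worth making explicit is that the passage from ``the $*$-algebraic span of the normal forms is dense'' to ``it is everything'' uses that this span is finite-dimensional, hence closed; with that noted, a bijective $*$-functor is automatically isometric and so an isomorphism of $\Cstar$-categories.
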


\begin{proof}
The proof is a straightforward verification that we leave for the reader.
\end{proof}

Whenever convenient, we will freely identify $S_n$ as in Lemma \ref{lemma:S_n}. 

\begin{defi}[Universal projection matrix $P(n)$]
\label{defi:P(n)}
Let $P(n)$ be the universal $\Cstar$-category containing $n$ objects and an idempotent self-adjoint $n\times n$ matrix of arrows between them. More precisely, consider the quiver~$Q$ with~$n$ vertices $o_1,\ldots, o_n$, with arrows $p_{ij}\colon o_j\to o_i$ (for $i,j\in\{1,\ldots,n\}$), and with the following set of $*$-algebraic relations:
\[
R=\{ [p_{ij}]^*=[p_{ij}] = [p_{ij}]^2 \} = \left\{ p_{ji}^* = p_{ij} = \sum_{k=1}^n p_{ik} p_{kj} \quad ( i,j=1,\ldots,n  ) \right\} \;.
\]
Then we set $P(n):= \mathrm{U}(Q,R)$.
This universal $\Cstar$-category exists, because the above equations~$R$ force the norm of each $p_{ij}$ to be at most equal to one.
\end{defi}

\begin{remark}
\label{remark:inspiration_for_defs}
Let $[p_{ij}]$ be a projection matrix on objects $x_1,\ldots,x_n$ in a $\Cstar$-category $A$. We say that an object $r\in \obj(A)$ is a \emph{range object for~$[p_{ij}]$} if $\iota_A(r)$ is a range object for the projection $p:=[p_{ij}]$ on the object $x_1\cdots x_n = \iota_A(x_1)\oplus \cdots\oplus \iota_A(x_n)$ of $A^\natural_\oplus$.
This is precisely the case if there exists some isometry $v\colon r\to x_1\cdots x_n$ with $vv^*=p$.
Since its target object is a direct sum, $v$ is determined by its components $s_i\colon x_i\to r$, as in $v=:[s_1 \,\ldots\, s_n]^*$. They are such that $p_{ij}=s^*_is_j$: 
\[
[p_{ij}] 
= p
= vv^*
= 
\left[ \begin{array}{c}
s^*_1 \\
\vdots \\
s^*_n
\end{array}\right]
\left[ \begin{array}{ccc}
s_1 &
\cdots &
s_n
\end{array}\right]
= [s^*_is_j]
\;.
\]
It is straightforward to verify that (with these notations and for any choice of such a range object~$r$) the following three sets of relations are equivalent:
\begin{enumerate}
\item[(i)] $p^*=p=p^2$ (with $p=vv^*$ or $p_{ij}=s^*_is_j$);
\item[(ii)] $1_r = v^*v$  (with $v=[s_1 \,\ldots\, s_n]^*$);
\item[(iii)] $1_{r}=\sum_{k=1}^{n} s_ks_k^*$.
\end{enumerate}
If we also take into consideration the direct sum $x:=x_1\cdots x_n$ with its isometries $v_i\colon x_i\to x$, we may visualize (most of) the relations between all these maps in the following commutative diagram:
\begin{equation}\label{eq:master_diagram}
\xymatrix{
x_j \ar[rd]^{s_j} \ar[rr]^-{p_{ij}} \ar[dd]_{v_j} &&  x_i \\
  & r \ar[ur]^{s_i^*} \ar[rd]_-{v} &   \\
x \ar[ru]_-{v^*} \ar[rr]_-{p} && x \ar[uu]_{v_i^*}
}
\end{equation}
Note that -- just as in (i) when formulated in terms of the matrix $[p_{ij}]$ as in Definition~\ref{defi:P(n)} -- the relations in~(iii) can be expressed entirely inside~$A$. 
This motivates the following crucial definition.
\end{remark}

\begin{defi}[Universal projection matrix with range $R(n)$]
\label{defi:R(n)}
Let $R(n)$ be the universal $\Cstar$-category with $n$ objects, a projection matrix of arrows among them, and a range object for the latter. 
More precisely, consider the quiver~$Q$ with vertices
\[
\obj(R(n)):= \{ o_1, \ldots, o_n, r(n) \}\,,
\]
with $n$ arrows $s_i\colon o_i\to r(n)$ ($i=1,\ldots,n$), and with the following $\ast$-algebraic relation:
%$R:=\{  [s_i^*s_j] = [s_i^*s_j]^2 \} \cup \{1_{r(n)}= \sum_k s_ks_k^* \}$,
%or more explicitly,
\[
R= \left\{ 1_{r(n)}=\sum_{k=1}^{n} s_ks_k^*  \right\} \;. %s^*_i s_j = \sum_{k=1}^n s^*_is_k s^*_k s_j \quad (i,j=1,\ldots,n) \quad, \quad
\]
Then we set $R(n):= \mathrm{U}(Q,R)$. The above equation forces the norm of each $s_i$ to be at most equal to one and so the $\Cstar$-category $R(n)$ is well-defined. 
\end{defi}

\begin{defi}[Universal sum of objects with a projection $SP(n)$]
Let $SP(n)$ be the universal $\Cstar$-category containing $n$ objects, a sum of these objects, and a projection on the sum. 
More precisely, consider the quiver $Q$ with vertex set 
\[
\obj(SP(n)) := \{o_1, \ldots ,o_n , s(n) \}\,,
\]
with $n+1$ arrows $v_i\colon o_i \to s(n)$ ($i=1,\ldots,n$) and $p \colon s(n) \to s(n)$, and with the following set of $\ast$-algebraic relations:
\[
R= \left\{ 
1_{s(n)} = \sum_{k=1}^n v_kv_k^*\,,\quad v_i^*v_j = \delta_{ij}  \; (i,j=1,\ldots,n) \,, \quad p^*=p=p^2  
\right\}
\;.
\]
Then we set $SP(n):= \mathrm{U}(Q,R)$. Since the above projection and isometry relations force the norm of~$p$ and $v_i$ to be at most equal to one, the $\Cstar$-category $SP(n)$ is well-defined.
\end{defi}

\begin{defi}[Universal sum of objects with a retract $SR(n)$]
\label{defi:SD}
Let $SR(n)$ be the universal $\Cstar$-category containing $n$ objects, a sum of these objects, and a retract of the sum. More precisely, consider the quiver $Q$ with vertex set 
\[
\obj(SR(n)) := \{o_1, \ldots ,o_n , s(n) , r(n) \}\,,
\]
with $n+1$ arrows $v_i\colon o_i \to s(n)$ ($i=1,\ldots,n$) and $v \colon r(n) \to s(n)$, and with the following set of $\ast$-algebraic relations:
\[
R=\left\{
1_{s(n)}  = \sum_{k=1}^n v_kv_k^*\,,\quad v_i^*v_j = \delta_{ij}  \; (i,j=1,\ldots,n) \,, \quad v^*v=1_{r(n)}  
\right\}
\;.
\]
Then we set $SR(n):= \mathrm{U}(Q,R)$. These isometry relations force the norm of $v$ and $v_i$ to be at most equal to one, and so the $\Cstar$-category is well-defined.
\end{defi}

For the reader's convenience and in order to simplify the proofs in the sequel, we now spell out explicitly the universal properties of the above $\Cstar$-categories.
\begin{lemma}\label{lem:char}
Let $n\geq 1$ be a positive integer and $B$ a $\Cstar$-category.
\begin{enumerate}
\item[(i)] A $*$-functor $F\colon \F^n\to B$ is uniquely determined by the choice of~$n$ objects $x_i=F(\bullet_i)\in \obj(B)$ \textup($i=1,\ldots,n$\textup).

\item[(ii)] A $*$-functor $F\colon S(n)\to B$ is uniquely determined by the choice of~$n$ objects $x_i=F(o_i)\in \obj(B)$ \textup($i=1,\ldots,n$\textup) and of a direct sum $x=F(s(n))$ of these objects in~$B$ (with specified isometries $F(v_i) \in B(x_i, x)$).

\item[(iii)] A $*$-functor $F\colon P(n)\to B$ is uniquely determined by the choice of~$n$ objects $x_i=F(o_i)\in \obj(B)$ \textup($i=1,\ldots,n$\textup) and of a projection matrix $[Fp_{ij}]$ on them (as in Remark \ref{remark:inspiration_for_defs}). 

\item[(iv)] A $*$-functor $F\colon SP(n)\to B$ is uniquely determined by the choice of~$n$ objects $x_i=F(o_i)\in \obj(B)$ \textup($i=1,\ldots,n$\textup), a direct sum $x=F(s(n))$ for them (with specified isometries $F(v_i) \in B(x_i, x)$), and a projection~$F(p)\in B(x,x)$ on~$x$.

\item[(v)] A $*$-functor $F\colon R(n)\to B$ is uniquely determined by the choice of $n+1$ objects $x=F(o_i)$ \textup($i=1,\ldots,n$\textup) and $r=F(r(n))$ of~$B$ together with an isometry $v=[Fs_1 \,\cdots\, Fs_n]^*\colon r\to x_1\oplus \cdots \oplus x_n$ in $B_\oplus$. (In the terminology of Remark~\ref{remark:inspiration_for_defs}, this amounts to choosing arrows $F(s_i)\in B( Fx_i, Fr(n))$ such that $[F(s_i)^*F(s_j)]$ is a projection matrix with range object $F(r(n))$).
%Or equivalently inside~$B$, by a choice of arrows $Fs_i\in B( Fx_i, Fr(n))$ such that $[F(s^*_is_j)]$ is a projection matrix and such that their common codomain $F(r(n))$ is a range object for it (in the terminology of Remark \ref{remark:inspiration_for_defs}).

\item[(vi)] A $*$-functor $F\colon SR(n)\to B$ is uniquely determined by the choice of  $n+1$ objects $x_i=F(o_i)\in \obj(B)$ \textup($i=1,\ldots,n$\textup) and $r=F(r(n))$ of~$B$, of a direct sum $x=F(s(n))$ of these objects \textup(with isometries $Fv_i\in B(x_i,x)$\textup), and of a retract $r=F(r(n))$ of~$x$ (with isometry $Fv\in B(r,x)$).

\end{enumerate}
\end{lemma}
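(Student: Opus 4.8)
The plan is to treat all six items uniformly by invoking the defining 2-universal property recalled in Remark \ref{rem:explanation_U}: each of the $\Cstar$-categories $\F^n$, $S(n)$, $P(n)$, $SP(n)$, $R(n)$, $SR(n)$ is of the form $\mathrm{U}(Q,R)$ for an explicit quiver-with-relations $(Q,R)$, so a $*$-functor $F\colon \mathrm{U}(Q,R)\to B$ corresponds bijectively to a representation of $(Q,R)$ in $B$, \ie\ to a choice of images for the objects and generating arrows of $Q$ subject to the relations in $R$. For item (i), $\F^n = \F\sqcup\cdots\sqcup\F$ is the coproduct of $n$ copies of $\F=\mathrm{U}(\emptyset,\emptyset)$ (one object, no nontrivial generating arrows), so by the universal property of the coproduct a $*$-functor out of it is exactly an $n$-tuple of objects of $B$; alternatively one uses $\F^n=\mathrm{U}(Q,\emptyset)$ with $Q$ having $n$ vertices and no arrows. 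For items (ii), (iii), (iv) the relations $R$ in the respective definitions are precisely the equations spelling out ``$x$ is a direct sum of the $x_i$ with isometries $F(v_i)$'' (Definition \ref{defi:S}), ``$[Fp_{ij}]$ is a self-adjoint idempotent matrix'' (Definition \ref{defi:P(n)}), and the conjunction of these two plus ``$F(p)$ is a projection on $x$''; so the asserted bijections are immediate from unwinding the definitions.

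The only items requiring a genuine observation are (v) and (vi), because there the defining relation is phrased purely inside $\mathrm{U}(Q,R)$ (using only the $s_i$, resp.\ the $v_i$ and $v$), whereas the statement of the Lemma describes the data in terms of an isometry $v\colon r\to x_1\oplus\cdots\oplus x_n$ living in the additive hull $B_\oplus$ and a projection matrix with range object. Here the point is exactly Remark \ref{remark:inspiration_for_defs}: for $R(n)$ one checks that the single relation $1_{r(n)}=\sum_k s_ks_k^*$ in Definition \ref{defi:R(n)} is equivalent — via the dictionary $v=[s_1\,\cdots\,s_n]^*$, $p=vv^*$, $p_{ij}=s_i^*s_j$, as displayed in diagram \eqref{eq:master_diagram} — to the statement that $v$ is an isometry in $B_\oplus$ onto a range object of the projection $[s_i^*s_j]$, which is the content of the equivalence (i)$\Leftrightarrow$(ii)$\Leftrightarrow$(iii) established there. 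For $SR(n)$ (Definition \ref{defi:SD}) one combines this with the universal property of $S(n)$ from item (ii): the relations $1_{s(n)}=\sum_k v_kv_k^*$ and $v_i^*v_j=\delta_{ij}$ make $x=F(s(n))$ a direct sum of the $x_i$, and the remaining relation $v^*v=1_{r(n)}$ says precisely that $F(v)\colon r\to x$ is an isometry, \ie\ that $r$ is a specified retract of $x$.

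The main (and only mildly delicate) obstacle is therefore the translation in (v)/(vi) between the ``internal'' formulation of the relations used in the definitions of $R(n)$ and $SR(n)$ and the ``external'' formulation in $B_\oplus$ used in the Lemma; but this translation has already been carried out in Remark \ref{remark:inspiration_for_defs}, so it suffices to cite it. All the verifications that the listed equations genuinely admit a uniform norm bound (needed for $\mathrm{U}(Q,R)$ to exist) have likewise been recorded in the respective definitions. Consequently the proof amounts to writing ``combine Remark \ref{rem:explanation_U}, the defining relations, and (for (v)--(vi)) the equivalences of Remark \ref{remark:inspiration_for_defs}'', and the routine bookkeeping may safely be left to the reader.
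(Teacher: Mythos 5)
Your proposal is correct and takes essentially the same route as the paper, whose entire proof is a one-line appeal to the universal property of $\mathrm{U}(Q,R)$ from Remark~\ref{rem:explanation_U}. You simply spell out the details the paper leaves implicit, including the translation via Remark~\ref{remark:inspiration_for_defs} needed for items (v) and (vi).
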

\begin{proof}
%\Goncalo{(Ivo, could you kindly add a small proof ?)}
As already suggested in Remark~\ref{rem:explanation_U}, in each case the claim follows automatically from the description of the generating quiver with relations $(Q,R)$ and from the universal property of the associated $\Cstar$-category $\mathrm U(Q,R)$; see \cite{ivo:unitary}*{Theorem~2.3}.
\end{proof}

\begin{defi}
\label{defi:R_n}
By comparing the relations imposed on their generating arrows, the universal properties of the above-constructed $\Cstar$-categories immediately induce (as in Lemma~\ref{lem:char}) the following commutative diagram of $*$-functors.
\[
\xymatrix@C=0pt@R=9pt{
&& \F^n \ar[dl] \ar[dr]^{S_n} & \\
& P(n) \ar[dl]_{R_n} \ar[dr] && S(n) \ar[dl] \\
R(n) \ar[dr] && SP(n) \ar[dl] & \\
& SR(n) &&
}
\]
The functor $S_n$ was already defined (see Definition~\ref{defi:S_n}). The remaining ones are determined by the following assignments, with $i,j=1,\ldots,n$ (\cf\ Remark\,\ref{remark:inspiration_for_defs}):
$$
\begin{array}{lcl}
\F^n \to P(n), \bullet_i \mapsto o_i && R_n\colon P(n) \to R(n), p_{ij} \mapsto s_i^*s_j\\
P(n) \to SP(n), p_{ij} \mapsto v^*_i p v_j && S(n) \to SP(n), v_i \mapsto v_i\\
SP(n) \to SR(n), v_i \mapsto v_i \textrm{ and } p \mapsto vv^* && R(n) \to SR(n), s_i \mapsto v^*v_i \,.\\
\end{array}
$$
For $n=0$, we also define $R_0:P(0)\to R(0)$ to be the unique $*$-functor $\emptyset\to \mathbf0$ from the initial object to the final object of $\Cstarcatun$, \ie\ from the empty $\Cstar$-category to the $\Cstar$-category with a unique object whose endomorphism ring is zero. Note that this is consistent with our notation since for $n=0$ the equation $1_{r(0)}=\sum_{\emptyset} s_ks^*_k =0$ says that the unique object $r(0)$ of $R(0)$ is a zero object.
\end{defi}

\begin{lemma}
\label{lemma:ff_comparisons}
All the $*$-functors introduced in Definition~\ref{defi:R_n}, except for the two families $\F^n\to P(n)$ and $S(n)\to SP(n)$ ($n\geq1$), are fully faithful.
%The $*$-functors $S_n\colon \F^n\to S(n)$, $R_n\colon P(n) \to R(n)$, $R(n)\to SR(n)$, $P(n)\to SP(n)$ and $SP(n) \to SR(n)$ are fully faithful.
\end{lemma}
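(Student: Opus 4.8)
The plan is to check fully faithfulness separately for each of the remaining $*$-functors in Definition~\ref{defi:R_n}, using in each case the explicit combinatorial description of $\mathrm{U}(Q,R)$ provided by \cite{ivo:unitary}*{Theorem~2.3} together with the universal properties spelled out in Lemma~\ref{lem:char}. The general strategy for showing that a $*$-functor $\Phi\colon \mathrm{U}(Q,R)\to\mathrm{U}(Q',R')$ is fully faithful is to produce a $*$-functor in the opposite direction on the relevant Hom-spaces, or more precisely to exhibit a concrete $\Cstar$-model of $\mathrm U(Q,R)$ sitting (isometrically, via a faithful representation into $\Hilb$) inside a model of $\mathrm U(Q',R')$ in such a way that $\Phi$ becomes the inclusion. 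Since a faithful $*$-functor between $\Cstar$-categories is automatically isometric, it then suffices to check injectivity on arrows plus surjectivity onto each Hom-space.

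First I would dispatch the functors involving $R(n)$ and $SR(n)$. For $\tau_A$-type inclusions such as $R(n)\to SR(n)$, $s_i\mapsto v^*v_i$, observe that in $SR(n)$ the arrows $v,v_i$ already satisfy $1_{r(n)} = v^*v = v^*(\sum_k v_kv_k^*)v = \sum_k (v^*v_k)(v^*v_k)^*$, so the images generate a copy of $R(n)$; conversely any word in the $v^*v_i$ can be rewritten using $v^*v = 1$ and $v_i^*v_j = \delta_{ij}$, and one checks the induced map $SR(n)\big(\iota(r),\iota(o_i)\big)\to\cdots$ is already accounted for — the key point is that the Hom-spaces of $R(n)$, as computed from $(Q,R)$, are spanned by the reduced words, and these inject. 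The cases $P(n)\to R(n)$ (the functor $R_n$), $R(n)\to SR(n)$, and $SP(n)\to SR(n)$ are all of this flavour: in each one the target $\Cstar$-category is, by its universal property, obtained from the source by \emph{adjoining} a range object (resp.\ a direct sum), which by Lemmas~\ref{lemma:UP_dirsum}, \ref{lemma:UP_proj}, and Proposition~\ref{prop:UP_saturated} does not alter the full subcategory on the old objects. Concretely, $SR(n)$ is unitarily equivalent to a full subcategory of $\big(R(n)_\oplus\big)^\natural_\oplus$, and similarly $R(n) \simeq$ full subcategory of $P(n)^\natural_\oplus$, so the comparison functors are fully faithful because $\iota$ is.

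For the remaining functors $P(n)\to SP(n)$, $p_{ij}\mapsto v_i^*pv_j$, and $S(n)\to SP(n)$ (wait — this last one is on the excluded list; so only $P(n)\to SP(n)$ remains, together with $R(n)\to SR(n)$, $P(n)\to R(n)$, $SP(n)\to SR(n)$, and the vertical $S(n)\to SP(n)$ is excluded). For $P(n)\to SP(n)$: here the source has no $s(n)$ object, but $SP(n)$ adjoins both a direct sum $s(n)$ of the $o_i$ and a projection $p$ on it; restricting to the subcategory on $o_1,\dots,o_n$, the arrow spaces of $SP(n)$ are spanned by words $v_{i_1}^* p^{\epsilon_1} v_{j_1}\cdots$, but using $p^2 = p = p^*$ and $v_i^*v_j=\delta_{ij}$ and $\sum v_kv_k^* = 1$ every such word reduces to a single $v_i^* p\, v_j$ or to $\delta_{ij}$, i.e.\ to an element of the image of $P(n)(o_j,o_i) = $ span of $p_{ij}$ and (when $i=j$) $1_{o_i}$. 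Injectivity follows because a faithful representation of $P(n)$ on Hilbert spaces extends to one of $SP(n)$ (put $s(n)\mapsto$ the Hilbert-space direct sum, $p\mapsto$ the corresponding projection matrix), under which $P(n)$ maps faithfully, hence isometrically; so the composite with $P(n)\to SP(n)$ is isometric and in particular injective.

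The main obstacle I anticipate is \emph{surjectivity} — i.e.\ the reduced-word normal form arguments — rather than injectivity, which in every case is handed to us for free by choosing a faithful representation of the larger $\Cstar$-category and restricting. One must argue carefully that, in the explicit presentation of $\mathrm U(Q',R')$, the relations $R'$ suffice to rewrite every word in the generators whose source and target lie among the ``old'' objects into the image of a word in the ``old'' generators; this is a small exercise in manipulating isometry/projection relations but needs to be done honestly for each of the four functors. A clean uniform way to organize it is to invoke Lemmas~\ref{lemma:UP_dirsum} and~\ref{lemma:UP_proj} and Proposition~\ref{prop:UP_saturated}(i),(iii): each of these functors exhibits its target as (a full subcategory of) the universal completion of its source under a direct sum and/or a range object, and completions under these operations preserve and reflect full faithfulness. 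I would present the $R(n)\to SR(n)$ case in full detail as a model and then indicate that the remaining three are entirely analogous.
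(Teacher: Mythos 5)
Your overall two-step strategy (faithfulness by factoring a faithful, hence isometric, $*$-functor through the comparison map; fullness by a rewriting argument on words in the generators) is the same as the paper's, which also reduces everything to the representative case $R_n\colon P(n)\to R(n)$ and constructs the extension $F\colon R(n)\to P(n)_\oplus^\natural$ of $\iota_{P(n)}$ with $F\circ R_n=\iota_{P(n)}$ to get faithfulness. However, your fullness argument has a genuine gap. You assert that the Hom-spaces of $R(n)$ (and of $SP(n)$, $SR(n)$) ``are spanned by the reduced words'' in the generating arrows. That is false for universal $\Cstar$-categories: each Hom-space of $\mathrm U(Q,R)$ is the \emph{norm-completion} of the image of the free $*$-category on $Q$, not the algebraic span itself. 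Your rewriting argument therefore only shows that a dense $*$-subalgebra of each target Hom-space lies in the image of the comparison functor. To close the argument you must combine this with the fact that the comparison functor, being faithful, is an isometric inclusion and hence has closed image: a norm-limit $\lim_k b_k$ of words already expressible in the source then defines an element of the (complete) source Hom-space mapping to the given arrow. This is exactly how the paper finishes, quoting the density statement from \cite{ivo:unitary}*{Example~2.7}; without it the proof is incomplete.

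A second, related problem is your proposed ``clean uniform way'' of organizing the argument via Lemmas~\ref{lemma:UP_dirsum} and~\ref{lemma:UP_proj}. The claim that $R(n)$ is unitarily equivalent to the full subcategory of $P(n)_\oplus^\natural$ on the old objects and the canonical range object is not a consequence of those lemmas: they describe the completions $A_\oplus$ and $A^\natural$, whereas $R(n)$ is defined by generators and relations, and identifying the two is \emph{equivalent} to the full faithfulness you are trying to prove (the universal property only gives you a $*$-functor $R(n)\to P(n)_\oplus^\natural$, a priori neither injective nor surjective on the relevant Hom-spaces). So that route is circular as stated. Finally, a minor omission: you never treat $S_n$ (handled in the paper by Lemma~\ref{lemma:S_n}) or the degenerate case $R_0$, though both are easy.
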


\begin{proof}
The claim is obvious for $S_n$ by Lemma \ref{lemma:S_n}, and for $R_0$ because it is empty.
The two exceptional families are seen \emph{not} to be fully faithful, simply because there do exist non-identity projection matrices on direct sums (\eg\ in $\Hilb$). 
Since the proof is similar for the remaining four families, we will restrict ourselves to the $\ast$-functor $R_n$ ($n\geq1$). 
Consider the canonical $*$-functor $\iota_{P(n)}\colon P(n)\to P(n)_\oplus^\natural$. 
By Lemma~\ref{lem:char}(iii) it is determined by the image of the generating arrows $p_{ij}\colon x_j\to x_i$. Hence, in order to extend $\iota_{P(n)}$ along~$R_n$ as in the following commutative diagram
\[
\xymatrix{
P(n) \ar[d]_{R_n} \ar[r]^-{\iota_{P(n)}} & P(n)_\oplus^\natural  \\
R(n) \ar@{..>}[ru]_-{F}  & \,,
} 
\]
it suffices by Lemma~\ref{lem:char}(v) to find  in $P(n)_\oplus^\natural$ an object~$r$ and arrows $s_i\colon x_i\to r$ such that $s_i^*s_j=p_{ij}$. We have a canonical choice where we set~$r$ to be $(x_1 \cdots x_n , [p_{ij}])$ and~$s_i$ to be the composition $x_i\to x_1 \cdots x_n \to r$ of the isometry of the direct summand~$x_i$ with the coisometry of the direct summand~$r$.
Thus $F\circ R_n= \iota_{P(n)}$. Since $\iota_{P(n)}$ is faithful so is $R_n$.

It remains to prove that $R_n$ is full. To this end, recall from \cite{ivo:unitary}*{\S 2} that for any quiver with admissible relations $(Q,R)$ there is a free $*$-category $\mathrm FQ$ on the quiver~$Q$ and thus an induced $*$-functor $\pi\colon \mathrm FQ\to \mathrm U(Q,R)$; see \cite{ivo:unitary}*{Constr.~2.4}. Moreover, if the relations $R$ are algebraic (such as the ones defining~$R(n)$) then the image $B:=\mathrm FQ/\ker(\pi)$ in $\mathrm U(Q,R)$ is a dense $*$-subcategory, \ie, each Hom-set of~$B$ is  norm-dense; see \cite{ivo:unitary}*{Example~2.7}. 
%Recall from the explicit construction of $\mathrm U(Q,R)$ in \cite{ivo:unitary}*{\S 2} that if $(Q,R)$ is a quiver with \emph{$*$-algebraic} relations  (such as the ones defining~$R(n)$) then it generates a universal $*$-category whose canonical image in $\mathrm U(Q,R)$ is dense (\ie\ each Hom-set is norm-dense). 
In particular -- applying this to the quiver with relations in Definition~\ref{defi:R(n)} -- for every arrow $b\in R(n)(o_j,o_i)$ there is a sequence $(b_k)_k\subset R(n)(o_j,o_i)$ converging in norm to~$b$, where each $b_k$ belongs to~$B$, \ie\ $b_k$ is a (finite) $*$-algebraic combination of arrows of~$Q$. But, since $s_i^*s_j=R_n(p_{ij})$, every such combination with domain~$o_j$ and codomain~$o_i$ can already be written using the generating arrows~$p_{ij}$ of~$P(n)$, hence each $b_k$ also defines an element in~$P(n)$. 
Moreover, we have just proved that $R_n\colon P(n)\to R(n)$ is faithful, \ie\ that it is an isometric inclusion. Therefore the expression $\lim_k b_k$ defines an arrow $a\in P(n)(o_j,o_i)$ such that $R_n(a)=b$. We conclude that $R_n$ is full, as claimed.
\end{proof}

%%%%%%%
\section{The Morita model structure}

In this section we start by constructing the \emph{Morita model structure} on the category of $\Cstar$-categories; see Theorem~\ref{thm:morita_model}. Then we describe its fibrant objects and the Hom-sets of the associated homotopy category; see Propositions~\ref{prop:fibrant} and \ref{prop:morphisms_Morita} and Corollary \ref{cor:new}. Finally, we relate Morita equivalence of $\Cstar$-categories with Morita-Rieffel equivalence of $\Cstar$-algebras; see Proposition~\ref{prop:Morita_agreement}.

\subsection{The unitary model} \label{subsec:unitary}

Recall from \cite{ivo:unitary}*{\S3} that the unitary model structure on~$\Cstarcatun$ consists of the following three classes of $*$-functors:
\begin{align*}
\Weq_\uni &= \{\textrm{unitary equivalences (see Definition~\ref{def:unitary})} \} \\
\Cof_\uni &= \{\ast\text{-}\textrm{functors } F \textrm{ such that } \obj(F) \textrm{ is injective}\} \\
\Fib_\uni & = \{\ast\text{-}\textrm{functors } F \textrm{ allowing the lift of unitaries of the form } Fx\stackrel{\sim}{\to} y \}\,.
\end{align*}
Recall also that $\mathbf I$ denotes the $\Cstar$-category with two objects $0$ and~$1$ and with a unitary isomorphism $u:0\stackrel{\sim}{\to} 1$. Each Hom-space of $\mathbf I$ is therefore one dimensional. 
Following \cite{ivo:unitary} we will name a $*$-functor $\F\to A$ by the object $x=F(\bullet)\in \obj(A)$ which determines it uniquely. The unitary model structure is cofibrantly generated  and its sets of generating (trivial) cofibrations are the following:
\begin{eqnarray*}
I_\uni = \{ U\colon \emptyset \to \F , V\colon \F\sqcup\F\to \mathbf1 , W\colon P\to \mathbf1 \}  &&
J_\uni = \{0\colon \F\to \mathbf I\}\,.
\end{eqnarray*}

\begin{cor}
\label{cor:pushout_unitrcof}
The class of unitary equivalences which are injective on objects is closed under pushouts in~$\Cstarcatun$.
\qed 
\end{cor}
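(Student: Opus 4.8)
The plan is to recognize the class in question as the trivial cofibrations of the unitary model structure and then invoke a standard closure property. By the very definitions recalled above, a $*$-functor $F$ is a unitary equivalence which is injective on objects exactly when $F\in\Weq_\uni\cap\Cof_\uni$, i.e.\ when $F$ is a trivial cofibration of the unitary model structure. Now in any Quillen model category the class of trivial cofibrations is closed under cobase change: by the retract and factorization axioms the pair $(\Cof_\uni\cap\Weq_\uni,\Fib_\uni)$ forms a weak factorization system, so $\Cof_\uni\cap\Weq_\uni$ coincides with the class of $*$-functors having the left lifting property with respect to every unitary fibration, and a class defined by a left lifting property is automatically stable under pushouts (test the universal property of a pushout square against an arbitrary lifting problem and transport a chosen lift). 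Equivalently, since the unitary model structure is cofibrantly generated with $J_\uni=\{0\colon\F\to\mathbf I\}$, every trivial cofibration---hence every pushout of one---is a retract of a transfinite composition of pushouts of the single $*$-functor $0\colon\F\to\mathbf I$. Either way the corollary follows at once.

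I expect no real obstacle here: the only content is the bookkeeping identification $\{\textrm{unitary equivalences injective on objects}\}=\Weq_\uni\cap\Cof_\uni$, which is immediate from the descriptions of $\Weq_\uni$ and $\Cof_\uni$. For the record, one could also argue directly---given a pushout square in which one leg is a unitary equivalence injective on objects, produce a quasi-inverse and the required unitary natural isomorphisms on the pushout from its universal property---but this concrete route is strictly longer and sheds no further light, so I would not pursue it.
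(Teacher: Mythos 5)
Your proof is correct and is essentially the paper's own argument: the paper likewise identifies this class with the trivial cofibrations of the unitary model structure and invokes the standard fact that trivial cofibrations are closed under pushout. The extra detail you supply (the weak factorization system / left lifting property justification) is just an expansion of what the paper leaves implicit.
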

\begin{proof}
This is an immediate consequence of the existence of the unitary model, since in any model category the class of trivial cofibrations is closed under pushouts.
\end{proof}

In the particular case of the generating unitary trivial cofibration $0:\F\to \mathbf I$, the pushouts of Corollary~\ref{cor:pushout_unitrcof} admit the following description: 
\begin{lemma}
\label{lemma:0_pushout}
Given a $\Cstar$-category $A$ and an object $x_0\in \obj(A)$, the pushout
\begin{equation}\label{eq:0_pushout}
\xymatrix{
\F \ar[r]^-{x_0} \ar[d]_0 \ar@{}[dr]|{\lrcorner}& A \ar[d] \\
\mathbf I \ar[r] & A\sqcup_\F \mathbf I
}
\end{equation}
has the following description: $A\sqcup_\F\mathbf I$ is the $\Cstar$-category consisting of $A$ plus an extra object~$x_1$, for which we set $(A\sqcup_\F\mathbf I)(x_0,x_1):= u\circ A(x_0,x_0)$, $(A\sqcup_\F\mathbf I)(x_1,x_0):=A(x_0,x_0)\circ u^*$ and $(A\sqcup_\F \mathbf I)(x_1,x_1):= u\circ A(x_0,x_0)\circ u^*$, with the new relations $u^*u= 1_{x_0}$ and $uu^*=1_{x_1}$.
Then the $*$-functor $A\to A\sqcup_\F \mathbf I$ is the inclusion and the $*$-functor $\mathbf I\to A\sqcup_\F \mathbf I$ maps the generating unitary morphism of~$\mathbf I$ to $u:x_0\to x_1$. 
%\Goncalo{(We still need to describe $(A\sqcup_\F\mathbf I)(x_1,x_1)$. Could you kindly solve this issue ?)}
\end{lemma}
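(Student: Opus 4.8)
The plan is to check directly that the explicitly described $\Cstar$-category, which I will call $C$, together with the two $*$-functors $\iota\colon A\hookrightarrow C$ (the inclusion) and $j\colon \mathbf I\to C$ (sending the generating unitary of $\mathbf I$ to the adjoined arrow $u\colon x_0\to x_1$), enjoys the universal property of the pushout \eqref{eq:0_pushout}. Since $\Cstarcatun$ is already known to admit such colimits (it carries the unitary model structure used in Corollary~\ref{cor:pushout_unitrcof}), it then suffices to identify the abstract pushout with this concrete model; alternatively, the verification below establishes the pushout from scratch.

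First I would check that $C$ really is a $\Cstar$-category. The quickest route is to pick a faithful (hence isometric) representation $\rho\colon A\to\Hilb$ and extend it to $C$ by declaring $\rho'(x_1):=\rho(x_0)$ and sending the new generator $u\colon x_0\to x_1$ to the identity operator on $\rho(x_0)$. Because $u$ is required to be unitary, the maps $a\mapsto u\circ a$, $b\mapsto b\circ u^*$ and $a\mapsto u\circ a\circ u^*$ are isometric, so the Hom-spaces displayed in the statement (and, more generally, $C(y,x_1)=u\circ A(y,x_0)$ and $C(x_1,y)=A(x_0,y)\circ u^*$ for arbitrary $y\in\obj(A)$) are complete, and $\rho'$ identifies $C$ with a norm-closed, $*$-closed subcategory of $\Hilb$; in particular the positivity axiom of the $\Cstar$-norm holds. (As a sanity check, $\iota$ is then fully faithful and essentially surjective, $x_1$ being unitarily isomorphic to $x_0$, so $C$ is a $\Cstar$-category unitarily equivalent to $A$.) It is immediate from the construction that $(\iota,j)$ is a cocone, i.e.\ $\iota\circ x_0=j\circ 0$ as $*$-functors $\F\to C$; note that a $*$-functor $\F\to D$ is determined by the single object it picks out, so there is no further compatibility to impose.

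The heart of the proof is the universal property. Given a $\Cstar$-category $D$ and $*$-functors $G\colon A\to D$, $H\colon\mathbf I\to D$ with $G\circ x_0=H\circ 0$ -- which merely says $Gx_0=H0$ as objects of $D$ -- I would define $K\colon C\to D$ by $K|_A:=G$, $Kx_1:=H1$, $K(u):=H(u)$, and on the mixed Hom-spaces by the forced formulas $K(u\circ a):=H(u)\circ G(a)$, $K(b\circ u^*):=G(b)\circ H(u)^*$, and $K(u\circ a\circ u^*):=H(u)\circ G(a)\circ H(u)^*$. Well-definedness follows because $u$ is invertible in $C$ (with inverse $u^*$), so every arrow of a mixed Hom-space has a unique presentation of the displayed form; moreover $H(u)$ is unitary in $D$, since $H(u)^*H(u)=H(u^*u)=H(1_{x_0})=1_{Gx_0}$ and $H(u)H(u)^*=H(1_{x_1})=1_{H1}$, so the target expressions compose correctly. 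That $K$ is $\F$-linear and involution-preserving is read off the definitions; preservation of composition and of identities is a short case analysis using $u^*u=1_{x_0}$, $uu^*=1_{x_1}$ and their images under $H$ (for example $K(1_{x_1})=K(uu^*)=H(u)H(u)^*=1_{H1}$, and $K\big((b\circ u^*)\circ(u\circ a)\big)=K(b\circ a)=G(b)G(a)=K(b\circ u^*)\,K(u\circ a)$). As any $*$-functor between $\Cstar$-categories is automatically norm-reducing, $K$ is a morphism in $\Cstarcatun$; by construction $K\circ\iota=G$ and $K\circ j=H$, and uniqueness is clear because such a $K$ is forced on $A$, on $x_1$, on $u$, and hence by functoriality on all mixed Hom-spaces.

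I expect no serious obstacle. The only points that require a moment's care are (i) verifying that the prescribed Hom-spaces genuinely assemble into a $\Cstar$-category -- handled cleanly by the faithful representation in $\Hilb$, which is also what supplies the positivity axiom -- and (ii) the bookkeeping of the case analysis showing $K$ respects composition and identities; both are entirely routine.
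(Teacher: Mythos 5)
Your proof is correct and follows essentially the same route as the paper's: describe the Hom-spaces of $A\sqcup_\F\mathbf I$ explicitly (noting they are determined by composition with the unitary $u$), and check that the resulting structure is a $\Cstar$-category satisfying the pushout property. You simply make explicit two steps the paper declares evident -- the faithful representation into $\Hilb$ to verify the $\Cstar$-axioms (the same device the paper uses for the additive hull) and the direct verification of the universal property.
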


\begin{proof}
For any object $x\in \obj (A)$, composition with the new morphism~$u$ induces isometric isomorphisms of Banach spaces 
$(A\sqcup_\F\mathbf I)(x,x_0)\stackrel{\sim}{\to} (A\sqcup_\F\mathbf I)(x,x_1)$ and
$(A\sqcup_\F\mathbf I)(x_1,x)\stackrel{\sim}{\to} (A\sqcup_\F\mathbf I)(x_0,x)$. Similarly  
$(A\sqcup_\F\mathbf I)(x_1,x_1)\stackrel{\sim}{\to} (A\sqcup_\F\mathbf I)(x_0,x_0)$
by conjugation with~$u$.
Thus all the Hom-spaces of $A\sqcup_\F\mathbf I$ are completely determined. 
%\Goncalo{(We still need to describe $(A\sqcup_\F\mathbf I)(x_1,x_1)$. Could you kindly solve this issue ?)}  
Moreover, its composition and involution are forced by those of~$A$. 
It is now evident that this data endows $A\sqcup_\F\mathbf I$ with a well-defined $\Cstar$-category structure satisfying the required pushout property.
\end{proof}

%\begin{cor}\label{cor:CBA}
%Let $A$ be a $\Cstar$-category, and consider the pushout square \eqref{eq:0_pushout} for some object $x_0$ of~$A$. Let $x_1$ be the image of $1\in\obj(\mathbf I)$. Then there exists an automorphism $\Phi: A\sqcup_\F\mathbf I \stackrel{\sim}{\to} A\sqcup_\F\mathbf I$ that fixes $A$ and exchanges $x_0$ with~$x_1$. \end{cor}
%\begin{proof} The existence of $\Phi$ is clear from the description of $A\sqcup_\F\mathbf I$ in Lemma \ref{lemma:0_pushout}.  \end{proof}

\subsection{The Morita model} 
\begin{defi}
\label{defi:Mor}
A $*$-functor $F: A\to B$ is a \emph{Morita equivalence} if the induced $*$-functor $F^\natural_\oplus = (F_\oplus)^\natural \colon A_\oplus^\natural \to B^\natural_\oplus$ is a unitary equivalence; justification for this terminology will be provided in Proposition \ref{prop:Morita_agreement}.
\end{defi}

\begin{remark}
Note that by Proposition~\ref{prop:UP_saturated}(iv) all unitary equivalences are Morita equivalences. 
\end{remark}

\begin{lemma}
\label{lemma:recognition_Moreq}
A $*$-functor $F\colon A\to B$ is a Morita equivalence if and only if:
\begin{itemize}
\item[(i)] it is fully faithful;

\item[(ii)] the smallest full subcategory of $B^\natural_\oplus$ containing $\iota_B(FA)$ and closed under taking isomorphic objects, direct sums, and retracts,  is the whole~$B^\natural_\oplus$. 
\end{itemize}
More interestingly, the same conclusion holds with (ii) replaced by:
\begin{itemize}
\item[(ii)'] the smallest full subcategory of $B$ containing~$FA$ and closed under taking isomorphic objects, and whatever direct sums and range objects of projection matrices (in the sense of Remark~\ref{remark:inspiration_for_defs}) exist in~$B$, is the whole~$B$. 
%\Goncalo{(I don't think condition (ii) makes sense because these operations are {\em not} defined on $B$. Could you kindly replace (ii) by (ii)' ?)}
\end{itemize}
\end{lemma}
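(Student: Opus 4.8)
The plan is to prove the equivalence of the three conditions in a cycle, using the characterization of unitary equivalences (fully faithful plus essentially surjective, Definition~\ref{def:unitary}) together with the structural properties of the saturation functor collected in Proposition~\ref{prop:UP_saturated}. First I would observe that, by Definition~\ref{defi:Mor}, $F$ is a Morita equivalence precisely when $F^\natural_\oplus\colon A^\natural_\oplus\to B^\natural_\oplus$ is a unitary equivalence, i.e.\ fully faithful and essentially surjective. By Proposition~\ref{prop:UP_saturated}(iii) the functor $F^\natural_\oplus$ is fully faithful if and only if $F$ itself is fully faithful; this immediately yields condition~(i) as a necessary component and reduces the remaining content to understanding when $F^\natural_\oplus$ is essentially surjective.

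The heart of the argument is then the translation of "$F^\natural_\oplus$ essentially surjective" into conditions~(ii) and~(ii)$'$. For the equivalence of the Morita condition with~(i)+(ii): assuming $F$ fully faithful, the image $\iota_B(FA)$ sits inside $B^\natural_\oplus$, and every object of $B^\natural_\oplus$ is, by the very construction of $(-)_\oplus$ and $(-)^\natural$, a retract of a direct sum of objects of the form $\iota_B(b)$ for $b\in\obj(B)$; and in turn every such $\iota_B(b)$, when $B$ is arbitrary, need not lie in $\iota_B(FA)$ — so one must be careful. The correct statement is that $F^\natural_\oplus$ is essentially surjective exactly when the closure of $\iota_B(FA)$ under isomorphism, direct sums and retracts exhausts $B^\natural_\oplus$, which is condition~(ii) verbatim; the forward direction is immediate from essential surjectivity plus the fact that the full subcategory of objects isomorphic to something in $F^\natural_\oplus(A^\natural_\oplus)$ is already closed under sums and retracts (since $A^\natural_\oplus$ is saturated and $F^\natural_\oplus$ preserves sums and retracts), and the reverse direction is immediate because any class of objects containing $\iota_B(FA)$ and closed under the three operations must contain the essential image of $F^\natural_\oplus$, hence all of $B^\natural_\oplus$ once it equals $B^\natural_\oplus$.

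The more interesting equivalence (ii)$\Leftrightarrow$(ii)$'$, with $F$ assumed fully faithful, is where the main work lies, and I expect this to be the main obstacle. The key point is a "descent" from $B^\natural_\oplus$ back to $B$: the smallest full subcategory $\mathcal{B}_0\subseteq B$ containing $FA$ and closed under whatever finite direct sums and range objects of projection matrices happen to exist in $B$ is the whole of $B$ if and only if the closure $\widetilde{\mathcal{B}_0}$ of $\iota_B(FA)$ in $B^\natural_\oplus$ under isomorphism, direct sums and retracts is all of $B^\natural_\oplus$. One inclusion is routine: $\iota_B$ preserves and reflects sums and retracts that exist, so $\iota_B(\mathcal{B}_0)$ lands in $\widetilde{\mathcal{B}_0}$, and if $\mathcal{B}_0=B$ then $\widetilde{\mathcal{B}_0}$ contains $\iota_B(B)$ and hence — since every object of $B^\natural_\oplus$ is a retract of a sum of objects $\iota_B(b)$ — all of $B^\natural_\oplus$. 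For the converse one argues: given $b\in\obj(B)$, the object $\iota_B(b)$ lies in $\widetilde{\mathcal{B}_0}=B^\natural_\oplus$, which is generated from $\iota_B(FA)$; unwinding the generation, $\iota_B(b)$ is a retract of $\iota_B(x_1)\oplus\cdots\oplus\iota_B(x_m)$ for suitable $x_k$ already in $\mathcal{B}_0$ (here one uses that objects of $FA$ are the base case and each closure step stays inside $\mathcal{B}_0$ whenever the corresponding sum or range object happens to exist back in $B$) — but by Remark~\ref{remark:inspiration_for_defs} a retract of such a direct sum in $B^\natural_\oplus$ that is of the form $\iota_B(b)$ is exactly a range object in $B$ of a projection matrix on $x_1,\dots,x_m$, which therefore exists in $B$ and lies in $\mathcal{B}_0$, forcing $b\in\obj(\mathcal{B}_0)$. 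The delicate bookkeeping is to carry out this inductive unwinding while only ever invoking sums and range objects that genuinely exist in $B$ — this is what Remark~\ref{remark:inspiration_for_defs} is set up to make precise, and it is the technical crux of the proof.
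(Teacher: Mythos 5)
Your overall architecture is sound and essentially parallel to the paper's: reduce everything to full faithfulness plus essential surjectivity of $F^\natural_\oplus$ via Proposition~\ref{prop:UP_saturated}(iii), identify condition~(ii) with the essential image of $F^\natural_\oplus$ being all of $B^\natural_\oplus$, and then descend from $B^\natural_\oplus$ to~$B$ to relate (ii) with~(ii)$'$. (The paper organizes the same ingredients as a cycle, Morita $\Rightarrow$ (i)+(ii) $\Rightarrow$ (i)+(ii)$'$ $\Rightarrow$ Morita, with all the real work in the last implication; your decomposition is logically equivalent.)

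However, the step you yourself flag as ``the technical crux'' is genuinely missing, and it is the actual content of the lemma. The closure of $\iota_B(FA)$ (resp.\ of $FA$) under the stated operations is a priori built by iterating alternating steps of ``take a direct sum'' and ``take a retract / range object'', and the intermediate objects produced in $B^\natural_\oplus$ need not lie in $\iota_B(B)$ at all; so your assertion that $\iota_B(b)$ is a retract of $\iota_B(x_1)\oplus\cdots\oplus\iota_B(x_m)$ ``for suitable $x_k$ already in $\mathcal{B}_0$'' does not follow from ``unwinding the generation'' as stated. What is needed is a collapse lemma: if $r$ is a range object of a projection matrix on $r_1,\dots,r_n$ and each $r_k$ is in turn a range object of a projection matrix on $x^k_1,\dots,x^k_{n_k}$, then $r$ is already a range object of a \emph{single} projection matrix on the $x^k_j$. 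This is proved by forming the composite $w=\mathrm{diag}(v_1,\dots,v_n)\circ v$ of the given isometries and checking $w^*w=v^*\,\mathrm{diag}(1_{r_1},\dots,1_{r_n})\,v=1_r$, so that $ww^*$ is the desired projection matrix; an induction then shows that one generation step always suffices, i.e.\ every object in the closure is (unitarily isomorphic to) a range object of a projection matrix on finitely many objects of $FA$ itself. Only with this in hand does your descent to~$B$ go through: the resulting matrix lives on objects of $FA$, hence by full faithfulness comes from~$A$, and uniqueness of range objects up to unitary isomorphism finishes the argument. Without it the ``inductive unwinding'' has no reason to terminate in the one-step form you assert. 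A secondary, fixable imprecision: closure of the essential image of $F^\natural_\oplus$ under retracts uses \emph{fullness} (a projection on $F^\natural_\oplus(w)$ must be lifted to a projection on~$w$, which then splits because $A^\natural_\oplus$ is saturated), not merely preservation of sums and retracts; this is exactly where hypothesis~(i) enters and should be said explicitly.
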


\begin{proof}
If $F$ is a Morita equivalence, then (i) holds by Proposition \ref{prop:UP_saturated}(iii) while~(ii) holds because $F^\natural_\oplus$ is  essentially surjective: every object of $B^\natural_\oplus$ is isomorphic to one of the form $F^\natural_\oplus(x_1\cdots x_n, [p_{ij}])=(Fx_1\cdots Fx_n, [Fp_{ij}]))$ with $x_1,\ldots,x_n \in \obj A$, so in particular it is a retract of a direct sum of objects in $\iota_B(FA)$.

Note that if (ii) holds then the full subcategory of $B^\natural_\oplus$ generated by $\iota_B(FA)$ by taking direct sums and retracts contains in particular $\iota_B(B)$. Since the range object of a projection matrix $[p_{ij}]$ in~$B$ is by definition a suitable retract of a direct sum in $B^\natural_\oplus$ (which also happens to belong to~$B$), we conclude that (ii) implies~(ii)'.

Thus it remains to prove that (i) combined with~(ii)' imply that $F^\natural_\oplus$ is a unitary equivalence.
By (i) and Proposition \ref{prop:UP_saturated}(iii) again, it suffices to prove that $F^\natural_\oplus$ is essentially surjective. 
%By (i), we have full inclusions $A\subset B\subset B^\natural_\oplus$ and $A\subset A^\natural_\oplus \subset B^\natural_\oplus$, which will now be omitted from the notation for simplicity. 
Note that a direct sum $r=x_1\oplus \cdots \oplus x_n$ in~$B$ is a special case of a range object for a projection matrix, namely the identity matrix. 
Thus (ii)' means that every $y\in B$ can be reached from $FA$, in a finite number of steps, by adding all range objects of projection matrices between objects that were produced in the previous steps.
We claim that one single step always suffices. In order to see this, let $r$ be a range object in~$B$ for the projection matrix $p=[p_{ij}]$ on $r_1,\ldots,r_n$, and assume that each $r_k$ is, in its turn, a range object in~$B$ for the projection matrix $p_k=[p_{ij}^k]$ on $x^k_1,\ldots,x^k_{n_k}$ $(k=1,\ldots n)$.
By definition, this means that there exist in~$B^\natural_\oplus$ isometries 
$v\colon \iota_B(r) \to \iota_B(r_1)\cdots \iota_B(r_n)$ and 
$v_k\colon \iota_B(r_k) \to \iota_B(x_1^k)\cdots \iota_B(x_{j_k}^k)$ 
such that $vv^*=p$ and $v_kv_k^*=p_k$. 
Consider the following composition in~$B^\natural_\oplus$
\[
w\quad\colon \quad
\xymatrix{ 
\iota_B(r) \ar[r]^-v &
 \iota_B(r_1)\cdots \iota_B(r_n) \ar[rr]^-{\mathrm{diag}(v_1,\ldots,v_k)}   && z } ,
\]
where $z:=\iota_B(x^1_1)\cdots \iota_B(x^1_{n_1}) \cdots \iota_B(x^k_1)\cdots \iota_B(x^k_{n_k})$ and where $\mathrm{diag}$ denotes the evident block-diagonal matrix.
Then $v$ is an isometry:
\[
w^*w 
= v^*\mathrm{diag}(v_1,\ldots,v_k)^* \mathrm{diag}(v_1,\ldots,v_k) v 
= v^* \mathrm{diag}(1_{r_1},\ldots , 1_{r_n}) v = 1_r 
\;,
\]
and therefore $ww^*$ is a projection matrix in~$B$ on  $x^1_1, \ldots , x^1_{n_1} , \ldots , x^k_1,\ldots, x^k_{n_k}$ for which $r$ is a range object. Hence, by an easy recursion (and hypothesis~(ii)'), every $y\in \obj B$ is a range object for a projection matrix $[q_{ij}]$ on some objects $Fx_1,\ldots,Fx_n$ in the image of~$F$, as claimed. 
Since $F$ is fully faithful, also the matrix comes from~$A$, say $[q_{ij}]=[Fp_{ij}]$, and therefore by the uniqueness of range objects $\iota_B(y)$ must be (unitarily) isomorphic to $F^\natural_\oplus(x_1\cdots x_n, [p_{ij}])$. Hence $\iota_B(B)$ is contained in the essential image of~$F$, and since $F$ is fully faithful this implies that $F^\natural_\oplus$ is (unitarily) essentially surjective.
%\Goncalo{(Ivo, could you kindly prove the converse statement ?)} \Ivo{I've changed (corrected) the statement and also the proof.}
\end{proof}

\begin{defi} 
\label{defi:IJ_Mor}
The sets of generating cofibrations and generating trivial cofibrations of the Morita model structure are the following: 
\begin{eqnarray*}
I_\Mor :=  I_\uni &&
J_\Mor := \{R_n: P(n) \to R(n) \mid n\geq0 \} \,,
\end{eqnarray*}
where the $*$-functors $R_n$ are as in  Definition \ref{defi:R_n}.
\end{defi}

\begin{remark}
It follows from Lemmas \ref{lemma:ff_comparisons} and \ref{lemma:recognition_Moreq} that the $*$-functors $R_n$, $n\geq1$, are Morita equivalences. Indeed, $R_n$ is fully faithful and the only missed object is by the very construction the range object of a projection matrix in the image. 
%\Goncalo{(Ivo, please say some words about the use of Lemma~\ref{lemma:recognition_Moreq}(ii))} 
Note that $R_0\colon \emptyset\to \mathbf0$ is also a Morita equivalence since every object in $\mathbf 0^\natural_\oplus$ is isomorphic to~$0$ and $(R_0)^\natural_\oplus$ is the $\ast$-functor sending the unique object of $\emptyset^\natural_\oplus\simeq\mathbf0$ to $0\in \mathbf0^\natural_\oplus$.
\end{remark}

\begin{thm}[The Morita model]
\label{thm:morita_model}
The category $\Cstarcatun$ admits a cofibrantly generated model structure where the weak equivalences are the Morita equivalences (see Definition~\ref{defi:Mor}) and the cofibrations the $*$-functors which are injective on objects. 
Moreover, $I_\Mor$ and $J_\Mor$ (see Definition~\ref{defi:IJ_Mor}) are generating sets of cofibrations and trivial cofibrations, respectively. 
\end{thm}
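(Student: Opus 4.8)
The plan is to verify the conditions of Kan's recognition theorem for cofibrantly generated model categories (as in Hirschhorn or Hovey), using the sets $I_\Mor$ and $J_\Mor$ from Definition~\ref{defi:IJ_Mor}. Since $I_\Mor = I_\uni$, the underlying category $\Cstarcatun$ is already known to be complete, cocomplete, and to satisfy the smallness/permitting-small-object-argument hypotheses for $I_\Mor$ (these were checked for the unitary model in \cite{ivo:unitary}); the domains of the $R_n$ are the $\Cstar$-categories $P(n)$, which have finitely many objects and separable-type Hom-spaces, so they are small relative to the relevant transfinite compositions, handling the smallness hypothesis for $J_\Mor$ as well. So the recognition theorem reduces to: (1) the class $\Weq_\Mor$ of Morita equivalences has the two-out-of-three property and is closed under retracts; (2) every map with the right lifting property against $I_\Mor$ lies in $\Weq_\Mor \cap \mathrm{RLP}(J_\Mor)$; and (3) either $J_\Mor\text{-cell} \subseteq \Weq_\Mor\cap \mathrm{cof}(I_\Mor)$, together with one of the two standard last conditions relating the two weak factorization systems.

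First I would dispatch (1): two-out-of-three and retract-closure for $\Weq_\Mor$ follow immediately from the corresponding properties of unitary equivalences (a class that is part of a known model structure) via the functor $(-)^\natural_\oplus$, since $F\in\Weq_\Mor$ iff $F^\natural_\oplus$ is a unitary equivalence and $(-)^\natural_\oplus$ preserves composition and retracts. Next, for the fibrations: I would identify $\mathrm{RLP}(J_\Mor)$ concretely. A $*$-functor $F$ has the RLP against $R_n\colon P(n)\to R(n)$ precisely when, given any projection matrix on objects in the image of the relevant lifting square together with a range object upstairs in $B$, one can lift the range object to $A$; using Lemma~\ref{lem:char}(iii) and (v) this says exactly that $F$ ``detects'' range objects of projection matrices. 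Then I would show $\mathrm{RLP}(I_\Mor) = \mathrm{RLP}(I_\uni) \subseteq \mathrm{RLP}(J_\Mor)$: a map in $\mathrm{RLP}(I_\uni)$ is a trivial fibration of the unitary model, in particular a unitary equivalence, hence surjective-on-objects-up-to-iso and fully faithful, which easily gives the range-object lifting property and also shows it is a Morita equivalence. This establishes (2), since unitary trivial fibrations are in particular Morita equivalences.

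The heart of the argument is (3), and the main obstacle is showing that $J_\Mor\text{-cell} \subseteq \Weq_\Mor$, \ie\ that transfinite compositions of pushouts of the $R_n$ are Morita equivalences, together with the ``acyclicity'' condition $\mathrm{RLP}(J_\Mor)\cap \Weq_\Mor \subseteq \mathrm{RLP}(I_\Mor)$ (or its dual, $\mathrm{cof}(I_\Mor)\cap\Weq_\Mor = \mathrm{cof}(J_\Mor)$). For the first part: each $R_n$ ($n\ge 1$) is a fully faithful Morita equivalence by the Remark following Definition~\ref{defi:IJ_Mor}, and $R_0$ is one too; the subtlety is that a pushout of $R_n$ along an arbitrary $*$-functor $P(n)\to A$ adjoins a range object for a projection matrix \emph{already present in} $A$ — one must check this pushout is computed correctly in $\Cstarcatun$ (a $\Cstar$-completion issue analogous to Lemma~\ref{lemma:0_pushout} and to the constructions of Section~\ref{sec:univ}), that it is injective on objects, and that it remains a Morita equivalence; closure of $\Weq_\Mor$ under such pushouts is not formal and I expect this to require the recognition criterion of Lemma~\ref{lemma:recognition_Moreq}(ii)$'$, checking that adding one range object of a projection matrix in the image does not enlarge the saturation. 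Transfinite composition then follows since $\Weq_\Mor$ is closed under the relevant filtered colimits (again via $(-)^\natural_\oplus$ and the fact that unitary equivalences are so closed, or directly via Lemma~\ref{lemma:recognition_Moreq}). For the acyclicity condition, I would argue that a map in $\mathrm{RLP}(J_\Mor)$ which is also a Morita equivalence is necessarily a unitary equivalence — because the range-object-lifting property upgrades ``Morita equivalence'' (essential surjectivity of $F^\natural_\oplus$) to genuine essential surjectivity of $F$ together with full faithfulness, hence $F$ is a unitary equivalence — and therefore has the RLP against $I_\uni = I_\Mor$. Assembling these, Kan's theorem yields the model structure with the stated generating sets.
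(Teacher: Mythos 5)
Your overall route coincides with the paper's: reduce to the recognition theorem with $I_\Mor=I_\uni$ and $J_\Mor=\{R_n\}$, obtain two-out-of-three and retract-closure for $\Weq_\Mor$ from the functoriality of $(-)^\natural_\oplus$, and reduce the remaining conditions to the two claims $J_\Mor\textrm{-cell}\subseteq\Weq_\Mor$ and $J_\Mor\textrm{-inj}\cap\Weq_\Mor=I_\uni\textrm{-inj}$. There are, however, two problems.

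The concrete error is in your acyclicity step. You conclude that a $*$-functor $F\colon A\to B$ in $J_\Mor\textrm{-inj}\cap\Weq_\Mor$ is a unitary equivalence and then assert that it ``therefore has the RLP against $I_\uni$.'' That inference is false: $I_\uni\textrm{-inj}$ is the class $\mathbf{Surj}$ of fully faithful $*$-functors that are genuinely \emph{surjective on objects} (the unitary trivial fibrations), and a unitary equivalence need not be surjective on objects -- it then already fails the lifting property against $\emptyset\to\F$. What must be shown (and what the paper proves in Proposition~\ref{prop:JcapWeq}) is that every $y\in\obj(B)$ has a strict preimage. Your own ingredients do give this if used correctly: since $F^\natural_\oplus$ is essentially surjective, $y$ is a range object in $B$ for a projection matrix $[Fp_{ij}]$ on objects in the image, which yields a strictly commuting square $P(n)\to A$, $R(n)\to B$ with $r(n)\mapsto y$; a lift against $R_n$ then produces $x'\in\obj(A)$ with $F(x')=y$ on the nose, not merely up to unitary isomorphism. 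So the correct intermediate conclusion is $F\in\mathbf{Surj}$, not ``$F$ is a unitary equivalence.'' The same surjective-on-objects versus essentially-surjective distinction matters in your verification that $I_\uni\textrm{-inj}\subseteq J_\Mor\textrm{-inj}$: ``surjective-on-objects-up-to-iso'' does not produce strict lifts, whereas the genuine object-surjectivity of $\mathbf{Surj}$ does (lift $y=H(r(n))$ to some $x$ and use full faithfulness to transport the structure maps $H(s_i)$ and their relations).

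The second issue is one of incompleteness rather than error: the step you describe as ``I expect this to require the recognition criterion of Lemma~\ref{lemma:recognition_Moreq}(ii)$'$'' is the computational heart of the theorem and is not carried out. One must actually exhibit the pushout of $R_n$ along an arbitrary $P(n)\to A$; the paper (Proposition~\ref{prop:aux}) identifies it with the full subcategory of $A^\natural_\oplus$ on $\iota(\obj A)\cup\{(x,[Fp_{ij}])\}$, defines the extension of a test functor on the three new Hom-spaces by explicit formulas such as $T(a)=\sum_j T_0(a_j)T_1(s_j^*)$, and verifies its uniqueness before Lemma~\ref{lemma:recognition_Moreq} can be applied. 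Your plan correctly names the target lemma and the issue (a $\Cstar$-completion of the naive pushout), but without this construction the inclusion $J_\Mor\textrm{-cell}\subseteq\Weq_\Mor$ remains unestablished.
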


\begin{notation}
In what follows we will write $\Weq_\Mor$, $\Cof_\Mor =\Cof_\uni=:\Cof$, and $\Fib_\Mor$ for the classes of weak equivalences, cofibrations and fibrations of the Morita model.
Moreover, we will denote by $\cM_\Mor$ the Morita model category  and by $\Ho(\cM_\Mor):= \Cstarcatun[\Weq_\Mor^{-1}]$ its homotopy category, obtained by formally inverting all Morita equivalences.
%Similarly, $\mathcal M_\uni$ will be the unitary model category recalled in \S\ref{subsec:unitary}.
We will also freely use the standard notations $J\textrm{-cell}, I\textrm{-cof}$, $J\textrm{-inj}$, etc.\ from model category theory; see \cite{hovey:model}*{\S 2.1.2}.
\end{notation}

\subsection{Proof of Theorem~\ref{thm:morita_model}}

The proof will consist in verifying the six conditions (i)-(vi)\ of \cite[Thm.~2.1.19]{hovey:model}. 
Condition (i) follows immediately from the functoriality of $(-)_\oplus^\natural$ and from the fact that the class of unitary equivalences satisfies the corresponding condition (i). The smallness conditions (ii)-(iii) follow immediately from the fact that the domain of each element of $I_\Mor$ and $J_\Mor$ is a $\Cstar$-category generated by finitely many morphisms.

\begin{notation}
Let $\mathbf{Surj}$ be the class of $\ast$-functors $F:A \rightarrow B$ such that the associated maps $A(x,x') \stackrel{\sim}{\rightarrow} B(Fx,Fx')$ are bijections and the associated map $\obj(A) \twoheadrightarrow \obj(B)$ is surjective.
\end{notation}

\begin{remark}
\label{remark:Surj_trfib}
Note that by \cite{ivo:unitary}*{Corollary 3.8}, $\mathbf{Surj}$ is precisely the class of trivial fibrations of the unitary model structure.
\end{remark}

The proof of the remaining three conditions (iv)-(vi) of \cite[Thm.~2.1.19]{hovey:model} reduces to the following two claims:
\begin{align}
\label{eq:former} & J_\Mor\text{-}\mathrm{inj} \cap \Weq_\Mor= \mathbf{Surj} \\
\label{eq:later} & J_\Mor\text{-}\mathrm{cell} \subset \Weq_\Mor\,.
\end{align}
Indeed, by \cite[Prop.~3.18]{ivo:unitary} we have $J_\Mor \subset \Cof_\uni = I_\uni \textrm{-cof}$ which  implies that $J_\Mor\textrm{-cell}\subset I_\uni\textrm{-cof}$. By combining it with~\eqref{eq:later} we obtain condition~(iv).
From Remark \ref{remark:Surj_trfib} and \eqref{eq:former} we obtain the equalities $I_\uni\textrm{-inj}= \Weq_\uni\cap \Fib_\uni = \mathbf{Surj} = J_\Mor\textrm{-inj}\cap  \Weq_\Mor $, and hence condition~(v). 
Finally, \eqref{eq:former} and Remark \ref{remark:Surj_trfib} imply the second choice for condition~(vi), \ie\ the inclusion 
$J_\Mor\textrm{-inj}\cap \Weq_\Mor =\mathbf{Surj}= \Weq_\uni \cap \Fib_\uni \subset I_\uni\textrm{-inj} $.\\

Therefore it remains only to prove \eqref{eq:former} and \eqref{eq:later}. Let us start with the latter.
For this, consider for every $n\geq1$ the following pushout
\begin{equation}
\label{eq:little_pushouts}
\xymatrix{
 \F \ar[d]_0 \ar[r]^-{r(n)} \ar@{}[dr]|{\lrcorner} & SR(n) \ar[d]^{V_0} \\
 \mathbf I \ar[r] & SR(n)\sqcup_\F \mathbf I
}
\end{equation}
as in Lemma~\ref{lemma:0_pushout}, where $V_0$ denotes the resulting fully faithful inclusion.
Let us also denote by $r_0(n)$ and $r_1(n)$ the images in $SR(n)\sqcup_\F \mathbf I$ of the two objects $0,1$ of~$\mathbf I$. Note that $r_1(n)$ is unitarily isomorphic to $r_0(n)= V_0(r(n))$ and so it is a retract of~$V_0(s(n))$. 
Thus, there exists a unique $*$-functor $SR(n)\to SR(n)\sqcup_\F\mathbf I$ agreeing with~$V_0$ on $o_1,\ldots,o_n,s(n)$ and sending $r(n)$ to~$r_1(n)$. After precomposing it with the canonical $*$-functor $R(n)\to SR(n)$, we obtain a well-defined $*$-functor $V_1\colon R(n)\to SR(n)\sqcup_\F\mathbf I$ agreeing with $V_0$ over $P(n)$ and sending $r(n)$ to~$r_1(n)$. This gives rise to the following commutative square
\begin{equation}\label{eq:square}
\xymatrix{
 P(n) \ar[d]_-{R_n} \ar[r]  & SR(n) \ar[d]^{V_0} \\
 R(n) \ar[r]_-{V_1} & SR(n) \sqcup_\F{\bf I}\,.
}
\end{equation}
\begin{lemma}
\label{lemma:little_pushouts}
For every $n\geq1$, the above commutative square \eqref{eq:square} is a pushout. Moreover, each $*$-functor in it is fully faithful.
\end{lemma}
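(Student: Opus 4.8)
The plan is to establish the pushout property directly, by testing the universal property against an arbitrary $\Cstar$-category $B$ and using the concrete description of $SR(n)\sqcup_\F\mathbf I$ supplied by Lemma~\ref{lemma:0_pushout}; commutativity of \eqref{eq:square} being already in hand, only the universal property is at issue. The full-faithfulness assertions are quick bookkeeping: $R_n$ is fully faithful by Lemma~\ref{lemma:ff_comparisons}; the top arrow $P(n)\to SR(n)$ factors as $P(n)\to SP(n)\to SR(n)$, a composite of two fully faithful $*$-functors from Definition~\ref{defi:R_n} (neither one of the two exceptional families), hence is fully faithful; $V_0$ is fully faithful by its very construction in Lemma~\ref{lemma:0_pushout}; and $V_1$ factors as $R(n)\to SR(n)\xrightarrow{W} SR(n)\sqcup_\F\mathbf I$, where the first map is the canonical (fully faithful) functor of Definition~\ref{defi:R_n} and $W$ is the functor agreeing with $V_0$ on $o_1,\dots,o_n,s(n)$ and sending $r(n)$ to $r_1(n)$ --- on the Hom-spaces into or out of $r_1(n)$, $W$ is just composition or conjugation with the unitary $u$ (by the explicit formulas of Lemma~\ref{lemma:0_pushout}), so $W$, and therefore $V_1$, is fully faithful.

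For the universal property, let $\phi\colon SR(n)\to B$ and $\psi\colon R(n)\to B$ be $*$-functors with $\phi\circ(P(n)\to SR(n))=\psi\circ R_n$; I must produce a unique $\theta\colon SR(n)\sqcup_\F\mathbf I\to B$ with $\theta V_0=\phi$ and $\theta V_1=\psi$. By Lemma~\ref{lemma:0_pushout}, giving a $*$-functor out of $SR(n)\sqcup_\F\mathbf I$ is the same as giving a $*$-functor out of $SR(n)$ together with a unitary of $B$ whose source is the image of $r(n)$; since $\theta V_0=\phi$ forces the first piece to be $\phi$, the entire problem reduces to finding the unique unitary $u_B$ with source $\phi(r(n))$ for which the resulting $\theta$ satisfies $\theta V_1=\psi$. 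Unwinding the construction of $V_1$ --- it sends $o_i\mapsto V_0(o_i)$, $r(n)\mapsto r_1(n)$, and $s_i\mapsto u\cdot V_0(v)^*V_0(v_i)$ --- the requirement $\theta V_1=\psi$ becomes $\theta(r_1(n))=\psi(r(n))$ together with $u_B\cdot\phi(v)^*\phi(v_i)=\psi(s_i)$ for every $i$ (the agreement on the $o_i$ being automatic from the hypothesis). Put $\gamma_i:=\phi(v)^*\phi(v_i)$ and $\beta_i:=\psi(s_i)$, both arrows out of the common object $\phi(o_i)=\psi(o_i)$. The defining relations of $SR(n)$ give $\sum_i\gamma_i\gamma_i^*=1_{\phi(r(n))}$, those of $R(n)$ give $\sum_i\beta_i\beta_i^*=1_{\psi(r(n))}$, and evaluating the compatibility hypothesis on the generator $p_{ij}$ of $P(n)$ yields the matrix-entry identities $\gamma_i^*\gamma_j=\beta_i^*\beta_j$. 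The first identity forces any admissible $u_B$ to equal $\sum_i\beta_i\gamma_i^*$, giving uniqueness; conversely, a short computation with all three identities shows that $u_B:=\sum_i\beta_i\gamma_i^*$ is indeed a unitary satisfying $u_B\gamma_j=\beta_j$ for all $j$. Feeding $(\phi,u_B)$ back through Lemma~\ref{lemma:0_pushout} gives $\theta$, and $\theta V_1=\psi$ then follows because the two $*$-functors agree on the generating objects and arrows of $R(n)$, hence everywhere by Lemma~\ref{lem:char}(v).

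The conceptual core --- and the step I expect to need the most care --- is identifying the formula $u_B=\sum_i\beta_i\gamma_i^*$ and checking that it is unitary with $u_B\gamma_j=\beta_j$; the leverage for this is precisely the translation of the compatibility hypothesis into the identities $\gamma_i^*\gamma_j=\beta_i^*\beta_j$, after which the verification is a routine manipulation of the two ``resolution of unity'' relations $\sum_i\gamma_i\gamma_i^*=1$ and $\sum_i\beta_i\beta_i^*=1$. Everything else is forced by Lemmas~\ref{lemma:0_pushout} and~\ref{lem:char}, and the only real pitfalls are clerical: tracking that all the $\gamma_i,\beta_i$ have source $\phi(o_i)$ (so their adjoints occur in the natural ``column'' orientation), and remembering that $V_1$ lands on the new object $r_1(n)$ rather than on $r_0(n)=V_0(r(n))$.
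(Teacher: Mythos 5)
Your proof is correct and follows essentially the same route as the paper: both reduce the pushout property to producing the unique unitary relating the two images of $r(n)$, which the paper obtains abstractly from the uniqueness of range objects for the common projection $T_0(vv^*)$ and which you compute explicitly as $u_B=\sum_i\psi(s_i)\,\phi(v_i)^*\phi(v)$. The only cosmetic difference is in the full-faithfulness part, where the paper uses a symmetry automorphism of $SR(n)\sqcup_\F\mathbf I$ exchanging $r_0(n)$ and $r_1(n)$ in place of your direct factorization of $V_1$.
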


\begin{proof}
By symmetry, we observe that there is an automorphism $\Psi\colon SR(n) \sqcup_\F{\bf I}\stackrel{\sim}{\to}SR(n) \sqcup_\F{\bf I}$ fixing $SP(n)$ (and thus also~$P(n)$) and exchanging $r_0(n)$ with $r_1(n)$, so that $(\Psi\circ V_0)|_{R(n)}=V_1$.
%\Goncalo{(This is not well defined because $V_1$ and $\Psi\circ V_0=V_1$ have different domains. Could you kindly re-write it.)}
In particular $V_1$ is fully faithful. Since the remaining $\ast$-functors are fully faithful, we conclude the second claim of the lemma. Now, let us show that \eqref{eq:square} is a pushout square. Consider two $*$-functors $T_0,T_1$ as in the following commutative diagram (without~$T$):
\[
\xymatrix{
P(n) \ar[r] \ar[d]_{R_n} & SR(n) \ar[d]^-{V_0} \ar@/^3ex/[ddr]^{T_0} & \\
R(n) \ar@/_3ex/[drr]_{T_1} \ar[r]_-{V_1} & SR(n)\sqcup_\F \mathbf I  \ar@{..>}[dr]|T & \\
&& C\,.
}
\]
By commutativity, the objects $T_0(r(n))$ and $T_1(r(n))$ of~$C$ are direct summands of $T_0(s(n))$ with the same range projection $T_0(vv^*)= T_0([p_{ij}])= T_1([s^*_is_j])$ on $T_0(s(n))$. Hence there exists a unique compatible unitary isomorphism $v$ from $T_0(r(n))$ to $T_1(r(n))$. It is now clear that there exists a unique $*$-functor $T$ mapping $u\colon r_0(n)\to r_1(n)$ to~$v$ and making the whole diagram commute.
\end{proof}

\begin{prop}\label{prop:aux}
$J_\Mor\text{-}\mathrm{cell} \subset \Weq_\Mor$.
\end{prop}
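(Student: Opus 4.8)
The goal is to show that every relative $J_\Mor$-cell complex is a Morita equivalence. Since $\Weq_\Mor$ satisfies the two-out-of-three property and (being closed under retracts, as the weak equivalences of any candidate model structure, or more basically because $(-)_\oplus^\natural$ preserves retracts and unitary equivalences are closed under retracts) it suffices to show that $J_\Mor$-cell, built as a transfinite composition of pushouts of the generators $R_n\colon P(n)\to R(n)$, lands in $\Weq_\Mor$. I would argue in three stages: first handle a single pushout of a generator $R_n$; then handle transfinite composites; and in both use the saturation functor $(-)_\oplus^\natural$ to reduce to statements about unitary equivalences, which are the weak equivalences of the unitary model and hence already known to be well-behaved.

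\textbf{Step 1: a single pushout of $R_n$ ($n\geq 1$) is a Morita equivalence.} Fix a pushout
\[
\xymatrix{
P(n) \ar[d]_{R_n} \ar[r]^-{T} & A \ar[d]^{\bar T} \\
R(n) \ar[r] & A\sqcup_{P(n)}R(n)\,.
}
\]
I want to show $\bar T\in\Weq_\Mor$, i.e.\ that $\bar T{}_\oplus^\natural$ is a unitary equivalence. The key is Lemma~\ref{lemma:little_pushouts}: the square \eqref{eq:square} is itself a pushout with all four legs fully faithful, and $SR(n)\hookrightarrow SR(n)\sqcup_\F\mathbf I$ is a pushout of the generating unitary trivial cofibration $0\colon\F\to\mathbf I$, hence a unitary equivalence (and injective on objects) by Corollary~\ref{cor:pushout_unitrcof}. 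Now I would use the standard fact that pushouts paste: the given pushout of $R_n$ along $T\colon P(n)\to A$ can be computed by first pushing out $P(n)\to SR(n)$ along $T$ to get $A\to A\sqcup_{P(n)}SR(n)$, and then — using that \eqref{eq:square} exhibits $R(n)$ as the pushout $SR(n)\sqcup_{P(n)}R(n)$ — the desired $A\sqcup_{P(n)}R(n)$ is the same as $A\sqcup_{P(n)}SR(n)$; more precisely, the cooperation of Lemma~\ref{lemma:little_pushouts} shows $A\sqcup_{P(n)}R(n)\cong A\sqcup_{SR(n)}(SR(n)\sqcup_\F\mathbf I)$, i.e.\ $\bar T$ is a pushout of the unitary trivial cofibration $SR(n)\hookrightarrow SR(n)\sqcup_\F\mathbf I$. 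By Corollary~\ref{cor:pushout_unitrcof} again, $\bar T$ is then a unitary equivalence injective on objects, and in particular a Morita equivalence by Proposition~\ref{prop:UP_saturated}(iv). (For $n=0$, $R_0\colon\emptyset\to\mathbf0$ is a Morita equivalence by the Remark following Definition~\ref{defi:IJ_Mor}, and its pushout along any $T\colon\emptyset\to A$ is just $A\to A\sqcup\mathbf0$; one checks directly that adjoining a zero object is a Morita equivalence since $(A\sqcup\mathbf0)_\oplus^\natural\simeq A_\oplus^\natural$.)

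\textbf{Step 2: coproducts of generators and transfinite composition.} A general relative $J_\Mor$-cell complex $A\to A'=\colim_{\beta<\lambda}A_\beta$ is a transfinite composition in which each $A_\beta\to A_{\beta+1}$ is a pushout of a \emph{coproduct} $\coprod_{i\in I_\beta}R_{n_i}$. A coproduct of $R_n$'s is again of the form handled above after iterating Step~1 finitely or transfinitely: a pushout along $\coprod R_{n_i}$ is obtained by successively pushing out along the individual $R_{n_i}$, so $A_\beta\to A_{\beta+1}$ is itself a (possibly transfinite) composite of single-generator pushouts, each a unitary equivalence injective on objects by Step~1. Thus it suffices to know that transfinite composites of unitary equivalences injective on objects are again such; but these are exactly transfinite composites of trivial cofibrations in the unitary model category, hence trivial cofibrations, hence unitary equivalences. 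Therefore $A\to A'$ is a unitary equivalence, a fortiori a Morita equivalence by Proposition~\ref{prop:UP_saturated}(iv). The colimit over the whole ordinal $\lambda$ is handled the same way: $(-)_\oplus^\natural$ need not commute with the colimit, but we do not need that — we directly identify $A\to A'$ as a transfinite composite of unitary trivial cofibrations, and then apply $(-)_\oplus^\natural$ once at the end.

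\textbf{Main obstacle.} The crux is the pasting argument in Step~1 that re-expresses an arbitrary pushout of $R_n$ as a pushout of the unitary trivial cofibration $SR(n)\hookrightarrow SR(n)\sqcup_\F\mathbf I$; this is where Lemma~\ref{lemma:little_pushouts} does all the work, and one must be careful that the universal property identifications are compatible with the maps $T$, $\bar T$. Once that identification is in hand everything else is formal model-category bookkeeping. A secondary subtlety is the $n=0$ case and, more generally, checking that adjoining zero objects or range objects one-at-a-time really does produce a unitary equivalence and not merely a Morita equivalence — but this follows from the explicit pushout description in Lemma~\ref{lemma:0_pushout} together with Lemma~\ref{lemma:little_pushouts}.
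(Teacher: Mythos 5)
There is a genuine error in Step~1, and it is fatal to the whole argument. You claim that an arbitrary pushout of $R_n$ is a \emph{unitary} equivalence (indeed a unitary trivial cofibration). This cannot be right: taking $T=\id_{P(n)}$, the pushout of $R_n$ along $T$ is $R_n$ itself, and $R_n$ is not a unitary equivalence for $n\geq 1$ --- the new object $r(n)$ is not unitarily isomorphic to anything in the image of $P(n)$ (e.g.\ represent $P(1)$ in $\Hilb$ by sending $o_1$ to $\C^2$ and $p_{11}$ to a rank-one projection; the range object goes to $\C$). More generally, $A\to A\sqcup_{P(n)}R(n)$ adjoins a range object for the projection matrix $[Tp_{ij}]$, which need not exist in $A$ up to unitary isomorphism, so the map is not essentially surjective. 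The source of the error is a misreading of Lemma~\ref{lemma:little_pushouts}: the square \eqref{eq:square} exhibits $SR(n)\sqcup_\F\mathbf I$ (not $R(n)$) as the pushout $SR(n)\sqcup_{P(n)}R(n)$. Pasting it onto the pushout of $P(n)\to SR(n)$ along $T$ therefore identifies $A\sqcup_{P(n)}\bigl(SR(n)\sqcup_\F\mathbf I\bigr)$ --- a category with three extra objects $s(n), r_0(n), r_1(n)$ --- as a pushout of the unitary trivial cofibration $V_0$ \emph{over} $A\sqcup_{P(n)}SR(n)$; it does not identify $A\sqcup_{P(n)}R(n)$ as a pushout of $V_0$ over $A$. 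The expression $A\sqcup_{SR(n)}(SR(n)\sqcup_\F\mathbf I)$ does not even typecheck, since there is no map $SR(n)\to A$, only $T\colon P(n)\to A$.

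Because Step~1 fails, Step~2 also collapses: you explicitly lean on each cell attachment being a unitary trivial cofibration in order to avoid discussing how $(-)^\natural_\oplus$ interacts with transfinite composition. The substantive content your proof is missing is precisely what the paper supplies: an explicit model of the pushout $B=A\sqcup_{P(n)}R(n)$ as the full subcategory of $A^\natural_\oplus$ on $\iota(\obj A)\cup\{(x,[Fp_{ij}])\}$, together with a hands-on verification of the universal property (this is where full faithfulness of $A\to B$ is actually established --- colimits in $\Cstarcatun$ are not otherwise easy to control), after which Lemma~\ref{lemma:recognition_Moreq} shows $A\to B$ is a Morita (not unitary) equivalence. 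One then needs the observation that Morita equivalences are closed under transfinite composition, which the paper gets from the fact that $(-)^\natural_\oplus$ preserves transfinite compositions --- a point you cannot bypass.
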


\begin{proof}
Note first that, since the class of unitary equivalences is stable under transfinite compositions and since the functor $(-)_\oplus^\natural$ preserves transfinite compositions, the class of Morita equivalences is also stable under transfinite compositions. 
Hence, given a $\Cstar$-category $A$ and a $*$-functor $F\colon \smash{P(n) \to A}$, it suffices to show that the $\ast$-functor $U$ in the following pushout square (for $n\geq0$)
\begin{equation}\label{eq:squares}
\xymatrix{
 P(n) \ar[d]_{R_n} \ar[r]^-F  \ar@{}[dr]|{\lrcorner} & A \ar[d]^{U} \\
 R(n) \ar[r] & B
}
\end{equation}
is a Morita equivalence. This is obvious for $n=0$, as in this case $B=A\sqcup \mathbf0$ is just the result of adding a disjoint zero object. 
In order to prove the case $n >0$ we will provide a concrete description of the $\Cstar$-category~$B$.
Namely, let $B$ be the full subcategory of $A^\natural_\oplus$ with object set $\iota(\obj(A))\cup\{ r:=(x,p) \}$, where $x=F(o_1)\oplus\cdots\oplus F(o_n)$, and $p=[Fp_{ij}]$. The $\ast$-functor $U$ is then the restriction of $\iota_A: A\to A_\oplus^\natural$ to~$B$, and $R(n)\to B$ is the $*$-functor uniquely determined by the assignments $o_i\mapsto F(o_i)$ and $r(n)\mapsto r$.
In order to verify the pushout property, consider two $*$-functors $T_0,T_1$ as in the following commutative diagram (without~$T$):
\[
\xymatrix{
P(n) \ar[r]^-F \ar[d]_{R_n} & A  \ar[d]^-{U} \ar@/^3ex/[ddr]^{T_0}  \\
R(n) \ar@/_3ex/[drr]_{T_1} \ar[r]^-{} & B  \ar@{..>}[dr]|T & \\
&& C \,.
}
\]
Extend $T_0$ along $U$ to a $*$-functor $T:B\to C$ by setting $T(r):= T_1(r(n))$. 
In order to define~$T$ on morphisms, it suffices to define it on the Hom-spaces
\begin{align*}
B(r,y) &= A^\natural_\oplus ((x,p),(y,1_y)) =  \Big(\bigoplus_{j}A(Fo_j,y)\Big) \circ p
\quad \subset \quad \bigoplus_{j}A(Fo_j,y) \\
B(y,r) &= A^\natural_\oplus ((y,1_y),(x,p)) = p\circ\Big(\bigoplus_{i}A(y,Fo_i)\Big)
 \quad \subset \quad \bigoplus_{i}A(y,Fo_i)
\end{align*}
for $y\in \obj(A)=\obj(B)\smallsetminus \{r\}$, as well as on the Hom-space
\[
B(r,r) = A_\oplus^\natural ((x,p),(x,p)) = p\circ \left( \bigoplus_{i,j} A(Fo_j,Fo_i) \right) \circ p 
\; \subset \; \bigoplus_{i,j} A(Fo_j,Fo_i)
\,.
\]
Given such a row vector $a = [a_j] = [a_j] [Fp_{ij}] \in B(r,y)$, a column vector $b= [b_i] = [Fp_{ij}] [b_i] \in B(y,r)$, and a square matrix $c = [c_{ij}] = [Fp_{ij}] [c_{ij}] [Fp_{ij}] \in B(r,r)$, we define the following arrows 
\begin{eqnarray}
T(a) &:= &\sum_j T_0(a_j) T_1(s_j^*) \;\in\; C\big(T_1(r(n)), T_0(y)\big) \nonumber\\
T(b) &:= & \sum_i  T_1(s_i)T_0(b_i) \;\in\; C\big(T_0(y), T_1(r(n))\big) \nonumber \\
T(c) &:=& \sum_{i,j} T_1(s_i) T_0(c_{ij}) T_1(s_j^*) \;\in\; C\big(T_1(r(n)), T_1(r(n))\big)\,. \nonumber
\end{eqnarray}
Using the equalities $T_1(s^*_is_j ) = T_1R_n(p_{ij})= T_0F(p_{ij})$ for all $i$ and $j$, it is straightforward to verify that $T$ is a well-defined $*$-functor $B\to C$ restricting to $T_0$ on $A$ and to $T_1$ on~$R(n)$. 
Since all morphisms into or out of a retract are determined by their components (\cf\ Remark \ref{remark:inspiration_for_defs}(ii)), $T$ is the unique such $\ast$-functor. This shows that in the pushout square \eqref{eq:squares} the map $U$ is fully faithful. It is also evident from this explicit construction that the only object in $B$ not lying in the image of~$U$, that is~$r$, is a range object for a projection matrix in the image. Therefore $U$ is a Morita equivalence by Lemma \ref{lemma:recognition_Moreq}, and the proof is finished.
%Hence $U^\natural_\oplus$ is also fully faithful. Note that in $B^\natural_\oplus$ the object $r\in \obj(B)$ becomes unitarily isomorphic to $U^\natural_\oplus((x,p))$ since both are range objects for the same projection matrix. This implies that $U^\natural_\oplus$ is also unitarily essentially surjective \Goncalo{(Ivo, please add some words about this essentially subjectivity)} 
%and therefore a unitary equivalence. As a consequence $U$ is a Morita equivalence, and so the proof is finished.
\end{proof}

\begin{prop}
\label{prop:JcapWeq}
$J_\Mor\text{-}\mathrm{inj} \cap \Weq_\Mor \subset \mathbf{Surj}$.
\end{prop}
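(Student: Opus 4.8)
The plan is to prove the stated inclusion directly, by unwinding what membership in $\mathbf{Surj}$ means: a $*$-functor $F\colon A\to B$ lies in $\mathbf{Surj}$ exactly when it is fully faithful and surjective on objects. So let $F\colon A\to B$ be a Morita equivalence lying in $J_\Mor\text{-}\mathrm{inj}$. Full faithfulness is for free: by Lemma~\ref{lemma:recognition_Moreq}(i) every Morita equivalence is fully faithful. Hence everything comes down to showing that $\obj(F)$ is surjective, and the strategy is to encode each object of~$B$ as a lifting problem against one of the generating trivial cofibrations $R_n\colon P(n)\to R(n)$.

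So fix $y\in\obj(B)$. Since $F$ is a Morita equivalence, $F^\natural_\oplus\colon A^\natural_\oplus\to B^\natural_\oplus$ is a unitary equivalence, in particular essentially surjective; and by construction every object of $A^\natural_\oplus$ has the shape $(x_1\cdots x_n,[p_{ij}])$ with $x_i\in\obj(A)$ and $[p_{ij}]$ a self-adjoint idempotent matrix over them. So there are $n\geq 0$, objects $x_1,\dots,x_n$ of~$A$ and such a projection matrix $[p_{ij}]$ with
\[
\iota_B(y)\;\simeq\;F^\natural_\oplus(x_1\cdots x_n,[p_{ij}])\;=\;(Fx_1\cdots Fx_n,[Fp_{ij}])
\]
in~$B^\natural_\oplus$. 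By Remark~\ref{remark:inspiration_for_defs} this says precisely that $y$ is a range object \emph{in~$B$} for the projection matrix $[Fp_{ij}]$ on $Fx_1,\dots,Fx_n$; composing the above unitary with the canonical inclusion of this retract and using that $\tau_{B_\oplus}$ is fully faithful, I obtain an isometry $v\colon y\to Fx_1\oplus\cdots\oplus Fx_n$ in~$B_\oplus$ with $vv^*=[Fp_{ij}]$, say with components $w_i\in B(Fx_i,y)$, so that $w_i^*w_j=Fp_{ij}$ and $\sum_i w_iw_i^*=1_y$. Note that $[p_{ij}]$ is itself a projection matrix over~$A$: since $F$ is faithful, the relations $p_{ji}^*=p_{ij}=\sum_k p_{ik}p_{kj}$ are forced by their $F$-images.

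If $n=0$ this already concludes the argument: then $\iota_B(y)\simeq 0$, so $\End_B(y)=\End_{B^\natural_\oplus}(\iota_B(y))=0$ and $y$ is a zero object of~$B$; the $*$-functor $\mathbf 0\to B$ it determines fits in a (trivially commuting) square with $R_0\colon\emptyset\to\mathbf 0$, so $J_\Mor$-injectivity of~$F$ yields a lift $\mathbf 0\to A$, i.e.\ a zero object of~$A$ mapped onto~$y$. If $n\geq 1$, the data $(x_i,[p_{ij}])$ determines by Lemma~\ref{lem:char}(iii) a $*$-functor $G\colon P(n)\to A$ with $G(o_i)=x_i$ and $G(p_{ij})=p_{ij}$, while the data $(Fx_i,y,v)$ determines by Lemma~\ref{lem:char}(v) a $*$-functor $H\colon R(n)\to B$ with $H(o_i)=Fx_i$, $H(r(n))=y$ and $H(s_i)=w_i$. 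Since $R_n$ sends $p_{ij}$ to $s_i^*s_j$ (Definition~\ref{defi:R_n}) and $w_i^*w_j=Fp_{ij}$, both composites $F\circ G$ and $H\circ R_n$ send $o_i\mapsto Fx_i$ and $p_{ij}\mapsto Fp_{ij}$, so the square
\[
\xymatrix{
P(n)\ar[r]^-{G}\ar[d]_-{R_n} & A\ar[d]^-{F}\\
R(n)\ar[r]_-{H} & B
}
\]
commutes. As $R_n\in J_\Mor$ and $F\in J_\Mor\text{-}\mathrm{inj}$, this square has a lift $L\colon R(n)\to A$ with $F\circ L=H$; in particular $F\bigl(L(r(n))\bigr)=H(r(n))=y$, so $y$ lies in the image of~$F$ on objects. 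In all cases $\obj(F)$ is surjective, and together with full faithfulness this gives $F\in\mathbf{Surj}$.

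I expect the only real friction to be the routine but careful bookkeeping in the $n\geq 1$ step --- extracting the $B_\oplus$-isometry~$v$ from the abstract isomorphism in $B^\natural_\oplus$, and matching the defining relations of $P(n)$ and $R(n)$ so that the square over~$R_n$ genuinely commutes --- all of which reduces, as sketched above, to faithfulness of~$F$ together with the universal properties recorded in Lemma~\ref{lem:char}.
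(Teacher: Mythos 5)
Your proof is correct, and its overall architecture coincides with the paper's: full faithfulness comes for free from Proposition~\ref{prop:UP_saturated}(iii) (equivalently Lemma~\ref{lemma:recognition_Moreq}(i)), and surjectivity on objects is obtained by encoding each $y\in\obj(B)$ as a lifting problem against a generating trivial cofibration $R_n$, using essential surjectivity of $F^\natural_\oplus$ to produce the relevant object $(x_1\cdots x_n,[p_{ij}])$ of $A^\natural_\oplus$. Where you genuinely diverge is in how the bottom arrow $R(n)\to B$ of the lifting square is manufactured. The paper first builds $H\colon SR(n)\to B^\natural_\oplus$ landing on the retract $F^\natural_\oplus(x)$, then transports along the unitary $u$ by extending over the pushout $SR(n)\sqcup_\F\mathbf I$ (Lemma~\ref{lemma:little_pushouts}) and restricting along $V_1$ to get a functor $\widetilde H\colon R(n)\to B$ hitting $y$ itself. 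You instead conjugate the canonical isometry of the retract by $u$ directly, obtaining an isometry $v$ with source $\iota_B(y)$ whose components $w_i\in B(Fx_i,y)$ satisfy $\sum_k w_kw_k^*=1_y$ and $w_i^*w_j=Fp_{ij}$; by Lemma~\ref{lem:char}(v) this hands you $H\colon R(n)\to B$ immediately, and the square over $R_n$ commutes because $P(n)$ is generated by the $p_{ij}$. This shortcut is sound (full faithfulness of $\tau_{B_\oplus}$ and $\sigma_B$ guarantees the $w_i$ really live in $B$) and it buys you independence from the $SR(n)\sqcup_\F\mathbf I$ machinery, which the paper needs elsewhere anyway; you also treat the degenerate case $n=0$ explicitly, which the paper's diagrammatic argument glosses over since $SR(n)$ is only defined for $n\geq1$. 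The one parenthetical in your write-up that is superfluous is the observation that $[p_{ij}]$ is a projection matrix ``because $F$ is faithful'': it is one by definition, being part of an object of $A^\natural_\oplus$.
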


\begin{proof}
Let $F:A \to B$ be a $\ast$-functor belonging to $J_\Mor\text{-}\mathrm{inj} \cap \Weq_\Mor $. By Proposition~\ref{prop:UP_saturated}(iii) it is fully faithful and so it remains to show that the map $\obj(A) \to \obj(B)$ is surjective. 
Let $y \in\obj( B)$. 
Since $F \in \Weq_\Mor$, $F^\natural_\oplus$ is unitarily essentially surjective and so there exist an object $x \in A_\oplus^\natural$ and a unitary isomorphism $u\colon F^\natural_\oplus(x) \stackrel{\sim}{\to} y$ in~$B^\natural_\oplus$. 
%%%%%%%%%%%%%%%%%
\begin{comment} %It seems to me that we don't need this special case anywhere! So I've put it in comment.
Let us first assume that $x \in A_\oplus \subset A^\natural_\oplus$. 
We can then consider the following commutative diagram
$$
\xymatrix{
\bbF^n \ar[dd] \ar[r]^-G & A \ar[d]^F \\
& B \ar[d] \\
S(n) \ar[r]_-H & B_\oplus\,, 
}
$$
where $G$ is determined by the object $x$ and $H$ sends $s(n)$ to $F_\oplus(x)$. 
The unitary isomorphism $u$ allows us then to extend $H$ to a $\ast$-functor $\overline{H}: S(n) {\sqcup}_\F {\bf I} \to B_\oplus$. 
Let us consider the following commutative diagram:
$$\xymatrix@!0 @R=3pc @C=4pc{
 & & \bbF^n \ar[rr]^-G \ar[dr]^-{S_n} \ar[dl]_{S_n} & 
 & A \ar[d]^F \\
\bbF \ar[r]^-{s(n)} \ar[dr]_-0 & S(n) \ar[dr]_{U_0} \ar[drrr]^>>>>>>{H} & &
 S(n) \ar[r]^-{\widetilde{H}}
 \ar[dl]|\hole_<<{U_1} \ar@{.>}[ur]^-{\tilde{\tilde{H}}}  & B \ar[d] \\
 & {\bf I} \ar[r] & S(n) \sqcup_{\bbF} {\bf I} \ar[rr]_-{\overline{H}} &  & B_\oplus \,.
}
$$
By pre-composing $\overline{H}$ with the inclusion $U_1$ we obtain then a functor $\tilde{H}$ with values in~$B$ (because it maps $s(n)$ to~$y$). Since $F \in J_\Mor\text{-}\mathrm{inj}$, we obtain then a $\ast$-functor $\tilde{\tilde{H}}$ as in the diagram and so we conclude that there exists an object $x' \in \obj(A)$ such that $F(x')=y$. This shows that $F\in \mathbf{Surj}$.
Let us now consider the general case, \ie\ $x \in A_\oplus^\natural$. 
\end{comment} %It seems to me that we don't need this special case anywhere! So I've put it in comment.
%%%%%%%%%%%%%%%%%%%%%%%%%%%%
This data allows us to consider the following commutative diagram 
$$
\xymatrix{
P(n) \ar[dd] \ar[r]^-G & A \ar[d]^F \\
& B \ar[d] \\
SR(n) \ar[r]_-H & B_\oplus^\natural\,, 
}
$$
where $G$ and $H$ are determined by~$x$. More precisely, by definition of $A^\natural_\oplus$ we have $x=(x_1\oplus \cdots\oplus x_n,[p_{ij}])$ for some $x_1,\ldots,x_n\in\obj(A)$ and some projection matrix $[p_{ij}\colon x_j\to x_i]$ in~$A$. Thus, $G$ is determined by $[p_{ij}]$, while $H$ sends $s(n)$ to the canonical direct sum $F(x_1)\oplus\cdots\oplus F(x_n)=F^\natural_\oplus(x_1\oplus\cdots\oplus x_n)$ and $r(n)$ to the canonical retract $F_\oplus^\natural(x)=(Fx_1\oplus\cdots\oplus Fx_n, [Fp_{ij}])$ (and of course $v\colon r(n)\to s(n)$ and $v_i\colon o_i\to s(n)$ are sent to the evident isometries in~$B^\natural_\oplus$). 

The unitary isomorphism $u\colon F^\natural_\oplus(x) \stackrel{\sim}{\to} y$ determines then a unique extension of~$H$ along~$V_0$ to a $*$-functor $\overline{H}\colon SR(n) \sqcup_\F {\bf I} \to B_\oplus^\natural$. Consider the following commutative diagram, where the square in the middle is \eqref{eq:square}:
$$\xymatrix@!0 @R=3pc @C=4pc{
 & & P(n) \ar[rr]^-G \ar[dr]_{R_n} \ar[dl] & 
 & A \ar[d]^F \\
\bbF \ar[r]^-{r(n)} \ar[dr]_-0 & SR(n) \ar@{}[d]|{\lrcorner} \ar[dr]_{V_0} \ar[drrr]^>>>>>>{H} & &
 R(n) \ar[r]^-{\widetilde{H}}
 \ar[dl]|\hole_<<{V_1} \ar@{.>}[ur]^-{\widetilde{\widetilde{H}}}  & B \ar[d] \\
 & {\bf I} \ar[r] & SR(n) \sqcup_{\bbF} {\bf I} \ar[rr]_-{\overline{H}} &  & B_\oplus^\natural \,.
}
$$
By precomposing $\overline{H}$ with the inclusion $V_1$ we obtain a functor $\widetilde{H}$ with values in $B$ (because it maps $r(n)$ to~$y$). Since $F \in J_\Mor\text{-}\mathrm{inj}$, there exists a $\ast$-functor $\widetilde{\widetilde{H}}$ as in the above diagram and so we conclude that there exists an object $x' \in \obj(A)$ such that $F(x')=y$. This shows that $F\in \mathbf{Surj}$, and so the proof is finished.
\end{proof}

%\begin{lemma}
%\label{lemma:lifting_of_uni}
%$J_\Mor\textrm-\mathrm{inj} \subset  \Fib_\uni$.
%\end{lemma}
%\begin{proof}
%In words, we must show that if a $*$-functor has the right lifting property with respect to the maps $S_n,R_n$, $n\geq1$, then it also has the right lifting property with respect to $0:\F\to \mathbf I$.
%Indeed it is easy to mimick $0:\F\to \mathbf I$ just by using $R_1:P(1)\to R(1)$. By definition, $P(1)$ is generated by one object $o_1$ and a projection~$p_{11} :o_1\to o_1$, while $R(1)$ is generated by two objects $o_1$ and $r(1)$ and a coisometry $s_1:o_1\to r(1)$ with $s_1^*s_1=R_1(p_{11})$ and $s_1s_1^*=1_{r(1)}$.
 
%Thus let $F:A\to B$ have the right lifting property with respect to~$R_1$, and suppose we are given a unitary map $u: Fx\stackrel{\sim}{\to} y$ in $B$ for some $x\in \obj(A)$. Since the identity map $1_x$ is a projection and the unitary $u$ is a coisometry, we obtain a well-defined commutative square of $*$-functors
%\[ \xymatrix{
%P(1) \ar[r]^-G \ar[d]_{R_1} & A\ar[d]^F \\
%R(1) \ar[r]^-H & B}\]
%by setting $G(p_{11})=1_x$, $H(r(1))=y$ and $H(s_1)=u$. 
%By hypothesis there exists a lift $L:R(1)\to A$ in the diagram, that is, there exists a map $v:=L(s_1)\in A(x, x')$ satisfying $v^*v= G(p_{11}) =1_x$,  $vv^*=1_{x'}$ and $F(v)=u$. 
%In other words, we have found a lift of the unitary~$u$, as required.
%\end{proof}

\begin{prop}\label{prop:aux1}
$\mathbf{Surj} \subset J_\Mor\text{-}\mathrm{inj} \cap \Weq_\Mor$.
\end{prop}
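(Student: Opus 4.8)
The plan is to show that every $*$-functor $F\colon A\to B$ in $\mathbf{Surj}$ is both a Morita equivalence and has the right lifting property with respect to every $R_n$, $n\geq 0$.

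\textbf{Step 1: $\mathbf{Surj}\subset\Weq_\Mor$.} Let $F\colon A\to B$ be in $\mathbf{Surj}$. By definition $F$ is fully faithful and surjective on objects; hence $UF\colon UA\to UB$ is an equivalence of underlying categories, so $F$ is a unitary equivalence by the discussion following Definition~\ref{def:unitary}. By Proposition~\ref{prop:UP_saturated}(iv) (or the Remark right after Definition~\ref{defi:Mor}), $F$ is then a Morita equivalence, i.e.\ $F\in\Weq_\Mor$.

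\textbf{Step 2: $\mathbf{Surj}\subset J_\Mor\text{-}\mathrm{inj}$.} Fix $n\geq 0$ and a lifting problem
\[
\xymatrix{
P(n) \ar[d]_{R_n} \ar[r]^-{G} & A \ar[d]^{F} \\
R(n) \ar[r]_-{H} \ar@{..>}[ur]^-{L} & B\,.
}
\]
For $n=0$ this is the statement that there is a (unique) $*$-functor $\mathbf0\to A$ lifting $\mathbf0\to B$, which holds because $F$ is surjective on objects: choose any $x_0\in\obj(A)$ with $F(x_0)$ equal to the image of the unique object of $\mathbf0$; since that object is a zero object, its endomorphism ring is zero, and as $F$ is fully faithful the endomorphism ring of $x_0$ is zero too, so $x_0$ is a zero object and defines the desired lift $\mathbf0\to A$. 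For $n\geq 1$, by Lemma~\ref{lem:char}(v) the functor $H$ amounts to: objects $y_i:=H(o_i)\in\obj(B)$, an object $r:=H(r(n))\in\obj(B)$, and arrows $H(s_i)\in B(y_i,r)$ with $[H(s_i)^*H(s_j)]$ a projection matrix having range object $r$. Commutativity $H\circ R_n = F\circ G$ together with Lemma~\ref{lem:char}(iii) forces $H(s_i^*s_j) = F(G(p_{ij}))$, so in particular $y_i = F(x_i)$ where $x_i := G(o_i)\in\obj(A)$. Since $F$ is surjective on objects, pick $r'\in\obj(A)$ with $F(r')=r$; since $F$ is fully faithful and bijective on all relevant Hom-spaces, there are unique arrows $t_i\in A(x_i,r')$ with $F(t_i)=H(s_i)$, and the identities defining "$[t_i^*t_j]$ is a projection matrix with range object $r'$" (namely $\sum_k t_kt_k^* = 1_{r'}$, cf.\ Remark~\ref{remark:inspiration_for_defs}(iii), together with $t_i^*t_j = G(p_{ij})$) are pulled back from $B$ along the isometric inclusion $F$. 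By Lemma~\ref{lem:char}(v) this data defines a $*$-functor $L\colon R(n)\to A$, and $F\circ L = H$ (both sides agree on generators), $L\circ R_n = G$ (both sides agree on the generating projection matrix). Thus $L$ is the required lift and $F\in R_n\text{-}\mathrm{inj}$ for every $n$, i.e.\ $F\in J_\Mor\text{-}\mathrm{inj}$.

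\textbf{Conclusion.} Combining Steps~1 and~2 gives $\mathbf{Surj}\subset J_\Mor\text{-}\mathrm{inj}\cap\Weq_\Mor$. The only mild subtlety is the bookkeeping in Step~2 ensuring that all the defining relations for the universal object $R(n)$ that $L$ must respect are algebraic relations living in the dense $*$-subcategory of $R(n)$, so that they are detected (and hence can be lifted) via the isometric, fully faithful inclusion $F$; this is exactly the kind of argument already used in Lemma~\ref{lemma:ff_comparisons}. I expect that to be the main (though routine) point to get right.
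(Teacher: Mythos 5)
Your proof is correct. Step 1 matches the paper exactly (elements of $\mathbf{Surj}$ are unitary equivalences, hence Morita equivalences), and your $n=0$ case is the paper's argument with the full-faithfulness made explicit. Where you genuinely diverge is the lifting step for $n\geq 1$. The paper's proof is more machinery-heavy: it embeds the lifting problem into the pushout square $P(n)\to SR(n)$, $R(n)\to SR(n)\sqcup_\F\mathbf I$ of Lemma~\ref{lemma:little_pushouts}, passes to the saturations $A^\natural_\oplus\to B^\natural_\oplus$, and uses the fact that $F^\natural_\oplus$ has the right lifting property against $0\colon\F\to\mathbf I$ to produce a unitary identifying $\overline{G}(r(n))$ with $\iota_A(x)$ for a preimage $x$ of $H(r(n))$; the lift is then assembled from that unitary. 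You instead construct the lift entirely inside $A$: pick a preimage $r'$ of $H(r(n))$ under $\obj(F)$, lift the structure maps $H(s_i)$ through the Hom-space bijections, and check the single defining relation $\sum_k t_kt_k^*=1_{r'}$ by faithfulness of $F$; Lemma~\ref{lem:char}(v) then hands you the $*$-functor $L$, and the two triangles commute because all functors involved are determined by their values on the generating quivers. This is shorter and avoids Lemma~\ref{lemma:little_pushouts} and the saturation functor entirely, at the cost of being a bare-hands verification rather than a reuse of the unitary model structure. Two small remarks: your closing worry about density of the algebraically generated subcategory of $R(n)$ is unnecessary here, since the universal property of $\mathrm{U}(Q,R)$ (Lemma~\ref{lem:char}) already reduces everything to checking the finitely many relations in $R$ on the chosen generators, which you do; and $F$ need not be an inclusion (it is surjective, not injective, on objects) --- but only faithfulness on Hom-spaces is used, so nothing breaks.
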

\begin{proof}
Let $F\colon A\to B$ be a $\ast$-functor belonging to $\mathbf{Surj}$.
Since $F$ is by definition a unitary equivalence and thus a Morita equivalence, it suffices to verify that it has the right lifting property with respect to the $\ast$-functors $R_n, n \geq 0$. In the case $n=0$, the requirement is that every zero object of $B$ must come from a zero object of~$A$. This is clearly the case since $F$ is also surjective on objects. For $n\geq1$, consider the following commutative square:
$$
\xymatrix{
P(n) \ar[d]_{R_n} \ar[r]^-G & A \ar[d]^F \\
R(n) \ar[r]_-H & B \,.
}
$$
In order to construct a lift, we will make use of the following commutative diagram:
$$\xymatrix@!0 @R=2pc @C=4pc{
&  & P(n) \ar[rr]^-G \ar[dl]_{} \ar[dd]|\hole & &  *+<1pc>{A}
\ar[dd]^-F \ar[dl] \\
\bbF \ar@{}[ddr]|{\lrcorner} \ar[dd]_0 \ar[r]^-{r(n)} & SR(n) \ar[dd]_{V_0} \ar[rr]^>>>>>>>{\overline{G}}  & & A^\natural_\oplus \ar[dd] & \\
& & R(n) \ar@{..>}[ru]|{L} \ar[rr]|{\hole}^>>>>>>H \ar[dl]_{V_1} &    & *+<1pc>{B} 
\ar[dl]^{\iota_B} \\
{\bf I} \ar[r] & SR(n) \sqcup_{\bbF} {\bf I}
\ar[rr]_-{\overline{H}} &   & B^\natural_\oplus & \,.
}$$
The $\ast$-functor $\overline{G}$ is the extension of $G$ determined by $s(n)\mapsto G(o_1)\oplus \cdots \oplus G(o_n)$ and $r(n)\mapsto (G(o_1)\oplus \cdots \oplus G(o_n), [Gp_{ij}])$, and the $\ast$-functor $\overline{H}:=(\iota_BH , F_\oplus^\natural \overline{G})$ %\Goncalo{(Ivo, could you better explain the remaining of this sentence)} 
is induced by the push-out property proved in Lemma~\ref{lemma:little_pushouts}.
Since $F$ belongs to $\mathbf{Surj}$ and this class is clearly stable under application of the functor $(-)^\natural_\oplus$, the $\ast$-functor $F_\oplus^\natural$ has the right lifting property with respect to $0\colon \bbF \to {\bf I}$. 
Hence, we conclude that $\overline{H}$ lifts to $A^\natural_\oplus$. 
By restricting this lift to $R(n)$ along $V_1$ we obtain a $*$-functor $L$ with $F^\natural_\oplus L(r(n))=\iota_B H(r(n))= \iota_B(Fx)$ for some $x\in \obj(A)$, because $\obj(F)$ is surjective.
By construction, $\iota_A(x)$ is unitarily isomorphic to $\overline{G}(r(n))$ in $A^\natural_\oplus$ over~$P(n)$, \ie\ $x$ is a range object in $A$ for the projection matrix $[Gp_{ij}]$.
Therefore, if we denote by $u\colon \overline{G}(r(n))\stackrel{\sim}{\to} \iota_A(x)$ this unitary morphism, 
we see that the assignments $r(n)\mapsto x$ and $s_i \mapsto u\, \overline{G}(s_i)$ (so that $s^*_is_j\mapsto G(p_{ij})$) define a lift $R(n)\to A$ for the initial square. 
%\Goncalo{(Do we need the text that follows ?)} 
%Now, let $x$ be an object in $A$ and $p \in A(x,x)$ a projection. This data gives rise to a $\ast$-functor $G:P(1) \to A$ sending $o_1$ to $x$ and $p_{11}$ to $p$. Similarly a $\ast$-functor $\overline{G}$ as above exists. Hence, the projection $p$ splits in $A$. This allow us to conclude that $A$ is saturated and so the proof is finished.
\end{proof}
The proof of Theorem~\ref{thm:morita_model} is now finished since the claims \eqref{eq:former} and \eqref{eq:later} follow respectively from Propositions~\ref{prop:JcapWeq}-\ref{prop:aux1} and Proposition~\ref{prop:aux}.
%----------------------------------------------------------------------
\subsection{Fibrant objects}
%----------------------------------------------------------------------

In this subsection we provide a simple characterization of the fibrant objects of the Morita model category; see Proposition~\ref{prop:fibrant}.
\begin{lemma}
\label{lemma:RLP_SvsR}
For every positive integer $n\geq1$ we have $R_n \textrm{-}\mathrm{inj}\subset S_n \textrm{-}\mathrm{inj}$.
\end{lemma}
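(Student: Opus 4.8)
The plan is to unwind both classes of $\ast$-functors via the universal properties of Lemma~\ref{lem:char} and to reduce a lifting problem against $S_n$ to one against $R_n$, exploiting the fact that a direct sum is precisely a range object for the \emph{identity} projection matrix. Concretely, suppose $F\colon A\to B$ lies in $R_n\textrm{-}\mathrm{inj}$ and consider an arbitrary commutative square with left edge $S_n$, top edge $G\colon\F^n\to A$, and bottom edge $H\colon S(n)\to B$. By Lemma~\ref{lem:char}(i) the functor $G$ is nothing but a choice of objects $x_1,\ldots,x_n\in\obj(A)$; by commutativity together with Lemma~\ref{lem:char}(ii), $H$ consists of the objects $F(x_1),\ldots,F(x_n)$ and a chosen direct sum $y:=H(s(n))$ of them in $B$, with structure isometries $w_i:=H(v_i)$ satisfying $w_i^*w_j=\delta_{ij}$ and $\sum_k w_kw_k^*=1_y$.

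First I would promote this to a commutative square with left edge $R_n$. Using Lemma~\ref{lem:char}(iii), the identity matrix $[\delta_{ij}]$ on $x_1,\ldots,x_n$ (certainly a self-adjoint idempotent matrix) defines a $\ast$-functor $G'\colon P(n)\to A$ with $o_i\mapsto x_i$ and $p_{ij}\mapsto\delta_{ij}$; using Lemma~\ref{lem:char}(v), the direct sum $y$ — being a range object for the identity projection matrix on the $F(x_i)$ — defines a $\ast$-functor $H'\colon R(n)\to B$ with $o_i\mapsto F(x_i)$, $r(n)\mapsto y$, and $s_i\mapsto w_i$. Since $R_n$ sends $p_{ij}\mapsto s_i^*s_j$ by Definition~\ref{defi:R_n}, the relation $w_i^*w_j=\delta_{ij}$ gives $FG'=H'R_n$ (one need only evaluate on the objects $o_i$ and the generating arrows $p_{ij}$). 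Feeding this square into the hypothesis $F\in R_n\textrm{-}\mathrm{inj}$ produces a diagonal filler $\ell\colon R(n)\to A$ with $\ell R_n=G'$ and $F\ell=H'$.

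Next I would read the desired lift off $\ell$. Put $r:=\ell(r(n))\in\obj(A)$ and $t_i:=\ell(s_i)$, noting $\ell(o_i)=G'(o_i)=x_i$. Applying $\ell$ to the defining relation $1_{r(n)}=\sum_k s_ks_k^*$ of $R(n)$ gives $1_r=\sum_k t_kt_k^*$, while $t_i^*t_j=\ell(R_n(p_{ij}))=G'(p_{ij})=\delta_{ij}$; hence $(r;t_1,\ldots,t_n)$ is a direct sum of $x_1,\ldots,x_n$ in $A$, and Lemma~\ref{lem:char}(ii) turns it into a $\ast$-functor $\Phi\colon S(n)\to A$. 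It then remains to check that $\Phi$ fills the original square: $\Phi S_n=G$ since both send $\bullet_i\mapsto x_i$, and $F\Phi=H$ since these two $\ast$-functors $S(n)\to B$ agree on $o_i$, on $s(n)$ (both equal $y$, using $F\ell=H'$), and on $v_i$ (both equal $w_i$). This gives $F\in S_n\textrm{-}\mathrm{inj}$.

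I do not expect a genuine obstacle: the argument is entirely a manipulation of the universal properties in Lemma~\ref{lem:char}. The one point deserving care — and the conceptual heart of the proof — is the observation that a direct sum is a range object of the identity matrix, which is what lets a lifting problem against $S_n$ be repackaged as one against $R_n$; concretely this means making sure the \emph{same} matrix (the identity) is used to build $G'$ on the $A$-side and $H'$ on the $B$-side, matched up through the identities $R_n(p_{ij})=s_i^*s_j$ and $w_i^*w_j=\delta_{ij}$ so that the auxiliary square commutes on the nose.
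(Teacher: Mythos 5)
Your proof is correct and follows essentially the same route as the paper: both convert the lifting problem against $S_n$ into one against $R_n$ via the observation that a direct sum is a range object for the identity projection matrix, then read the desired lift off the resulting filler using the relations $t_i^*t_j=\delta_{ij}$ and $\sum_k t_kt_k^*=1_r$. The only (cosmetic) difference is that the paper packages your $G'$ and $H'$ as compositions with explicit comparison $*$-functors $M\colon P(n)\to\F^n$ and $N\colon R(n)\to S(n)$.
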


\begin{proof}
We need to show that every $*$-functor $F\colon A\to B$ having the right lifting property with respect to $R_n$ also has the right lifting property with respect to~$S_n$. Consider the following commutative square of $*$-functors:
\begin{equation}\label{eq:original}
\xymatrix{
\F^n \ar[r]^-G \ar[d]_{S_n} & A \ar[d]^F \\
S(n) \ar[r]_-H & B\,.
}
\end{equation}
Since the identity matrix is a projection matrix, and since a direct sum is a retract of its summands, we can extend it to the (solid) commutative diagram
\[
\xymatrix{
P(n) \ar[d]_{R_n} \ar[r]^-M & \F^n \ar[r]^-G \ar[d] & A \ar[d]^F \\
R(n) \ar@{..>}[urr]^<<<<<<{L} \ar[r]_-N & S(n) \ar@{..>}[ur]_{\widetilde{L}} \ar[r]_-H & B
}
\]
by setting $M(o_i):= \bullet_i$, $M(p_{ij}):= \delta_{ij}$, $N(o_i):=o_i$, $N(r(n)):= s(n)$, and $N(s_i):= v_i$ (for $i,j=1,\ldots,n$).
By hypothesis, there exists a lift $L\colon R(n)\to A$ such that the outer square commutes.
Using this commutativity and the defining relations of our universal $\Cstar$-categories, we conclude that
\[
L(s_i)^*L(s_j)
 = L(s_i^*s_j)
 = LR_n(p_{ij})=\delta_{ij} 
\]
and that
\[
\sum_{k=1}^n L(s_k)L(s_k)^*
 = L\left( \sum_k s_ks_k^* \right) 
 = 1_{L(s(n))} \,.
\]
This shows us that $L(r)$ is a direct sum of the $G(\bullet_i)$'s with isometries $L(s_i)$.
By the definition of $S(n)$, it follows immediately that $L$ lifts along~$N$ to a $\ast$-functor 
$\tilde L\colon S(n)\to A$ with $\tilde L(s(n))=L(r(n))$ and $\tilde L(v_i)=L(s_i)$.
Clearly $F\tilde L=H$ and $\tilde LS_n=G$, which implies that we have found a lift~$\tilde L$ for the original square \eqref{eq:original}. This achieves the proof.
\end{proof}

\begin{prop}\label{prop:fibrant}
For every $\Cstar$-category~$A$ the following conditions are equivalent:
\begin{enumerate}
\item[(i)] The canonical $\ast$-functor $\iota_A\colon A\to A_\oplus^\natural$ is a unitary equivalence.
\item[(ii)]  $A$ is saturated in the sense of Definition~\ref{defi:saturated}.
\item[(iii)] $A$ is fibrant in the Morita model structure of Theorem~\ref{thm:morita_model}.
\item[(iv)] The unique $*$-functor $A\to \mathbf0$ has the right lifting property with respect to the set $S=\{\, R_0\colon \emptyset\to \mathbf0 \, , \, R_1\colon P(1)\to R(1) \,,\,  S_2\colon \F^2 \to S(2)\, \}$.
\end{enumerate}
\end{prop}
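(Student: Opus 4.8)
The plan is to establish the cycle (i)$\Leftrightarrow$(ii), (ii)$\Rightarrow$(iii), (iii)$\Rightarrow$(iv), (iv)$\Rightarrow$(ii). The equivalence (i)$\Leftrightarrow$(ii) requires no work: it is exactly Proposition~\ref{prop:UP_saturated}(ii). For everything else, recall that by Theorem~\ref{thm:morita_model} being fibrant means precisely that the unique $*$-functor $A\to\mathbf0$ lies in $J_\Mor\textrm{-inj}$, i.e.\ has the right lifting property against every $R_n\colon P(n)\to R(n)$, $n\geq0$.

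For (ii)$\Rightarrow$(iii), assume $A$ saturated and fix a lifting problem against $R_n$. When $n=0$ a lift is just a $*$-functor $\mathbf0\to A$, which exists because $A$ has a zero object. When $n\geq1$, the top map $G\colon P(n)\to A$ is, by Lemma~\ref{lem:char}(iii), a projection matrix $[Gp_{ij}]$ on objects $x_1,\dots,x_n$ of $A$; using additivity I would form the direct sum $x=x_1\oplus\cdots\oplus x_n$ in $A$ and view $[Gp_{ij}]$ as an honest projection $p$ on $x$ (as in Remark~\ref{remark:inspiration_for_defs}), then split $p$ by idempotent completeness to obtain a range object $r\in\obj(A)$ and an isometry $v\colon r\to x$. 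Setting $s_i:=v^*v_i$ one computes $s_i^*s_j=Gp_{ij}$, so by Lemma~\ref{lem:char}(v) the data $(x_i,r,v)$ defines a $*$-functor $R(n)\to A$ solving the lifting problem (commutativity with $R_n$ is the identity $s_i^*s_j=Gp_{ij}$, and commutativity over $\mathbf0$ is automatic).

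For (iii)$\Rightarrow$(iv): a fibrant $A$ has $A\to\mathbf0\in R_n\textrm{-inj}$ for all $n\geq0$, in particular for $n=0,1$ and $n=2$; and by Lemma~\ref{lemma:RLP_SvsR} (case $n=2$) we have $R_2\textrm{-inj}\subset S_2\textrm{-inj}$, so $A\to\mathbf0$ also has the right lifting property against $S_2$, hence against all of $S=\{R_0,R_1,S_2\}$, which is (iv). For (iv)$\Rightarrow$(ii) I would simply unwind the three lifting conditions: solving lifting problems against $R_0\colon\emptyset\to\mathbf0$ means $A$ has a zero object; against $R_1\colon P(1)\to R(1)$ means (via Lemma~\ref{lem:char}(iii),(v)) that every projection of $A$ admits a range object, i.e.\ $A$ is idempotent complete by Corollary~\ref{cor:idem}; against $S_2\colon\F^2\to S(2)$ means (via Lemma~\ref{lem:char}(i),(ii)) that any two objects of $A$ have a direct sum. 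A zero object together with binary direct sums yields all finite direct sums by an obvious induction — one checks directly from the defining isometry relations that a direct sum of direct sums is again a direct sum — so $A$ is additive, and together with idempotent completeness this is the definition of saturated.

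The whole argument is bookkeeping built on the universal properties packaged in Lemma~\ref{lem:char}, on Corollary~\ref{cor:idem}, on Lemma~\ref{lemma:RLP_SvsR}, and on Proposition~\ref{prop:UP_saturated}(ii); no new construction is needed. The only point that takes a moment of care is the two translations in (iv)$\Rightarrow$(ii), where one must make sure the lifting square against $R_1$ (resp.\ $S_2$) literally encodes ``split this projection'' (resp.\ ``form this direct sum''), together with the elementary verification that nullary and binary direct sums propagate, in a $*$-compatible way, to all finite ones. I expect this to be the main (and really the only) obstacle; the essential conceptual content is the reduction in (iii)$\Rightarrow$(iv) trimming the infinite generating family $\{R_n\}_{n\geq0}$ down to the three test maps in $S$.
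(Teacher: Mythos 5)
Your proof is correct and follows essentially the same route as the paper: the same cycle (i)$\Leftrightarrow$(ii) via Proposition~\ref{prop:UP_saturated}(ii), the same use of saturation to solve the $R_n$-lifting problems for (ii)$\Rightarrow$(iii), the same appeal to Lemma~\ref{lemma:RLP_SvsR} for (iii)$\Rightarrow$(iv), and the same unwinding of the three test liftings (zero object, splitting of projections via Corollary~\ref{cor:idem}, binary direct sums plus recursion) for (iv)$\Rightarrow$(ii). No substantive difference from the paper's argument.
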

\begin{proof}
The equivalence (i)$\Leftrightarrow$(ii) was proved in Proposition \ref{prop:UP_saturated}(ii). Let us now show that (ii) $\Rightarrow$~(iii). By definition of the set of generating trivial cofibrations $J_\Mor$, the $\Cstar$-category $A$ is fibrant if and only if every (solid) diagram 
\[
\xymatrix{ 
& P(n)\ar[r]^-F \ar[d]_{R_n} & A \\
& R(n) \ar@{..>}[ur]_{\overline{F}} &
}
\]
admits a $\ast$-functor $\overline{F}$ making the triangle commute. 
%The $\ast$-functor $F$ corresponds to the choice of $n$ objects $F(\bullet_1), \ldots, F(\bullet_n)$ of $A$. A $\ast$-functor $\overline{F}$ as above corresponds to the choice of a direct sum object of $F(\bullet_1), \ldots, F(\bullet_n)$. Since by hypothesis $A$ is saturated, such a direct sum object exists and hence $\overline{F}$ also exists. 
For $n=0$, the existence of $\overline{F}$ is equivalent to the existence in~$A$ of a zero object.
For $n\geq1$, the $\ast$-functor~$F$ corresponds to the choice of~$n$ objects $F(o_1), \ldots, F(o_n)$ of $A$ plus the choice of an idempotent self-adjoint $n\times n$ matrix of arrows $F(p_{ij}), 0 \leq i,j \leq n$, between them; see Lemma~\ref{lem:char}. 
%\Goncalonew{(Ivo: could you kindly make sense of the following sentence and then prove it ?)} 
A $\ast$-functor $\overline{F}$ as above corresponds to the choice of a range object for the projection matrix $[F(p_{ij})]$. Since by hypothesis $A$ is saturated, such a range object exists and hence $\overline{F}$ also exists.
Indeed, by saturation we can choose in $A$ a direct sum $x$ of $F(o_1),\ldots,F(o_n)$, with isometries $v_i\colon F(o_i)\to x$, and also a range object~$r$, with isometry $v:r\to x$, for the projection $p\in A(x,x)$ uniquely determined by $[Fp_{ij}]$. 
Then we may set $\overline{F}(r(n)):=r$ and $\overline{F}(s_i):=v^*v_i$, and by construction $\overline{F}R_n=F$.
This proves the implication (ii) $\Rightarrow$~(iii).

If (iii) holds, then by definition the $*$-functor $A\to \mathbf0$ has the right lifting property with respect to $R_0$, $R_1$ and also $S_2$ thanks to Lemma \ref{lemma:RLP_SvsR}. Thus (iii) $\Rightarrow$~(iv).

 %\Goncalo{(I don't see how it follows immediately; please explain this better)} 
 So it remains only to prove the implication (iv) $\Rightarrow$~(ii).
 %\Goncalo{(This is not correct. It remains to show that (iv) is equivalent to (i), (ii) or (iii). Please solve this)} 
 Property~(iv) means that all (solid) diagrams of the following three shapes
\[
\xymatrix{ 
\emptyset \ar[r] \ar[d] & A && \F^2 \ar[r]^-F \ar[d]_{S_2} & A && P(1)\ar[r]^-G \ar[d]_{R_1} & A \\
\mathbf 0 \ar@{..>}[ur] & && S(2) \ar@{..>}[ur]_{\overline{F}} &&& R(1) \ar@{..>}[ur]_{\overline{G}} &
}
\]
admit liftings as indicated by the dotted maps. As before, the existence of the left-most lifting simply means that $A$ has a zero object. The existence of a lifting $\overline{F}$ means that $A$ admits a direct sum for any two objects. An easy recursive argument then shows that $A$ admits all finite sums. Finally, the $*$-functor $G$ corresponds to the choice of a projection $p\in A(x,x)$. Hence, a lifting $\overline{G}$ exists if and only if $p$ splits in~$A$. Making use of Corollary~\ref{cor:idem} as usual, we then conclude that~$A$ is idempotent complete. In sum, $A$ is additive and idempotent complete, and so by definition saturated. 
%\Goncalo{Ivo, could you kindly also prove the converse, \ie\ that (ii) $\Rightarrow$ (iv) ?)}\Ivo{(There is no need.)}
%Let $x_1,\ldots,x_n$ be finitely many objects in~$A$. 
%These objects define a unique $\ast$-functor $G\colon \bbF^n \to A$. 
%By hypothesis~(i) and Lemma \ref{lemma:RLP_SvsR}, there exists a $\ast$-functor $\overline{G}$ making the following triangle commute:
%\[\xymatrix{
%\F^n\ar[r]^-G \ar[d]_{S_n} & A \\
%S(n) \ar[ur]_{\overline{G}} &}\] 
%The images $\overline{G}(s(n))$ and $\overline{G}(v_i)$ define in $A$ a direct sum for $x_1,\ldots,x_n$.
%Hence $A$ admits direct sums. 
%Let $x$ be an object in $A$ and $p \in A(x,x)$ a projection. This data determines a $\ast$-functor $H:P(1) \to A$. By hypothesis~(i), there exists a $\ast$-functor $\overline{H}$ making the following triangle commute:
%\[\xymatrix{
%P(1)\ar[r]^-H \ar[d]_{R_1} & A \\
%R(n) \ar[ur]_{\overline{H}} &}\]  
%Hence the projection $p$ splits in $A$. By Corollary~\ref{cor:idem} we then conclude that $A$ is idempotent complete. In sum, $A$ admits all finite sums and is idempotent complete. Hence, by definition it is saturated. The proof is then finished.
\end{proof}

\begin{cor}
\label{cor:fibrant_replacement}
The $*$-functor $\iota_A\colon A\to A^\natural_\oplus$ provides a canonical choice for a fibrant replacement functor in the Morita model category $\cM_\Mor$.
\qed
\end{cor}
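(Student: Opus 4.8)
The plan is to unwind what a functorial fibrant replacement amounts to: a functor together with a natural weak equivalence out of the identity whose values are fibrant objects. Since Definition~\ref{defi:saturated1} already provides the functor $(-)^\natural_\oplus$ and the natural augmentation $\iota\colon\id\to(-)^\natural_\oplus$, all that is left to check, for each $\Cstar$-category $A$, is that $A^\natural_\oplus$ is fibrant and that $\iota_A$ is a Morita equivalence.

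First I would dispose of fibrancy of the target: by Remark~\ref{rk:sums} the $\Cstar$-category $A^\natural_\oplus=(A_\oplus)^\natural$ is always saturated, and Proposition~\ref{prop:fibrant}, equivalence (ii)$\Leftrightarrow$(iii), identifies ``saturated'' with ``fibrant''. Nothing more is required here.

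Next I would show that $\iota_A$ is a Morita equivalence by applying the recognition criterion of Lemma~\ref{lemma:recognition_Moreq} in its form~(ii)$'$. Full faithfulness of $\iota_A$ (condition~(i)) is already recorded in the proof of Proposition~\ref{prop:UP_saturated}. For condition~(ii)$'$ I would note that, $A^\natural_\oplus$ being saturated, it admits all finite direct sums and all range objects of projection matrices, and that an arbitrary object $(x_1\cdots x_n,[p_{ij}])$ of $A^\natural_\oplus=(A_\oplus)^\natural$ is reached from $\iota_A(A)$ in two steps: its underlying word is the canonical direct sum $\iota_A(x_1)\oplus\cdots\oplus\iota_A(x_n)$, and the self-adjoint idempotent $[p_{ij}]$, read as an isometry $(x_1\cdots x_n,[p_{ij}])\to(x_1\cdots x_n,1)$, exhibits $(x_1\cdots x_n,[p_{ij}])$ as a range object for the projection matrix $[p_{ij}]$ on $\iota_A(x_1),\dots,\iota_A(x_n)$. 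An equivalent but perhaps more transparent route factors $\iota_A=\tau_{A_\oplus}\circ\sigma_A$ as in Definition~\ref{defi:saturated1} and verifies, again with Lemma~\ref{lemma:recognition_Moreq}, that $\sigma_A\colon A\to A_\oplus$ and $\tau_{A_\oplus}\colon A_\oplus\to(A_\oplus)^\natural$ are separately Morita equivalences --- the first because every formal word is the direct sum of its letters, the second because every pair $(x,p)$ is the range of the projection $p$ on $(x,1_x)$ --- and then uses that Morita equivalences, being the weak equivalences of the model structure of Theorem~\ref{thm:morita_model}, are closed under composition. With this, and observing in passing that $\iota_A$ is injective on objects and hence even a cofibration (so in fact a trivial cofibration with fibrant target), the corollary is complete.

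The one place asking for a little care --- and the closest thing here to an obstacle --- is the bookkeeping in condition~(ii)$'$ around the notion of ``range object of a projection matrix'' from Remark~\ref{remark:inspiration_for_defs}, which is formulated inside $(A^\natural_\oplus)^\natural_\oplus$ rather than inside $A^\natural_\oplus$ itself; this is harmless precisely because $A^\natural_\oplus$ is already saturated, so $\iota_{A^\natural_\oplus}$ is a unitary equivalence by Proposition~\ref{prop:UP_saturated}(ii) and the two ambient $\Cstar$-categories may be used interchangeably. Beyond that, this is genuinely a corollary: the substance is already contained in Proposition~\ref{prop:fibrant}, Lemma~\ref{lemma:recognition_Moreq}, and the structural properties of saturation established earlier.
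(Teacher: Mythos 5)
Your argument is correct and is exactly the one the paper leaves implicit behind the \qed: fibrancy of $A^\natural_\oplus$ comes from Remark~\ref{rk:sums} together with Proposition~\ref{prop:fibrant}, and $\iota_A$ is a Morita equivalence by the recognition criterion of Lemma~\ref{lemma:recognition_Moreq} (full faithfulness from Proposition~\ref{prop:UP_saturated}, plus the observation that every $(x_1\cdots x_n,[p_{ij}])$ is the range object of $[p_{ij}]$ on the $\iota_A(x_i)$). Your care with the ambient category in condition~(ii)$'$ and the alternative factorization through $\sigma_A$ and $\tau_{A_\oplus}$ are both sound; nothing is missing.
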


\subsection{Morphisms in the Morita homotopy category}
We are now well equipped to provide a complete description of the morphisms in $\Ho(\cM_\Mor)$, and characterize those which are invertible. 

\begin{prop}
\label{prop:morphisms_Morita}
Consider an arbitrary morphism $\varphi\colon A\to B$ in the Morita homotopy category $\Ho(\cM_\Mor)$. Then the following holds:
\begin{itemize}
\item[(i)]
The morphism $\varphi$ is represented by a fraction of the form
\[
\xymatrix@1{A \ar[r]^-F & B^\natural_\oplus & B \ar[l]^-{\sim}_-{\iota_B} }
 \]
 for some $*$-functor~$F$. 
\item[(ii)]
The morphism $\varphi$ is invertible if and only if the essentially unique extension $\widetilde{F}\colon A^\natural_\oplus \to B^\natural_\oplus$ of~$F$ along $\iota_A$ provided by Prop.~\ref{prop:UP_saturated} is a unitary equivalence.  
\item[(iii)]
Two $*$-functors $F_1,F_2\colon A\to B^\natural_\oplus$ represent the same morphism $\varphi\colon A\to B$, as in part~(i), if and only if they are (unitarily) isomorphic in $\Cstar(A,B^\natural_\oplus)$.
\end{itemize}
\end{prop}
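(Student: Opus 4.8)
The plan is to reduce all three statements to standard facts about the homotopy category of a model category, using two features of $\cM_\Mor$: every object is cofibrant, since $\emptyset\to A$ is injective on objects for trivial reasons; and by Proposition~\ref{prop:fibrant} together with Corollary~\ref{cor:fibrant_replacement}, the $\Cstar$-category $B^\natural_\oplus$ is fibrant and $\iota_B\colon B\to B^\natural_\oplus$ is a fibrant replacement of~$B$. Throughout I write $\gamma\colon\Cstarcatun\to\Ho(\cM_\Mor)$ for the localization functor and abbreviate $X:=B^\natural_\oplus$.

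For part~(i): since $\gamma(\iota_B)$ is an isomorphism, $\gamma(\iota_B)\circ\varphi$ is a morphism $A\to X$ in $\Ho(\cM_\Mor)$ with cofibrant source and fibrant target, hence of the form $\gamma(F)$ for some $*$-functor $F\colon A\to X$ by the usual description of such Hom-sets \cite{hovey:model}*{\S1.2}. Then $\varphi=\gamma(\iota_B)^{-1}\circ\gamma(F)$ is exactly the morphism represented by the displayed fraction. For part~(ii): with $F$ as above, $\varphi$ is invertible if and only if $\gamma(F)$ is, if and only if $F$ is a Morita equivalence (a morphism of a model category is a weak equivalence precisely when it becomes invertible in the homotopy category). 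By Definition~\ref{defi:Mor} the latter says that $F^\natural_\oplus\colon A^\natural_\oplus\to X^\natural_\oplus$ is a unitary equivalence, and it remains to compare $F^\natural_\oplus$ with $\widetilde F$. Here I would use naturality of $\iota$, which gives $\iota_X\circ F=F^\natural_\oplus\circ\iota_A$, together with the defining relation $\widetilde F\circ\iota_A\simeq F$: both $\iota_X\circ\widetilde F$ and $F^\natural_\oplus$ are thus extensions of $\iota_X\circ F$ along $\iota_A$ to the saturated $\Cstar$-category $X^\natural_\oplus$, so they are unitarily isomorphic by the uniqueness clause of Proposition~\ref{prop:UP_saturated}(i); since $\iota_X$ is itself a unitary equivalence (Proposition~\ref{prop:UP_saturated}(ii), $X$ being saturated), $F^\natural_\oplus$ is a unitary equivalence if and only if $\widetilde F$ is.

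Part~(iii) carries the real content. Again by \cite{hovey:model}*{\S1.2}, two $*$-functors $F_1,F_2\colon A\to X$ represent the same $\varphi$ if and only if they are homotopic, so the point is to identify the homotopy relation on $\Cstarcatun(A,X)$ with unitary isomorphism in $\Cstar(A,X)$. My proposal is to exhibit $X\to\Cstar(\mathbf I,X)\to X\times X$ as a path object for~$X$, where the first map is $x\mapsto(x\xrightarrow{1_x}x)$ and the second is evaluation at the two objects of~$\mathbf I$. The first map is a unitary equivalence, and the second is a (unitary) fibration: given a unitary $(a,b)\colon(x_0,x_1)\to(y_0,y_1)$ in $X\times X$, the object $y_0\xrightarrow{\,bua^{-1}\,}y_1$ together with $(a,b)$ is a unitary lift of it. Unwinding definitions, a right homotopy $A\to\Cstar(\mathbf I,X)$ between $F_1$ and $F_2$ is precisely a family of unitaries $F_1(a)\to F_2(a)$ ($a\in\obj(A)$) that is natural in~$a$, i.e.\ a unitary isomorphism in $\Cstar(A,X)$; and for cofibrant~$A$ and fibrant~$X$ right homotopy agrees with the homotopy relation.

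The one genuine obstacle is that this must be a path object \emph{in the Morita model}, whereas the evaluation $*$-functor is a priori only a fibration for the unitary model structure. To overcome it I would argue that $X$, being saturated, has saturated $X\times X$ and $\Cstar(\mathbf I,X)$ (additivity and idempotent completeness pass along the equivalence $X\simeq\Cstar(\mathbf I,X)$), so that all three are fibrant in $\cM_\Mor$ by Proposition~\ref{prop:fibrant}; and then invoke the fact that in a left Bousfield localization a map between fibrant objects is a fibration in the localized structure iff it is a fibration in the original one (recall that $\cM_\Mor$ is a left Bousfield localization of the unitary model, Proposition~\ref{prop:bousfield}). Alternatively, one may bypass path objects entirely: since $X$ is Morita-fibrant, the derived mapping space $\Map_{\cM_\Mor}(A,X)$ coincides with $\Map_{\cM_\uni}(A,X)$, reducing the claim to the analogous statement for the unitary model, which is essentially contained in \cite{ivo:unitary}.
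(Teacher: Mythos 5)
Your argument is correct. Parts (i) and (ii) follow the paper's proof essentially verbatim: the fraction in (i) comes from the standard cofibrant-source/fibrant-target representation of morphisms in a homotopy category, and (ii) reduces, exactly as in the paper, to comparing $F^\natural_\oplus$ with $\widetilde F$ through the naturality square for $\iota$ and the $2$-universal property of Proposition~\ref{prop:UP_saturated}. Part (iii) is where you genuinely diverge: the paper works with \emph{left} homotopies and the explicit cylinder $A\sqcup A\to A\otimes_{\mathrm{max}}\mathbf I\to A$, which is automatically a good cylinder for $\cM_\Mor$ because $\Cof_\uni=\Cof_\Mor$ and $\Weq_\uni\subset\Weq_\Mor$ --- no statement about Morita fibrations is needed. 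You instead use \emph{right} homotopies and the path object $X\to\Cstar(\mathbf I,X)\to X\times X$, which forces you to verify that the evaluation $*$-functor is a fibration in the Morita model; your fix (all three objects are saturated, hence Morita fibrant, and a unitary fibration between Morita-fibrant objects is a Morita fibration by the usual retract argument in a left Bousfield localization) is sound, but it leans on Proposition~\ref{prop:bousfield}, which the paper only establishes later (there is no circularity, since its proof does not use Proposition~\ref{prop:morphisms_Morita}, but it does reorder the logical dependencies; your alternative via $\map_\uni(A,X)\simeq\map_\Mor(A,X)$ for Morita-fibrant $X$ has the same flavor). The two routes are adjoint to one another via $\Hom(A\otimes_{\mathrm{max}}\mathbf I,X)\cong\Hom(A,\Cstar(\mathbf I,X))$ and both unwind to the same concrete datum, namely a bounded natural unitary isomorphism $F_1\stackrel{\sim}{\to}F_2$; the cylinder version is simply the cheaper of the two here because left Bousfield localization leaves cofibrations, and hence good cylinders, untouched, whereas it changes the fibrations.
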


\begin{cor}\label{cor:new}
Given $\Cstar$-categories $A$ and $B$ there is a canonical bijection
$$ \Hom_{\Ho(\cM_\Mor)}(A,B)\simeq \mathrm{ob}(\Cstar(A,B^\natural_\oplus))/_{\!\simeq}\,,$$
where the equivalence relation $\simeq$ is unitary isomorphism.
\qed
\end{cor}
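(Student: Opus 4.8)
The plan is to reduce the statement to the standard description of a morphism in the homotopy category as a homotopy class of maps between a cofibrant source and a fibrant target, using the explicit fibrant replacement $\iota_B\colon B\to B^\natural_\oplus$ of Corollary~\ref{cor:fibrant_replacement}. Write $\gamma\colon\Cstarcatun\to\Ho(\cM_\Mor)$ for the localisation functor. First I would observe that every object of $\cM_\Mor$ is cofibrant, since $\emptyset\to A$ is (vacuously) injective on objects, hence a cofibration. Consequently, for any $B$, post-composition with the isomorphism $\gamma(\iota_B)$ yields a bijection
\[
\Hom_{\Ho(\cM_\Mor)}(A,B)\stackrel{\sim}{\too}\Hom_{\Ho(\cM_\Mor)}(A,B^\natural_\oplus)=\pi(A,B^\natural_\oplus),
\]
the last equality being the usual identification for the cofibrant object $A$ and the fibrant object $B^\natural_\oplus$, with $\pi(-,-)$ denoting homotopy classes of $*$-functors \cite{hovey:model}*{\S 1.2}. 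Tracing a morphism $\varphi$ through this bijection yields part~(i): $\varphi=\gamma(\iota_B)^{-1}\circ\gamma(F)$ for any $*$-functor $F\colon A\to B^\natural_\oplus$ representing the corresponding homotopy class.

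Next I would identify the homotopy relation by exhibiting an explicit path object. For any $\Cstar$-category $C$, I claim that $\Cstar(\mathbf I,C)$, equipped with the constant-functor inclusion $C\to\Cstar(\mathbf I,C)$ (a unitary equivalence, hence a weak equivalence, which is injective on objects) and the evaluation-at-the-two-endpoints $*$-functor $\Cstar(\mathbf I,C)\to C\times C$, is a path object for $C$. The only point that is not purely formal is that this evaluation is a Morita fibration, which I would check by the lifting criterion against the $R_n$: a range object in $C\times C$ for a projection matrix coming from $\Cstar(\mathbf I,C)$ amounts to a pair of range objects in $C$ related by unitaries, and these assemble, uniquely, into a range object in $\Cstar(\mathbf I,C)$ with the prescribed endpoints. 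Granting this, a right homotopy $A\to\Cstar(\mathbf I,C)$ between $*$-functors $F_1,F_2\colon A\to C$ is precisely the datum of a natural unitary isomorphism $F_1\simeq F_2$; specialising to the cofibrant $A$ and the fibrant $C=B^\natural_\oplus$, right homotopy coincides with the homotopy relation, so part~(iii) follows, and Corollary~\ref{cor:new} is then immediate from (i) and~(iii). (Alternatively, one could note that $\cM_\Mor$ is a left Bousfield localisation of the unitary model structure with the same cofibrations and that $B^\natural_\oplus$ is fibrant in both, and invoke the identification of the unitary homotopy relation with natural unitary isomorphism from \cite{ivo:unitary}.)

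For part~(ii), with $\varphi=\gamma(\iota_B)^{-1}\circ\gamma(F)$ as above and $\gamma(\iota_B)$ invertible, $\varphi$ is invertible if and only if $\gamma(F)$ is; and since the class of weak equivalences of a model category is saturated, $\gamma(F)$ is invertible if and only if $F$ is a Morita equivalence. Now $\widetilde F\circ\iota_A\simeq F$ and $\iota_A$ is a Morita equivalence (it is a trivial cofibration by Corollary~\ref{cor:fibrant_replacement}), so by two-out-of-three $F$ is a Morita equivalence if and only if $\widetilde F$ is. Finally, $\widetilde F\colon A^\natural_\oplus\to B^\natural_\oplus$ is a $*$-functor between \emph{saturated} $\Cstar$-categories, and for such a functor I claim that being a Morita equivalence is equivalent to being a unitary equivalence: ``unitary $\Rightarrow$ Morita'' is Proposition~\ref{prop:UP_saturated}(iv), while for the converse one uses the naturality square of $\iota\colon\id\Rightarrow(-)^\natural_\oplus$ along $\widetilde F$, whose vertical legs $\iota_{A^\natural_\oplus}$ and $\iota_{B^\natural_\oplus}$ are unitary equivalences by Proposition~\ref{prop:UP_saturated}(ii), so that $\widetilde F^\natural_\oplus$ being a unitary equivalence forces $\widetilde F$ to be one. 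Concatenating these equivalences is exactly assertion~(ii).

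I expect the step requiring the most care to be the verification, inside part~(iii), that the endpoint evaluation $\Cstar(\mathbf I,B^\natural_\oplus)\to B^\natural_\oplus\times B^\natural_\oplus$ has the right lifting property against each $R_n$; equivalently, that range objects in $B^\natural_\oplus\times B^\natural_\oplus$ lift \emph{on the nose}, with prescribed isometries, to range objects in $\Cstar(\mathbf I,B^\natural_\oplus)$. This rests on the uniqueness of range objects up to a unique unitary isomorphism together with the saturation of $B^\natural_\oplus$. If one instead follows the Bousfield-localisation route, the delicate point is rather that a cylinder object is a structural datum surviving the localisation and that fibrancy of $B^\natural_\oplus$ renders left homotopy insensitive to its choice. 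Everything else, namely parts (i) and (ii) and the reduction in (ii) to ``Morita $=$ unitary between saturated $\Cstar$-categories'', is routine bookkeeping with the 2-universal property of saturation (Proposition~\ref{prop:UP_saturated}).
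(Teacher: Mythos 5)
Your proposal is correct and, for parts (i) and (ii), coincides with the paper's proof of Proposition~\ref{prop:morphisms_Morita} (of which the corollary is an immediate consequence): every object is cofibrant, $\iota_B$ is the fibrant replacement of Corollary~\ref{cor:fibrant_replacement}, and invertibility is reduced via two-out-of-three and the naturality square of $\iota$ to the statement that a $*$-functor between saturated $\Cstar$-categories is a Morita equivalence iff it is a unitary equivalence. Where you diverge is in part (iii): the paper detects the homotopy relation with the cylinder object $A\sqcup A\to A\otimes_\mathrm{max}\mathbf I\to A$ inherited from the unitary model structure (whose required properties -- $(J_1,J_2)$ a cofibration, $Q$ a unitary equivalence -- are already available from \cite{ivo:unitary}, since $\Cof_\uni=\Cof_\Mor$ and $\Weq_\uni\subset\Weq_\Mor$), whereas you use the adjoint path object $B^\natural_\oplus\to\Cstar(\mathbf I,B^\natural_\oplus)\to B^\natural_\oplus\times B^\natural_\oplus$ and right homotopies. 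By the tensor--hom adjunction the two homotopies are literally the same datum, namely a natural unitary isomorphism $F_1\simeq F_2$, so both routes land in the same place; the price of your variant is the extra verification that endpoint evaluation is a Morita fibration, which you correctly reduce to the lifting property against the $R_n$ and which does go through (the induced comparison $\sum_i s^1_iu_i(s^0_i)^*$ between the two range objects is the required unique compatible unitary), while the paper's choice avoids any fibration check because cofibrations are simply the $*$-functors injective on objects.
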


\begin{proof} In the homotopy category $\Ho(\mathcal M)=\mathcal M[\Weq^{-1}]$ of any model category~$\mathcal M$, an arbitrary morphism $X\to Y$ can always be represented by a zig-zag of the form $X\stackrel{\sim}{\leftarrow} X'\to Y' \stackrel{\sim}{\leftarrow} Y$, where $X'\stackrel{\sim}{\to} X$ is some cofibrant replacement of~$X$ and $Y \stackrel{\sim}{\to} Y'$ some fibrant replacement of~$Y$.
Hence item (i) follows from Corollary \ref{cor:fibrant_replacement} and from the fact that every $\Cstar$-category is Morita cofibrant.  

In what concerns item (ii) consider the following commutative diagrams:
\[
\xymatrix{A \ar[d]_{\iota}^\sim \ar[dr]^F & \\ A^\natural_\oplus \ar[r]_-{\widetilde{F}} & B^\natural_\oplus}
\quad\quad\quad
\xymatrix{
A^\natural_\oplus \ar[r]^-{\widetilde{F}} \ar[d]_{\iota}^\simeq &
 B^\natural_\oplus \ar[d]_\simeq^{\iota} \\
(A^\natural_\oplus)^\natural_\oplus \ar[r]_-{(\widetilde F)^\natural_\oplus} & (B^\natural_\oplus)^\natural_\oplus\,.
}
\]
The commutativity of the above triangle implies that $F$ is a Morita equivalence if and only if $\widetilde{F}$ is one. By applying $(-)^\natural_\oplus$ to $\widetilde F$ we obtain the commutative square on the right-hand side. Since its vertical arrows are unitary equivalences by Proposition \ref{prop:UP_saturated}(ii), we observe that $\widetilde{F}$ is a Morita equivalence if and only if it is a unitary equivalence. The conclusion now follows by combining these two observations. 

The proof of item (iii) makes use of some facts about cylinders, left homotopy, etc., for which we refer to \cite{dwyer-spalinski}*{\S4}. 
Consider in $\Cstarcatun$ the factorization 
\begin{equation}\label{eq:cyl}
\xymatrix{
A\sqcup A \ar[dr]_{(J_1,J_2)} \ar[rr]^-{(\id,\id)} && A \\
& A\otimes_\mathrm{max} \mathbf I \ar[ur]_Q^\simeq &
}
\end{equation}
of the fold map $\nabla=(\id_A,\id_A)$ of a $\Cstar$-category~$A$. Here $A\otimes_\mathrm{max}\mathbf I$ is the maximal tensor product (see \cite{ivo:unitary}*{\S2}) of~$A$ with the ``interval'' $\Cstar$-category~$\mathbf I$ (see \cite{ivo:unitary}*{Def.\,3.4}), $J_i$ sends $x\in \obj(A)$ to $(x,i)\in \obj(A\otimes_\mathrm{max} \mathbf I)$ ($i=1,2$), and $Q$ is the evident projection. 
This diagram is a cylinder object in the unitary model category (\cf\ \cite{ivo:unitary}*{Lemma 3.14}) because $Q$ is a unitary equivalence and $(J_1,J_2)$ a unitary cofibration. 
Since $\Weq_\uni\subset \Weq_\Mor$ and $\Cof_\uni =\Cof_\Mor$, we see that \eqref{eq:cyl} is also a (at least ``good'') cylinder object for $\cM_\Mor$.
Now, two morphisms $F_1,F_2\colon A\to B^\natural_\oplus$ represent the same map~$\varphi$ in $\Ho(\cM_\Mor)$, as in item~(i), if and only if they become equal in $\Ho(\cM_\Mor)$ (because $\iota_B$ becomes an isomorphism). Since $A$ is Morita cofibrant and $B^\natural_\oplus$ is Morita fibrant, 
this is the case if and only if there exists a left homotopy $F_1\sim^\ell F_2$ with respect to any choice of cylinder over~$A$.
Choosing \eqref{eq:cyl}, this is the case if and only if there exists a $*$-functor $H\colon A\otimes_\mathrm{max}\mathbf I\to B^\natural_\oplus$ such that $H\circ J_i=F_i$ for $i=1,2$. Since the data of such an $H$ is equivalent to that of a natural unitary isomorphism $F_1\stackrel{\sim}{\to}F_2$, the claim is proved. %(Alternatively, part~(iii) may be proved as a consequence of Proposition \ref{prop:simplicial} below.)
\end{proof}

\begin{remark}
Note that the argument used in the proof of item (ii) also proves the following more general fact: a $*$-functor between two saturated $\Cstar$-categories is a Morita equivalence if and only if it is a unitary equivalence. 
This can also be obtained as a formal consequence of Proposition \ref{prop:bousfield} below.
\end{remark}

It is now clear how to compose maps in the homotopy category.

\begin{cor}
\label{cor:comp}
If two composable morphisms $\varphi\colon A\to B$ and $\psi\colon B\to C$ of $\Ho(\cM_\Mor)$ are represented, as in Proposition \ref{prop:morphisms_Morita}(i), by the $*$-functors $F\colon A\to B^\natural_\oplus$ and $G\colon B\to C^\natural_\oplus$, their composition $\psi\circ \phi$ is represented by the $*$-functor $\widetilde{G}\circ F$.
\qed
\end{cor}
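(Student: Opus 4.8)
The plan is to reduce the statement to the calculus of fractions from Proposition~\ref{prop:morphisms_Morita} together with the universal property of the extension $\widetilde G$. Write $\gamma\colon\Cstarcatun\to\Ho(\cM_\Mor)$ for the localization functor. By Proposition~\ref{prop:morphisms_Morita}(i), the hypothesis that $\varphi$ is represented by $F\colon A\to B^\natural_\oplus$ unwinds to the equality $\varphi=\gamma(\iota_B)^{-1}\circ\gamma(F)$ in $\Ho(\cM_\Mor)$, and similarly $\psi=\gamma(\iota_C)^{-1}\circ\gamma(G)$; here $\gamma(\iota_B)$ and $\gamma(\iota_C)$ are isomorphisms because $\iota_B$ and $\iota_C$ are Morita equivalences (Corollary~\ref{cor:fibrant_replacement}). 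So it suffices to prove the single identity $\gamma(G)\circ\gamma(\iota_B)^{-1}=\gamma(\widetilde G)$ in $\Ho(\cM_\Mor)$: granting it, functoriality of $\gamma$ gives
\[
\psi\circ\varphi=\gamma(\iota_C)^{-1}\circ\gamma(G)\circ\gamma(\iota_B)^{-1}\circ\gamma(F)=\gamma(\iota_C)^{-1}\circ\gamma(\widetilde G)\circ\gamma(F)=\gamma(\iota_C)^{-1}\circ\gamma(\widetilde G\circ F),
\]
which by Proposition~\ref{prop:morphisms_Morita}(i) says precisely that $\psi\circ\varphi$ is represented by the fraction $A\xrightarrow{\widetilde G\circ F}C^\natural_\oplus\xleftarrow[\iota_C]{\ \sim\ }C$, i.e.\ by the $*$-functor $\widetilde G\circ F$.

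To establish the identity $\gamma(G)\circ\gamma(\iota_B)^{-1}=\gamma(\widetilde G)$, I would use that $C^\natural_\oplus$ is saturated, so that by Proposition~\ref{prop:UP_saturated}(i) the functor $\widetilde G\colon B^\natural_\oplus\to C^\natural_\oplus$ is the essentially unique $*$-functor equipped with a natural unitary isomorphism $\widetilde G\circ\iota_B\simeq G$ in $\Cstar(B,C^\natural_\oplus)$. Now I would invoke the fact, already contained in the proof of Proposition~\ref{prop:morphisms_Morita}(iii), that a natural unitary isomorphism between $*$-functors with common (cofibrant) source $B$ and common (fibrant) target $C^\natural_\oplus$ produces a left homotopy, hence an equality in $\Ho(\cM_\Mor)$; applied to $\widetilde G\circ\iota_B\simeq G$ this yields $\gamma(\widetilde G)\circ\gamma(\iota_B)=\gamma(\widetilde G\circ\iota_B)=\gamma(G)$, and composing on the right with $\gamma(\iota_B)^{-1}$ gives the claim. (Equivalently, one may quote Proposition~\ref{prop:morphisms_Morita}(iii) directly, with its roles of $A,B$ played by $B,C$: the unitarily isomorphic $*$-functors $\widetilde G\circ\iota_B$ and $G$ from $B$ to $C^\natural_\oplus$ represent the same morphism $B\to C$, and composing that equality on the left with $\gamma(\iota_C)$ gives $\gamma(\widetilde G\circ\iota_B)=\gamma(G)$.)

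I do not expect any genuine obstacle here: apart from functoriality of $\gamma$ and the definition of ``being represented by a fraction'', the only substantive input is the passage from ``unitarily isomorphic $*$-functors'' to ``equal morphisms in $\Ho(\cM_\Mor)$'', and this is exactly where fibrancy of the target $C^\natural_\oplus$ (equivalently, its saturation, via Proposition~\ref{prop:fibrant}) is used -- but it is already available from Proposition~\ref{prop:morphisms_Morita}, so no new work is needed.
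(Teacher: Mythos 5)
Your argument is correct and is exactly the reasoning the paper leaves implicit behind the \textup{\qedsymbol} (the corollary is stated as an immediate consequence of Proposition~\ref{prop:morphisms_Morita}): you unwind ``represented by'' into the fraction identities, reduce to $\gamma(\widetilde G)\circ\gamma(\iota_B)=\gamma(G)$, and obtain that from the unitary isomorphism $\widetilde G\circ\iota_B\simeq G$ together with Proposition~\ref{prop:morphisms_Morita}(iii). Nothing is missing; the appeal to fibrancy of $C^\natural_\oplus$ is precisely where the passage from unitary isomorphism to equality in $\Ho(\cM_\Mor)$ is justified.
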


\subsection{Morita equivalence of unital $\Cstar$-algebras}
\label{subsec:morita_agreement}
In the theory of $\Cstar$-algebras one encounters more than one notion of Morita equivalence. The most commonly used is the one introduced by Rieffel called \emph{Morita-Rieffel equivalence}, \emph{strong Morita equivalence}, or sometimes just \emph{Morita equivalence}. It is based on the highly structured notion of imprimitivity bimodules~\cites{rieffel:induced, rieffel:morita, brown-green-rieffel}; see \cite{raeburn-williams} for a textbook treatment and  \cite{blecher:new} for an alternative viewpoint. In the case of unital $\Cstar$-algebras -- the ones also appearing as objects in $\Cstarcatun$ --  things simplify considerably because in this case Morita-Rieffel equivalence actually agrees with the usual notion of Morita equivalence of rings. Without going into too much detail, we now connect these classical ideas with the constructions introduced in the present article.

\begin{prop}
\label{prop:Morita_agreement}
For unital  $\Cstar$-algebras $A$ and $B$, the following are equivalent:
\begin{itemize}
\item[(i)] $A$ and $B$ are isomorphic objects in the Morita homotopy category~$\Ho(\cM_\Mor)$.
\item[(ii)] $A$ and $B$ are strongly Morita equivalent, in the usual sense of $\Cstar$-algebras that there exists an imprimitivity bimodule between them. 
\item[(iii)] Considered as rings,  $A$ and $B$ are Morita equivalent in the usual sense.
\end{itemize}
\end{prop}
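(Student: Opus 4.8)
The plan is to establish the cycle of implications (i)$\Rightarrow$(iii)$\Rightarrow$(ii)$\Rightarrow$(i), using the description of isomorphisms in $\Ho(\cM_\Mor)$ from Proposition~\ref{prop:morphisms_Morita} at both ends and one classical operator-algebraic input in the middle.

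For (i)$\Rightarrow$(iii): assuming $A\cong B$ in $\Ho(\cM_\Mor)$, Proposition~\ref{prop:morphisms_Morita}(ii) and the remark following it yield a unitary equivalence $A^\natural_\oplus\stackrel{\sim}{\to}B^\natural_\oplus$, hence an equivalence $UA^\natural_\oplus\simeq UB^\natural_\oplus$ of the underlying ordinary categories. I would then identify these: viewing the unital $\Cstar$-algebra $A$ as a one-object $\Cstar$-category, Remarks~\ref{remark:oplus_agreement} and~\ref{remark:idemp_agreement} give $U_\F A^\natural_\oplus\simeq\big((U_\F A)_\oplus\big)^\flat$, which is precisely the category $\proj(A)$ of finitely generated projective $A$-modules; forgetting the $\F$-linear structure yields $UA^\natural_\oplus\simeq\proj(A)$, and similarly $UB^\natural_\oplus\simeq\proj(B)$. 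Thus $\proj(A)\simeq\proj(B)$ as categories, and classical Morita theory for rings then delivers (iii): any such equivalence is automatically additive and carries the object $A$ --- a progenerator of $\proj(A)$, since every object of $\proj(A)$ is a retract of some $A^n$ --- to a progenerator $P$ of $\proj(B)$ with $\End_B(P)\cong A$ as rings, and hence induces an equivalence $\mathrm{Mod}(A)\simeq\mathrm{Mod}(B)$.

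The implication (iii)$\Rightarrow$(ii) I would dispatch by quoting the classical fact, recalled informally just before the statement, that for \emph{unital} $\Cstar$-algebras ring-theoretic Morita equivalence is always implemented by an imprimitivity bimodule; see \cite{raeburn-williams} and \cite{brown-green-rieffel}. For (ii)$\Rightarrow$(i): given an imprimitivity bimodule ${}_AX_B$, the $\Cstar$-algebra $A\cong\mathcal{K}_B(X)$ is unital, so $X$ is finitely generated projective as a right Hilbert $B$-module; writing $X\cong qB^n$ for a full projection $q=q^*=q^2\in M_n(B)$, the resulting isomorphism $A\cong\mathcal{K}_B(qB^n)=qM_n(B)q$ is a $*$-isomorphism. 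Let $\xi$ be the object of $B^\natural_\oplus$ determined by the projection $q\in M_n(B)$, so that $\End_{B^\natural_\oplus}(\xi)=qM_n(B)q\cong A$, and let $\Phi\colon A\to B^\natural_\oplus$ be the fully faithful $*$-functor sending the unique object of $A$ to $\xi$. Since $q$ is full, the generating object $\iota_B(\bullet)$ of $B^\natural_\oplus$ is a retract of a finite direct sum of copies of $\xi$; as every object of $B^\natural_\oplus$ is itself a retract of a finite direct sum of copies of $\iota_B(\bullet)$, the smallest full subcategory of $B^\natural_\oplus$ containing $\Phi(A)$ and closed under isomorphisms, direct sums and retracts is all of $B^\natural_\oplus$. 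By Lemma~\ref{lemma:recognition_Moreq}, $\Phi$ is a Morita equivalence, and hence an isomorphism $A\to B$ in $\Ho(\cM_\Mor)$ by Proposition~\ref{prop:morphisms_Morita}.

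The only genuinely external ingredient --- and therefore the main obstacle, in the sense that its proof lies outside the present framework --- is the classical implication (iii)$\Rightarrow$(ii); everything else is a routine assembly of the results already proved. A secondary point requiring a little care is the fullness bookkeeping in (ii)$\Rightarrow$(i), namely verifying that $\iota_B(\bullet)$ lies in the full subcategory of $B^\natural_\oplus$ generated by $\xi$ under direct sums and retracts.
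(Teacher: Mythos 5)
Your argument is correct, but it is organized differently from the paper's. The paper proves (ii)$\Leftrightarrow$(iii) by citing Beer and then establishes (i)$\Leftrightarrow$(ii) directly in both directions: it identifies $D^\natural_\oplus$ with the $\Cstar$-category $\Hilb(D)_{\proj}$ of finitely generated projective Hilbert $D$-modules, characterizes imprimitivity bimodules as invertible correspondences, and matches unitary equivalences $A^\natural_\oplus\stackrel{\sim}{\to}B^\natural_\oplus$ with such correspondences before invoking Proposition~\ref{prop:morphisms_Morita}. You instead run the cycle (i)$\Rightarrow$(iii)$\Rightarrow$(ii)$\Rightarrow$(i): for (i)$\Rightarrow$(iii) you pass through the purely algebraic identification $U_\F A^\natural_\oplus\simeq((U_\F A)_\oplus)^\flat\simeq\proj(A)$ (Remarks~\ref{remark:oplus_agreement} and~\ref{remark:idemp_agreement}) and classical ring-theoretic Morita theory, and for (ii)$\Rightarrow$(i) you build an explicit fully faithful $*$-functor $A\to B^\natural_\oplus$ out of the full projection $q$ and verify the generation condition of Lemma~\ref{lemma:recognition_Moreq}. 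Both proofs lean on the same external input (Beer's theorem relating strong and ring-theoretic Morita equivalence for unital $\Cstar$-algebras) and the same internal machinery (Proposition~\ref{prop:morphisms_Morita}); your version avoids having to set up the full dictionary between imprimitivity bimodules and invertible correspondences, at the cost of routing one direction through the algebraic module categories, while the paper's version treats (i)$\Leftrightarrow$(ii) symmetrically via Hilbert modules. The one point you rightly flag --- that fullness of $q$ makes $\iota_B(\bullet)$ a retract of a finite sum of copies of $\xi=(\bullet^n,q)$ --- does require the standard polarization/positivity argument producing finitely many $v_i\in qB^n$ with $\sum_i v_i^*v_i=1_B$, but this is exactly the kind of detail the paper itself leaves to ``the operator-algebraist,'' so your proof is at the same level of completeness as the original.
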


\begin{proof}
The equivalence (ii)$\Leftrightarrow$(iii) is well-known and due to Beer; see \cite{beer:morita}*{\S1.8}.
%The argument for the equivalence of (ii) and (iii) is well-known , see \eg\ \cite{Bertozzini-etal:spcC*}*{Proposition~2.23}. Briefly: if $A$ and $B$ are Morita equivalent as rings, then there exist (algebraic) bimodules ${}_AM_B$ and ${}_BN_A$ and isomorphisms $M\otimes_B N\simeq A$ and $N\otimes_AM\simeq B$. It follows from this that $M$ is finitely generated projective over~$B$, and such a bimodule can always be equipped with $B$-valued inner product turning it into a correspondence (\cite{}) which, being invertible, must be an imprimitivity bimodule. Thus $A$ and $B$ are Morita-Rieffel equivalent. The converse is immediate, since an imprimitive bimodule is in particular an algebraic bimodule.
Let us now sketch a proof of the equivalence (i)$\Leftrightarrow$(ii) (the operator-algebraist will have no problem in filling in the missing details). To this end we use the fact that for every unital $\Cstar$-algebra~$D$, there is a unitary equivalence of $\Cstar$-categories 
\begin{equation} \label{eq:identif_proj}
D_\oplus^\natural \stackrel{\sim}{\to} \Hilb(D)_{\proj}
\end{equation}
obtained by extending the Yoneda embedding $D\stackrel{\sim}{\to} \mathcal L(D) \subset \Hilb(D)$ by Proposition \ref{prop:UP_saturated}(i); here $\Hilb(D)$ denotes the $\Cstar$-category of (right) Hilbert $D$-modules and adjointable operators between them (see \eg\ \cite{lance:toolkit}), and $\Hilb(D)_{\proj}$ denotes its full sub-$\Cstar$-category of those Hilbert modules which are finitely generated projective over~$D$.

Now let $A$  and~$B$ be two unital $\Cstar$-algebras. Recall that an imprimitivity bimodule is the same as an invertible correspondence, and for unital $\Cstar$-algebras this is the same as a unital $*$-homomorphism $\varphi\colon A\to \mathcal L(E)$ into the $\Cstar$-algebra of adjointable operators on a finitely generated projective right Hilbert $B$-module~$E$, with the property of  inducing a unitary equivalence $(-)\otimes_{\varphi} E \colon \Hilb(A)_\proj\stackrel{\sim}{\to} \Hilb(B)_\proj$. 
Under \eqref{eq:identif_proj}, the $*$-functor $(-)\otimes_{\varphi} E$ corresponds to a unitary equivalence 
$F\colon A_\oplus^\natural \stackrel{\sim}{\to} B^\natural_\oplus $. By Proposition \ref{prop:UP_saturated}(i), the $*$-functor $F$ is determined, up to a unitary isomorphism, by its restriction $F\circ \iota_A\colon A\to B^\natural_\oplus$. 
Therefore we see that $A$ and $B$ are Morita-Rieffel equivalent if and only if there exists a $*$-functor $\varphi\colon A\to B^\natural_\oplus$ which extends to a unitary equivalence $A_\oplus^\natural \stackrel{\sim}{\to} B^\natural_\oplus$. By Proposition \ref{prop:morphisms_Morita}, this is equivalent to $A$ and $B$ becoming isomorphic in the Morita homotopy category and so the proof is finished.
  %For any unital $\Cstar$-algebra~$D$, Let $\Hilb(D)$ denote the $\Cstar$-category of right Hilbert $D$-modules and adjointable operators (see \cite{lance:toolkit}), and let $\Hilb(D)_{\proj}$ be the full sub-$\Cstar$-category of those Hilbert module which are finitely generated projected over~$D$.
%We note that there is a unitary equivalence of $\Cstar$-categories 
%\begin{equation}
%D_\oplus^\natural \stackrel{\sim}{\to} \Hilb(D)_{\proj}
%\end{equation}
%given by extending the Yoneda embedding $D\stackrel{\sim}{\to} \mathcal L(D) \subset \Hilb(D)$ by Proposition \ref{prop:UP_saturated}(i).  Similarly, if $\proj(D)$ denotes the category of finitely generated projective right $UD$-modules (where $UD$ is $A$ considered as a unital ring) we have an equivalence of categories
%\[ (UD_\oplus)^\flat \stackrel{\sim}{\to} \proj(D) \;. \]
%Thus we obtain a square of functors
%\[ \xymatrix{ U(D^\natural_\oplus) \ar[d]_-\sim \ar[r]^-\sim & U(\Hilb(D)_{\mathrm{proj}} ) \ar[d]^{V_D} \\ (UD_\oplus)^\flat \ar[r]^-\sim & \proj(D)} \]
%where the two horizontal functors are the above equivalences and where the left vertical one is the equivalence of Remark \ref{remark:idemp_agreement}. Here $V_D$ is the forgetful functor, which is easily seen to make the square commutes up to un isomorphism of functors. Thus $V_D$ is an equivalence. We conclude that, for two unital $\Cstar$-algebras $A$ and~$B$
\end{proof}

%--------------------------------------------------------------------------------
\section{Picard groups}\label{sec:Pic}
%--------------------------------------------------------------------------------
The Picard group $\Pic(A)$ of a $\Cstar$-algebra $A$ encodes a lot of interesting information. Recall for instance from \cite{brown-green-rieffel}*{page\,357} that when $X$ is a compact Hausdorff space and $A=C(X)$ is the algebra of continuous functions on~$X$, we have the following identification
\[
\Pic(A) \simeq \mathrm{Homeo}(X)\ltimes \Pic(X)\,,
\]
where the right-hand-side is the semi-direct product of the homeomorphism group of $X$ with its Picard group of  line bundles. 

\subsection*{Proof of Proposition~\ref{prop:Pic}}
Recall from \cite{brown-green-rieffel}*{\S3} that $\Aut(A)$ is the group of (unitary) isomorphism classes of imprimitivity bimodules ${}_AM_A$, with group operation given by the Rieffel tensor product. 
As in the proof of Proposition \ref{prop:Morita_agreement}, we see that the unitary equivalence \eqref{eq:identif_proj} induces a bijection between $\Aut(A)$ and the set of (unitary) isomorphism classes of $*$-functors $F\colon A\to A^\natural_\oplus$ such that the extension $F\colon A^\natural_\oplus\to A^\natural_\oplus$ is a unitary equivalence. By Proposition \ref{prop:morphisms_Morita} this set can then be canonically identified with $\Aut_{\Ho(\cM_\Mor)}(A)$.
In order to complete the proof, it remains then to verify that the Rieffel tensor product of imprimitivity bimodules corresponds (up to isomorphism) to the composition in $\Ho(\cM_\Mor)$, as detailed in Corollary~\ref{cor:comp}. 
This is easy to verify because, when working up to isomorphism, we may forget the inner products on the imprimitivity bimodules; \cf\ the proof of \cite{beer:morita}*{Theorem \S1.8}. The problem reduces then to the  analogous algebraic statement, whose details we leave to the reader.
%----------------------------------------------------------------------
\section{Symmetric monoidal structure} \label{sec:symm}
%----------------------------------------------------------------------
In this section we show that the Morita model structure is nicely compatible with the maximal tensor product of $\Cstar$-categories $\otimes_{\mathrm{max}}$ introduced in \cite{ivo:unitary}*{\S2}. In order to simplify the exposition we will often simply write $\otimes$ instead of $\otimes_{\mathrm{max}}$.

\begin{lemma}\label{lemma:key-mon}
For every $C^\ast$-category $A$ and $n\geq0$, the induced $\ast$-functor
\begin{eqnarray*}
%A\otimes S_n\colon A\otimes \bbF^n \too A \otimes S(n) &,& 
A\otimes R_n\colon A\otimes P(n) \too A \otimes R(n)
\end{eqnarray*}
is a Morita equivalence.
\end{lemma}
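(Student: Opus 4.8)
The plan is to check that the $*$-functor $A\otimes R_n$ satisfies the recognition criterion of Lemma~\ref{lemma:recognition_Moreq}, i.e.\ that it is fully faithful and that every object of $A\otimes R(n)$ can be reached from the image of $A\otimes R_n$ by adjoining range objects of projection matrices. I would first dispose of the degenerate case $n=0$, where $R_0\colon\emptyset\to\mathbf0$: here $A\otimes\emptyset=\emptyset$, while $A\otimes\mathbf0$ is the $\Cstar$-category with object set $\obj(A)$ and all Hom-spaces equal to~$0$, so that in each of the saturations $(A\otimes\emptyset)^\natural_\oplus$ and $(A\otimes\mathbf0)^\natural_\oplus$ every object is a zero object and hence all objects are unitarily isomorphic; consequently $(A\otimes R_0)^\natural_\oplus$ is a unitary equivalence and $A\otimes R_0$ is a Morita equivalence. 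From now on assume $n\geq1$.

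Next I would prove that $A\otimes R_n$ is fully faithful. By Lemma~\ref{lemma:ff_comparisons} the $*$-functor $R_n\colon P(n)\to R(n)$ is fully faithful, hence --- being a faithful $*$-functor between $\Cstar$-categories --- restricts to isometric isomorphisms $P(n)(o_i,o_j)\stackrel{\sim}{\to}R(n)(o_i,o_j)$ for all $i,j$. Since the maximal tensor product with a fixed $\Cstar$-category carries fully faithful $*$-functors to fully faithful ones --- which follows from the construction of $\otimes_{\mathrm{max}}$ in \cite{ivo:unitary}*{\S2}, where the Hom-space $(A\otimes B)((a,b),(a',b'))$ is obtained as the maximal $\Cstar$-completion of the algebraic tensor product of $A(a,a')$ with $B(b,b')$ --- I conclude that $A\otimes R_n$ is fully faithful; note that its objects are precisely the pairs $(a,o_i)$ with $a\in\obj(A)$. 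Since $A\otimes R_n$ is moreover injective on objects, it is in fact already a cofibration.

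Finally I would verify condition~(ii)$'$ of Lemma~\ref{lemma:recognition_Moreq}. The only objects of $A\otimes R(n)$ outside the image of $A\otimes R_n$ are those of the form $(a,r(n))$ with $a\in\obj(A)$. Fix such an~$a$. The defining relation $\sum_{k=1}^n s_ks_k^*=1_{r(n)}$ of $R(n)$ yields, for the arrows $1_a\otimes s_i\colon(a,o_i)\to(a,r(n))$ of $A\otimes R(n)$, the identity $\sum_{k=1}^n(1_a\otimes s_k)(1_a\otimes s_k)^*=1_a\otimes\bigl(\sum_{k}s_ks_k^*\bigr)=1_{(a,r(n))}$. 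By the equivalences recorded in Remark~\ref{remark:inspiration_for_defs}, this shows that $[\,1_a\otimes s_i^*s_j\,]$ is a projection matrix on the objects $(a,o_1),\dots,(a,o_n)$ --- all of which lie in the image of $A\otimes R_n$ --- admitting $(a,r(n))$ as a range object inside $A\otimes R(n)$. Hence the smallest full subcategory of $A\otimes R(n)$ containing the image of $A\otimes R_n$ and closed under isomorphic objects and range objects of projection matrices is the whole of $A\otimes R(n)$, so condition~(ii)$'$ holds and Lemma~\ref{lemma:recognition_Moreq} gives the result. The single point requiring care is the behaviour of $\otimes_{\mathrm{max}}$ on Hom-spaces invoked in the second step (equivalently, that $A\otimes_{\mathrm{max}}-$ preserves fully faithful $*$-functors); this should be read off from the construction of the maximal tensor product in \cite{ivo:unitary}, while everything else is a direct unwinding of the universal relations defining $P(n)$ and $R(n)$.
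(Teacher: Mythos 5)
Your proof is correct, and for the essential-surjectivity half it takes a genuinely different (and tidier) route than the paper. The paper, after the same full-faithfulness step, shows directly that $(A\otimes R_n)^\natural_\oplus$ is unitarily essentially surjective: given an object $\bigl((x_1\otimes y_1)\cdots(x_m\otimes y_m),P\bigr)$ of $(A\otimes R(n))^\natural_\oplus$ it replaces each occurrence of $x_j\otimes r(n)$ by the string $(x_j\otimes o_1)\cdots(x_j\otimes o_n)$ and conjugates $P$ by explicit block matrices $U,V$ built from the $1_{x_j}\otimes s_i$, producing a unitarily isomorphic object in the image. You instead observe that $(a,r(n))$ is a range object \emph{inside} $A\otimes R(n)$ for the projection matrix $[1_a\otimes s_i^*s_j]$ on the $(a,o_i)$ (your verification of relation (iii) of Remark~\ref{remark:inspiration_for_defs} is exactly right), and then invoke criterion~(ii)$'$ of Lemma~\ref{lemma:recognition_Moreq}. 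This is the same underlying fact --- your isometry $[\,1_a\otimes s_1\ \cdots\ 1_a\otimes s_n\,]^*$ is precisely the $j$-th block of the paper's matrix $V$ --- but your packaging offloads all the word/block-matrix bookkeeping onto the already-proved recognition lemma, which is what (ii)$'$ was designed for; the cost is that your argument is less self-contained. Your explicit treatment of $n=0$ is also welcome (the paper's replacement procedure is really written for $n\geq1$), and your computation that everything in $(A\otimes\mathbf 0)^\natural_\oplus$ is a zero object is correct. The only delicate point in either argument is the assertion that $A\otimes_{\mathrm{max}}(-)$ carries the fully faithful $R_n$ to a fully faithful functor; you flag this yourself, but note that your parenthetical description of the Hom-spaces of $\otimes_{\mathrm{max}}$ slightly understates the issue, since the maximal norm on $A(a,a')\odot P(n)(o_i,o_j)$ is computed over representations of the \emph{whole} tensor category and could a priori differ between $A\otimes P(n)$ and $A\otimes R(n)$ (the maximal tensor product of $\Cstar$-algebras is not injective in general). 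The paper dismisses this with ``by construction,'' so you are not below its standard, but a fully careful argument would check that every representation of $A\otimes P(n)$ extends to one of $A\otimes R(n)$ (e.g.\ by adjoining the range object in $\Hilb$, on which the $A$-action extends canonically).
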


\begin{proof}
Recall that the $\ast$-functor $R_n$ is fully faithful. Hence, by construction, $A\otimes R_n$ is also fully faithful. By Proposition~\ref{prop:UP_saturated}(iii) we then conclude that $(A\otimes R_n)_\oplus^\natural$ is also fully faithful. It remains to show $(A\otimes R_n)^\natural_\oplus$ is unitarily essentially surjective. 
Recall that the objects of $A\otimes R(n)$ are pairs $x\otimes y:=(x,y)$ with $x\in\obj(A)$ and $y\in R(n)$. Thus an object in $(A \otimes R(n))_\oplus^\natural$ is given by 
\begin{equation} \label{eq:pair1}
z \, := \, \left((x_1\otimes y_1) \cdots (x_j \otimes y_j) \cdots (x_m \otimes y_m) \, , \, P\right)\,,
\end{equation}
where $P$ is a suitable $m\times m$ projection matrix, and each $y_j$ is an occurrence of~$r(n)$ or of~$o_i$ for some $i\in \{1,\ldots,n\}$. Now, if $j$ is such that $y_j=r(n)$ in \eqref{eq:pair1}, we can replace~$z$ by the following object of $(A\otimes R(n))_\oplus^\natural$: 
\begin{equation*} \label{eq:pair2}
z' \, :=\, \left[(x_1\otimes y_1) \cdots (x_j \otimes o_1) \cdots (x_j \otimes o_n) \cdots (x_m \otimes y_m) \, , \, UPV \right]\,,
\end{equation*}
where we have inserted $(x_j\otimes o_1)\cdots (x_j\otimes o_n)$ instead of $x_j\otimes y_j$, and where~$U$ denotes the block-matrix 
\[
U:=
\left[
\begin{array}{c|ccc|c}
\id_{j-1} &0&\cdots&0&0 \\ \hline
0& 1_{x_j} \otimes s_1 & \cdots & 1_{x_j} \otimes s_n &0 \\ \hline
0&0&\cdots&0& \id_{m-j}
\end{array}
\right]_{m \times (m+n-1)}
\]
and $V$ the block-matrix
\[
V := U^* =
\left[
\begin{array}{c|c|c}
\id_{j-1} &0&0 \\ \hline
0& 1_{x_j} \otimes s_1^*& 0\\ 
\vdots &\vdots & \vdots  \\ 
0& 1_{x_j} \otimes s_n^* & 0 \\ \hline
0&0& \id_{m-j} 
\end{array}
\right]_{(m+n-1) \times m}\,.
\]
Note that the objects $z$ and $z'$ are unitarily isomorphic via $U\colon z\stackrel{\sim}{\to} z'$ and~$V\colon z'\stackrel{\sim}{\to} z$. 
Repeating this procedure whenever necessary from $j=1$ to $j=m$, we find at the end an object~$z''$ in the image of $(A\otimes R_n)_\oplus^\natural$ and a unitary isomorphism $z\stackrel{\sim}{\to}z''$ in $(A\otimes R(n))_\oplus^\natural$.
This achieves the proof.
\end{proof}
\begin{prop}\label{prop:monoidal}
The category $\Cstarcatun$, endowed with the Morita model structure and with the closed symmetric monoidal structure induced by the maximal tensor product $\otimes_{\mathrm{max}}$, is a monoidal model category in the sense of \cite[Def.~4.2.18]{hovey:model}.
\end{prop}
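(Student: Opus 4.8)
The plan is to apply Hovey's recognition criterion \cite{hovey:model}*{Corollary~4.2.5}: for a cofibrantly generated, closed symmetric monoidal model category, being a monoidal model category in the sense of \cite{hovey:model}*{Def.~4.2.18} amounts to checking (a) that the pushout--product of two generating cofibrations is a cofibration; (b) that the pushout--product of a generating cofibration with a generating trivial cofibration is a trivial cofibration; and (c) the unit axiom. First note that in $\cM_\Mor$ \emph{every} object is cofibrant, since the initial object $\emptyset$ has no objects, so the $*$-functor $\emptyset\to X$ is injective on objects for every~$X$. In particular the tensor unit $\F$ is cofibrant, and taking $\F$ as its own cofibrant replacement makes the map in the unit axiom an identity; thus (c) holds automatically.

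For (a) -- and for the ``cofibration'' half of (b) -- I would show once and for all that the pushout--product of two $*$-functors which are injective on objects is again injective on objects, hence a cofibration (recall $\Cof_\Mor=\Cof_\uni$). This is elementary: the object--set functor $\obj\colon\Cstarcatun\to\mathsf{Set}$ preserves colimits (having both adjoints) and satisfies $\obj(A\otimes B)=\obj(A)\times\obj(B)$; so if $f\colon X\to Y$ and $g\colon C\to D$ are injective on objects, the domain of their pushout--product has object set the subset $(\obj Y\times\obj C)\cup(\obj X\times\obj D)$ of $\obj Y\times\obj D$, the pushout--product map being its inclusion.

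The real content is the ``weak equivalence'' half of (b): for each generating cofibration $i\colon X\to Y$ in $I_\Mor=I_\uni$ and each $n\geq0$, the pushout--product
\[
i\square R_n\;\colon\quad Y\otimes P(n)\;\sqcup_{X\otimes P(n)}\;X\otimes R(n)\;\too\;Y\otimes R(n)
\]
is a Morita equivalence. Here Lemma~\ref{lemma:key-mon} is exactly the input needed: it says $X\otimes R_n$ and $Y\otimes R_n$ are Morita equivalences. Moreover $R_n\colon P(n)\to R(n)$ is injective on objects (the inclusion of a full subcategory when $n\geq1$, and $\emptyset\to\mathbf0$ when $n=0$), hence so are $X\otimes R_n$ and $Y\otimes R_n$ by the previous paragraph, making both of them \emph{trivial} cofibrations of $\cM_\Mor$. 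Now write $Q$ for the pushout defining the source of $i\square R_n$. The leg $Y\otimes P(n)\to Q$ is the cobase change of the trivial cofibration $X\otimes R_n$ along $i\otimes\id_{P(n)}$, hence is a trivial cofibration -- in particular a Morita equivalence. The outer commutative rectangle identifies the composite $Y\otimes P(n)\to Q\to Y\otimes R(n)$ with $Y\otimes R_n$, a Morita equivalence; so by two-out-of-three $i\square R_n$ is a Morita equivalence, and together with (a) it is a trivial cofibration. This proves (b); the symmetric condition $J_\Mor\square I_\Mor\subset\Weq_\Mor\cap\Cof_\Mor$ follows by symmetry of $\otimes$.

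I do not foresee a genuine obstacle, since the only non-formal ingredient is Lemma~\ref{lemma:key-mon}, already proved; the rest is standard pushout--product bookkeeping, two-out-of-three, and the stability of trivial cofibrations under cobase change. The one point worth a sentence of care is the degenerate case $n=0$: there $P(0)=\emptyset$ and $R(0)=\mathbf0$, so $i\square R_0$ is just $i\otimes\mathbf0\colon X\otimes\mathbf0\to Y\otimes\mathbf0$ and $X\otimes R_0$ is the $*$-functor $\emptyset\to X\otimes\mathbf0$; but Lemma~\ref{lemma:key-mon} still makes $X\otimes R_0$ a trivial cofibration, so the argument above goes through unchanged.
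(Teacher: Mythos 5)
Your proof is correct and follows essentially the same route as the paper's: reduce via cofibrant generation to the case where the trivial-cofibration factor is some $R_n$, feed in Lemma~\ref{lemma:key-mon}, and conclude by stability of trivial cofibrations under cobase change plus two-out-of-three (with the unit axiom trivial since every object is cofibrant). The only cosmetic differences are that you reduce both factors to generating maps via Hovey's Corollary~4.2.5 whereas the paper keeps one factor an arbitrary cofibration, and that you argue the object-injectivity of the pushout--product directly where the paper cites \cite{ivo:unitary}*{Prop.~3.19}.
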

\begin{proof}
Given two $\ast$-functors $F\colon A\to B$ and $F'\colon A'\to B'$, consider the following commutative diagram
$$
\xymatrix@!0 @R=3.5pc @C=6pc{
A\otimes A' \ar[dd]_-{F\otimes A'} \ar[rr]^-{A \otimes F'} && A\otimes B' \ar[dd]^-{F\otimes B'} \ar[dl]_-G\\
& (B \otimes A') \sqcup_{A \otimes A'} (A\otimes B') \ar@{..>}[dr]^{F\square F} & \\
B \otimes A' \ar[rr]_-{B \otimes F'} \ar[ur] && B \otimes B' \,,
}
$$
where $F\square F'$ is the $\ast$-functor induced by the pushout property. We need to verify the following two conditions:
\begin{itemize}
\item[(i)] the $\otimes$-unit $\Cstar$-category is cofibrant;
\item[(ii)] if $F, F' \in \Cof_\Mor$, then $F \square F' \in \Cof_\Mor$. Moreover, if $F$ or $F'$ is a Morita equivalence, then $F\square F'$ is also a Morita equivalence.
\end{itemize}
Since the unitary and the Morita model structure have the same cofibrations, condition~(i) is obvious (every object is cofibrant) and the first claim of condition~(ii) follows from \cite[Prop.~3.19]{ivo:unitary}. In what concerns the second claim of condition~(ii), since the Morita model structure is cofibrantly generated we can assume without loss of generality that $F \in J_\Mor$. By Lemma~\ref{lemma:key-mon}, the $\ast$-functors $F \otimes A'$ and $F\otimes B'$ in the above diagram are not only cofibrations but moreover Morita equivalences. 
Since trivial cofibrations are stable under pushouts, $G$ is also a Morita equivalence. Finally, by the $2$-out-of-$3$ property we conclude that $F\square F'$ is a Morita equivalence. This achieves the proof.
\end{proof}

%----------------------------------------------------------------------
\section{Simplicial structure} \label{sec:simplicial}
%----------------------------------------------------------------------
In this section we show that the Morita model structure is nicely compatible with the simplicial structure constructed in~\cite{ivo:unitary}. 
We recall from \cite{ivo:unitary}*{Def.~3.22} that the mapping complex of two $\Cstar$-categories $A$ and~$B$ is the simplicial set 
\[
\Map(A,B) := \nu \,\Cstar(A,B) \;, 
\]
where the functor $\nu:= N\circ uni$ associates to any $\Cstar$-category~$D$ the simplicial nerve,~$N$, of its subcategory $uni (D)$ of unitary isomorphisms. The remaining simplicial structure is given (for $K$ a simplicial set and $D$ a $\Cstar$-category) by the  the coaction $D^K := \Cstar(\pi K, D)$ and the action $D\otimes K := D\otimes_\mathrm{max} \pi (K)$, where $\pi:= \Cstarmax \circ \Pi$ is the functor associating to a simplicial set the maximal enveloping $\Cstar$-category of its (simplicial) fundamental groupoid; consult \emph{loc.\,cit.\ }for details.

%\Goncalo{(Ivo, for the convenience of the reader could you kindly recall here the simplicial structure ? Also, slightly change the following proof taking into account your description of the simplicial structure)}.
\begin{prop}
\label{prop:simplicial}
The category $\Cstarcatun$, endowed with the Morita model structure and with the simplicial structure described above, is a simplicial model category in the sense of \cite[Def.~4.2.18]{hovey:model}.
\end{prop}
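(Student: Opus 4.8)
The plan is to verify the axioms of a simplicial (i.e.\ $\sSet$-) model category as formulated in \cite[Def.~4.2.18]{hovey:model}. The underlying simplicial enrichment --- the mapping complex $\Map(-,-)=\nu\,\Cstar(-,-)$, the cotensor $D^K=\Cstar(\pi K,D)$, the tensor $D\otimes K=D\otimes_{\mathrm{max}}\pi(K)$, and the natural isomorphisms relating them --- is already provided by \cite{ivo:unitary} and is insensitive to the model structure, so only two things remain: the pushout-product (Quillen bifunctor) axiom for the action $\otimes$, and the unit condition. I would dispose of the unit condition first: the monoidal unit $\Delta^0$ of $\sSet$ is cofibrant and $D\otimes\Delta^0=D\otimes_{\mathrm{max}}\pi(\Delta^0)=D\otimes_{\mathrm{max}}\F=D$ for every $\Cstar$-category $D$, so there is nothing to prove.

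For the pushout-product axiom, the key observation is that the action factors through $\otimes_{\mathrm{max}}$ via the functor $\pi=\Cstarmax\circ\Pi\colon\sSet\to\Cstarcatun$. Thus, for a monomorphism $j\colon K\to L$ of simplicial sets and a $\ast$-functor $i\colon A\to B$, unwinding $(-)\otimes K=(-)\otimes_{\mathrm{max}}\pi(K)$ (and similarly for $L$) shows that the simplicial pushout-product
\[
i\,\square\,j\;\colon\;(B\otimes K)\sqcup_{A\otimes K}(A\otimes L)\too B\otimes L
\]
\emph{is} the $\otimes_{\mathrm{max}}$-pushout-product of $i$ with $\pi(j)\colon\pi(K)\to\pi(L)$. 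Hence the whole axiom reduces, via Proposition~\ref{prop:monoidal}, to the single claim that $\pi$ is a left Quillen functor from $\sSet$ (with the Kan--Quillen model structure) to $\cM_\Mor$: granting that, $\pi(j)$ is a cofibration of $\cM_\Mor$ --- trivial whenever $j$ is --- and Proposition~\ref{prop:monoidal} then delivers that $i\,\square\,j$ is a cofibration, trivial as soon as $i$ or $j$ is.

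To prove $\pi$ left Quillen for $\cM_\Mor$ I would use that $\pi$ is a left adjoint (to $\nu=N\circ\mathit{uni}$, by \cite{ivo:unitary}) and check it on the generating (trivial) cofibrations of $\sSet$. Since both $\Pi$ and $\Cstarmax$ are the identity on objects, $\pi(\partial\Delta^m\hookrightarrow\Delta^m)$ is injective on objects, hence a cofibration of $\cM_\Mor$. For the horn inclusions $\Lambda^m_k\hookrightarrow\Delta^m$ with $m\ge2$, both $\Lambda^m_k$ and $\Delta^m$ are $1$-connected with vertex set $\{0,\dots,m\}$, so $\Pi$ of each is the codiscrete groupoid on that set and the inclusion induces an isomorphism; applying $\Cstarmax$ gives an isomorphism of $\Cstar$-categories, in particular a Morita equivalence. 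For $m=1$ the map $\pi(\Lambda^1_k\hookrightarrow\Delta^1)$ is the canonical $\ast$-functor $\F\to\mathbf I$ selecting an object of $\mathbf I$, which is a unitary equivalence and a fortiori a Morita equivalence. As all of these are moreover injective on objects, $\pi$ carries generating trivial cofibrations of $\sSet$ to trivial cofibrations of $\cM_\Mor$, and we conclude. (Equivalently: $\pi\cong\F\otimes(-)$ is already left Quillen for the unitary model structure of \cite{ivo:unitary}, and one then invokes $\Cof_\uni=\Cof_\Mor$ together with $\Weq_\uni\subset\Weq_\Mor$.)

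The step I expect to be the genuine obstacle is exactly this last one --- correctly computing $\pi$ on the horn inclusions and recognising the images as Morita equivalences, equivalently noticing that the simplicial axiom collapses, through $\pi$, onto the already-established monoidal compatibility of Proposition~\ref{prop:monoidal}. Everything else is formal bookkeeping with pushout-products and the relevant adjunctions.
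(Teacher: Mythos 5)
Your proof is correct and follows essentially the same route as the paper: reduce the simplicial pushout-product axiom, via $A\otimes K=A\otimes_{\mathrm{max}}\pi K$, to the monoidal axiom of Proposition~\ref{prop:monoidal}, using that $\pi\dashv\nu$ is left Quillen for $\cM_\Mor$ because $\Cof_\uni=\Cof_\Mor$ and unitary trivial cofibrations are Morita trivial cofibrations. The only difference is that you additionally verify the left-Quillen property of $\pi$ by hand on the generating (trivial) cofibrations of $\sSet$, whereas the paper simply cites the unitary-model Quillen adjunction from \cite{ivo:unitary}; both are fine.
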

\begin{proof}
Given a $\ast$-functor $F\colon A \to B$ and a map $f\colon K \to L$ of simplicial sets, consider the following commutative diagram:
$$
\xymatrix@!0 @R=3.5pc @C=6pc{
A\otimes K \ar[dd]_-{F\otimes K} \ar[rr]^-{A \otimes f} && A\otimes L \ar[dd]^-{F\otimes L} \ar[dl]\\
& (B \otimes K) \sqcup_{A \otimes K} (A\otimes L) \ar@{.}[dr]^{F\square f} & \\
B \otimes K \ar[rr]_-{B \otimes f} \ar[ur] && B \otimes L \,.
}
$$
Since every $\Cstar$-category is cofibrant we need only to verify the following condition:
\begin{itemize}
\item[(i)] If $F \in \Cof_\Mor$ and $f \in \Cof_\sSet$, then $F\square f \in \Cof_\Mor$. Moreover, if $F$ is a Morita equivalence or $f$ is a weak equivalence, then $F\square f$ is also a Morita equivalence. 
\end{itemize}
Recall from \cite[Prop.~3.21]{ivo:unitary} the construction of the following Quillen adjunction
\begin{eqnarray}\label{eq:adjunction}
\xymatrix{
\Cstarcatun \ar@<1ex>[d]^\nu \\
\sSet \ar@<1ex>[u]^\pi\,,
}
\end{eqnarray}
where $\Cstarcatun$ is endowed with the unitary model structure. Since $\Cof_\uni=\Cof_\Mor$ and $\Cof_\uni \cap \Weq_\uni \subset \Cof_\Mor \cap \Weq_\Mor$, the above adjunction \eqref{eq:adjunction} is also a Quillen adjunction with respect to the Morita model structure. By definition, we have $A\otimes K = A \otimes \pi K$. Hence the above condition~(i) follows from condition~(ii) of the proof of Proposition~\ref{prop:monoidal}.
 \end{proof}

%----------------------------------------------------------------------
\section{Bousfield localization} \label{sec:bousfield}
%----------------------------------------------------------------------

In this section, making use of the mapping complex functor $\Map(-,-)\colon \Cstarcatun^\op\times \Cstarcatun\to \Cstarcatun$ (see \S\ref{sec:simplicial}), we characterize the Morita model structure as a left Bousfield localization; see \cite{hirschhorn}*{Def.~3.1.1}.
\begin{prop}
\label{prop:bousfield}
The Morita model structure is the left Bousfield localization of the unitary model structure with respect to the set 
$$S:=\{ \emptyset\to \mathbf0,\;  R_1\colon P(1) \to R(1) ,\; S_2\colon \bbF^2 \to S(2)\}\,.$$
\end{prop}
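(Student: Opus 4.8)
The plan is to invoke the general existence theorem for left Bousfield localizations and then identify the localized model structure with the Morita model structure constructed in Theorem~\ref{thm:morita_model}. First I would recall that the unitary model structure is cofibrantly generated (in fact combinatorial, or at least cellular/left proper in the sense required by \cite{hirschhorn}), and that it is simplicial with mapping complex $\Map(-,-)$ as recalled in \S\ref{sec:simplicial}. Hence, by \cite{hirschhorn}*{Thm.~4.1.1}, the left Bousfield localization $L_S\Cstarcatun$ at the set $S = \{\emptyset\to\mathbf 0,\ R_1\colon P(1)\to R(1),\ S_2\colon \F^2\to S(2)\}$ exists; it has the same cofibrations as the unitary model structure (hence the same cofibrations as $\cM_\Mor$, namely the $*$-functors injective on objects), its weak equivalences are the $S$-local equivalences, and its fibrant objects are the $S$-local objects, i.e.\ the fibrant $A$ such that $\Map(f, A)$ is a weak equivalence of simplicial sets for every $f\in S$.

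The key step is then to show that this localization coincides with $\cM_\Mor$. Since both model structures are determined by their cofibrations together with their fibrant objects (two model structures on the same category with the same cofibrations and the same fibrant objects agree — one can deduce the weak equivalences and fibrations formally, or simply compare the classes of fibrant objects and use that a cofibrantly generated structure is pinned down once we know the trivial cofibrations), it suffices to prove that $A$ is $S$-local (in the unitary-localized sense) if and only if $A$ is saturated. By Proposition~\ref{prop:fibrant} we already know that $A$ is Morita-fibrant if and only if $A\to\mathbf 0$ has the right lifting property with respect to this very same set $S$. So the heart of the matter is to check that, for a unitary-fibrant $\Cstar$-category $A$, having the RLP against each $f\in S$ is equivalent to $\Map(f,A)$ being a weak equivalence of simplicial sets. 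In one direction: if $A$ has the RLP against $S$, then for each $f\colon X\to Y$ in $S$ the induced $*$-functor $\Cstar(Y,A)\to \Cstar(X,A)$ is surjective on objects and, using that $f$ is fully faithful for $R_1$ and $S_2$ (Lemma~\ref{lemma:ff_comparisons}) together with Lemma~\ref{lemma:UP_dirsum} / the universal properties of Section~\ref{sec:univ}, is in fact a unitary equivalence of $\Cstar$-categories; applying the nerve-of-unitaries functor $\nu$ (which sends unitary equivalences to weak equivalences of simplicial sets, being part of a Quillen pair by \cite{ivo:unitary}*{Prop.~3.21}) shows $\Map(f,A)$ is a weak equivalence.

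Conversely, suppose $\Map(f,A)=\nu\Cstar(f,A)$ is a weak equivalence for each $f\in S$. Surjectivity on $\pi_0$ of $\nu$ together with the structure of the $\Cstar$-categories $S(2)$, $R(1)$, $\mathbf 0$ — each of which is generated by its universal data over the source — lets us lift the relevant objects: a vertex of $\Map(\F^2,A)$ is a pair of objects of $A$, and a vertex of $\Map(S(2),A)$ in the same path component is a direct sum for them with specified isometries, so $A$ admits binary (hence all finite) direct sums; similarly $R_1$ forces splittings of projections, and $\emptyset\to\mathbf 0$ forces a zero object. Thus $A$ is saturated, i.e.\ Morita-fibrant by Proposition~\ref{prop:fibrant}. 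Finally, since $\cM_\Mor$ is a model structure with the unitary cofibrations whose fibrant objects are exactly the $S$-local ones, and since the unitary trivial cofibrations are contained in the Morita trivial cofibrations (as $\Weq_\uni\subset\Weq_\Mor$ by the remark after Definition~\ref{defi:Mor} and $\Cof_\uni=\Cof_\Mor$), the universal property of left Bousfield localization \cite{hirschhorn}*{Def.~3.1.1 and Thm.~3.3.19} identifies $\cM_\Mor$ with $L_S$ of the unitary model structure.

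The main obstacle I anticipate is the careful verification that $S$-locality (a statement about mapping \emph{spaces}) reduces to the on-the-nose lifting property used in Proposition~\ref{prop:fibrant} — that is, translating ``$\nu\Cstar(f,A)$ is a weak equivalence of simplicial sets'' into ``the forgetful functor on objects $\obj\Cstar(Y,A)\to\obj\Cstar(X,A)$ is essentially surjective, up to unitary isomorphism, with fully faithful fibers.'' This uses in an essential way that $\nu$ only records unitary isomorphisms and that the source $\Cstar$-categories in $S$ are presented by explicit universal data, so that a $*$-functor out of them is literally a choice of objects plus isometries/projections subject to $*$-algebraic relations; combined with the fact that in a $\Cstar$-category isomorphic implies unitarily isomorphic (\cite{ivo:unitary}*{Prop.~1.6}), one gets the needed equivalence. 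Once that bridge is in place, the remaining comparison of the two model structures is formal.
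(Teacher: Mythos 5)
Your reduction of $S$-locality to the lifting-property characterization of saturated objects (Proposition~\ref{prop:fibrant}) is essentially the same as the first half of the paper's proof, and the bridge you identify -- translating ``$\nu\,\Cstar(f,A)$ is a weak equivalence'' into existence of liftings, using that the sources in $S$ are presented by universal data and that isomorphic objects are unitarily isomorphic -- is exactly the right technical point. The gap is in your overall strategy: you invoke Hirschhorn's existence theorem for left Bousfield localizations to produce $L_S\Cstarcatun$ and then identify it with $\cM_\Mor$ by comparing cofibrations and fibrant objects. That existence theorem requires the unitary model structure to be cellular (left properness is fine, since every object is cofibrant), and cellularity is not established anywhere -- indeed the paper explicitly remarks, immediately after this proposition, that it is \emph{unclear} whether the unitary model structure is cellular and hence whether Hirschhorn's machinery applies. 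Asserting ``in fact combinatorial, or at least cellular'' without proof is therefore not a harmless parenthetical: it is the load-bearing step, and it is precisely the step the authors could not verify.

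The fix is to avoid the existence theorem entirely. Since $\cM_\Mor$ is already known to exist (Theorem~\ref{thm:morita_model}) and has the unitary cofibrations, being the left Bousfield localization in the sense of \cite{hirschhorn}*{Def.~3.1.1} amounts to the single assertion that $\Weq_\Mor$ coincides with the class of $S$-local equivalences; no existence result is needed. This is what the paper proves directly: a $*$-functor $F$ is an $S$-local equivalence iff $\map_\uni(F,A)$ is a weak equivalence for every $S$-local $A$, which by the fibrant-object comparison is every Morita-fibrant $A$; and for such $A$ one has $\map_\uni(-,A)\simeq\map_\Mor(-,A)$ (the two structures share cofibrations, hence trivial fibrations, hence cosimplicial resolutions), so the condition holds iff $F$ is a Morita equivalence, by detection of weak equivalences via homotopy function complexes into fibrant objects. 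Your argument never establishes this equality of weak-equivalence classes except through the unproved existence of $L_S$, so as written the proof is incomplete; with the direct argument for statement (ii) substituted for the existence-theorem appeal, the rest of what you wrote goes through.
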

\begin{proof}
We need to prove the following statements:
\begin{itemize}
\item[(i)] A $\Cstar$-category $A$ is $S$-local (see \cite[Def.~3.1.4]{hirschhorn}) if and only if it is fibrant in the Morita model structure. 
\item[(ii)] A $\ast$-functor $F$ is an $S$-local equivalence (see \cite[Def.~3.1.4]{hirschhorn}) if and only if it is a Morita equivalence.
\end{itemize}
Let us begin with statement (i). By construction the unitary and the Morita model structures have the same cofibrations, and therefore the same trivial fibrations. As a consequence the simplicial cofibrant replacement functor $\Gamma^\ast$ (see \cite[\S17]{hirschhorn}) is the same in both cases. 
Since $S$ consists of Morita equivalences, this implies that if $A$ is fibrant in the Morita model structure, then $R_0$, $R_1$ and $S_2$ induce weak equivalences of simplicial sets
$$
\begin{array}{lcl}
\map_\uni(R_0,A)\colon \map_\uni(\mathbf0, A) &\stackrel{\sim}{\too}& \map_\uni(\emptyset, A) \\
 \map_\uni(R_1,A)\colon \map_\uni(R(1), A) &\stackrel{\sim}{\too}& \map_\uni(P(1), A) \\
\map_\uni(S_2,A)\colon \map_\uni(S(2), A) &\stackrel{\sim}{\too}& \map_\uni(\bbF^2, A)\,,
\end{array}
$$
where $\map_\uni(-,-)$ stands for the homotopy function complex in the unitary model structure; see \cite[Notation~17.4.2]{hirschhorn}. 
In other words, if $A$ is fibrant in the Morita model structure, then it is $S$-local. 
Let us now prove the converse. By Proposition~\ref{prop:fibrant}, $A$ is fibrant in the Morita model structure if and only if it has a zero object and all diagrams of shape
 \[
\xymatrix{ 
\F^2 \ar[r]^-F \ar[d]_{S_2} & A && P(1)\ar[r]^-G \ar[d]_{R_1} & A \\
S(2) \ar@{..>}[ur]_{\widetilde{F}} &&& R(1) \ar@{..>}[ur]_{\widetilde{G}} &
}
\]
admit liftings $\overline{F}$ and~$\overline{G}$. Since the unitary model is simplicial (see \cite{ivo:unitary}*{Thm.~3.23}) and every object is fibrant and cofibrant in the unitary model, we can choose $\map_\uni(-,-)$ to be the mapping complex $\Map(-,-)$ (again, consult \cite[\S17]{hirschhorn}).
Using this identification and the hypothesis that $\Map(R_0,A)$ is a weak equivalence, we observe that $\Map(\mathbf0,A)$ is weakly equivalent to a point $\Map(\emptyset,A) = \{\pt\}$. In particular it is nonempty, and so $\Cstar(\mathbf0,A)$ has an object, and hence $A$ has a zero object. Now, note that $F$ (in the above diagram) is a $0$-simplex of $\Map(\bbF^2,A)$. Since by hypothesis the induced map $\Map(S_2,A)$ of simplicial sets is a weak equivalence, there exists a $0$-simplex $\widetilde{F}$ of $\Map(S(2),A)$, \ie\ a $\ast$-functor $\widetilde{F}:S(2) \to A$, and a zig-zag of $1$-simplices in $\Map(\bbF^2,A)$ relating $F$ with $\widetilde{F}\circ S_2$. 
%The description of \cite[Def.~3.2.2]{ivo:unitary} shows us that the homotopy function complex $\Map$ is obtained as the nerve of the groupoide of unitary equivalences. 
From the definition of $\Map(-,-)$ we observe that the above zig-zag of $1$-simplices reduces to a single $1$-simplex $H$ in $\Map(\bbF^2,A)$. This data corresponds precisely to the following diagram in $A$
$$
\xymatrix@!0 @R=1.5pc @C=6pc{
F(\bullet_1) \ar[r]^-{H(\bullet_1)}_\sim &\widetilde{F}(\bullet_1) \ar[dr]^{\widetilde{F}(v_1)}  & \\
 && \widetilde{F}(s(2)) \,,\\
F(\bullet_2) \ar[r]_-{H(\bullet_1)}^\sim & \widetilde{F}(\bullet_2) \ar[ur]_{\widetilde{F}(v_2)} & 
}
$$
satisfying the relations
\begin{eqnarray*}
\widetilde{F}(v_1) \circ \widetilde{F}(v_1^\ast) + \widetilde{F}(v_2)\circ \widetilde{F}(v_2^\ast) = 1_{\widetilde{F}(s(2))} &  & F(v_i^\ast)\circ F(v_j)= \delta_{ij}\,,
\end{eqnarray*}
where $H(\bullet_1)$ and $H(\bullet_2)$ are moreover unitary morphisms. 
It follows that we can define a $\ast$-functor $\overline{F}$ (making the above left-hand-side triangle commutative) by setting
\begin{equation*}
\bullet_i \mapsto F(\bullet_i) 
\quad\quad
 s(2) \mapsto \widetilde{F}(s(2)) 
 \quad\quad
 v_i \mapsto \widetilde{F}(v_i) \circ H(\bullet_i)\,.
\end{equation*}
Similarly, $G$ is a $0$-simplex of $\Map(P(1),A)$. Since $\Map(R_1,A)$ is a weak equivalence there exists a $0$-simplex $\widetilde{G}$ of $\Map(R(1),A)$, \ie\ a $\ast$-functor $\widetilde{G}\colon R(1) \to A$ and a $1$-simplex $H$ in $\Map(P(1),A)$ relating $G$ with $\widetilde{G}\circ R_1$. This data corresponds precisely to the following commutative diagram
$$
\xymatrix{
G(o_1) \ar[d]_{G(p_{11})} \ar[r]^{H(o_1)} & \widetilde{G}(o_1) \ar[d]^{\widetilde{G}(p_{11})} \ar[r]^{\widetilde{G}(s_1)} & \widetilde{G}(r(1)) \ar@{=}[d] \\
G(o_1) \ar[r]_{H(o_1)} & \widetilde{G}(o_1) \ar[r]_{\widetilde{G}(s_1)} & \widetilde{G}(r(1))\,,
}
$$
satisfying the relations
\begin{eqnarray*}
\widetilde{G}(s_1^\ast)  \widetilde{G}(s_1) = (\widetilde{G}(s_1^\ast)  \widetilde{G}(s_1))^2 && \widetilde{G}(s_1)  \widetilde{G}(s_1^\ast) = 1_{\widetilde{G}(r(1))}\,,
\end{eqnarray*}
where $H(o_1)$ is moreover a unitary morphism. It follows that we can define a $\ast$-functor $\overline{G}$ (making the above right-hand-side triangle commutative) by setting
\begin{equation*}
o_1 \mapsto G(o_1) \quad\quad
 r(1) \mapsto \widetilde{G}(r(1)) \quad\quad
  s_1 \mapsto \widetilde{G}(s_1)  H(o_1)\,.
\end{equation*}
This concludes the proof that a $\Cstar$-category $A$ is $S$-local if and only if it is fibrant in the Morita model structure. 

Let us now prove statement (ii). The $\ast$-functor $F$ is an $S$-local equivalence if and only if for every $S$-local $\Cstar$-category $A$ the induced map $\map_\uni(F,A)$ is a weak equivalence of simplicial sets. By part~(i) this is equivalent to the condition that $\map_\uni(F,A)$ is a weak equivalence for every Morita fibrant $C^\ast$-category~$A$. Since, for such~$A$, we have an isomomorphism $\map_\uni(-,A) \simeq \map_{\Mor}(-,A)$ between the homotopy function complexes for the unitary and the Morita model structures, we conclude that the last condition is equivalent to requiring that~$F$ be a Morita equivalence. The proof is then achieved.
\end{proof}

\begin{remark}
It is unclear to the authors if the unitary model structure is cellular, and thus if the (left) Bousfield localization machinery developed in~\cite{hirschhorn} can be used to obtain the Morita model category. This would provide another way of establishing Theorem~\ref{thm:morita_model}.

\end{remark}

%----------------------------------------------------------------------
\section{Semi-additivity}\label{sec:semi_add}
%----------------------------------------------------------------------
In this section we explore the rich structure of the Morita homotopy category $\Ho(\cM_\Mor)$. We start with some general results concerning semi-additive categories.

A category $\cC$ is called {\em semi-additive} if it has a zero object $0$ (\ie\ an object which is simultaneously initial and terminal), finite products, finite coproducts, and is such that the canonical map from the coproduct to the product
\[\big[{}^1_0 \, {}^0_1 \big]\colon x \sqcup y \stackrel{\sim}{\too} x \times y\]
is an isomorphism for every pair of objects~$x,y$. Since all other finite (co)products can be obtained recursively from these, it follows that the analogous canonical map is an isomorphism for every finite collection of objects. 
A {\em semi-additive functor} $F:\cC \to \cD$ is a functor which preserves the zero object, finite products, and finite coproducts. 

%\Ivo{Put later the def of additive!}
%Clearly, every additive category is semi-additive and every additive functor is semi-additive. 

\begin{lemma}\label{lem:prod-star}
In any semi-additive category~$\cC$, each Hom-set $\Hom_\cC(x,y)$ is naturally endowed with an abelian monoid structure~$+$. Moreover, composition is bilinear with respect to~$+$. In other words, $\cC$ is an $\N$-category, \ie, is a category enriched over the symmetric monoidal category of abelian monoids; see \cite{kelly:enriched_book}.
\end{lemma}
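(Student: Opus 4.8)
The plan is to follow the classical argument showing that a category with finite biproducts is automatically enriched in commutative monoids, which goes back to Mac Lane. Fix objects $x,y$ in the semi-additive category $\cC$. First I would define the addition on $\Hom_\cC(x,y)$. Given $f,g\colon x\to y$, form the composite
\[
x \stackrel{\Delta}{\too} x\times x \stackrel{\sim}{\longleftarrow} x\sqcup x \stackrel{(f,g)}{\too} y\,,
\]
where $\Delta=(1_x,1_x)$ is the diagonal into the product, the middle map is the inverse of the canonical isomorphism $x\sqcup x \stackrel{\sim}{\to} x\times x$ postulated by semi-additivity, and $(f,g)\colon x\sqcup x\to y$ is induced by the coproduct universal property. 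Call this composite $f+g$. Dually one could use the codiagonal $\nabla\colon y\sqcup y\to y$ precomposed with $(f,g)\colon x\to y\times y$ through the same canonical iso; the first step is then to check these two recipes agree, which is a diagram chase using naturality of the biproduct comparison maps. The zero element is the composite $x\to 0\to y$ through the zero object.

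Next I would verify the monoid axioms. Associativity and commutativity follow from the corresponding symmetries of the triple biproduct $x\sqcup x\sqcup x \simeq x\times x\times x$ (permuting summands/factors, which is compatible with the canonical comparison map): concretely, both $(f+g)+h$ and $f+(g+h)$ unwind to the composite $x\to x^{\times 3}\simeq x^{\sqcup 3}\to y$ built from the diagonal and $(f,g,h)$, and commutativity comes from the transposition automorphism of $x\sqcup x$. The unit axiom $f+0=f$ reduces to the identity $(f,0)\circ(\text{iso})\circ\Delta = f$, which follows because the composite $x\xrightarrow{\Delta} x\times x \xleftarrow{\sim} x\sqcup x \xrightarrow{(1_x,0)} x$ equals $1_x$ — this is itself a short computation with the biproduct projections/inclusions.

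Finally I would check bilinearity of composition: for $f,g\colon x\to y$ and $h\colon y\to z$, $e\colon w\to x$ one needs $h\circ(f+g)=h\circ f + h\circ g$ and $(f+g)\circ e = f\circ e + g\circ e$. The first is immediate from the definition since $h\circ(f,g) = (h f, h g)$ as maps $x\sqcup x\to z$, by the coproduct universal property. The second uses instead the \emph{product} description of $+$: $(f,g)\circ e\colon w\to y\times y$ equals $(f e, g e)$ by the product universal property, so postcomposing with $\nabla$ and the iso gives $f e + g e$; here it is convenient to have already established that the "coproduct" and "product" definitions of $+$ coincide. Putting these together gives that $\cC$ is enriched over commutative monoids, i.e.\ is an $\N$-category.

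The main obstacle is the verification that the two definitions of $+$ (via diagonal-into-product and via codiagonal-out-of-coproduct) agree; everything else is then formal, but that coincidence requires carefully tracking the canonical comparison isomorphism $x\sqcup x\stackrel{\sim}{\to} x\times x$ through a $2\times 2$ matrix computation. All of this is standard and I would either cite it (e.g.\ \cite{kelly:enriched_book}, or Mac Lane's treatment of additive categories) or include the diagram chases in compressed form.
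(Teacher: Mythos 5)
Your proposal is correct and follows essentially the same route as the paper: the paper defines $f+g$ as the composite $x \stackrel{\Delta}{\to} x\times x \stackrel{f\times g}{\to} y\times y \stackrel{\big[{}^1_0\,{}^0_1\big]^{-1}}{\to} y\sqcup y \stackrel{\nabla}{\to} y$ --- which is precisely your ``dual'' codiagonal recipe --- and leaves commutativity, unitality and bilinearity as exercises. You merely carry out those verifications (including the coincidence of the product-side and coproduct-side descriptions of $+$, which is indeed the one nontrivial point), so the difference is one of detail rather than of method.
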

\begin{proof}
Given maps $f,g \in \Hom_\cC(x,y)$, their sum $f+ g$ is defined as the composition
$$x \stackrel{\Delta}{\too} x \times x \stackrel{f\times g}{\too} y \times y  \stackrel{\big[{}^1_0 \, {}^0_1 \big]^{-1}}{\too} y \sqcup y \stackrel{\nabla}{\too} y\,,$$
where $\Delta=\big[ {}^1_1 \big]$ and $\nabla=\big[1\;1\big]$ are the diagonal and fold maps induced by the universal properties. The neutral element $0$ for $+$ is given by the unique zero map $x\to 0\to y$.
The fact that $+$ is commutative (with neutral element~$0$) and that the composition operation is bilinear with respect to $+$ are simple exercises that we leave for the reader. 
%\Goncalonew{Ivo: for completeness, would you like to do it ?} \Ivonew{no :-)}
\end{proof}
\begin{remark}
Note that for any semi-additive functor $F\colon \cC \to \cD$ the induced maps $\Hom_{\cC}(x,y) \to \Hom_{\cD}(Fx,Fy)$ are homomorphisms of abelian monoids. In fact, the latter is an equivalent condition; cf.\ \S\ref{sec:K0}.
\end{remark}

\begin{thm}\label{thm:semi-additive}
The Morita homotopy category $\Ho(\cM_\Mor)$ is semi-additive. 
%Moreover, given $C^\ast$-categories $A$ and $B$, the resulting abelian monoid structure on $\Hom_{\Ho(\cM_\Mor)}(A,B)$ (see Lemma~\ref{lem:prod-star}) is induced by the direct sum operation in~$B_\oplus^\natural$. 
\end{thm}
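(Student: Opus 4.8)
The plan is to verify the three defining properties of a semi-additive category for $\Ho(\cM_\Mor)$: existence of a zero object, existence of finite products and coproducts, and the assertion that the canonical comparison map from coproduct to product is an isomorphism. Throughout I would work with the concrete description of the Hom-sets from Corollary~\ref{cor:new}, namely $\Hom_{\Ho(\cM_\Mor)}(A,B)\simeq \mathrm{ob}(\Cstar(A,B^\natural_\oplus))/_{\!\simeq}$, and compose morphisms as in Corollary~\ref{cor:comp}. This reduces every claim to a statement about $*$-functors into saturated $\Cstar$-categories, modulo unitary isomorphism.

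First I would establish that $\mathbf{0}$ (the $\Cstar$-category with one object and zero endomorphism ring) is a zero object in $\Ho(\cM_\Mor)$. Since $\mathbf{0}$ is already saturated, it equals its own fibrant replacement, and $\Hom_{\Ho(\cM_\Mor)}(A,\mathbf{0})\simeq \mathrm{ob}(\Cstar(A,\mathbf 0))/_{\!\simeq}$ is a singleton because there is a unique $*$-functor $A\to\mathbf 0$. For the initial side, $\Hom_{\Ho(\cM_\Mor)}(\mathbf{0},B)\simeq \mathrm{ob}(\Cstar(\mathbf 0,B^\natural_\oplus))/_{\!\simeq}$; a $*$-functor out of $\mathbf 0$ picks a zero object of $B^\natural_\oplus$, and since any two zero objects are unitarily isomorphic this set is again a singleton. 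Hence $\mathbf 0$ is a zero object.

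Next, for binary products/coproducts I would show that the disjoint union $A\sqcup B$ in $\Cstarcatun$ represents both the product and the coproduct of $A$ and $B$ in $\Ho(\cM_\Mor)$. For the coproduct this should follow formally, since $\Ho(\cM_\Mor)$ is a localization of $\Cstarcatun$ and $\sqcup$ is the categorical coproduct there; more precisely, a morphism $A\sqcup B\to C$ in $\Ho(\cM_\Mor)$ is, by Corollary~\ref{cor:new}, an isomorphism class of $*$-functors $A\sqcup B\to C^\natural_\oplus$, which is the same as a pair of such functors $A\to C^\natural_\oplus$ and $B\to C^\natural_\oplus$ up to componentwise unitary isomorphism, i.e.\ an element of $\Hom_{\Ho(\cM_\Mor)}(A,C)\times\Hom_{\Ho(\cM_\Mor)}(B,C)$. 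For the product, given a $\Cstar$-category $C$, a morphism $C\to A\sqcup B$ in $\Ho(\cM_\Mor)$ is an isomorphism class of $*$-functors $F\colon C\to (A\sqcup B)^\natural_\oplus$; the key observation is that $(A\sqcup B)^\natural_\oplus$ decomposes (up to unitary equivalence) compatibly with the idempotent/direct-sum structure, so that specifying $F$ up to unitary isomorphism is the same as specifying the pair $(p_A\circ F, p_B\circ F)$ where $p_A,p_B$ are the evident ``projection'' $*$-functors $(A\sqcup B)^\natural_\oplus\to A^\natural_\oplus, B^\natural_\oplus$ obtained from the two coproduct inclusions. This gives the required natural bijection $\Hom_{\Ho(\cM_\Mor)}(C,A\sqcup B)\simeq\Hom_{\Ho(\cM_\Mor)}(C,A)\times\Hom_{\Ho(\cM_\Mor)}(C,B)$, and one checks that under both identifications the canonical comparison map $A\sqcup B\to A\times B$ is represented by the identity, hence is an isomorphism.

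The main obstacle, and the step deserving genuine care, is the product computation: one must argue that every $*$-functor $F\colon C\to (A\sqcup B)^\natural_\oplus$ is, up to unitary isomorphism, determined by its two ``coordinate projections'', i.e.\ that an object of $(A\sqcup B)^\natural_\oplus$ is unitarily isomorphic to an orthogonal direct sum of a piece supported on (the image of) $A$ and a piece supported on $B$, and that morphisms decompose accordingly. This is a statement about how the saturation functor interacts with coproducts in $\Cstarcatun$; it should follow from the explicit description of $(-)_\oplus$ and $(-)^\natural$ (Definitions of the additive hull and idempotent completion), observing that in $A\sqcup B$ there are no nonzero morphisms between an object coming from $A$ and one coming from $B$, so a projection matrix on a word $x_1\cdots x_m$ with each $x_i$ coming from $A$ or from $B$ is block-diagonal, and an object of $(A\sqcup B)^\natural_\oplus$ splits as a unitary direct sum of an $A$-part and a $B$-part. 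Once this structural lemma is in hand, the rest of the verification is bookkeeping with Corollaries~\ref{cor:new} and~\ref{cor:comp}, and the recursion to arbitrary finite (co)products is automatic.
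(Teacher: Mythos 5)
Your proposal is correct, but it is organized differently from the paper's proof. The paper does not verify any universal properties by hand: it invokes the general fact that the homotopy category of a model category has (co)products computed as derived (co)products, so the coproduct of $A$ and $B$ in $\Ho(\cM_\Mor)$ is $A\sqcup B$ and the product is $A^\natural_\oplus\times B^\natural_\oplus$ (product of fibrant replacements), the zero object comes for free from the trivial cofibration $R_0\colon\emptyset\to\mathbf 0$, and the only thing left to check is that the explicit comparison $*$-functor $A\sqcup B\to A^\natural_\oplus\times B^\natural_\oplus$, $x\mapsto (x,0)$, $y\mapsto(0,y)$, is a Morita equivalence --- which follows from the recognition criterion of Lemma~\ref{lemma:recognition_Moreq} (fully faithful, and every object of the target is a retract of a direct sum of objects in the image). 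You instead verify the universal properties directly from the Hom-set description of Corollary~\ref{cor:new} and Corollary~\ref{cor:comp}, with the key input being the structural lemma that $(A\sqcup B)^\natural_\oplus$ splits as $A^\natural_\oplus\times B^\natural_\oplus$ because projection matrices on mixed words are block-diagonal. The mathematical core is the same in both arguments (no nonzero cross-morphisms in $A\sqcup B$ forces the splitting after saturation); your route costs you the extra bookkeeping of naturality and compatibility with composition that the model-categorical machinery hands the paper for free, but it buys a more concrete identification of the product object as $A\sqcup B$ itself rather than $A^\natural_\oplus\times B^\natural_\oplus$. One small caveat: your remark that the coproduct claim ``follows formally'' because $\Ho(\cM_\Mor)$ is a localization is not a valid argument on its own (localizations need not preserve colimits); the Hom-set computation you give immediately afterwards is the actual proof and is fine.
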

\begin{proof}
The homotopy category $\Ho(\mathcal M)$ of a Quillen model category $\mathcal{M}$ is always endowed with arbitrary small products and coproducts. These are obtained as appropriate total derived functors of the product and coproduct functors on~$\mathcal M$; see \cite{hovey:model}*{Example 1.3.11} for details. Thus, in our case, the coproduct and product of $A_1$ and $A_2$ in $\Ho(\cM_\Mor)$ are given respectively by $A_1\sqcup A_2$ and $(A_1)^\natural_\oplus\times (A_2)^\natural_\oplus$, with $\sqcup$ and~$\times$ denoting as usual the coproduct and product in $\Cstarcatun$. Moreover $\Ho(\cM_\Mor)$ is pointed, \ie\ the unique map from the initial to the final object is invertible. Indeed the $*$-functor $\emptyset\to \mathbf0$ is the generating trivial cofibration~$R_0$. In order to show that $\Ho(\cM_\Mor)$ is semi-additive, it remains then to prove that the canonical comparison map
\[
\big[ {}^1_0 \, {}^0_1 \big] \colon A_1\sqcup A_2 \longrightarrow (A_1)^\natural_\oplus\times (A_2)^\natural_\oplus
\]
is invertible. Recall that the diagonal components of $\big[ {}^1_0 \, {}^0_1 \big]$ are the identity $*$-functors of $A_1$ and $A_2$, respectively, and that the off-diagonal components are the zero maps, which exist since $\Ho(\cM_\Mor)$ is pointed.
Concretely, $\big[ {}^1_0 \, {}^0_1 \big]$ is given by the $*$-functor sending $x\in \obj(A_1)$ to $(x,0)$ and  $y\in \obj(A_2)$ to $(0,y)$. This functor is clearly fully faithful since by construction there are only zero morphisms in $A_1\sqcup A_2$ between $A_1$ and $A_2$.
We claim that every object in $(A_1)^\natural_\oplus\times (A_2)^\natural_\oplus$ can be obtained from objects in the image of $\big[ {}^1_0 \, {}^0_1 \big]$ by taking finite direct sums and retracts.
Notice that every object $(x,y)$ in the image of $\big[ {}^1_0 \, {}^0_1 \big]$ is a direct sum $(x,0)\oplus(0,y)$.
Let 
\[
z = \left( (x_1\cdots x_n, p) \, ,\, (y_1\cdots y_m, q) \right)
\]
be an arbitrary object of $(A_1)^\natural_\oplus\times (A_2)^\natural_\oplus$, with $x_1,\ldots,x_n \in\obj( A_1)$ and $y_1,\ldots,y_n\in\obj( A_2)$.
We then have the following identifications
\begin{eqnarray*}
z &\simeq&  \big( ((x_1,0) \cdots (x_n,0) , p) \,,\, ( (0,y_1)\cdots (0,y_m), q) \big) \\
 &\simeq &  ((x_1,0) \cdots (x_n,0) , p) \oplus ( (0,y_1)\cdots (0,y_m), q)\,.
\end{eqnarray*}
This implies that $z$ is a range object for the projection $\big[ {}^p_0\, {}^0_q\big]$  on the direct sum
\[
(x_1,0)\oplus \cdots \oplus (x_n,0) \oplus (0,y_1) \oplus \cdots \oplus (0,y_m) \,,
\]
where all summands belong to the image of the comparison $*$-functor $\big[{}^1_0\,{}^0_1\big]$. This proves our claim and so we conclude, making use of Lemma \ref{lemma:recognition_Moreq}, that $\big[{}^1_0\,{}^0_1\big]$ is a Morita equivalence and therefore an isomorphism in $\Ho(\cM_\Mor)$. The proof is finished.
\end{proof}

By combining Theorem~\ref{thm:semi-additive} with Lemma~\ref{lem:prod-star} we conclude that $\Ho(\cM_\Mor)$ has an intrinsic abelian monoid structure on each Hom-set. It can be described as follows:

\begin{thm}\label{thm:map_sum}
Given any two $\Cstar$-categories $A$ and~$B$, the direct sum operation on $B^\natural_\oplus$ turns the homotopy function complex $\map_\Mor(A,B)$ of the Morita model category 
%\Goncalo{(Ivo, this is confusing because this $\map(-,-)$ is different from the $\map(-,-)$ used in the proof of Prop.~\ref{prop:bousfield}. Would you like to add ``uni'' and ``Mor'' to help the reader ?)} 
into a monoid in the category of simplicial sets. On the set of connected components, the canonical isomorphism
\begin{equation}\label{eq:components}
\pi_0(\map_\Mor(A,B)) \simeq \Hom_{\Ho(\cM_\Mor)}(A,B)
\end{equation}
identifies this operation with the abelian monoid structure of Lemma~\ref{lem:prod-star}.
\end{thm}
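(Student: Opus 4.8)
The plan is to construct the monoid structure on $\map_\Mor(A,B)$ directly from the biproduct structure on $B^\natural_\oplus$ and then check compatibility with Lemma~\ref{lem:prod-star} via the description of composition and addition in $\Ho(\cM_\Mor)$ already available. First I would recall that, since $\cM_\Mor$ is a simplicial model category (Proposition~\ref{prop:simplicial}) in which every object is cofibrant and $B^\natural_\oplus$ is fibrant (Corollary~\ref{cor:fibrant_replacement}), the homotopy function complex may be taken to be the honest mapping complex $\Map(A,B^\natural_\oplus) = \nu\,\Cstar(A,B^\natural_\oplus)$. The direct sum on $B^\natural_\oplus$ is unique only up to unitary isomorphism, so strictly speaking it does not define a $*$-functor $B^\natural_\oplus \times B^\natural_\oplus \to B^\natural_\oplus$ on the nose; however, after applying $\nu$ (which only remembers objects and unitary isomorphisms) the choice becomes immaterial, and one obtains a well-defined map of simplicial sets $\oplus\colon \Map(A,B^\natural_\oplus) \times \Map(A,B^\natural_\oplus) \to \Map(A,B^\natural_\oplus)$ sending a pair of $*$-functors $(F_1,F_2)$ to $x \mapsto F_1(x)\oplus F_2(x)$, and sending unitary $1$-simplices to their ``diagonal'' direct sum. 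Associativity up to canonical unitary isomorphism, commutativity via the swap unitary, and the empty word as a strict unit all lift through $\nu$ to make $\Map(A,B^\natural_\oplus)$ a commutative monoid object in $\sSet$; these verifications are routine once one notes that all the required coherence unitaries exist and are natural because the direct sum is $2$-universal.

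Next I would identify $\pi_0$ of this monoid with $\Hom_{\Ho(\cM_\Mor)}(A,B)$ and its monoid operation. By Corollary~\ref{cor:new}, $\Hom_{\Ho(\cM_\Mor)}(A,B) \simeq \mathrm{ob}\,\Cstar(A,B^\natural_\oplus)/_\simeq$, which is exactly $\pi_0(\nu\,\Cstar(A,B^\natural_\oplus)) = \pi_0\Map(A,B^\natural_\oplus)$; passing to $\pi_0$ clearly sends the simplicial monoid operation $\oplus$ to the operation $[F_1]+[F_2]:=[F_1\oplus F_2]$. So it remains to show this coincides with the abelian monoid structure of Lemma~\ref{lem:prod-star} coming from semi-additivity of $\Ho(\cM_\Mor)$ (Theorem~\ref{thm:semi-additive}). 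By the recipe in that lemma, the sum $\varphi_1+\varphi_2$ of two maps $A\to B$ in $\Ho(\cM_\Mor)$ is the composite $A\xrightarrow{\Delta} A\times A \xrightarrow{\varphi_1\times\varphi_2} B\times B \xrightarrow{\sim} B\sqcup B \xrightarrow{\nabla} B$. Here I would use the explicit identification of products and coproducts in $\Ho(\cM_\Mor)$ from the proof of Theorem~\ref{thm:semi-additive}: $B\times B$ is computed by $B^\natural_\oplus\times B^\natural_\oplus$, the comparison isomorphism $B\sqcup B\xrightarrow{\sim}B\times B$ is the fully faithful $*$-functor $x\mapsto(x,0)$, $y\mapsto(0,y)$, and composition in $\Ho(\cM_\Mor)$ is given by Corollary~\ref{cor:comp}. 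Chasing a representative $F_i\colon A\to B^\natural_\oplus$ of $\varphi_i$ through this composite: the diagonal sends $a$ to $(F_1 a, F_2 a)\in B^\natural_\oplus\times B^\natural_\oplus$, the inverse of the comparison map identifies $(F_1a,F_2a)$ with the object $(F_1a,0)\oplus(0,F_2a)$ of (a copy of) $B^\natural_\oplus$ — exactly as in the computation ``$z\simeq(x,0)\oplus(0,y)$'' in the proof of Theorem~\ref{thm:semi-additive} — and the fold map $\nabla=[1\;1]$ collapses $(x,0)$ and $(0,y)$ back to $x$ and $y$ in $B$, hence sends this object to $F_1a\oplus F_2a$ in $B^\natural_\oplus$. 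Therefore $\varphi_1+\varphi_2$ is represented by $a\mapsto F_1a\oplus F_2 a$, which is precisely $[F_1]+[F_2]$ under $\oplus$, establishing \eqref{eq:components} as an isomorphism of abelian monoids.

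The main obstacle is bookkeeping rather than a genuine difficulty: one must be careful that the direct sum on $B^\natural_\oplus$ is only defined up to unitary isomorphism, so that the operation $\oplus$ on $\Map(A,B^\natural_\oplus)$ is only a strictly associative, strictly unital commutative monoid \emph{after} applying $\nu$ — at the level of $\Cstar(A,B^\natural_\oplus)$ one has only a coherently (but not strictly) associative structure. Checking that $\nu$ rectifies this, i.e.\ that the canonical coherence unitaries are sent to identities or at least to unitary $1$-simplices implementing the associativity and commutativity constraints compatibly with the simplicial structure, is the one place requiring genuine care; everything else — the simplicial identities for $\oplus$ on higher simplices, and the diagram chase identifying the two monoid operations on $\pi_0$ — is a direct unwinding of the constructions in Theorem~\ref{thm:semi-additive}, Corollary~\ref{cor:comp} and Corollary~\ref{cor:new}.
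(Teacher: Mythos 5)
Your overall route is the paper's: identify $\map_\Mor(A,B)$ with $\nu\,\Cstar(A,B^\natural_\oplus)$, induce the operation from the direct sum on $B^\natural_\oplus$, and then chase the recipe of Lemma~\ref{lem:prod-star} through the explicit product/coproduct identifications of Theorem~\ref{thm:semi-additive} and Corollary~\ref{cor:comp}. However, there is one genuine gap, precisely at the point you yourself single out as the delicate one. You assert that the direct sum on $B^\natural_\oplus$ is only defined up to unitary isomorphism, hence only coherently associative on $\Cstar(A,B^\natural_\oplus)$, and that applying $\nu$ ``rectifies'' this into a strict monoid object in $\sSet$. That rectification claim is false as stated: a monoid in $\sSet$ requires the multiplication map to be strictly associative, and the nerve of the unitary subcategory sends a natural unitary isomorphism to a $1$-simplex, i.e.\ a homotopy, not to an identity. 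A merely coherently associative $\oplus$ would therefore only yield associativity up to homotopy (a homotopy-coherent monoid), which does not prove the statement as formulated. Your fallback phrasing (``identities or at least unitary $1$-simplices implementing the associativity'') concedes exactly the version that is not enough.

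The repair is that your premise is wrong: there \emph{is} a canonical choice of direct sum on $B^\natural_\oplus=(B_\oplus)^\natural$ that is an honest $*$-functor $\oplus\colon B^\natural_\oplus\times B^\natural_\oplus\to B^\natural_\oplus$, strictly associative and strictly unital with unit the empty word, namely concatenation of formal words together with block-diagonal projection matrices. This makes $(B^\natural_\oplus,\oplus,0)$ a \emph{strict} symmetric monoidal $\Cstar$-category (only the symmetry $\bigl[{}^0_1\,{}^1_0\bigr]$ fails to be the identity, which is harmless for strict associativity and only matters for commutativity of the induced monoid on $\pi_0$). Since $\Cstar(A,-)$ and $\nu$ preserve finite products, the induced operation on $\map_\Mor(A,B)$ is then a genuine simplicial monoid with no rectification needed; this is exactly how the paper proceeds. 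One further small point in your $\pi_0$ chase: the inverse of the comparison map $\bigl[{}^1_0\,{}^0_1\bigr]\colon B^\natural_\oplus\sqcup B^\natural_\oplus\to B^\natural_\oplus\times B^\natural_\oplus$ exists only in $\Ho(\cM_\Mor)$, not as a $*$-functor, so to carry out the chase with actual $*$-functors you must first extend along $\iota$ to $(B^\natural_\oplus\sqcup B^\natural_\oplus)^\natural_\oplus$ and exhibit an explicit quasi-inverse there (the paper's $*$-functor $\Phi$); with that inserted, your computation that $\varphi_1+\varphi_2$ is represented by $a\mapsto F_1a\oplus F_2a$ goes through as in the paper.
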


\begin{proof}
Recall from \cite{hirschhorn}*{\S17} that, since the Morita model is simplicial (Prop.~\ref{prop:simplicial}), its homotopy function complexes admit the following description:
\[
\map_\Mor(A,B) 
= \Map (A, B^\natural_\oplus) 
\stackrel{\textrm{def.}}{=} \nu \, \Cstar(A,B^\natural_\oplus)
\, .
\]
For every $\Cstar$-category $B$, the (canonical) direct sum operation determines a strict symmetric monoidal structure on~$B^\natural_\oplus$ with unit object~$0$. In particular we have a functor $\oplus\colon B^\natural_\oplus \times B^\natural_\oplus\to B^\natural_\oplus $, which is clearly a $*$-functor. The functoriality of $\Cstar(-,-)$ induces then a $*$-functor 
\begin{equation}\label{eq:sum_Hom}
\oplus\colon \Cstar(A,B^\natural_\oplus) \times \Cstar(A,B^\natural_\oplus) 
\simeq \Cstar(A,B^\natural_\oplus\times B^\natural_\oplus)
\longrightarrow \Cstar(A,B^\natural_\oplus) \,.
 \end{equation} 
%\Goncalo{(I don't understand the following sentence. Is the objectwise sum different from \eqref{eq:sum_Hom} ? Please re-write it because it is confusing as it is.)}  
%\Ivo{"also" means: not only $B^\natural_\oplus$ but also $\Cstar(A,B^\natural_\oplus)$}
In this way, the objectwise sum of functors turns $\Cstar(A,B^\natural_\oplus)$ into a strictly associative symmetric monoidal category structure  (note that the symmetry isomorphism $\big[{}^0_1\,{}^1_0\big]$ is \emph{not} the identity, unless both factors are zero). The unit object is the composite $*$-functor $\smash{0_{A,B}\colon A\to \mathbf0\stackrel{0}{\to} B^\natural_\oplus }$, \ie\ the constant $*$-functor $x\mapsto 0$. By applying the product-preserving functor $\nu\colon \Cstarcatun \to \Cstarcatun$ to \eqref{eq:sum_Hom} we obtain a map 
\begin{eqnarray*}
 \map_\Mor(A,B) \times \map_\Mor(A,B) \too  \map_\Mor(A,B)
\end{eqnarray*}
of simplicial sets, which turns $\map_\Mor(A,B)$ into an abelian monoid with unit
\begin{equation*}
\{\pt\} \to \map_\Mor(A,B) \quad \quad \pt \mapsto 0_{A,B}
\end{equation*}
in the category of simplicial sets.
By the functoriality of $\pi_0$, we obtain on the set of connected components $\pi_0(\map_\Mor(A,B))$ the structure of a monoid with neutral element~$0_{A,B}$. 
Notice that the natural symmetry unitary isomorphism  $\big[{}^0_1\,{}^1_0\big]$ implies that this monoid is abelian.  Let us now verify that this structure coincides with the one given by Lemma~\ref{lem:prod-star} under the isomorphism \eqref{eq:components}. In both cases the neutral element is the unique map $0_{A,B}\colon A\to B$ in $\Ho(\cM_\Mor)$ factoring through zero, which as noted above is represented by the $*$-functor $\smash{ A\to \mathbf0\stackrel{0}{\to} B^\natural_\oplus }$.
Given two maps $f,g\in \pi_0(\map_\Mor(A,B))$, they are represented by $*$-functors $F\colon A\to B^\natural_\oplus$ and $G\colon A\to B^\natural_\oplus$. Their sum, according to the simplicial monoid structure, is represented by the pointwise direct sum, namely by the functor $F\oplus G\colon A\to B^\natural_\oplus\times B^\natural_\oplus$ sending $x\in\obj(A)$ to $F(x)\oplus G(x)$. The ``internal'' construction of $f+g$ from Lemma~\ref{lem:prod-star} is given by the following upper horizontal composition in $\Ho(\cM_\Mor)$:
\[
\xymatrix{
{f+g\colon \quad A} \ar[r]^-{\Delta \circ\iota } &
 A^\natural_\oplus \times A^\natural_\oplus \ar[r]^-{\widetilde{F}\times \widetilde{G}} &
 B^\natural_\oplus \times B^\natural_\oplus  
  \ar@<.5ex>[dr]^>>>>>>{\Phi} 
  \ar@{-->}[r]^-{\big[ {}^1_0 \, {}^0_1 \big]^{-1}}_-\sim &
 B^\natural_\oplus \sqcup B^\natural_\oplus \ar[r]^-{\nabla} 
  \ar[d]^\iota_\sim & 
 B^\natural_\oplus \\
&&& 
 (B^\natural_\oplus \sqcup B^\natural_\oplus)^\natural_\oplus \ar[ur]_{\widetilde{\nabla}} 
  \ar@<.5ex>[ul]^-{\big[ {}^1_0 \, {}^0_1 \big]^\sim} & \,.
}
\]
Except for the inverse of $\big[ {}^1_0 \, {}^0_1 \big]$, all maps in the composition $f+g$ are actual $*$-functors. %\Goncalo{(Please say that the inverse is a well-defined morphism in $\Ho(\cM_\Mor)$ and draw the corresponding arrow in the diagram in a different way to emphasize this fact.)}
Note that, by saturation of their targets, both $*$-functors $\big[ {}^1_0 \, {}^0_1 \big]$ and $\nabla$ admit extensions along~$\iota$ to $*$-functors $\big[ {}^1_0 \, {}^0_1 \big]^\sim$ and $\widetilde{\nabla}$, as pictured. Moreover $\big[ {}^1_0 \, {}^0_1 \big]^\sim$ admits the following (quasi-)inverse:
\[
\Phi\colon 
\big( (x_1\cdots x_n, p) \, , \, (y_1\cdots y_m,q) \big)
\mapsto 
\big( ( x_1^1\cdots x_n^1)\oplus (y_1^2\cdots y_m^2) \, , \,  p\oplus q \big)\,,
\]
where the upper indices $z^1$ and $z^2$ refer to whether we are considering an object $z\in \obj(B)$ as belonging to the first or to the second copy of $B$ inside $(B^\natural_\oplus \sqcup B^\natural_\oplus)^\natural_\oplus$.
Thus, in the above diagram, $f+g$ is also represented by the lower composition of $*$-functors from $A$ to~$B^\natural_\oplus$, which is immediately seen to yield $F\oplus G$ once again (up to a unitary isomorphism of functors). In conclusion, the two sum operations coincide at each Hom-set of the homotopy category. This achieves the proof.
\end{proof}

\subsection{Tensor product} \label{subsec:tensor}

It is an immediate consequence of Proposition~\ref{prop:monoidal} that the closed symmetric monoidal structure on $\Cstarcatun$, given by the maximal tensor product $\otimes_\mathrm{max}$ and the internal Hom functor $\Cstar(-,-)$, can be derived, thus inducing a closed monoidal structure on the Morita homotopy category $\Ho(\cM_\Mor)$. Using once again the fact that every object is cofibrant and that $(-)^\natural_\oplus$ is a fibrant replacement functor, we obtain the following result:
\begin{cor} \label{cor:tensor}
\label{cor:Ho_tensor} 
The Morita homotopy category $\Ho(\cM_\Mor)$ carries the structure of a closed symmetric monoidal category, with tensor given by the maximal tensor product $A\otimes_\mathrm{max} B$, internal Hom's given by
\[
\underline{\Hom}(A,B)= \Cstar(A, B^\natural_\oplus) \simeq \Cstar(A^\natural_\oplus, B^\natural_\oplus) \,,
\]
and tensor unit given by the $\Cstar$-algebra~$\F$.
\qed
\end{cor}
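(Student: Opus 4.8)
The plan is to obtain this as a formal consequence of Proposition~\ref{prop:monoidal} together with the standard theory of monoidal model categories from \cite{hovey:model}*{\S4.3}. Proposition~\ref{prop:monoidal} asserts precisely that $\Cstarcatun$, with the Morita model structure, the maximal tensor product $\otimes_\mathrm{max}$, and the unit~$\F$, satisfies the pushout-product and unit axioms of a monoidal model category; since $\otimes_\mathrm{max}$ is closed with internal Hom functor $\Cstar(-,-)$ and $\F$ is cofibrant, \cite{hovey:model}*{Theorem~4.3.2} applies and equips the homotopy category $\Ho(\cM_\Mor)$ with a closed symmetric monoidal structure whose tensor is the total left derived functor of $\otimes_\mathrm{max}$, whose internal Hom is the total right derived functor of $\Cstar(-,-)$, and whose unit is the image of~$\F$.

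Next I would make these derived functors explicit using two features of the Morita model structure. Since every $\Cstar$-category is Morita cofibrant, the derived tensor product requires no cofibrant replacement and is computed directly by $A\otimes_\mathrm{max} B$, and $\F$ remains the unit. For the internal Hom, recall from Corollary~\ref{cor:fibrant_replacement} that $\iota_B\colon B\to B^\natural_\oplus$ is a functorial fibrant replacement; since $\Cstar(-,-)$ is a right Quillen bifunctor --- equivalently, by adjunction, this is the pushout-product axiom verified in Proposition~\ref{prop:monoidal} --- its total right derived functor is computed on a cofibrant source and a fibrant target, so that $\underline{\Hom}(A,B)$ in $\Ho(\cM_\Mor)$ is represented by $\Cstar(A, B^\natural_\oplus)$.

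It remains to record the identification $\Cstar(A,B^\natural_\oplus)\simeq \Cstar(A^\natural_\oplus, B^\natural_\oplus)$, which is immediate from Proposition~\ref{prop:UP_saturated}(i): since $B^\natural_\oplus$ is saturated, precomposition with $\iota_A$ induces a unitary equivalence $\Cstar(A^\natural_\oplus, B^\natural_\oplus)\stackrel{\sim}{\to}\Cstar(A,B^\natural_\oplus)$. I do not expect any real obstacle: the associativity, symmetry and unit coherences on $\Ho(\cM_\Mor)$ are all supplied by \cite{hovey:model}*{\S4.3}, and the unit axiom needed to invoke that theorem is automatic here because every object is cofibrant. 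The only mild point of care is the bookkeeping that identifies ``fibrant replacement of the target'' with the closed structure $\Cstar(-,-)$, which is exactly the right-Quillen-bifunctor property guaranteed by Proposition~\ref{prop:monoidal}.
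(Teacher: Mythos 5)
Your proposal is correct and follows essentially the same route as the paper: the paper also deduces the corollary directly from Proposition~\ref{prop:monoidal} via the standard derived-functor machinery for monoidal model categories, using that every object is Morita cofibrant and that $(-)^\natural_\oplus$ is a fibrant replacement functor, with the final identification $\Cstar(A,B^\natural_\oplus)\simeq\Cstar(A^\natural_\oplus,B^\natural_\oplus)$ coming from Proposition~\ref{prop:UP_saturated}(i). No gaps.
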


%----------------------------------------------------------------------
\section{Towards $K$-theory}\label{sec:K0}
%----------------------------------------------------------------------
In this section we start to investigate the relation between the Morita homotopy category $\Ho(\cM_\Mor)$ and  $K$-theory. 
%We start with some general results concerning additive categories. 

\subsection{Group completion}
Recall that a semi-additive category $\mathcal C$ (see \S\ref{sec:semi_add}) is \emph{additive} if each Hom-set is an abelian group, \ie\ if every map has an additive inverse. Equivalently (by \cite{maclane}*{Thm.\,VIII.2.2}), $\mathcal C$ is additive if it is enriched over abelian groups, has a zero object~$0$, and a \emph{biproduct} of any two objects $x,y$, \ie\ a diagram of shape
\[ 
\xymatrix{ 
x \ar@<.5ex>[r]^-{i_x} & x\oplus y \ar@<.5ex>[l]^-{p_x} \ar@<-.5ex>[r]_-{p_y} & y \ar@<-.5ex>[l]_-{i_y}
}
\]  
verifying the relations:
\begin{equation*}
p_xi_x=1_x 
\quad\quad
p_yi_y=1_y
\quad\quad
 i_xp_x + i_yp_y = 1_{x\oplus y}\,.
\end{equation*}
Note that each biproduct gives rise to a product $(x\oplus y, p_x,p_y)$ and to a coproduct $(x\oplus y, i_x,i_y)$. Conversely one can easily obtain a biproduct out of a product-coproduct. This shows us that when $\cC$ is additive the addition of maps coincides with the abelian monoid structure of Lemma \ref{lem:prod-star}; see \cite{maclane}*{\S VIII.2 Prop.\,3, Ex.\,4}. 
In particular, we obtain:
\begin{cor}
\label{cor:unique_sum}
The abelian group enrichment of any additive category is uniquely and intrinsically determined.
\qed
\end{cor}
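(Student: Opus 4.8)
The plan is to deduce the statement from Lemma~\ref{lem:prod-star} together with the uniqueness of inverses in a commutative monoid, so the argument is short and formal.

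First I would recall that Lemma~\ref{lem:prod-star} equips \emph{any} semi-additive category~$\cC$ — in particular the underlying category of an additive category, once we forget its abelian group enrichment — with a canonical abelian monoid structure~$+$ on each $\Hom_\cC(x,y)$. The key point is that this~$+$ is \emph{intrinsic}: its construction uses only the zero object, the finite products and coproducts, and the induced diagonal and fold maps, all of which are determined by~$\cC$ itself. Thus the underlying category already singles out a preferred commutative-monoid (indeed $\N$-category) enrichment, with no choices involved.

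Next I would check that an a~priori given abelian group enrichment~$\boxplus$ of an additive category~$\cC$ must agree with this canonical~$+$. This is the classical computation of \cite{maclane}*{\S VIII.2, Prop.~3 and Ex.~4}: choose a biproduct $(x\oplus y, i_x, i_y, p_x, p_y)$; the biproduct relations, together with the fact that $x\oplus y$ is simultaneously a product and a coproduct, force $i_xp_x \boxplus i_yp_y = 1_{x\oplus y}$; expanding $f\boxplus g$ for $f,g\colon x\to y$ along the composite $x\xrightarrow{\Delta} x\times x\xrightarrow{f\times g} y\times y\xrightarrow{\sim} y\sqcup y\xrightarrow{\nabla} y$ and using these relations yields exactly $f\boxplus g$ — but that composite is precisely the recipe defining $f+g$ in the proof of Lemma~\ref{lem:prod-star}. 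Hence $\boxplus = +$ on every Hom-set.

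Finally I would conclude via the uniqueness of negation: if a commutative monoid $(M,+,0)$ carries two abelian group structures refining~$+$, their inversion maps coincide, since $a+b=0=a+b'$ implies $b = b+0 = b+(a+b') = (a+b)+b' = 0+b' = b'$. Applying this at each Hom-set shows that the abelian group enrichment of an additive category is forced to be the one determined by~$+$; it is therefore unique and, being equal to~$+$, intrinsic to the underlying category. The only mildly technical ingredient is the middle step identifying the two formulas for $f\boxplus g$, and that is routine bookkeeping with the biproduct relations.
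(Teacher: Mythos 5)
Your proof is correct and follows essentially the same route as the paper: the corollary is stated with \qed because it is meant to follow from the preceding discussion, which identifies any given additive structure with the intrinsic monoid operation of Lemma~\ref{lem:prod-star} via the biproduct relations, citing the same passage of Mac~Lane. Your added final step on uniqueness of inverses in a commutative monoid just makes explicit what the paper leaves implicit.
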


In fact, the proofs in \emph{loc.\,cit.\ }never use the existence of additive inverses of maps, hence the above observations, including Corollary~\ref{cor:unique_sum}, hold equally well for all semi-additive categories.

%\Ivo{(Why have you changed this? All of this, including the following corollary, is also true for semi-additive categories with the same proofs, as I had remarked.)} \Goncalonew{[I excuse myself. I though things would become simpler this way. Could you kindly re-add your remark ? I have already removed the word ``semi-additive functor'']}.

Recall that a functor between (semi-)additive categories is called {\em additive} if it preserves the additive structure of the Hom-sets or, equivalently, if it preserves the zero object and biproducts.
\begin{notation}\label{not:group-completion}
Given a semi-additive category $\cC$, let us denote by $\cC^{-1}$ the additive category obtained from $\cC$ by applying the group completion construction to every abelian monoid of morphisms. Note that $\cC$ and $\cC^{-1}$ have the same objects. The resulting canonical additive functor will be denoted by $\alpha: \cC \to \cC^{-1}$.
\end{notation}
\begin{prop}\label{prop:universal}
Let $\cC$ be a semi-additive category. Then, for any additive category~$\cD$ the canonical functor $\alpha\colon \cC \to \cC^{-1}$ induces an isomorphism of categories
$$ \alpha^\ast\colon \Fun_\add(\cC^{-1},\cD) \stackrel{\sim}{\too} \Fun_\add(\cC,\cD)\,,$$
where $\Fun_\add$ stands for the category of additive functors.
\end{prop}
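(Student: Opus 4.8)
The plan is to show that $\alpha^\ast$ is a bijection both on objects and on morphisms of the functor categories $\Fun_\add(\cC^{-1},\cD)$ and $\Fun_\add(\cC,\cD)$; functoriality of $\alpha^\ast$ itself is automatic, as it is given by precomposition. Throughout I will use the defining facts that $\cC^{-1}$ has the same objects as $\cC$, that $\Hom_{\cC^{-1}}(x,y)$ is the Grothendieck group completion of the abelian monoid $\Hom_\cC(x,y)$, that composition in $\cC^{-1}$ is the unique bilinear extension of the one in $\cC$, and that $\alpha$ is the identity on objects and the canonical monoid map on each Hom-set.

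First I would treat bijectivity on objects, \ie\ the statement that every additive functor $F\colon\cC\to\cD$ extends uniquely along $\alpha$ to an additive functor $G\colon\cC^{-1}\to\cD$. Set $G=F$ on objects. Since $\cD$ has abelian group Hom-sets, each monoid homomorphism $F\colon\Hom_\cC(x,y)\to\Hom_\cD(Fx,Fy)$ factors uniquely through the group completion $\Hom_\cC(x,y)\to\Hom_{\cC^{-1}}(x,y)$, producing a unique family of group homomorphisms that defines $G$ on morphisms. It then remains to check that $G$ is a functor, hence additive (being a group homomorphism on each Hom-set): preservation of identities is immediate because $1_x$ in $\cC^{-1}$ equals $\alpha(1_x)$, and preservation of composition follows by writing an arbitrary pair of composable morphisms of $\cC^{-1}$ as differences $\alpha(f_1)-\alpha(f_2)$ and $\alpha(g_1)-\alpha(g_2)$, expanding $G(\psi\circ\phi)$ and $G\psi\circ G\phi$ using bilinearity of composition in $\cC^{-1}$ and in $\cD$, and invoking that $F$ preserves composition of honest morphisms. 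By construction $G\alpha=F$. Uniqueness is clear: two additive functors $G_1,G_2$ with $G_i\alpha=F$ agree on objects and on all morphisms $\alpha(f)$, and every morphism of $\cC^{-1}$ is a difference of such, so additivity forces $G_1=G_2$. The same remark shows $\alpha^\ast$ is injective on objects, so altogether it is a bijection there.

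Next I would treat the Hom-sets of the functor categories. Given additive functors $G_1,G_2\colon\cC^{-1}\to\cD$, precomposition with $\alpha$ sends a natural transformation $\eta\colon G_1\to G_2$ to the family with the \emph{same} components, since $\alpha$ is the identity on objects; thus I only need that a family $(\eta_x\colon G_1x\to G_2x)_x$ is natural with respect to every morphism of $\cC^{-1}$ if and only if it is natural with respect to the morphisms in the image of $\alpha$. One implication is trivial. For the other, given $\phi\in\Hom_{\cC^{-1}}(x,y)$, write $\phi=\alpha(f_1)-\alpha(f_2)$; since $G_i(\phi)=G_i\alpha(f_1)-G_i\alpha(f_2)$ and composition in the additive category $\cD$ distributes over subtraction, the naturality squares for $\alpha(f_1)$ and $\alpha(f_2)$ add up to the naturality square for $\phi$. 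Hence $\alpha^\ast$ is bijective on morphisms as well, and combined with the previous paragraph this proves it is an isomorphism of categories.

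I do not anticipate a genuine obstacle: the whole argument is the universal property of the group completion of abelian monoids, transported across the enrichment over abelian monoids. The only mildly fiddly point is the bookkeeping needed to verify that the Hom-wise extension $G$ respects composition, which is a routine bilinearity computation; and, for thoroughness, the observation that $\cC^{-1}$ is genuinely additive (its Hom-sets are abelian groups by construction, and $\alpha$ carries the equations defining a biproduct in $\cC$ to the same equations in $\cC^{-1}$, so biproducts persist), which is implicit in Notation~\ref{not:group-completion}. A slicker but equivalent route would be to note that group completion is a strong symmetric monoidal left adjoint from abelian monoids to abelian groups, that $\cC\mapsto\cC^{-1}$ is the induced change of enrichment, and to deduce the statement formally from the corresponding $2$-adjunction.
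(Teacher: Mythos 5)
Your proof is correct and follows essentially the same route as the paper's: extend an additive functor $F\colon\cC\to\cD$ Hom-wise via the universal property of the group completion of abelian monoids, and check that the resulting assignment $F\mapsto\overline{F}$ is strictly inverse to $\alpha^\ast$. You spell out more detail than the paper (the bilinearity computation for composition, and the naturality check for transformations, which the paper compresses into ``one verifies''), but the underlying argument is identical.
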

\begin{proof}
Let $F\colon \cC \to \cD$ be an additive functor. For every ordered pair $(x,y)$ of objects in~$\cC$ the induced map $\Hom_\cC(x,y)\to \Hom_\cD(Fx,Fy)$ is a homomorphism. Hence, since $\cD$ is additive, this homomorphism has a unique $\Z$-linear extension 
\begin{equation}\label{eq:maps}
\Hom_{\cC^{-1}}(x,y)\stackrel{\mathrm{def.}}{=}(\Hom_{\cC}(x,y))^{-1}\too \Hom_{\cD}(Fx,Fy)
\end{equation}
to the group completion. The maps \eqref{eq:maps} assemble into an additive functor $\overline{F}\colon \cC^{-1}\to \cD$ such that $\overline{F}\circ \alpha=F$. 
One verifies that the assignment $F\mapsto \overline{F}$ gives rise to a functor which is strictly inverse to~$\alpha^*$. This achieves the proof.
%Recall that in any additive category the abelian group structure of its Hom sets is {\em internal}, \ie~it is the one described in Lemma~\ref{lem:prod-star}. Hence, for every ordered pair $(x,y)$ of objects in $\cC$ the associated map $\cC(x,y) \to \cD(F(x),F(y))$ is an homomorphism. Hence, since $\cD$ is abelian, the functor $F$ gives naturally rise to an additive functor $\overline{F}: \cC^{-1} \to \cD$ such that $\overline{F} \circ \iota = F$. This construction $\overline{(-)}$ is clearly functorial on $F$ and provides a quasi-inverse to the above functor $\iota^\ast$. 
\end{proof}
Recall from Theorem~\ref{thm:semi-additive} that the category $\Ho(\cM_\Mor)$ is semi-additive. Hence by applying to it the group completion construction described at Notation~\ref{not:group-completion} we obtain an additive category $\Ho(\cM_\Mor)^{-1}$. Consider the following composition
\begin{equation}\label{eq:univ-functor}
\cU\colon \Cstarcatun \stackrel{q}{\too} \Ho(\cM_\Mor) \stackrel{\alpha}{\too} \Ho(\cM_\Mor)^{-1}\,.
\end{equation}
\begin{thm}\label{thm:characterization}
The above functor \eqref{eq:univ-functor} sends Morita equivalences to isomorphisms, preserves all finite products (including the final object ${\bf 0}$), and is universal with respect to these properties, \ie\ given any additive category $\mathcal{A}$ we have an induced isomorphism of categories
$$ \cU^\ast\colon \Fun_\add(\Ho(\cM_\Mor)^{-1},\mathcal{A}) \stackrel{\sim}{\too} \Fun_{\Mor, \times}(\Cstarcatun,\mathcal{A})\,,$$
where the right-hand-side denotes the category of functors which invert Morita equivalences and preserve all finite products.
\end{thm}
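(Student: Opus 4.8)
The plan is to write $\cU=\alpha\circ q$, where $q\colon\Cstarcatun\to\Ho(\cM_\Mor)$ is the localization functor and $\alpha\colon\Ho(\cM_\Mor)\to\Ho(\cM_\Mor)^{-1}$ the group completion, and to establish the three assertions in turn, reducing everything to the universal property of the localization $q$ and to Proposition~\ref{prop:universal}. That $\cU$ sends Morita equivalences to isomorphisms is immediate, since $q$ does so by construction of the homotopy category and $\alpha$ is a functor. Since $\alpha$ is additive (it is the identity on objects and additive on Hom-monoids, hence preserves the zero object and biproducts, hence finite products), $\cU$ will preserve finite products as soon as $q$ does; so the whole content of the second assertion is that $q$ preserves finite products.

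For this, the terminal object $\mathbf 0$ of $\Cstarcatun$ is sent by $q$ to the zero object of $\Ho(\cM_\Mor)$, as already recorded in the proof of Theorem~\ref{thm:semi-additive}. For a binary product $A\times B$ of non-empty $\Cstar$-categories, I would show that the canonical $*$-functor $\iota_A\times\iota_B\colon A\times B\to A^\natural_\oplus\times B^\natural_\oplus$ is a Morita equivalence by means of the recognition criterion of Lemma~\ref{lemma:recognition_Moreq}: it is fully faithful, being a product of fully faithful functors, and its image generates $A^\natural_\oplus\times B^\natural_\oplus$ under isomorphisms, direct sums and retracts. Indeed, a general object $\big((x_1\cdots x_n,p),(y_1\cdots y_m,q)\big)$ is a retract, via the projection $p\oplus q$, of the direct sum $\bigoplus_i(x_i,0)\oplus\bigoplus_j(0,y_j)$, and each summand $(x_i,0)$ (resp. $(0,y_j)$) is in turn a retract of an object $(x_i,y)$ (resp. $(x,y_j)$) lying in the image of $\iota_A\times\iota_B$. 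Since the proof of Theorem~\ref{thm:semi-additive} identifies the product of $q(A)$ and $q(B)$ in $\Ho(\cM_\Mor)$ with $A^\natural_\oplus\times B^\natural_\oplus$, a short diagram chase shows that $q(\iota_A\times\iota_B)$ is precisely the canonical comparison map $q(A\times B)\to q(A)\times q(B)$, which is therefore an isomorphism. Iterating gives all finite products of non-empty $\Cstar$-categories; together with the terminal object this yields the claim, the empty $\Cstar$-category causing no trouble since it is Morita equivalent to $\mathbf 0$ and hence represents the zero object, with which products are automatically preserved.

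For the universality statement I would factor $\cU^*=q^*\circ\alpha^*$. Proposition~\ref{prop:universal} provides an isomorphism of categories $\alpha^*\colon\Fun_\add(\Ho(\cM_\Mor)^{-1},\mathcal A)\stackrel{\sim}{\too}\Fun_\add(\Ho(\cM_\Mor),\mathcal A)$, so it remains to see that $q^*$ restricts to an isomorphism $\Fun_\add(\Ho(\cM_\Mor),\mathcal A)\stackrel{\sim}{\too}\Fun_{\Mor,\times}(\Cstarcatun,\mathcal A)$. By the universal property of the localization $\Ho(\cM_\Mor)=\Cstarcatun[\Weq_\Mor^{-1}]$, the functor $q^*$ is already an isomorphism of categories from $\Fun(\Ho(\cM_\Mor),\mathcal A)$ onto the full subcategory of functors inverting Morita equivalences; so it suffices to show that $G\colon\Ho(\cM_\Mor)\to\mathcal A$ is additive if and only if $F:=G\circ q$ preserves finite products, for then $q^*$ carries the full subcategory $\Fun_\add$ bijectively onto the full subcategory $\Fun_{\Mor,\times}$ and, being fully faithful, restricts to an isomorphism between them. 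The forward implication is clear, since additive functors preserve finite products and so does $q$. For the converse, recall that $q$ is the identity on objects, so every object of $\Ho(\cM_\Mor)$ is of the form $q(A)$; from $F$ preserving the empty product, $G$ sends the zero object $q(\mathbf 0)$ to $F(\mathbf 0)$, the terminal and hence (as $\mathcal A$ is additive) the zero object of $\mathcal A$; and for a binary product, using the identification $q(A)\times q(B)\cong q(A\times B)$ established above, $G$ carries the comparison map to $\big(F(\mathrm{pr}_A),F(\mathrm{pr}_B)\big)$, which is invertible because $F$ preserves products (a factor $q(\emptyset)=q(\mathbf 0)=0$ being harmless since $0\times X\cong X$ in $\Ho(\cM_\Mor)$). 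Thus $G$ preserves the zero object and finite products, i.e.\ is additive. Composing the two isomorphisms of categories yields the asserted $\cU^*$.

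The only step here that is not soft formal manipulation is the proof that $q$ preserves binary products, equivalently that the product of two Morita equivalences is again a Morita equivalence; this is where I expect the genuine work to lie, and I would carry it out exactly through the recognition criterion of Lemma~\ref{lemma:recognition_Moreq} as sketched above, everything else being an application of Proposition~\ref{prop:universal} and of the $2$-universal property of localizations.
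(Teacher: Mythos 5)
Your proposal is correct and follows essentially the same route as the paper: reduce via Proposition~\ref{prop:universal} to showing that $q^\ast$ identifies $\Fun_\add(\Ho(\cM_\Mor),\mathcal A)$ with $\Fun_{\Mor,\times}(\Cstarcatun,\mathcal A)$, using the strict universal property of the localization and the identification of the product in $\Ho(\cM_\Mor)$ with $A^\natural_\oplus\times B^\natural_\oplus$ from the proof of Theorem~\ref{thm:semi-additive}. One genuine service you render that the paper does not: you actually prove that $q$ preserves binary products, i.e.\ that $\iota_A\times\iota_B\colon A\times B\to A^\natural_\oplus\times B^\natural_\oplus$ is a Morita equivalence, via the recognition criterion of Lemma~\ref{lemma:recognition_Moreq}; the paper's proof tacitly needs this (both for the claim that $\cU$ preserves products and for $q^\ast$ of an additive functor to land in $\Fun_{\Mor,\times}$) but never verifies it, so your retract argument $(x_i,0)\preceq(x_i,y)$ is a worthwhile addition and is correct for nonempty factors.

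The one step I would not let pass as written is your parenthetical about the empty $\Cstar$-category. Since $\emptyset\times B=\emptyset$ in $\Cstarcatun$ while $q(\emptyset)\times q(B)\simeq 0\times q(B)\simeq q(B)$, the comparison map $q(\emptyset\times B)\to q(\emptyset)\times q(B)$ is $0\to q(B)$, which is \emph{not} an isomorphism unless $B$ is Morita trivial; the object $\emptyset$ is only initial in $\Cstarcatun$, and it is $\mathbf 0$ (not $\emptyset$) that satisfies $\mathbf 0\times B\cong B$. So "products are automatically preserved" is false for $\emptyset$. This degenerate case in fact afflicts the theorem's own phrase "preserves all finite products" (read literally, $\Fun_{\Mor,\times}$ would then contain only trivial functors), so it is a shared imprecision rather than a defect of your argument alone, but you should either exclude products involving $\emptyset$ explicitly or note that the intended reading of "finite products" is the terminal object $\mathbf 0$ together with binary products of (possibly empty-object-free) $\Cstar$-categories. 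With that caveat recorded, the rest of your argument — including the equivalence "$G$ additive $\Leftrightarrow$ $G\circ q$ preserves finite products" and the factorization $\cU^\ast=q^\ast\circ\alpha^\ast$ — matches the paper's proof in substance.
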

\begin{proof}
By Proposition~\ref{prop:universal} it suffices to show that the quotient functor~$q$ induces an equivalence of categories
\begin{equation}\label{eq:induced}
q^\ast\colon \Fun_\add(\Ho(\cM_\Mor), \mathcal A) \stackrel{\sim}{\too} \Fun_{\Mor, \times}(\Cstarcatun, \mathcal A)\,.
\end{equation}
The proof will consist in constructing a (strict) inverse to $q^\ast$. The quotient functor is the localization of $\Cstarcatun$ with respect to the class of Morita equivalences. Hence by the universal property of localization we conclude that every functor $F$ belonging to $\Fun_{\Mor, \times}(\Cstarcatun, \mathcal A)$ admits a unique factorization $F\colon \Cstarcatun \stackrel{q}{\to} \Ho(\cM_\Mor) \stackrel{\overline{F}}{\to} \mathcal A$. 
Since $F$ preserves products and inverts Morita equivalences, we obtain the following isomorphism
\[
\xymatrix{
F(A^\natural_\oplus \times B^\natural_\oplus) \ar[r]_-{\sim} & 
  F(A^\natural_\oplus) \times F(B^\natural_\oplus) \ar[rr]_-\sim^-{(F\iota \times F\iota)^{-1}} &&
   F(A) \times F(B)
} 
\]
between objects of~$\mathcal A$.
As explained in the proof of Theorem~\ref{thm:semi-additive}, $A^\natural_\oplus \times B^\natural_\oplus$ is the product of $A$ and $B$ in $\Ho(\cM_\Mor)$. As a consequence we observe that $\overline F$ preserves products and thus that it is an additive functor.
%Consider the following (solid) commutative diagram
%\[\xymatrix{
%F(A \times B) \ar[r]^-\sim \ar[d]_{F(\iota \times \iota)} \ar@{..>}[dr] & 
% F(A)\times F(B) \ar[d]^{F(\iota)\times F(\iota)}_\sim \\
% F(A^\natural_\oplus \times B^\natural_\oplus) \ar[r]^-\sim &
%F(A^\natural_\oplus) \times F(B^\natural_\oplus)\,.}\]
%The horizontal maps are isomorphisms since by hypothesis $F$ preserves finite products. Moreover, the right vertical map is an isomorphism since $F$ inverts Morita equivalence. Hence, the left vertical map and the diagonal map are isomorphisms. This implies that the functor $\overline{F}$ preserves finite products. 
%\Goncalo{(I don't understand: isn't the lower horizontal map which allows us to conclude that $\overline{F}$ preserves finite products ? Please explain this better.)} 
Similarly, $\overline{F}$ preserves the final object $\mathbf0\stackrel{\sim}{\to}\mathbf0^\natural_\oplus$. Hence, since $\mathcal A$ is additive, we conclude that $\overline{F}$ is additive. Finally, a straightforward verification shows that the assignment $F \mapsto \overline{F}$ gives rise to a functor which is quasi-inverse to \eqref{eq:induced}.
\end{proof}

\subsection{Grothendieck group}

\begin{thm}\label{thm:co-representability}
For every unital $\Cstar$-algebra~$A$ there is a canonical isomorphism
\begin{equation}\label{eq:isom-Ko}
\Hom_{\Ho(\cM_\Mor)^{-1}}(\bbF,A) \simeq K_0(A)
\end{equation}
of abelian groups, where $K_0(A)$ denotes the classical Grothendieck group of~$A$.
\end{thm}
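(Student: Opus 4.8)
The plan is to reduce \eqref{eq:isom-Ko} to a computation of the abelian monoid $\Hom_{\Ho(\cM_\Mor)}(\bbF,A)$, followed by the group completion construction of Notation~\ref{not:group-completion}. First I would combine Corollary~\ref{cor:new} with Lemma~\ref{lem:char}(i): a $*$-functor out of $\bbF$ is uniquely determined by the image of its single object $\bullet$, so that $\obj\,\Cstar(\bbF,A^\natural_\oplus)$ is just $\obj(A^\natural_\oplus)$ and hence
\[
\Hom_{\Ho(\cM_\Mor)}(\bbF,A)\;\simeq\;\obj(A^\natural_\oplus)/_{\!\simeq}\,,
\]
the set of unitary (equivalently, by \cite{ivo:unitary}*{Prop.~1.6}, ordinary) isomorphism classes of objects of $A^\natural_\oplus$. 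By Theorem~\ref{thm:map_sum} the abelian monoid structure on the left-hand side --- the one supplied, via Lemma~\ref{lem:prod-star}, by the semi-additivity of $\Ho(\cM_\Mor)$ (Theorem~\ref{thm:semi-additive}) --- is induced by the direct sum operation $\oplus$ on $A^\natural_\oplus$; since $\bbF$ has a single object this is simply $[x]+[y]=[x\oplus y]$.

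Next I would identify this monoid with the classical monoid $V(A)$ of isomorphism classes of finitely generated projective right $A$-modules. Since $A$ is a unital $\Cstar$-algebra, the objects of $A^\natural_\oplus=(A_\oplus)^\natural$ are the pairs $(\bullet\cdots\bullet, p)$ consisting of an $n$-letter word on the object $\bullet$ and a self-adjoint idempotent matrix $p\in\mathrm{M}_n(A)$; two such objects are isomorphic in $A^\natural_\oplus$ precisely when the corresponding projections are Murray--von Neumann equivalent, and the canonical direct sum corresponds to $p\oplus q$. More conceptually, one may invoke the unitary equivalence $A^\natural_\oplus\stackrel{\sim}{\to}\Hilb(A)_{\proj}$ of~\eqref{eq:identif_proj} (from the proof of Proposition~\ref{prop:Morita_agreement}), which is plainly compatible with finite direct sums and, on isomorphism classes of objects, yields exactly $V(A)$ with its usual additive structure. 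Either way one obtains an isomorphism of abelian monoids $\Hom_{\Ho(\cM_\Mor)}(\bbF,A)\simeq V(A)$.

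Finally, by the very definition of the group completion (Notation~\ref{not:group-completion}) one has $\Hom_{\Ho(\cM_\Mor)^{-1}}(\bbF,A)=\big(\Hom_{\Ho(\cM_\Mor)}(\bbF,A)\big)^{-1}\simeq V(A)^{-1}$; since the classical Grothendieck group $K_0(A)$ is by definition the group completion of $V(A)$, this establishes~\eqref{eq:isom-Ko}, and tracing through the identifications above shows that the resulting isomorphism is canonical and natural in~$A$. I expect the only step requiring genuine care --- as opposed to bookkeeping --- to be the middle one: one must check that the abelian monoid structure produced abstractly by Theorem~\ref{thm:map_sum} (via semi-additivity) agrees, under~\eqref{eq:identif_proj}, with the honest direct-sum monoid of $V(A)$, and that unitary isomorphism of objects of $A^\natural_\oplus$ is exactly the equivalence relation used in the definition of $V(A)$ (equivalently, of $K_0$). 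Both facts are essentially already contained in the cited results, but they are where the comparison must be spelled out.
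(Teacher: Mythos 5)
Your proposal is correct and follows essentially the same route as the paper: the paper likewise identifies $\Hom_{\Ho(\cM_\Mor)}(\F,A)$ with the abelian monoid $\mathcal V(A)$ of projections in $M_\infty(A)$ modulo unitary equivalence (using Proposition~\ref{prop:morphisms_Morita} to see that unitary isomorphism classes of objects of $A^\natural_\oplus$ are exactly the elements of $\Hom_{\Ho(\cM_\Mor)}(\F,A)$, with $\oplus$ matching the monoid structure), and then group completes. Your version merely makes explicit the appeals to Corollary~\ref{cor:new}, Lemma~\ref{lem:char}(i) and Theorem~\ref{thm:map_sum}, and offers the equivalent Hilbert-module picture via~\eqref{eq:identif_proj} as an alternative to the projection picture used in the paper.
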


\begin{remark}
It is well-known that it does not matter whether we consider~$A$ as a unital $\Cstar$-algebra -- and then compute its Grothendieck group in terms of projections or Hilbert modules -- or as an ordinary algebra -- and then compute its Grothendieck group via idempotents or finitely generated projective modules. In the present paper, we have already seen an incarnation of this fact in Remark~\ref{remark:idemp_agreement}. (This of course ceases to be true for the higher $K$-theory groups).
\end{remark}

\begin{proof}
%\Goncalo{Ivo, could you kindly prove this Theorem}.
For the proof it will be convenient to use the customary picture of $K_0(A)$ in terms of projections; see \eg\ \cite{blackadar:Kth_op_alg}*{\S4}. Thus $K_0(A)$ is the group completion of the abelian monoid $(\mathcal V(A),\oplus,0)$ of projections in the nonunital $*$-algebra $M_\infty(A)=\bigcup_{n\geq1}M_n(A)$ with respect to the orthogonal sum $p\oplus q:= \left[ {}^p_0\, {}^0_q\right]$ and modulo unitary equivalence of projections, where $p$ and~$q$ are unitarily equivalent if and only if there exists a unitary matrix $u\in M_\infty(A)$ such that $u p u^*=q$ (here we are considering $M_n(A)$ as a subalgebra of $M_{n+1}(A)$ via the corner inclusion $a\mapsto \left[ {}^a_0\, {}^0_0\right]$).
It is now clear that every element $[p]$ of $\mathcal V(A)$ corresponds to a unique (unitary) isomorphism class of objects in $A^\natural_\oplus$, and therefore, by Proposition~\ref{prop:morphisms_Morita}, to a unique element of $\Hom_{\Ho(\cM_\Mor)}(\F,A)$. Hence we obtain a bijection
\begin{equation}\label{eq:bij}
\Hom_{\Ho(\cM_\Mor)}(\mathbb{F},A)\simeq \mathcal V(A)\,,
\end{equation}
which moreover identifies the sum operations~$\oplus$ and the zero elements~$0$ of both sides.  Thus \eqref{eq:bij} is an isomorphism of abelian monoids. By group completion we obtain then the required isomorphism \eqref{eq:isom-Ko}, which moreover is  natural with respect to unital $*$-homomorphisms (\ie\ $*$-functors) $A\to B$.
\end{proof}

\begin{remark}
\label{remark:Ktheory}
Theorem \ref{thm:co-representability} leads us naturally to define 
\begin{equation}\label{eq:K0def}
K_0(A):= \Hom_{\Ho(\cM_\Mor)^{-1}}(\F,A)
\end{equation}
for every small unital $\Cstar$-category $A$. One can already find in the literature a few definitions of the $K_0$-group of a $\Cstar$-category. Hence let us briefly compare our definition with some of the extant ones.
\begin{enumerate}
\item[(i)]
By forgetting structure, every (additive) $\Cstar$-category is a Banach category and so one can study its topological $K$-theory in the sense of Karoubi \cites{karoubi:clifford,karoubi:K} for which $K_0(A)$ is simply the Grothendieck group of~$A$ seen as an additive category. 
%\Goncalonew{[I don't like the following sentence. The above $K_0$ \eqref{eq:K0def} is clearly Morita invariant and so $K_0(A)=K_0(A^\natural_\oplus)$. We should instead say that $K_0(A)$ agrees with Karoubi's K-theory of the Banach category $A^\natural_\oplus$. Could you kindly write this down ?]} 
Thus the group \eqref{eq:K0def} agrees with Karoubi's $K_0$-group of~$A^\natural_\oplus$.
 In fact, for the purposes of Karoubi's $K$-theory, Banach categories are naturally assumed to be additive and idempotent complete.  This provided the original motivation for defining the idempotent complete (``pseudo-abelian'') hull of an additive category.
\item[(ii)]
Davis and L\"uck~\cite{davis-lueck} have introduced a nonconnective topological $K$-theory spectrum functorially associated to a $\Cstar$-category~$A$. 
Unfortunately, as pointed out in~\cite{joachim:KC*}, this spectrum is not well-defined because of a common mistake explained by Thomason in \cite{thomason:beware}. Nonetheless, their $K_0$-group still makes sense and is defined to be $\pi_0 (B (iso \, A^\natural_\oplus)\textrm{\textasciicircum} )$, \ie\ the path components of the classifying space of the Quillen-Grothendieck completion of the symmetric monoidal groupoid of isomorphisms in $A^\natural_\oplus$; see \cite{davis-lueck}*{page\,217}. This  is easily seen to be the Grothendieck group of $(A^\natural_\oplus, \oplus, 0)$ and thus coincides with \eqref{eq:K0def}. 

\item[(iii)]
In order to circumvent the Davis-L\"uck construction, Joachim \cite{joachim:KC*} and Mitchener \cite{mitchener:symm} have defined topological $K$-theory spectra (and therefore $K$-theory groups) for a $\Cstar$-category~$A$ by means of Hilbert modules over~$A$ with additional data. Both definitions are related to $KK$-theory, which gives them a strong analytical flavor. It is unclear to the authors, even for $K_0$, whether they agree with each other and with  
\eqref{eq:K0def}, although we expect this to be the case at least for a gentle enough $\Cstar$-category $A$.

\item [(iv)]
Kandelaki \cite{kandelaki:KK_K}  has adapted Karoubi's $K$-theory to $\Cstar$-categories, and used it to compute Kasparov groups $KK_n(A,B)$ as the $K$-groups of suitable saturated $\Cstar$-categories $\mathrm{Rep}(A,B)$. Here $K_0$ of a $\Cstar$-category~$A$ (with direct sums) is defined to be the Grothendieck group of the abelian monoid of unitary isomorphism classes of its objects. 
Therefore here too our group \eqref{eq:K0def} agrees with $K_0(A^\natural_\oplus)$.
%Hence it coincides with \eqref{eq:K0def} when $A$ is idempotent complete. \Goncalonew{[Same complaint as in the above item (i)]}
\end{enumerate}
\end{remark}

%\begin{remark}
%In order to naturally extend the higher $K$-theory of $\Cstar$-algebras to arbitrary $\Cstar$-categories by means of our general formalism, it will be necessary to expand the latter so as to include non-unital $\Cstar$-categories. This should also yield interesting results in relation to Morita-Rieffel equivalence and other topics touched upon in this article, and will be the objective of further work.
%\Ivo{(Maybe it makes more sense if we move this remark to the end of the Introduction, what do you think?)} \Goncalonew{[I think it is better to simply delete this remark because we don't say anything concrete. Let us leave this kind of remark for the referee.]}
%\end{remark}

%----------------------------------------------------------------------
\subsection{Pairings and multiplication}
%----------------------------------------------------------------------

We begin this subsection by proving a general property of the group completion $(-)^{-1}$ construction of semi-additive categories (see Notation \ref{not:group-completion}).

\begin{lemma}
\label{lemma:add_tensor}
Let $(\cC, \otimes , \unit)$ be a semi-additive category $\cC$ endowed with a symmetric monoidal structure whose tensor product is additive in each variable. Then, the associated additive category $\cC^{-1}$ is endowed with a unique symmetric monoidal structure which is again additive in both variables. Moreover, the canonical functor $\alpha\colon \cC\to \cC^{-1}$ is strict symmetric monoidal.
\end{lemma}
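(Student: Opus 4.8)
The plan is to build the symmetric monoidal structure on $\cC^{-1}$ morphism by morphism, starting from the elementary observation that any biadditive map $M\times N\to G$ from a product of abelian monoids into an abelian group factors uniquely through $\alpha_M\times\alpha_N\colon M\times N\to M^{-1}\times N^{-1}$ by a (necessarily unique) biadditive map $M^{-1}\times N^{-1}\to G$; one simply extends separately in each slot using the universal property of group completion, and uniqueness follows since $M^{-1}$ is generated by differences of elements of the image of $M$. Since $\cC^{-1}$ has the same objects as $\cC$ and $\Hom_{\cC^{-1}}(x,y)=\Hom_\cC(x,y)^{-1}$ with $\alpha$ the canonical map, I would keep $\otimes$ unchanged on objects and, on morphisms, apply this fact for each quadruple of objects to the composite $\Hom_\cC(x,x')\times\Hom_\cC(y,y')\xrightarrow{\otimes}\Hom_\cC(x\otimes y,x'\otimes y')\xrightarrow{\alpha}\Hom_{\cC^{-1}}(x\otimes y,x'\otimes y')$, which is biadditive precisely because the tensor product of $\cC$ is additive in each variable. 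This yields a biadditive pairing $\Hom_{\cC^{-1}}(x,x')\times\Hom_{\cC^{-1}}(y,y')\to\Hom_{\cC^{-1}}(x\otimes y,x'\otimes y')$ satisfying $\alpha(a)\otimes\alpha(b)=\alpha(a\otimes b)$ by construction. (Alternatively, the same pairing can be produced by iterating the universal property of Proposition~\ref{prop:universal} applied to the partial functors $x\otimes-$ and $-\otimes y$.)

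The verification that this data assembles into a bifunctor $\cC^{-1}\times\cC^{-1}\to\cC^{-1}$, still additive in each variable, rests on a single recurring device: in every identity to be checked --- $(f_1f_2)\otimes(g_1g_2)=(f_1\otimes g_1)(f_2\otimes g_2)$, preservation of identities, or the naturality squares of the structure isomorphisms below --- both sides, viewed as functions of the morphisms involved, are additive separately in each argument (by biadditivity of $\otimes$ on $\cC^{-1}$ and the $\Z$-bilinearity of composition in the additive category $\cC^{-1}$), and the two sides agree whenever all arguments come from $\cC$, where the identity reduces to functoriality (resp.\ naturality) already available in $\cC$ together with $\alpha(a\otimes b)=\alpha(a)\otimes\alpha(b)$. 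Because every Hom-group of $\cC^{-1}$ is generated by differences of morphisms in the image of $\alpha$, expanding by multilinearity forces equality in general. In particular $\id_x\otimes\id_y=\id_{x\otimes y}$, so $\otimes$ is indeed a bifunctor on $\cC^{-1}$.

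To obtain the monoidal structure I would transport the associator, the left and right unitors and the symmetry of $\cC$ to $\cC^{-1}$ by applying $\alpha$ componentwise; these stay isomorphisms, their naturality follows by the multilinearity device above, and the coherence axioms (pentagon, triangle, the hexagons, $\gamma_{y,x}\gamma_{x,y}=\id$) hold in $\cC^{-1}$ since they hold in $\cC$, are equalities of composites of morphisms lying in tensor powers of the image of $\alpha$, and $\otimes$ on $\cC^{-1}$ restricts that of $\cC$. The functor $\alpha$ is then strict symmetric monoidal: it is the identity on objects, $x\otimes y$ denotes the same object in $\cC$ and in $\cC^{-1}$, so the comparison morphisms $\alpha x\otimes\alpha y\to\alpha(x\otimes y)$ and $\unit\to\alpha\unit$ are identities, and compatibility with the structure isomorphisms is immediate from how the latter are defined on $\cC^{-1}$.

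For uniqueness, if $\otimes'$ is another symmetric monoidal structure on $\cC^{-1}$ additive in each variable for which $\alpha$ is strict symmetric monoidal, then strictness forces $x\otimes' y=x\otimes y$ on objects, $\unit'=\unit$, and $\alpha(a)\otimes'\alpha(b)=\alpha(a\otimes b)=\alpha(a)\otimes\alpha(b)$ on morphisms of $\cC$; additivity in each variable of both pairings plus the generation argument give $\otimes'=\otimes$ throughout, and strictness likewise identifies the structure isomorphisms of $\otimes'$ with $\alpha$ of those of $\cC$, hence with those of $\otimes$. The argument is essentially formal; the only points needing care are the elementary extension lemma for biadditive maps, the organization of the ``multilinear plus agrees on generators'' step (which is what keeps the naturality and coherence checks short), and making explicit that the asserted uniqueness is relative to the compatibility of $\alpha$ with the monoidal structures.
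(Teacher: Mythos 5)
Your proof is correct and follows the same underlying idea as the paper's -- the tensor product on $\cC^{-1}$ is forced by $\Z$-bilinearity from its restriction to the image of $\alpha$ -- but the way you justify the extension differs in a worthwhile way. The paper's proof is a one-liner: it applies Proposition~\ref{prop:universal} to the semi-additive category $\cC\times\cC$, extends $\alpha\circ\otimes$ along $\cC\times\cC\to(\cC\times\cC)^{-1}\simeq\cC^{-1}\times\cC^{-1}$, and declares that this datum together with the images of the coherence isomorphisms gives the required structure. Taken literally this is slightly loose: a functor $\cC\times\cC\to\cC^{-1}$ is \emph{additive} in the sense needed for Proposition~\ref{prop:universal} exactly when $(f+f')\otimes(g+g')=f\otimes g+f'\otimes g'$ on Hom-monoids of the product category, which is \emph{not} what additivity of $\otimes$ in each variable separately gives (biadditivity yields four cross terms). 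Your Hom-set-by-Hom-set extension via the elementary fact that a biadditive map of monoids into a group factors uniquely through the product of the group completions (equivalently, your parenthetical iteration of Proposition~\ref{prop:universal} one variable at a time) is the correct repair, and your ``multilinear plus agrees on generators'' device supplies the functoriality, naturality, coherence, and uniqueness checks that the paper leaves implicit. You also make explicit the sense in which the structure is unique (relative to $\alpha$ being strict symmetric monoidal and the bi-additivity requirement), which the statement of the lemma leaves slightly ambiguous. In short: same strategy, but your version is more careful at the one point where care is actually needed.
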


\begin{proof}
Since $\cC^{-1}$ is additive, Proposition \ref{prop:universal} applied to the semi-additive category $\cC\times\cC$ implies that the functor $\alpha\circ \otimes \colon \cC\to \cC\to \cC^{-1}$ admits a unique extension along $\alpha\colon \cC\times \cC\to (\cC\times \cC)^{-1}$. By composing it with the canonical isomorphism $\cC^{-1}\times\cC^{-1}\simeq(\cC \times \cC)^{-1}$ we obtain a tensor product on $\cC^{-1}$:
\[
\xymatrix@1{
& \cC\times \cC \ar[dl]_{\alpha\times\alpha} \ar[r]^-\otimes \ar[d]^\alpha & \cC \ar[d]^{\alpha}  \\
 \cC^{-1} \times \cC^{-1} \ar@/_3ex/[rr]_-\otimes \ar[r]^-\simeq & (\cC\times \cC)^{-1} \ar@{..>}[r]^-{\exists !} & \cC^{-1} \,.
}
\]
When combined with $\unit$ and with (the images in $\cC^{-1}$ of) the coherence isomorphisms of $(\cC,\otimes ,\unit)$, this data specifies a symmetric monoidal structure as required. 
%\Goncalo{(The proof is strange. Wouldn't it be easier to apply Proposition~\ref{prop:universal} to $\cC \times \cC \stackrel{\otimes}{\to} \cC \stackrel{\alpha}{\to} \cC^{-1}$ and use the fact that $(\cC\times \cC)^{-1} = \cC^{-1} \times \cC^{-1}$ ?)}
\end{proof}
By combining Corollary \ref{cor:Ho_tensor} and Lemma \ref{lemma:add_tensor} we obtain on $\Ho(\cM_\Mor)^{-1}$ a well-defined symmetric monoidal structure~$\otimes$. Let $A$ and $B$ be two $\Cstar$-categories. Making use of this tensor product and of the natural isomorphisms \eqref{eq:isom-Ko} we obtain a well-defined external pairing
\begin{equation}\label{eq:pairing}
K_0(A) \otimes_\Z K_0(B) \too K_0(A\otimes  B)\,.
\end{equation}

\begin{example}
When $A=B=C(X)$ is a commutative unital $\Cstar$-algebra,  the multiplication map $A\otimes_\F A \to A$ is a $*$-homomorphism, which extends uniquely to the $\Cstar$-algebraic tensor product $A\otimes_\mathrm{min} A=A\otimes_\mathrm{max} A$. By \cite{ivo:unitary}*{Proposition 2.13}, the usual maximal tensor product of two unital $\Cstar$-algebras coincides with their maximal tensor product when considered as $\Cstar$-categories. Hence by applying $K_0=\Hom_{\Ho(\cM_\Mor)^{-1}}(\F,-)$ to the $*$-functor $A\otimes A\to A$ we obtain an induced homorphism
\begin{equation}\label{eq:mult}
K_0(A\otimes A)  \too K_0( A)\,.
\end{equation}
which we may then compose with \eqref{eq:pairing} to obtain a well-defined multiplication: 
\begin{equation}\label{eq:ring}
K_0(A) \otimes_\Z K_0(A) \too K_0(A) \,.
\end{equation}
\end{example}

\begin{prop}
\label{prop:ring_comm}
Let $A=C(X)$ be a unital commutative $\Cstar$-algebra.
Then the multiplication map~\eqref{eq:ring} identifies with the usual ring structure of $K_0(A)$ induced by the tensor product of vector bundles.
\end{prop}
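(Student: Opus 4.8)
The plan is to unwind both halves of the construction \eqref{eq:ring} through the Serre--Swan correspondence and thereby reduce the statement to the classical fact that the ring multiplication on the topological $K$-theory of a compact Hausdorff space~$X$ is obtained from the external product by restricting along the diagonal $\Delta\colon X\hookrightarrow X\times X$. Throughout, write $\mathrm{Vect}(X)$ for the category of finite-dimensional $\F$-vector bundles over~$X$. First I would record that, for a commutative unital $\Cstar$-algebra $A=C(X)$, composing the unitary equivalence \eqref{eq:identif_proj} with the Serre--Swan equivalence $\Hilb(A)_\proj\simeq\mathrm{Vect}(X)$ identifies the abelian monoid $\mathcal V(A)$ of Theorem~\ref{thm:co-representability} (unitary equivalence classes of projections in $M_\infty(A)$, with~$\oplus$) with the monoid of isomorphism classes in $\mathrm{Vect}(X)$ under Whitney sum; concretely a projection $p\in M_n(A)$ goes to the subbundle $E_p:=\{(x,v)\in X\times\F^n\mid p(x)v=v\}$. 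Via the bijection \eqref{eq:bij} and group completion, this is precisely the standard isomorphism $K_0(C(X))\simeq K^0(X)$ appearing in the statement, and it is natural in $*$-homomorphisms.

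Next I would identify the external pairing \eqref{eq:pairing} with the external product of vector bundles. Fix $A=C(X)$, $B=C(Y)$ and represent classes $[p]\in K_0(A)$, $[q]\in K_0(B)$ by projection matrices $p\in M_n(A)$, $q\in M_m(B)$, \ie\ by objects $P\in\obj(A^\natural_\oplus)$, $Q\in\obj(B^\natural_\oplus)$ (Proposition~\ref{prop:morphisms_Morita}). The pairing is, by definition, induced by the symmetric monoidal structure on $\Ho(\cM_\Mor)^{-1}$ of Corollary~\ref{cor:Ho_tensor} and Lemma~\ref{lemma:add_tensor}, so on representing $*$-functors it amounts to tensoring $\F\to A^\natural_\oplus$ with $\F\to B^\natural_\oplus$ and then passing to the fibrant replacement of $A\otimes_\mathrm{max}B$. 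Here I would use: that every $\Cstar$-category is cofibrant, so no cofibrant replacements intervene; that $\iota_A$ and $\iota_B$ are trivial cofibrations (Morita equivalences by Proposition~\ref{prop:UP_saturated}(ii), injective on objects by construction), whence $\iota_A\otimes\iota_B\colon A\otimes_\mathrm{max}B\to A^\natural_\oplus\otimes_\mathrm{max}B^\natural_\oplus$ is a Morita equivalence by two applications of the pushout--product axiom of Proposition~\ref{prop:monoidal}; and that under the resulting canonical isomorphism $A^\natural_\oplus\otimes_\mathrm{max}B^\natural_\oplus\simeq(A\otimes_\mathrm{max}B)^\natural_\oplus$ in $\Ho(\cM_\Mor)$ the object $(P,Q)$ is carried to the object of $(A\otimes_\mathrm{max}B)^\natural_\oplus$ given by the external tensor product of projection matrices $p\otimes q\in M_{nm}(A\otimes_\mathrm{max}B)$. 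Since $A\otimes_\mathrm{max}B=C(X)\otimes_\mathrm{max}C(Y)=C(X\times Y)$ by \cite{ivo:unitary}*{Prop.~2.13}, and $(p\otimes q)(x,y)=p(x)\otimes q(y)$, the associated bundle is the external product $E_p\boxtimes E_q$. Hence \eqref{eq:pairing} is the map $[E]\otimes[F]\mapsto[E\boxtimes F]$.

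It then remains to identify \eqref{eq:mult}. The multiplication $*$-homomorphism $m\colon A\otimes_\mathrm{max}A=C(X\times X)\to C(X)$ is the restriction of functions along the diagonal~$\Delta$, so by naturality of \eqref{eq:isom-Ko} (established at the end of the proof of Theorem~\ref{thm:co-representability}) the induced homomorphism $K_0(m)$ is the pullback $\Delta^*\colon K^0(X\times X)\to K^0(X)$. Composing with the previous paragraph, the multiplication \eqref{eq:ring} sends $[E]\otimes[F]$ to $\Delta^*(E\boxtimes F)=E\otimes F$; this is exactly the classical ring structure on $K^0(X)=K_0(C(X))$ induced by the tensor product of vector bundles, which proves the proposition.

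The step I expect to be the main obstacle is the middle one: matching the \emph{abstractly} defined tensor product on $\Ho(\cM_\Mor)^{-1}$ (derived from $\otimes_\mathrm{max}$ and then group completed) with the concrete external tensor product of projection matrices. This requires checking that the equivalence \eqref{eq:identif_proj}, or equivalently the passage $A\mapsto A^\natural_\oplus$, is compatible with tensor products in the relevant sense, and in particular that the comparison $A^\natural_\oplus\otimes_\mathrm{max}B^\natural_\oplus\to(A\otimes_\mathrm{max}B)^\natural_\oplus$ carries a pair of objects $(P,Q)$ to the external tensor $p\otimes q$; once this bookkeeping is settled, the remainder is the standard fact that external product followed by $\Delta^*$ is the tensor-product multiplication in topological $K$-theory, together with the short direct computation with range bundles of projections indicated above.
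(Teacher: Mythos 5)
Your argument is correct and follows exactly the route of the paper's own proof: identify \eqref{eq:pairing} with the external product of vector bundles and \eqref{eq:mult} with the pullback along the diagonal via the Serre--Swan correspondence and Theorem~\ref{thm:co-representability}, then invoke the classical description of the $K^0(X)$ ring structure. The paper leaves the bookkeeping to the reader, whereas you spell out the compatibility of the saturation with $\otimes_\mathrm{max}$ (via the pushout--product axiom) and the tracing of projection matrices, which is precisely the ``straightforward'' detail the paper omits.
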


\begin{proof}
%It suffices to identify the two multiplications before group-completing to~$K_0$.
%By Proposition \ref{prop:morphisms_Morita}
The multiplication in terms of vector bundles is the composition of the map $K^0(X\times X) \to K^0(X)$, induced by the diagonal embedding $\Delta\colon X\to X\times X$, with the map $K^0(X)\otimes_\Z K^0(X)\to K^0(X\times X)$ induced by the external tensor product of vector bundles; here $K^0(-)$ denotes the Grothendieck group of the additive category of vector bundles. Via the Swan-Serre theorem and Theorem \ref{thm:co-representability}, the former identifies with \eqref{eq:mult} and the latter with~\eqref{eq:pairing}. The details are straightforward and left to the reader.
\end{proof}

\subsection*{Acknowledgments:} The authors are very grateful to Paul Balmer, Dan Christensen and
Amnon Neeman for the organization of the conference ``Triangulated categories and applications'' at the Banff International Research Station (Canada) where this work was initiated.

%%%%%%%%%%%%%%%%%%%%%%%%%%%%%%%%%%%%%%%%%%%%%%%%%%%%%%%%%%
%%%%%%%%%%%%%%%%%%%%%%%%%%%%%%%%%%%%%%%%%%%%%%%%%%%%%%%%%%

\begin{bibdiv}
\begin{biblist}
\bibselect{bib_CcatMorita}
\end{biblist}
\end{bibdiv}
\end{document}